\theoremstyle{definition}
\newtheorem{Theorem}{Theorem}
\newtheorem*{Theorem*}{Theorem}
\newtheorem{Lemma}{Lemma}
\newtheorem{Proposition}{Proposition}
\newtheorem*{Lemma*}{Lemma}
\newtheorem{Remark}{Remark}
\newtheorem{Corollary}{Corollary}
\newtheorem*{Definition*}{Definition}
\newtheorem{Condition}{Condition}
\def\blfootnote{\xdef\@thefnmark{}\@footnotetext}
\title{Statistical Guarantees for Data-driven Posterior Tempering}
\author{Ruchira Ray \and  Marco Avella Medina  \and Cynthia Rush  \\ \\ Department of Statistics, Columbia University \thanks{This research was partly
supported by  NSF grants DMS-2310973 (Avella Medina) and DMS-2413828 (Rush).}}
\date{}
\begin{document}

\maketitle

\begin{abstract}
Posterior tempering reduces the influence of the likelihood in the calculation of the posterior by raising the likelihood to a fractional power $\alpha$. The resulting power posterior — also known as an $\alpha$-posterior  or fractional posterior — has been shown to exhibit appealing properties, including robustness to model misspecification and asymptotic normality (Bernstein-von Mises theorem). However, practical recommendations for selecting the tempering parameter  and statistical guarantees for the resulting power posterior remain open questions. Cross-validation-based approaches to tuning this  parameter suggest interesting asymptotic regimes  for the selected $\alpha$, which can either vanish or behave like a mixture distribution with a point mass at infinity and the remaining mass converging to zero. We formalize the asymptotic properties of  the power posterior in these regimes. In particular, we provide sufficient conditions for (i) consistency of the power posterior moments and (ii) asymptotic normality of the power posterior mean. Our analysis required us to  establish a new Laplace approximation that is interesting in its own right and is the key technical tool for showing a critical threshold $\alpha\asymp 1/\sqrt{n}$ where the asymptotic  normality of the posterior mean breaks. Our results allow for the power to depend on the data in an arbitrary way.\blfootnote{Code to replicate the experiments and plots on a smaller scale is available at \url{https://github.com/rray123/data-dependent-tempering}.}
\end{abstract}

\noindent\textbf{Keywords:}  power posterior,  parameter tuning, cross-validation, Bernstein-von Mises theorem, Laplace approximation.

\clearpage
\begin{doublespace}
\section{Introduction}\label{sec:intro}
Posterior tempering has gained popularity in Bayesian statistics because, in the presence of model misspecification, it offers increased robustness and more reliable inference compared to the standard posterior  \cite{grunwald_safe_2012, grunwald_inconsistency_2017, holmes_assigning_2017, miller_robust_2019, avella_medina_robustness_2022}. In particular, the tempering reduces the influence of the (potentially misspecified) likelihood in the construction of the posterior by raising the likelihood to a fractional exponent. The resulting posterior is known as a power posterior, fractional posterior \cite{bhattacharya_bayesian_2019}, or $\alpha$-posterior \cite[Chapter 8.6]{yang_alpha-variational_2020,ghosal_fundamentals_2017}.
A recent body of work has studied the statistical properties of $\alpha$-posteriors, where $\alpha$ is specified as a fixed constant \cite{bhattacharya_bayesian_2019, alquier_concentration_2020, yang_alpha-variational_2020, avella_medina_robustness_2022, ray_asymptotics_2023}. The authors in \cite{avella_medina_robustness_2022} established the asymptotic normality of the $\alpha$-posterior and showed that, relative to the standard posterior, the limiting variance is rescaled by $\alpha$ while the mean is unchanged. Under similar regularity conditions, \cite{ray_asymptotics_2023} generalized this result by establishing the consistency of the $\alpha$-posterior moments and asymptotic normality of the $\alpha$-posterior mean.

In practice, $\alpha$ is not a constant value, as theoretical prescriptions rely on unknown population quantities \cite{avella_medina_robustness_2022}, but is rather tuned from the data. Optimal tuning of $\alpha$ is critical for $\alpha$-posterior performance and to this end, there are many methods suggested in the literature for tuning $\hat\alpha_n$ from the data \cite{grunwald_safe_2012, grunwald_inconsistency_2017, wang_robust_2017, holmes_assigning_2017, miller_robust_2019, perrotta_practical_2020, syring_calibrating_2019, martin_chapter_2022, bu_towards_2024, mclatchie_predictive_2025}.  Some of these methods suggest that appropriate asymptotic regimes for considering $\alpha$-posterior performance are not consistent with a fixed parameter $\alpha$, but rather one that changes with the sample size, $n$. In this work, we generalize the asymptotic analysis of \cite{avella_medina_robustness_2022,ray_asymptotics_2023} to the case where $\alpha$ is random and data-dependent.

In Section \ref{sec:motivating_examples}, we empirically study this phenomenon of how data-driven selections of $\alpha$ change with $n$. We investigate the asymptotic behavior of $\hat\alpha_n$ values that are tuned using cross-validation-based methods in the context of regularized linear regression. We present extensive numerical experiments, which reveal that the selected $\hat\alpha_n$ can be vanishing as $n$ increases or, for some methods like Bayesian cross-validation and train-test splitting, can have a mixture distribution. This mixture behavior is consistent with previous findings that these methods tend to select $\hat\alpha_n=\infty$ with some fixed probability \cite{perrotta_practical_2020, mclatchie_predictive_2025,sengupta_asymptotic_2020}. Overall, these numerical results motivate the idea that statistical analyses of data-driven $\hat\alpha_n$-posteriors should consider the until-now-unexplored regimes where $\hat{\alpha}_n$ is vanishing or diverging in probability as the sample size increases.

Our main contributions can be summarized as follows:
\begin{enumerate}[(a)]
    \item When $\hat\alpha_n$ is a sequence satisfying $\frac{1}{n} \ll \hat\alpha_n \ll 1$ in probability, we prove consistency of all $\hat\alpha_n$-posterior moments (Theorem \ref{thm:moments_alt}), showing they converge to the moments of a normal distribution centered at the maximum likelihood estimator (MLE). Our result implies a Bernstein-von Mises (BvM) theorem for the $\hat\alpha_n$-posterior (Corollary \ref{cor:BvM}) that is sharp, meaning  that $\frac{1}{n} \ll \hat\alpha_n$ in probability is a necessary condition (Proposition \ref{prop:counter_example_exp_family}). In the regime where $\hat\alpha_n \to \infty$ in probability, we prove that the limiting distribution of the $\hat\alpha_n$-posterior is a point mass at the MLE (Proposition \ref{prop:alpha_inf}). We leverage this result to establish a BvM-type theorem for a mixed setting where  $\hat\alpha_n$ either tends to infinity with a fixed probability or vanishes (Theorem \ref{thm:BvM_mix_data-dep}). To the best of our knowledge, none of these regimes have been studied in the literature but are strongly motivated by empirical evidence. Our work advances the understanding of asymptotic properties of power posteriors in more realistic settings and our theoretical analysis reveals important differences in asymptotic behavior between $\hat{\alpha}_n$-posteriors and $\alpha$-posteriors with a fixed $\alpha$, which practitioners may need to consider in constructing methods to tune $\alpha$ from the data.

    \item We prove asymptotic normality of the $\hat\alpha_n$-posterior mean when $\frac{1}{\sqrt{n}} \ll \hat{\alpha}_n \ll 1$ in probability (Corollary \ref{cor:BayesEstimator}). Our proof relies critically on a new Laplace approximation result that is interesting in its own right (Lemma \ref{lem:lap_int_lem}). The Laplace approximation enables us to $(i)$ quantify accurately the distance between the posterior mean and MLE (Theorem \ref{thm:BayesEstimator}); $(ii)$ demonstrate that $\frac{1}{\sqrt{n}} \ll \hat\alpha_n$ in probability is a necessary condition for the asymptotic equivalence between the posterior mean and MLE.
\end{enumerate}

\subsection{Related work}\label{sec:related_work}

There is a vast body of work in Bayesian statistics that is related to our contribution. Perhaps the most obvious connections are to the  literature on the asymptotic properties of  Bayesian posteriors and to the literature on methods for choosing the tempering parameter, $\alpha$. In particular, our work builds on existing BvM theorems for standard posteriors with misspecified likelihoods \cite{kleijn_bernstein-von-mises_2012,avella_medina_robustness_2022}, but we  note that there is a broader class  of results for generalized posterior distributions \cite{chernozhukov_mcmc_2003, ghosh_robust_2016, miller_asymptotic_2021, lee_liu_nicholls2025,marusic:avellamedina:rush2025}. Tempering parameter selection methods include the SafeBayesian algorithm of \cite{grunwald_safe_2012, grunwald_inconsistency_2017}, frequentist calibration approaches \cite{syring_calibrating_2019,lyddon_general_2019,altamirano:briol:knoblauch2023,matsubara_generalized_2024,wu_comparison_2023}, expected information matching \cite{holmes_assigning_2017}, expected generalization error \cite{bu_towards_2024} and Bayesian prediction methods \cite{mclatchie_predictive_2025,lee_liu_nicholls2025}.

The popularity of $\alpha$-posteriors stems largely from the robustness properties they have been shown to enjoy in both theory and practice. Robustness of $\alpha$-posteriors was initially explored in the context of misspecified  models \cite{grunwald_inconsistency_2017,holmes_assigning_2017,miller_robust_2019, avella_medina_robustness_2022,lin_tarp_evans2025}. 
There have also been proposals to robustify standard posterior constructions to the presence of outliers, like Bayesian data reweighting \cite{wang_robust_2017} or the construction of generalized posteriors with robust losses \cite{hooker:vidyashankar2014, ghosh_robust_2016,altamirano_robust_2024,matsubara_generalized_2024,marusic:avellamedina:rush2025}  and test statistics \cite{baraud_robust_2024}. Finally, we note that $\alpha$-posteriors have also been studied in the context of variational inference by \cite{huang_improving_2018, yang_alpha-variational_2020} and variational autoencoders \cite{higgins_beta-vae_2016, burgess_understanding_2018}. 

We provide a more detailed discussion of related work in Appendix \ref{App:related_work}.

\subsection{Statistical model and $\alpha$-posteriors}

We now introduce the statistical framework used throughout the paper.
To model the random sample $X^n = (X_1,\dots,X_n) \in \mathcal{X}^n$, where $\mathcal{X}\subset\mathbb{R}^d$ denotes the sample space, let $\mathcal{F}_n = \{f_n(\cdot|\theta): \theta \in\Theta\subset \mathbb{R}^p \}$ be a parametric family of densities. This model is allowed to be misspecified in the sense that $f_{0,n}(X^n)$, the true density of the random sample $X^n$, may not belong to $\mathcal{F}_n$. We will use the notion of the pseudo-true parameter $\theta^*$, which is the true data-generating $\theta$ if the model is well specified, meaning $f_{0,n}(X^n) = f_n(X^n|\theta^*) \in\mathcal{F}_n$. If the model is misspecified, then $\theta^*$ best approximates the true data-generating process in terms of KL divergence. We denote the (pseudo) MLE by $\hat{\theta} = \arg\max_\theta f_n(X^n|\theta)$. 

For a sequence of distributions $f_{0,n}$ on random vectors $Y_n$, we say $Y_n=o_{f_{0,n}}(1)$ if for every $\epsilon > 0$ we have $\lim_n \mathbb{P}_{f_{0,n}}(\|Y_n\|_2 >\epsilon ) = 0$. We say $Y_n$ is ``bounded in $f_{0,n}$-probability,'' or $O_{f_{0,n}}(1)$, if for every $\epsilon > 0$ there exists $M\equiv M(\epsilon) > 0$ such that $\mathbb{P}_{f_{0,n}}(\|Y_n\|_2>M) < \epsilon$.  

Given a statistical model $f_n(X^n|\theta)\in\mathcal{F}_n$, a prior density $\pi(\theta)$ for $\theta$, and a scalar $\alpha > 0$, the $\alpha$-posterior is defined by the density
\begin{equation}\label{alphaposterior}
    \pi_{n,\alpha}(\theta|X^n) \equiv \frac{f_n(X^n|\theta)^{\alpha}\pi(\theta)}{\int_{\mathbb{R}^p} f_n(X^n|\theta)^{\alpha}\pi(\theta) d\theta}.
\end{equation}
We subscript $\alpha$ with $n$ to emphasize the dependence of $\alpha$ on the sample size, $n$. We also differentiate $\alpha_n$-posteriors, where the sequence, $\alpha_n$ is chosen a priori, and $\hat{\alpha}_n$-posteriors, where $\hat{\alpha}_n$ is learned from the data or depends on the data in any way. Throughout, all  $\alpha$-posteriors are assumed to be proper (i.e., the normalization constant in the denominator of \eqref{alphaposterior} is finite). Indeed, this is the case when $\pi(\theta)$ is proper \cite[Theorem 1]{carvalho_normalized_2021}.

\subsection{Notation and general definitions} \label{sec:notation}
We use $\phi(\cdot|\mu,\Sigma)$ to denote a multivariate normal density with mean $\mu$ and covariance $\Sigma$.  The determinant of a matrix $B$ is $|B|$ and the indicator function of an event $A$ is $\mathbbm{1}\{A\}$. For sequences $a_n$ and $b_n$, we say $a = O(b_n)$ if there exists $M > 0$ and $N$ such that $|a_n|\leq Mb_n$ for all $n > N$. We say $a_n\asymp b_n$ if $a_n = O(b_n)$ and $b_n = O(a_n)$. We use $\|v\|_p = \left(\sum_i|v_i|^p\right)^{1/p}$ to denote the $p$-norm of a vector $v$. Let $\bar{B}_{v}(\delta) \equiv \{\theta: \|\theta - v\|_2 \leq \delta\}$ denote the  closed Euclidean ball of radius $\delta$ around vector $v \in \mathbb{R}^p$ and $B_{v}(\delta) \equiv \{\theta: \|\theta - v\|_2 < \delta \}$ the open Euclidean ball. For any two probability densities $f$ and $g$ that are absolutely continuous with respect to the Lebesgue measure in $\mathbb{R}^p$, the total variation distance, $d_{\text{TV}}(f,g)$, between them is
\begin{equation}
    \label{eq:dtv}
    d_{\text{TV}}(f,g) = \frac{1}{2}\int_{\mathbb{R}^p}|f(x)-g(x)|dx.
\end{equation}
For any two probability measures, $\mu$ and $\nu$, on $\mathbb{R}^p$ with the Euclidian norm, the  $p$-Wasserstein distance between them is defined as
\begin{equation}\label{eq:Wasserstein_def}
    d_{\text{W}_p}(\mu, \nu) = \Big(\inf_{\gamma\in\Gamma(\mu,\nu)}\int_{\mathbb{R}^p\times\mathbb{R}^p}\|x-y\|_2^pd\gamma(x,y)\Big)^{1/p},
\end{equation}
where $\Gamma(\mu,\nu)$ is the set of joint distributions of $\mu$ and $\nu$ on the product space $\mathbb{R}^p\times\mathbb{R}^p$. We use tensor product notation to represent multivariate polynomials of vectors, $v\in\mathbb{R}^p$. Define the ``tensor inner product" between a $\text{k}^{\text{th}}$-order tensor, $T\in\mathbb{R}^{p \times \cdots \times p}$, and $\text{k}$ vectors $v^{(1)}, v^{(2)},\dots,v^{(k)}$ (all $\in\mathbb{R}^p$) to be
\begin{equation}\label{eq:tensor_inner_product_def}
    \left< T, v^{(1)}\otimes\ldots\otimes v^{(k)} \right> = \sum_{i_1=1}^p\cdots\sum_{i_k=1}^p [T]_{i_1,\ldots,i_k}\times v_{i_1}^{(1)}\times\cdots \times v_{i_k}^{(k)},   
\end{equation}
where $[T]_{i_1,\ldots,i_k}$ denotes the entries of $T$ for $1 \leq i_1,\ldots,i_k \leq p$. 
We also define the 1-norm of the $\textrm{k}^{\textrm{th}}$-order outer product of a vector $v\in\mathbb{R}^p$ to be
\begin{equation}\label{eq:tensor_norm_def}
    \begin{split}
        \|v^{\otimes k}\|_1 
        &= \sum_{i_1=1}^{p} \dots \sum_{i_k=1}^{p}\left|v_{i_1}\times\dots\times v_{i_k}\right|.\
    \end{split}
\end{equation}

\subsection{Organization}
The rest of this paper is organized as follows. In Section \ref{sec:motivating_examples} we present an extensive discussion of the connections between $\alpha$-posteriors and regularized estimation in the context of linear regression. These connections and extensive numerical experiments motivate our theoretical analysis. Section \ref{sec:main_results} presents the main asymptotic analysis; namely, for regimes of $\hat{\alpha}_n$ that arise in the numerical experiments in Section \ref{sec:motivating_examples}, we provide $\hat{\alpha}_n$-posterior moment consistency/BvM-type results as well as asymptotic normality for the posterior mean. Section \ref{sec:discussion} concludes the paper with a discussion. We relegate all our proofs, supplementary discussions, and examples to the Appendix. 

\section{A motivating example for tuning $\alpha$}\label{sec:motivating_examples}

In this section, we consider tuning the regularization parameter in ridge regression, which can be equivalently formulated as tuning the tempering parameter in Bayesian linear regression. We numerically investigate the limiting distributions of the tuned parameter values selected using various methods in both settings: ridge and Bayesian linear regression. While this is a simple example, it motivates and provides intuition for the asymptotic analysis in the general setting we study later in the paper.

\subsection{Model specification, $\mathbf{\boldsymbol{\alpha}_n}$-posterior and ridge regression}\label{sec:model_spec}

We conduct numerical experiments in the following regression setting. Consider paired data $\{x_i, y_i\}_{i=1}^n$, where $x_i = [x_{i,1}, x_{i,2}, x_{i,3}]^\top\in\mathbb{R}^3$ and $y_i\in\mathbb{R}$ for all $i = 1,\ldots,n$. Let $ x_i \sim \mathcal{N}(0, I_3)$ and  $\epsilon_i \sim \mathcal{N}(0, 1)$, independently. We simulate each sample according to the generative model
\begin{equation}\label{eq:gen_model}
        y_i = x_i^\top  \beta^* + \epsilon_i.
\end{equation}
In our experiments, for $\epsilon_i \sim \mathcal{N}(0, 1)$ and $\tilde{x}_i = [x_{i,2}, x_{i,3}]^\top\in\mathbb{R}^2$, we consider the following working model:
\begin{equation}\label{eq:spec_model}
    y_i = \tilde{x}_i^\top  \tilde{\beta} + \epsilon_i.
\end{equation}
The true parameter is set to be  $\beta^* = \left[\beta_1^*, -0.5, 0.1\right]$, and therefore \eqref{eq:spec_model} is  ``well specified" if $\beta_1^* = 0$, whereas it is ``misspecified" if $\beta_1^* \neq 0$ due to the omitted variable $x_{i,1}$. In our experiments, we set $\beta_1^* = 1$ in the misspecified case.

\begin{table}[h]
\begin{center}
\begin{tabular}{ c|c|c|c|c  }
    \multicolumn{5}{c}{Behavior of $\sqrt{n}(\hat{\beta}^{\text{Ridge}} - b(\lambda) - \beta_0)$}\\
    \hline
    \multicolumn{2}{c|}{Regime} 
    &    
    \multicolumn{1}{c}{$\alpha_n$-posterior}
    &                                       
    \multicolumn{2}{|c}{$\hat{\beta}^{\text{Ridge}}$ ($\alpha_n$-posterior mean)} \\\hline  
    $\lambda_n$& $\alpha_n$ &Variance& Bias $b(\lambda)$&$\sqrt{n}(\hat{\beta}^{\text{Ridge}}-\hat{\beta})$ \\
    \hline
    $\frac{1}{\sqrt{n}}\ll\lambda_n\ll 1$&$\frac{1}{n}\ll\alpha_n \ll \frac{1}{\sqrt{n}}$&$O_{f_{0,n}}(\frac{1}{\alpha_nn})$&$O(\lambda_n)$&Diverges\\
    $\lambda_n\asymp\frac{1}{\sqrt{n}}$&$\alpha_n\asymp\frac{1}{\sqrt{n}}$
    &$O_{f_{0,n}}(\frac{1}{\sqrt{n}})$&$O(\frac{1}{\sqrt{n}})$&$O_{f_{0,n}}(1)$\\
    $\frac{1}{n} \ll\lambda_n\ll\frac{1}{\sqrt{n}}$ &$\frac{1}{\sqrt{n}}\ll\alpha_n \ll 1$ &$O_{f_{0,n}}(\frac{1}{\alpha_nn})$&$O(\lambda_n)$&$o_{f_{0,n}}(1)$\\
    $\lambda_n\asymp\frac{1}{n}$&$\alpha_n\asymp1$ &$O_{f_{0,n}}(\frac{1}{n})$&$O(\frac{1}{n})$&$o_{f_{0,n}}(1)$\\
\end{tabular}
\end{center}
\caption{\label{table:ridge} Asymptotic behavior of $\hat{\beta}^{\text{Ridge}}$ and the corresponding $\alpha$-posterior. We denote the bias of the ridge estimator/$\alpha$-posterior mean by $b(\lambda)$. The last three columns denote the variance of the $\alpha_n$-posterior, the bias of the ridge estimator, and the order of $\sqrt{n}(\hat{\beta}^{\text{Ridge}}-\hat{\beta})$, where $\hat{\beta}$ is the MLE of $\beta$ (i.e., the OLS estimator). 
}
\end{table}

Note the equivalence between ridge regression to estimate $\beta$ and $\alpha$-posterior inference on $\beta$ in the linear regression model with a Gaussian prior on the slope parameters. Indeed,  the  model in \eqref{eq:gen_model} gives rise to a likelihood for the data $\{x_i, y_i\}_{i=1}^n$, given the coefficient vector $\beta$.  Combining this likelihood with a  $\mathcal{N}(0,I_3)$ prior for $\beta$  yields the $\alpha_n$-posterior 
\begin{equation}\label{eq:ridge_post}
    \pi_{n,\alpha_n}\left(\beta|{X},Y\right) = \phi\Big(\beta \, \big| \,\hat{\beta}^{\text{Ridge}},~\frac{1}{\alpha_n n}\Big(\frac{1}{n}X^\top  X+\frac{1}{\alpha_n n} I_p\Big)^{-1}\Big),
\end{equation}
where, using $\lambda_n =\frac{1}{n \alpha_n}$,
\begin{equation}\label{eq:ridge_obj}
    \hat{\beta}^{\text{Ridge}} = \arg\min_\beta \frac{1}{n}\|Y-X\beta\|_2^2 + \lambda_n\|\beta\|_2^2,
\end{equation}
with $X = [x_1 \cdots x_n]^\top$ and $Y = [y_1,\ldots,y_n]$.
Standard frequentist theory shows that, under some regularity conditions,  $\hat{\beta}^{\text{Ridge}}$ is consistent as long as $\lambda_n=o(1)$ and $\hat{\beta}^{\text{Ridge}}$ is asymptotically normally distributed if $\lambda_n=o(1/\sqrt{n})$ \cite{hoerl:kennard1970}.  In Table \ref{table:ridge}, we compare the asymptotic behavior of $\hat{\beta}^{\text{Ridge}}$ to that of the $\alpha_n$-posterior in \eqref{eq:ridge_post} in these regimes of $\lambda_n$ (and the corresponding regimes of $\alpha_n$). Table \ref{table:ridge} suggests that the regimes $1/n \ll\alpha_n\ll 1$ (rows 1-3) and $1/\sqrt{n} \ll\alpha_n\ll 1$ (row 3) are particularly interesting. In the regime where $1/n \ll\alpha_n\ll 1$, the variance of the $\alpha_n$-posterior and the bias of the ridge regression estimator both converge to zero. In the regime where $1/\sqrt{n} \ll\alpha_n\ll 1$, the $\alpha_n$-posterior mean -- which coincides with the ridge regression estimator -- is asymptotically equivalent to the MLE. We will see in Section \ref{sec:main_results} that these regimes are indeed most relevant for establishing BvM thereoms (when $1/n \ll\alpha_n\ll 1$) and asymptotic normality of the $\alpha$-posterior mean estimator (when $1/\sqrt{n} \ll\alpha_n\ll 1$). Before giving our theoretical results, we provide empirical evidence showing that natural data-driven choices of $\alpha$  lead us to these types of vanishing $\hat\alpha_n$ regimes as well. 

\subsection{Data-driven choices of the tempering parameter}\label{sec:sel_methods}

In both the well-specified and misspecified model settings (Section \ref{sec:model_spec}), we investigate the distribution of the data-driven tempering parameter $\hat\alpha_n$ selected according to the following commonly-used methods for parameter tuning: 

\begin{enumerate}
    \item \textbf{Bayesian cross-validation (BCV)}: We choose $\alpha$ to maximize the leave-one-out log pointwise predictive density \cite{gelman_understanding_2014, vehtari_practical_2017,gelmanetal2013} given by 
\begin{equation}\label{eq:elpd}
    \overline{\text{lppd}}_{\text{LOO}}(\alpha) = \frac{1}{n}\sum_{i=1}^n \log \int p(x_i,y_i|\beta)\pi_{n,\alpha}\left(\beta|X_{-i}Y_{-i}\right)d\beta,  
\end{equation}
where $X_{-i}$ and $Y_{-i}$ are the design matrix and dependent variable, respectively, with the $i^{\text{th}}$ entries removed. The $\overline{\text{lppd}}_{\textrm{LOO}}$ loss function was discussed in the context of tuning hyperparameters in Bayesian inference in \cite{yang_alpha-variational_2020}. See Appendix \ref{sec:elpd_calc} for the calculation of $\overline{\textrm{lppd}}_{\textrm{LOO}}(\alpha)$ for the posterior in \eqref{eq:ridge_post}.

\item \noindent \textbf{Bayesian CV + VI (BCV + VI)}: We also consider \eqref{eq:elpd} with the $\alpha$-posterior replaced with its Gaussian mean field variational approximation
\begin{equation}
\label{eq:elpd_vi}
    \overline{\text{lppd}}_{\textrm{LOO-VI}}(\alpha) = \frac{1}{n}\sum_{i=1}^n \log \int p\left(x_i,y_i|\beta\right)\tilde{\pi}_{n,\alpha}\left(\beta|X_{-i}Y_{-i}\right)d\beta,
\end{equation}
where $\tilde{\pi}_{n,\alpha}(\cdot)$ is the density of the Gaussian mean field variational approximation of \eqref{eq:ridge_post}: 
\begin{equation}\label{eq:ridge_post_VI}
    \tilde{\pi}_{n,\alpha}\left(\beta|X,Y\right) = \phi\Big(\beta\big|\hat{\beta}^{\text{Ridge}},~\frac{1}{\alpha n}\Big[\text{diag}\Big(\frac{1}{n}X^\top  X+\frac{1}{\alpha n} I\Big)\Big]^{-1}\Big).
\end{equation}
See Appendix \ref{sec:elpdvi_calc} for the $\overline{\textrm{lppd}}_{\text{LOO-VI}}(\alpha)$ calculation for the posterior in \eqref{eq:ridge_post_VI}.
      
\item \noindent \textbf{LOOCV}: We choose $\alpha$ to minimize Allen's PRESS statistic \cite{allen_mean_1971}, given by $\text{PRESS}(\alpha) = \frac{1}{n}\|B(I_n - H(\alpha))Y\|_2^2,$
where $B$ is diagonal with entries $B_{ii} = (1-H_{ii})^{-1}$ and $H(\alpha) = X(X^\top X + \frac{1}{\alpha} I_p)^{-1}X^\top$. Allen's PRESS statistic is equivalent to the leave-one-out cross-validation (LOOCV) squared error of $\hat{\beta}^{\text{Ridge}}$. 

\item \noindent \textbf{Train-test split (Train-test)}: Given data split into a training set and test set, we choose $\alpha$ to minimize the squared test error of $\hat{\beta}^{\textrm{Ridge}}$ estimated on the training dataset. The results that we report correspond to a 70-30 training-test data split. We omit the results we obtained for the splits 50-50 and 90-10 as they were very similar.

\item \noindent \textbf{SafeBayes}: The SafeBayesian algorithm was proposed in \cite{grunwald_safe_2011} and explored in the context of misspecified linear models in \cite{grunwald_inconsistency_2017}. This procedure chooses $\alpha \in [0,1]$ that minimizes the cumulative posterior expected log loss. 
Since we assume our noise variance is known, we use the R-square-Safe-Bayesian Ridge Regression algorithm \cite{safebayes}.  
\end{enumerate}

We use grid searches to select the optimal parameter according to each method. For methods selecting $\lambda$, we map the selected value, denoted by $\hat{\lambda}_n$, to its equivalent $\hat{\alpha}_n$ value. See Table \ref{tab:sim_grid} in Appendix \ref{app:grid_searches} for these mappings and the grids used.

\subsection{Simulation results}\label{sec:sim-results}

Figures \ref{fig:lim_dist_constant}--\ref{fig:lim_dist_mix} display the distribution of $\hat{\alpha}_n$, over 1,000 independent replications of the data, selected according to the five methods discussed in Section \ref{sec:sel_methods} in the model specification settings described in Section \ref{sec:model_spec}. To understand the asymptotic behavior of $\hat{\alpha}_n$, we compare the distribution of $\hat{\alpha}_n$ when $n=100$, $n=1,000$, and $n=5,000$. 

We observe the following classes of asymptotic behavior of $\hat{\alpha}_n$:
\begin{enumerate}
    \item \noindent\textbf{Constant limit (Figure \ref{fig:lim_dist_constant}):} In both model specification settings, LOOCV and SafeBayes suggest that $\hat{\alpha}_n$ approaches a constant. The asymptotics of the resulting constant $\alpha$-posterior were studied in \cite{avella_medina_robustness_2022} and \cite{ray_asymptotics_2023}. In particular, a BvM theorem for the $\alpha$-posterior was established in \cite{avella_medina_robustness_2022}. The moment constency of the $\alpha$-posterior and asymptotic normality of the $\alpha$-posterior mean were established in \cite{ray_asymptotics_2023}.
    \begin{figure}
        \centering
        \includegraphics{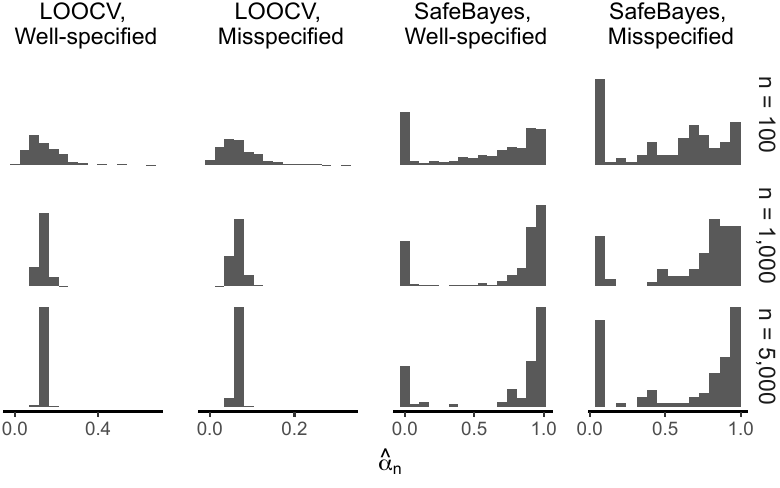}
        \caption{ \small Distribution of selected $\hat{\alpha}_n$ over 1,000 replications.$^*$ Column titles denote the tuning method (LOOCV, SafeBayes) and the model specification (Well-specified, Misspecified). Within each column, rows correspond to sample size per replication ($n=100$, $n=1,000$, $n=5,000$).\\ \small{$^*$The SafeBayes, $n=5,000$ plots have $100$ replications due to computational constraints}
        }
        \label{fig:lim_dist_constant}
    \end{figure}

   \item \noindent\textbf{Quickly vanishing $\mathbf{\hat{\boldsymbol{\alpha}}_n}$ (Figure \ref{fig:lim_dist_quick}):} In the misspecified model setting, BCV and BCV + VI suggest that $\hat{\alpha}_n$ decreases to zero. The convergence rate is roughly of the order $\frac{1}{n}$, as can be seen in Table \ref{tab:conv_rates}. In Section \ref{sec:BvM_discussion}, we study the $\alpha_n$-posterior when $\alpha_n\asymp \frac{1}{n}$ and show that a BvM result does not hold in general, as we provide a counterexamaple with exponential family conjugate models (Propositon \ref{prop:counter_example_exp_family}).

\begin{SCfigure}[0.95]
        \centering
        \includegraphics{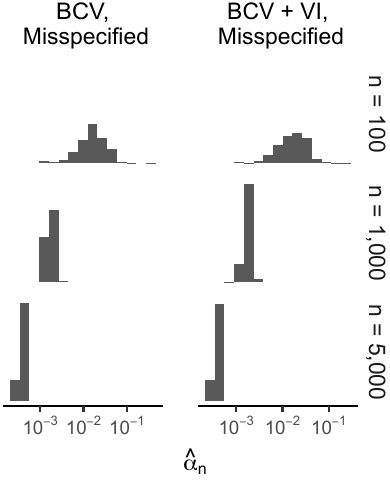}
        \caption{ \small Distribution of selected $\hat{\alpha}_n$ over 1,000 replications.$^*$ Column titles denote the tuning method (BCV, BCV+VI) and the model specification (Misspecified). Within each column, rows correspond to sample size per replication ($n=100$, $n=1,000$, $n=5,000$). \\ \small{$^*$The $\hat{\alpha}_n$ values are plotted on a log scale to more easily show the convergence of $\hat{\alpha}_n$ to zero. See Figure \ref{fig:lim_dist_quick_linear} in Appendix \ref{app:lim_dist_linear} for histograms of $\hat{\alpha}_n$ plotted on a linear scale.}
        }
        \label{fig:lim_dist_quick}
    \end{SCfigure}
    
    \item \noindent\textbf{Mixed $\mathbf{\hat{\boldsymbol{\alpha}}_n}$ (Figure \ref{fig:lim_dist_mix}):} The remaining settings (BCV and BCV + VI in the well-specified setting and Train-test in both model specification settings) suggest a distribution of $\hat{\alpha}_n$ that has a point mass at what is effectively $\alpha=\infty$, which we will call a ``corner solution" as it corresponds essentially to a mass point at the MLE, with remaining mass appearing to converge to zero. For more details on how we recode $\hat{\alpha}_n$ values to be $\infty$, see Appendix \ref{app:grid_searches}. Table \ref{tab:conv_rates} shows that  the rate of convergence to zero of the non-degenerate component of the distribution is estimated to be slower than $\frac{1}{n}$. Furthermore, approximately half of the $\hat{\alpha}_n$ are corner solutions (see Figure \ref{fig:conv-rate-prop}), asymptotically. We refer to estimated $\alpha$ in this regime as $\hat{\alpha}_n^{\text{mix}}$ and study the asymptotic properties of such $\hat{\alpha}^{\text{mix}}$-posteriors in  Section \ref{sec:alpha_mix} (Theorem \ref{thm:BvM_mix_data-dep}).
    
\begin{figure}[!htb]
    \centering
    \includegraphics{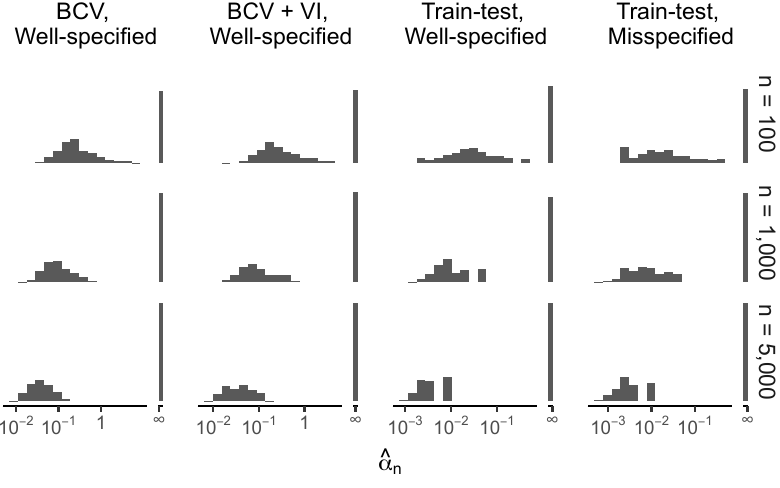}
    \caption{ \small Distribution of selected $\hat{\alpha}_n$ over 1,000 replications.$^*$ Column titles denote the tuning method (BCV, BCV+VI, Train-test) and the model specification (Well-specified, Misspecified). Within each column, rows correspond to sample size per replication ($n=100$, $n=1,000$, $n=5,000$). \small{$^*$The $\hat{\alpha}_n$ values are plotted on a log scale to more easily show the convergence of $\hat{\alpha}_n$ values that are not corner solutions to zero. See Figure \ref{fig:lim_dist_mix_linear} in Appendix \ref{app:lim_dist_linear} for histograms of $\hat{\alpha}_n$ plotted on a linear scale.}
    }
    \label{fig:lim_dist_mix}
\end{figure}
\end{enumerate}

We note that all the regimes for data-driven choices of $\alpha$ have appeared in previous work. The constant $\hat\alpha_n$ for SafeBayes is consistent with the results in \cite[Figure 5]{grunwald_inconsistency_2017} and the mixed corner solutions have been observed in other works as well. When $\alpha$ is selected to optimize the expected log predictive density (a quantity similar to the log point predictive density), \cite{mclatchie_predictive_2025} also observe a point mass at $\alpha = \infty$. Furthermore, when the   ridge regression parameter $\lambda$ is selected according to a train-test split, \cite{sengupta_asymptotic_2020}  observe that, asymptotically, around half of the selected values are corner solutions at $\lambda = 0$ (i.e., $\alpha=\infty$) and formalize this observation into a result about the asymptotic distribution of the selected $\lambda$. In Section \ref{sec:main_results}, we provide a general theory that covers all these regimes of data-driven $\hat\alpha_n$ for  $\alpha$-posteriors constructed using possibly misspecified likelihood models. 

\begin{minipage}{\textwidth}
  \begin{minipage}[c]{0.47\textwidth}
  \vspace{0pt}
    \centering
    \begin{tabular}{ccc}
        \toprule
        Method & Specification & Exponent, 95\% CI\\
        \midrule
         & Well-specified & -0.63, [-0.66, -0.59]\\
        \cmidrule{2-3}
        \multirow{-2}{*}{\centering\arraybackslash BCV} & Misspecified & -1.04, [-1.05, -1.03]\\
        \cmidrule{1-3}
         & Well-specified & -0.63, [-0.66, -0.59]\\
        \cmidrule{2-3}
        \multirow{-2}{*}{\centering\arraybackslash BCV + VI} & Misspecified & -1.04, [-1.05, -1.02]\\
        \cmidrule{1-3}
         & Well-specified & -0.63, [-0.66, -0.6]\\
        \cmidrule{2-3}
        \multirow{-2}{*}{\centering\arraybackslash Train-test} & Misspecified & -0.61, [-0.64, -0.58]\\
        \bottomrule
    \end{tabular}
      \captionof{table}{\small Estimated stochastic order of $\hat{\alpha}_n$. We model $\hat{\alpha}_n$ (that are not corner solutions) as $\hat{\alpha}_n=Cn^{\gamma}$ and report the estimates and $95\%$ confidence intervals of $\gamma$ (Exponent). See Appendix \ref{sec:alpha_lambda} for the corresponding best fit curves (Figure \ref{fig:conv-rate}) and for details on the estimation of $\gamma$.}\label{tab:conv_rates}
    \end{minipage}
    \hfill
  \begin{minipage}[c]{0.45\textwidth}
  \vspace{0pt}
    \centering
    \includegraphics[width=\columnwidth]{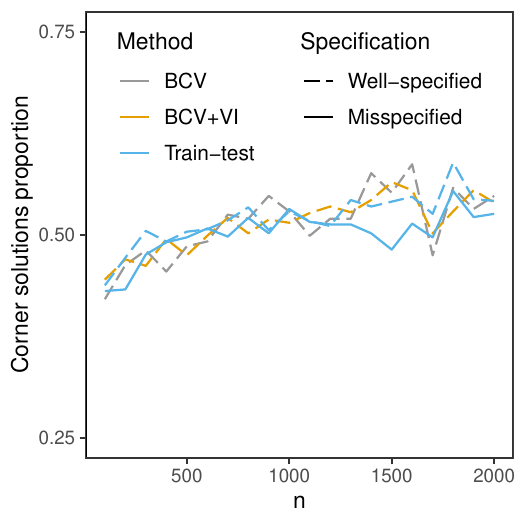}
    \captionof{figure}{\small Proportion of corner solutions vs. sample size ($n$).}\label{fig:conv-rate-prop}
  \end{minipage}
  \end{minipage}

  \vspace{5mm}

\subsection{Real data application}\label{sec:data_example}

We illustrate the practical relevance of the asymptotic regimes of $\hat{\alpha}_n$ we observed in our simulations by considering the Current Population Survey (CPS) data published by the U.S.\ Census Bureau for March 1988. This cross-sectional dataset is publicly available in the \texttt{AER} package for \texttt{R} as \texttt{CPS1988}. The dataset consists of 28,155 observations containing information about the wage, education, work experience, and ethnicity of men between the ages of 18 and 70 who are making more than $\$50$ yearly and are neither self-employed nor working without pay \cite{kleiber_applied_2008}. As a baseline, we start with the model specified in \cite{kleiber_applied_2008}, which we refer to as the ``full model"; namely,
\begin{equation}\label{eq:full_model}
    \log(\textrm{Wage}) = \beta_0 + \beta_1\textrm{Education} + \beta_2\textrm{Ethnicity} + \beta_3\textrm{Experience} + \beta_4\textrm{Experience}^2 + \epsilon.\
\end{equation}
We consider two additional model specifications that remove covariates from \eqref{eq:full_model}. A first model with the Ethnicity variable removed 
\begin{equation}\label{eq:noeth}
    \log(\textrm{Wage}) = \beta_0 + \beta_1\textrm{Education} + \beta_2\textrm{Experience} + \beta_3\textrm{Experience}^2 + \epsilon,
\end{equation}
and another model with the $\textrm{Experience}^2$ term removed
\begin{equation}\label{eq:noexp2}
    \log(\textrm{Wage}) = \beta_0 + \beta_1\textrm{Education} + \beta_2\textrm{Ethnicity} + \beta_3\textrm{Experience} + \epsilon.\
\end{equation}
We note that eliminating the $\textrm{Experience}^2$ term will greatly affect the estimation of the coefficient of the linear term. While we do not claim that \eqref{eq:full_model} is the correct or true model,  it does seem to be a better model or closer to correctly specified than either \eqref{eq:noeth} or \eqref{eq:noexp2}.

First, we repeat the experiments in Figures \ref{fig:lim_dist_constant}--\ref{fig:lim_dist_mix} and Table \ref{tab:conv_rates} by taking subsamples from the full dataset. The results of these experiments are in Figure \ref{fig:lim-dist-data} in Appendix \ref{app:lim_dist_data}. Note that these subsampling experiments are not equivalent to the experiments in Figures \ref{fig:lim_dist_constant}--\ref{fig:lim_dist_mix} and Table \ref{tab:conv_rates} because those experiments involve independent replications of the data, whereas subsampling creates dependence. Nevertheless, we find that similar ``asymptotic'' regimes arise (see Figure \ref{fig:lim-dist-data} for more details). In particular, $(i)$ under all model specifications, SafeBayes suggests $\hat{\alpha}_n$ approaching a constant; $(ii)$ we do not observe the ``quickly vanishing" regime under any of the model specifications; $(iii)$ the distributions of $\hat{\alpha}_n$ selected according to both BCV + VI (under the full model and model excluding ethnicity) and Train-test (under all model specifications) exhibit the mixture behavior discussed in our simulation example (Table \ref{tab:conv_rates-data}). More specifically, Table \ref{tab:conv_rates-data} reports the non-corner solutions selected according to BCV + VI and Train-test are estimated to be of order slower than $\frac{1}{n}$ under all model specifications. We also note that LOOCV appears to exhibit the same mixture behavior to some extent under all model specifications (Figure \ref{fig:lim-dist-data}). However, we find that the values of the ``non-corner" $\hat{\alpha}_n$ appear to remain constant or grow with $n$, rather than shrink (Table \ref{tab:conv_rates-data}; see Figure \ref{fig:conv-rate-data-non-corner} in Appendix \ref{sec:alpha_lambda} for more details). Finally, BCV selects corner solutions under all model specifications (Figure \ref{fig:lim-dist-data}). We formalize the behavior of the resulting $\alpha$-posterior, which we refer to as the $\infty$-posterior, in Proposition \ref{prop:alpha_inf}. LOOCV and BCV are the only selection methods for which the distribution of $\hat{\alpha}_n$ is different in the data example from the distribution of $\hat{\alpha}_n$ in the simulation (Figures \ref{fig:lim_dist_constant} and \ref{fig:lim_dist_mix}).

Under all model specifications, all methods tend to favor higher values of $\hat{\alpha}_n$, including $\alpha = \infty$. SafeBayes constrains $\alpha\in[0,1]$ and overwhelmingly selects $\alpha$ to be close to 1 under all model specifications. Moreover, Train-test exhibits mixture behavior similar to that in Figure \ref{fig:lim_dist_mix}, where a proportion of values of $\hat{\alpha}_n$ are effectively $\infty$.

\begin{table}[]
    \centering
    \begin{tabular}{ccc}
        \toprule
        Method & Model Specification & Exponent, 95\% CI\\
        \midrule
         & Full Model & -0.82, [-0.95, -0.68]\\
        \cmidrule{2-3}
        \multirow{-2}{*}{\centering\arraybackslash BCV + VI} & No Ethnicity & -0.85, [-0.98, -0.71]\\
        \cmidrule{1-3}
         & Full Model & 0.36, [0.27, 0.44]\\
        \cmidrule{2-3}
         & No Ethnicity & 0.47, [0.3, 0.64]\\
        \cmidrule{2-3}
        \multirow{-3}{*}{\centering\arraybackslash LOOCV} & No $\text{Experience}^2$ & 0.00, [-0.04, 0.04]\\
        \cmidrule{1-3}
         & Full Model & -0.66, [-0.75, -0.56]\\
        \cmidrule{2-3}
         & No Ethnicity & -0.70, [-0.77, -0.64]\\
        \cmidrule{2-3}
        \multirow{-3}{*}{\centering\arraybackslash Train-test} & No $\text{Experience}^2$ & -0.59, [-0.69, -0.5]\\
        \bottomrule
    \end{tabular}
    \caption{Estimated stochastic order of $\hat{\alpha}_n$. We model the $\hat{\alpha}_n$ (that are not corner solutions) as $\hat{\alpha}_n=Cn^{\gamma}$ and report the estimates and confidence intervals of $\gamma$ (Exponent). We also note that the ``non-corner" solutions according to LOOCV appear to be growing, rather than shrinking, with $n$. We include this result for completeness. See Appendix \ref{sec:alpha_lambda} for the corresponding curves of best fit (Figure \ref{fig:conv-rate-data-non-corner}) and for details on the estimation of $\gamma$.}
    \label{tab:conv_rates-data}
\end{table}

Taking subsamples of size $n=5,000$, one could expect that given the relatively large subsample size, the $\hat{\alpha}_n$ values would ``agree" across subsamples. Indeed, across all methods and model specifications, the $\hat{\alpha}_n$ values chosen according to BCV, BCV+VI, and SafeBayes are not sensitive to the choice of subsample. In particular, we observe that  BCV always chooses $\hat\alpha_n$ equals ``infinity'', while  BCV+VI gives  $\hat\alpha_n$ equals ``infinity'' a larger probability as $n$ grows large. However, the mixture proportion of the distribution of the $\hat{\alpha}_n$ chosen according to Train-Test remains stable as $n$ grows. The $\hat{\alpha}_n$ chosen according to LOOCV are somewhat sensitive to choice of subsample under the full model, but all the selected $\hat{\alpha}_n$ are relatively large (Figure \ref{fig:lim-dist-data}).

We also compute the mean of the resulting $\alpha$-posterior according to \eqref{eq:ridge_post} (which is the same as the mean of its variational approximation according to \eqref{eq:ridge_post_VI}) when $n=5,000$. We compare the distribution of $\hat{\alpha}_n$-posterior mean values to the least squares estimates of the full model coefficients on the full dataset. We call these the ``full coefficient values." The results of these experiments are displayed in  Appendix \ref{app:data_experiment} in Figure \ref{fig:data_fig_full}--\ref{fig:data_fig_noexp2}. Overall, they demonstrate that the posterior mean estimator is stable across the random subsamples.

\section{Asymptotic results for the $\hat{\alpha}_n$-posterior}\label{sec:main_results}
In this section, we study the asymptotic properties of $\hat{\alpha}_n$-posteriors in the three asymptotic growth regimes of $\hat{\alpha}_n$ that were observed in the simulated and real data experiments from  Section \ref{sec:motivating_examples}. Our first result establishes moment consistency in the regime where $\frac{1}{n}\ll\hat\alpha_n\ll 1$ in probability, which is part of the ``mixed $\hat{\alpha}_n$" regime discussed in Section \ref{sec:motivating_examples}. We also discuss how $\frac{1}{n} \ll \hat{\alpha}_n$ is the critical threshold for this result to hold. Our second result shows that the $\hat\alpha_n$-posterior mean estimator is asymptotically normal in the regime where $\frac{1}{\sqrt{n}}\ll\hat\alpha_n\ll 1$ in probability. To show that $\frac{1}{\sqrt{n}} \ll \hat{\alpha}_n$ is the critical threshold for this result, we establish a novel Laplace approximation. Our final result formalizes the convergence of the $\hat{\alpha}_n$-posterior in the ``mixed $\hat{\alpha}_n$" regime where $\hat{\alpha}_n$ has a mixed distribution characterized by a point mass at infinity with remaining mass vanishing slower than $\frac{1}{n}$. This regime can arise when the tempering parameter is selected with Bayesian cross-validation as discussed previously.

\subsection{BvM for the $\hat{\alpha}_n$-posterior and consistency of its moments}\label{sec:BvM+moments}

The main result of this section establishes the consistency of the $\hat\alpha_n$-posterior moments for any random sequence, $\hat{\alpha}_n$, such that $\frac{1}{n}\ll\hat\alpha_n\ll1$ with probability tending to 1. More precisely, we show that the $L_1$ difference between the (centered and rescaled) moments of the $\hat\alpha_n$-posterior and those of a multivariate Gaussian converges in probability to $0$.

\subsubsection{Main result}
 We require the following regularity conditions on the model. Throughout, $\alpha_n$ denotes an arbitrary positive sequence satisfying $\frac{1}{n}\ll\alpha_n\ll1$ unless otherwise specified.
\begin{itemize}
     \item[\textbf{(A0)}] The MLE, $\hat{\theta}$, is unique and there exists a $\theta^*\in\Theta$, where $\Theta$ is an open subset of  $\mathbb{R}^p$, such that  $\hat{\theta} \to \theta^*$ in $f_{0,n}$-probability and $\sqrt{n}(\hat{\theta}-\theta^*)$ is asymptotically normal. 
    \item[\textbf{(A1)}] The prior, $\pi(\theta)$, is continuous and positive in an open ball of $\theta^*$. 
    \item[\textbf{(A2)}]
    Define $\Delta_{n,\theta^*}=\sqrt{n}(\hat{\theta}-\theta^*$). There exists a positive definite matrix $V_{\theta^*}$ such that $\sup_{h\in \bar{B}_0(r)}|R_n(h)|\rightarrow 0$ in $f_{0,n}$-probability for any  $r > 0$ where
    \begin{equation}\label{eq:Rn(h)_def}
        \begin{split}
        R_n(h) \equiv \alpha_n\Big[\log f_n\Big(X^n\Big|\theta^*+\frac{h}{\sqrt{\alpha_nn}}\Big)-\log f_n(X^n|\theta^*)\Big] -\sqrt{\alpha_n}h^\top V_{\theta^*}\Delta_{n,\theta^*}+\frac{1}{2}h^\top V_{\theta^*}h.\
        \end{split}
    \end{equation}
    \item[\textbf{(A3)}] For all $\epsilon > 0$ and all $k_0 \geq 0$, there exist a constant $M \equiv M(k_0, \epsilon) < \infty$ and an  integer $N\equiv N(k_0,\epsilon)$ such that for $\alpha_nn\geq N$, 
    \begin{equation*}    
        \mathbb{P}_{f_{0,n}}\Big(\int_{\mathbb{R}^p} \|\sqrt{\alpha_nn}(\theta-\theta^*)\|_2^{k_{0}}\pi_{n,\alpha_n} (\theta|X^n)  d\theta > M\Big) < \epsilon.
    \end{equation*}
\end{itemize}
Assumptions \textbf{(A0)} and \textbf{(A1)} are minimal regularity conditions on the MLE and prior, respectively. They are standard in the literature and also assumed in \cite{lehmann_asymptotic_1998, kleijn_bernstein-von-mises_2012, avella_medina_robustness_2022, ray_asymptotics_2023}. Assumption \textbf{(A2)} is a variant of the standard local asymptotic normality (LAN) condition in \cite[eq.\ (2.1)]{kleijn_bernstein-von-mises_2012}, \cite[Assumption 1]{avella_medina_robustness_2022}, and \cite[\textbf{(A2)}]{ray_asymptotics_2023}. We note that our condition is a generalization of LAN, which can be recovered by setting $\alpha_n=1$. However, the same usual regularity conditions involving three derivatives of the log-likelihood that are used to check LAN \cite{lehmann_asymptotic_1998, chernozhukov_mcmc_2003, van_der_vaart_asymptotic_1998} can also check our variant of LAN when $\alpha_n\to0$. See Appendix \ref{app:suff_cond_A2} for a more detailed discussion and a formal statement. Assumption \textbf{(A3)} states that all moments of the rescaled $\alpha_n$-posterior are finite with high probability for $\alpha_nn$ sufficiently large. This condition is related to posterior concentration assumptions used to establish BvM theorems \cite{kleijn_bernstein-von-mises_2012, avella_medina_robustness_2022, ray_asymptotics_2023}. We also note that other works put conditions on the prior and log-likelihood that imply posterior concentration \cite{lehmann_asymptotic_1998, chernozhukov_mcmc_2003, syring_gibbs_2023}. We complement these assumptions with  conditions on the likelihood and prior that are sufficient to check that \textbf{(A3)} holds, as shown in Appendix \ref{app:post_conc_suff}. We also require them to establish moment convergence for the random $\hat\alpha_n$-posterior.
\begin{Condition}
\label{condition1}
   Let $L, c_0, c_1 > 0$ and $c_2 \geq 0$ be constants. Then for some $r > \frac{e^{c_2/c_0}}{c_1}$, there exists a function $\gamma:[0,\infty)\to[0,\infty)$ with the property $\gamma(v) \geq c_0\log(1+c_1v) - c_2$ when $v \geq r$,  such that uniformly on $B_{\theta^*}(r)^\mathsf{c}$ with $f_{0,n}$-probability tending to one, we have
        \begin{align}\label{eq:cond2_part1}
            \log f_n(X^n|\theta)-\log f_n(X^n|\theta^*) \leq -Ln\gamma(\|\theta-\theta^*\|_2).
        \end{align}
\end{Condition}
\begin{Condition}
\label{condition2}
    For $c_0$, $c_1$, $c_2$ and $r$ defined in Condition \ref{condition1}, there exist positive constants, $c_3 \equiv c_3(r) > 0$ and $c_4 \equiv c_4(r) > 0$, such that uniformly on $B_{\hat{\theta}}(2r)$, we have $-nc_3\|\theta-\hat{\theta}\|_2^2 \leq \log f_n(X^n|\theta)-\log f_n(X^n|\hat{\theta}) \leq -nc_4\|\theta-\hat{\theta}\|_2^2$ with $f_{0,n}$-probability tending to one.
    
\end{Condition}
Condition \ref{condition1} requires the pseudo-true parameter to be ``salient" enough, in the sense that the suboptimiality of the log-likelihood evaluated at $\theta$ outside of a neighborhood of $\theta^*$ is lower bounded by an increasing function $\gamma$ of $\|\theta-\theta^*\|_2$. Condition \ref{condition2} assumes that with probability tending to one the log-likelihood is locally strongly convex and smooth in a neighborhood of the MLE.

We are now ready to state our first main result. The proof is in Appendix \ref{app:main_results_thm2}. 

\begin{Theorem}\label{thm:moments_alt} 
Assume \textbf{(A0)}, \textbf{(A1)}, \textbf{(A2)}, and \textbf{(A3)} hold. Then, for any sequence $\alpha_n$ such that $\frac{1}{n}\ll\alpha_n\ll1$ and for any integer $k \geq 0$,
\begin{equation}\label{eq:moment_result}
    \int_{\mathbb{R}^p}\|\sqrt{\alpha_nn}(\theta-\theta^*)\|_2^k \, \Big|\pi_{n,\alpha_n}\left(\theta|X^n\right) - \phi\Big(\theta\big|\hat{\theta},\frac{1}{\alpha_nn}V_{\theta^*}^{-1}\Big)\Big|d\theta \rightarrow 0,
\end{equation}
in $f_{0,n}$-probability.
Furthermore, suppose that for a positive, data-dependent sequence $\hat{\alpha}_n$ there exists a positive sequence $\alpha_n$ with $\frac{1}{n}\ll\alpha_n\ll1$ such that $\frac{\hat{\alpha}_n}{\alpha_n}\to1$ in $f_{0,n}$-probability and that $f_n(X^n|\theta)$  satisfies Conditions \ref{condition1}-\ref{condition2}. Furthermore, assume  that $\pi(\theta) \leq \kappa$ for some $\kappa > 0$. Then the result in \eqref{eq:moment_result} holds with $\pi_{n,\alpha_n}(\theta|X^n)$ replaced with $\pi_{n,\hat{\alpha}_n}(\theta|X^n)$. 
\end{Theorem}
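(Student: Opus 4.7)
My plan is to pass to the $\sqrt{\alpha_n n}$-rescaled coordinate $h = \sqrt{\alpha_n n}(\theta-\theta^*)$. Writing $c_n \equiv \sqrt{\alpha_n}\Delta_{n,\theta^*}$ (the image of $\hat\theta$) and $\tilde\pi_{n,\alpha_n}(h|X^n)$ for the rescaled $\alpha_n$-posterior, the target reduces to
\begin{equation*}
    \int_{\mathbb{R}^p}\|h\|_2^k\bigl|\tilde\pi_{n,\alpha_n}(h|X^n)-\phi(h|c_n,V_{\theta^*}^{-1})\bigr|\,dh \to 0
\end{equation*}
in $f_{0,n}$-probability; by \textbf{(A0)} and $\alpha_n\to 0$, the center $c_n=o_{f_{0,n}}(1)$. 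Assumption \textbf{(A2)} rewrites the likelihood factor as $f_n(X^n|\theta^*)^{\alpha_n}\exp(c_n^\top V_{\theta^*} h - \tfrac{1}{2}h^\top V_{\theta^*} h + R_n(h))$, so that the unnormalized integrand is locally uniformly close to a Gaussian proportional to $\phi(h|c_n,V_{\theta^*}^{-1})$, with \textbf{(A1)} giving continuity and positivity of the prior factor at $\theta^*$.

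\textbf{Bulk--tail decomposition.} Split the integral at $\{\|h\|_2\leq R\}$ versus $\{\|h\|_2>R\}$. On the bulk, \textbf{(A1)}--\textbf{(A2)} give locally uniform convergence of the unnormalized integrand, and a Laplace-style argument---using \textbf{(A3)} to discard the tail---shows that the normalizer, divided by $f_n(X^n|\theta^*)^{\alpha_n}$, converges in $f_{0,n}$-probability to $\pi(\theta^*)(2\pi)^{p/2}|V_{\theta^*}|^{-1/2}\exp(\tfrac{1}{2}c_n^\top V_{\theta^*} c_n)$. Dominated convergence then drives the bulk integral to zero for each fixed $R$. On the tail, the Gaussian piece is $O(R^{-k_0})$ uniformly in $n$ (since $c_n=o_{f_{0,n}}(1)$ and $V_{\theta^*}$ is fixed), while \textbf{(A3)} with exponent $k+k_0$ yields
\begin{equation*}
    \int_{\{\|h\|_2>R\}}\|h\|_2^k\tilde\pi_{n,\alpha_n}(h|X^n)\,dh \leq R^{-k_0}\int_{\mathbb{R}^p}\|h\|_2^{k+k_0}\tilde\pi_{n,\alpha_n}(h|X^n)\,dh = O_{f_{0,n}}(R^{-k_0}),
\end{equation*}
and sending $R\to\infty$ after $n\to\infty$ closes the deterministic part.

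\textbf{Transfer to data-driven $\hat\alpha_n$.} I would reduce the second half to the first by transferring \textbf{(A2)} and \textbf{(A3)} from $\alpha_n$ to $\hat\alpha_n$. Let $\rho_n = \hat\alpha_n/\alpha_n\to 1$ in probability; on $\{|\rho_n-1|\leq\epsilon\}$, the reparameterization $\tilde h = h/\sqrt{\rho_n}$ inside the identity supplied by \textbf{(A2)}, followed by multiplication by $\rho_n$, gives
\begin{equation*}
    \hat\alpha_n\bigl[\log f_n(X^n|\theta^*+h/\sqrt{\hat\alpha_n n})-\log f_n(X^n|\theta^*)\bigr] = \sqrt{\hat\alpha_n}\,h^\top V_{\theta^*}\Delta_{n,\theta^*} - \tfrac{1}{2} h^\top V_{\theta^*} h + \hat R_n(h),
\end{equation*}
with remainder $\hat R_n(h)=\rho_n R_n(h/\sqrt{\rho_n})$ satisfying $\sup_{\|h\|_2\leq r}|\hat R_n(h)|\leq (1+\epsilon)\sup_{\|\tilde h\|_2\leq r/\sqrt{1-\epsilon}}|R_n(\tilde h)|\to 0$ in probability by \textbf{(A2)}. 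For \textbf{(A3)} with $\hat\alpha_n$, Conditions \ref{condition1}--\ref{condition2} together with $\pi\leq\kappa$ support a Laplace-type argument: Condition \ref{condition2} provides a lower bound of order $(\hat\alpha_n n)^{-p/2}f_n(X^n|\hat\theta)^{\hat\alpha_n}$ on the normalizer, while Condition \ref{condition1} produces a decay factor $(1+c_1\|\theta-\theta^*\|_2)^{-\hat\alpha_n L n c_0}$ outside $B_{\theta^*}(r)$ that dominates any polynomial moment once $\hat\alpha_n n c_0 > k_0 + p$. With \textbf{(A2)} and \textbf{(A3)} so transferred, the Part~1 argument applies verbatim to $\hat\alpha_n$.

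\textbf{Main obstacle.} The principal technical difficulty is this Laplace/tail analysis in the vanishing regime: the tail suppression from Condition \ref{condition1} carries an unfavorable factor $\hat\alpha_n\to 0$ while the rescaling inflates the moment integrand by $(\hat\alpha_n n)^{k_0/2}$, so establishing uniform control requires $\hat\alpha_n n$ to be large enough to dominate both simultaneously along the random sequence $\hat\alpha_n$. This is precisely the point at which the hypothesis $1/n\ll\hat\alpha_n$ becomes indispensable, and it is where I expect the bulk of the quantitative work---deferred to the appendix---to lie.
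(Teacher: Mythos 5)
Your outline is correct and reaches the same destination, but the engine driving the bulk comparison is genuinely different from the paper's. The paper never computes the normalizing constant: it compares the rescaled posterior to the Gaussian through the ratio statistic $f_n(g,h)=\{1-\phi_n(h)\pi^{\mathrm{LAN}}_{n,\alpha_n}(g)/(\pi^{\mathrm{LAN}}_{n,\alpha_n}(h)\phi_n(g))\}^+$ and a comparison lemma (Lemma~\ref{lem:TVbound}) that bounds the weighted $L_1$ distance by $\sup f_n$ times the two moments plus the two tail moments, with the bulk taken to be a carefully constructed \emph{growing} ball $\bar B_0(r_n)$ (Lemma~\ref{lem:f(g,h)}). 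You instead run a direct Laplace argument: identify the limit of the normalizer, get locally uniform convergence of the normalized density on a \emph{fixed} ball $R$, apply dominated convergence, and take $R\to\infty$ after $n\to\infty$; the tails are handled identically in both proofs (Markov with \textbf{(A3)} at exponent $k+k_0$, Gaussian concentration for the limit). Your route is more classical and arguably more transparent, at the cost of having to justify the normalizer limit — note that \textbf{(A3)} is stated for the \emph{normalized} posterior, so "using (A3) to discard the tail" of the unnormalized integral needs the small sandwich argument (tail $\le MR^{-k_0}\times$ normalizer, so normalizer is squeezed between the bulk and bulk$/(1-MR^{-k_0})$); the paper's ratio trick buys it immunity from this. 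For the data-dependent part, your reparameterization $\tilde h=h/\sqrt{\rho_n}$ is a clean way to transfer \textbf{(A2)} and is computationally equivalent to the paper's reduction to the tilted likelihood $\tilde f_n=f_n^{\hat\alpha_n/\alpha_n}$ (Proposition~\ref{prop:random_alpha_(A0)(A2)}), and your sketch of the \textbf{(A3)} transfer — normalizer lower bound from Condition~\ref{condition2}, polynomial-vs-$(1+c_1\|\theta-\theta^*\|)^{-Lc_0\alpha_nn}$ competition from Condition~\ref{condition1}, with the threshold $\alpha_nnc_0\gtrsim k_0+p$ — is exactly the mechanism of Propositions~\ref{prop:A3_suff} and~\ref{prop:random_alpha(A3)}. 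Two bookkeeping points to watch when writing this up: the theorem's display is scaled by $\sqrt{\alpha_nn}$ (the deterministic sequence), so after expanding at $h/\sqrt{\hat\alpha_nn}$ you must convert back, picking up harmless $\rho_n^{k/2}\to1$ factors; and Condition~\ref{condition2}'s quadratic upper bound is also needed to control the moment contribution from the intermediate region $B_{\hat\theta}(2r)$, not only to lower-bound the normalizer.
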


Note that $k=0$ in Theorem \ref{thm:moments_alt} immediately implies a BvM result that generalizes \cite[Theorem 1]{avella_medina_robustness_2022}, which we state below.
\begin{Corollary} \label{cor:BvM}
Assume the conditions of Theorem \ref{thm:moments_alt} hold. Then, in $f_{0,n}$-probability,
\begin{equation}\label{eq:BvM_result}
    \textrm{d}_{\textrm{TV}}\Big(\pi_{n,\hat{\alpha}_n}(\theta|X^n), \phi\Big(\theta \, \big| \, \hat{\theta}, \frac{1}{\alpha_nn}V_{\theta^*}^{-1}\Big)\Big) \rightarrow 0.
\end{equation}
\end{Corollary}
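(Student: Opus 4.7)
The plan is to apply Theorem \ref{thm:moments_alt} with $k=0$. With this choice, the integrand $\|\sqrt{\alpha_n n}(\theta-\theta^*)\|_2^{k}$ reduces to $1$ (using the standard convention $\|v\|_2^{0}=1$ for all $v\in\mathbb{R}^p$), so the $L_1$ convergence statement \eqref{eq:moment_result} in the second (data-dependent) part of Theorem \ref{thm:moments_alt} becomes
\begin{equation*}
    \int_{\mathbb{R}^p}\Big|\pi_{n,\hat{\alpha}_n}(\theta|X^n)-\phi\Big(\theta\,\big|\,\hat{\theta},\tfrac{1}{\alpha_n n}V_{\theta^*}^{-1}\Big)\Big|\,d\theta \to 0
\end{equation*}
in $f_{0,n}$-probability. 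By the definition of total variation distance in \eqref{eq:dtv}, this integral is exactly $2\,d_{\mathrm{TV}}\!\left(\pi_{n,\hat{\alpha}_n}(\cdot|X^n),\,\phi\!\left(\cdot\,\big|\,\hat{\theta},\tfrac{1}{\alpha_n n}V_{\theta^*}^{-1}\right)\right)$, so dividing by $2$ yields \eqref{eq:BvM_result}.

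No separate obstacle arises in proving the corollary: all the work, including the extension from a deterministic sequence $\alpha_n$ to a data-dependent $\hat{\alpha}_n$ via the hypotheses $\hat{\alpha}_n/\alpha_n \to 1$ in probability, Conditions \ref{condition1}--\ref{condition2}, and the bounded-prior assumption $\pi(\theta)\le \kappa$, has already been discharged inside Theorem \ref{thm:moments_alt}. Conceptually, this corollary simply records the weak-convergence (TV-BvM) content of that theorem's moment-consistency conclusion. It extends the fixed-$\alpha$ BvM result \cite[Theorem 1]{avella_medina_robustness_2022} to possibly random, vanishing tempering parameters in the regime $\frac{1}{n}\ll\hat{\alpha}_n\ll 1$, while the $k\ge 1$ cases of Theorem \ref{thm:moments_alt} give the additional moment convergence beyond pure total-variation convergence.
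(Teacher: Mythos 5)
Your proposal is correct and matches the paper's own argument exactly: the paper notes immediately before the corollary that taking $k=0$ in Theorem \ref{thm:moments_alt} reduces the integral in \eqref{eq:moment_result} to twice the total variation distance in \eqref{eq:dtv}, yielding \eqref{eq:BvM_result}. Nothing further is required.
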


\subsubsection{Discussion}\label{sec:BvM_discussion}

Theorem \ref{thm:moments_alt} extends the moment convergence result of \cite[Theorem 1]{ray_asymptotics_2023} to a vanishing and random sequence $\hat\alpha_n$ and Corollary \ref{cor:BvM} extends the BvM theorem of \cite[Theorem 1]{avella_medina_robustness_2022} in a similar way. Our results recover these known results when $\hat\alpha_n$ is a fixed constant $\alpha$. 
Since we now assume that $\hat\alpha_nn \ll  n$  with high probability, our rate of concentration in the limiting variance is slower than that in the existing results for standard posteriors \cite{lehmann_asymptotic_1998, kleijn_bernstein-von-mises_2012, avella_medina_robustness_2022}.

We emphasize that the condition $\frac{1}{n}\ll\alpha_n$ is necessary for \eqref{eq:moment_result} to hold, so a BvM result does not hold for the $\hat{\alpha}_n$-posterior if $\hat{\alpha}_n$ is in the ``quickly vanishing" regime observed in Section \ref{sec:motivating_examples}. 
Indeed, we can show explicit counterexamples  when $\alpha_n \asymp \frac{1}{n}$. Consider the statistical model
\begin{equation}
\label{eq:exponential_family_likelihood}
    f_n(X^n|\eta) = \exp\left(\eta^\top T(X^n) - nA(\eta) - B(X^n)\right),
\end{equation}
where $\eta \in H$, some natural parameter space, and $B(X^n)$ is chosen such that \eqref{eq:exponential_family_likelihood} integrates to one. Furthermore, consider the prior density
\begin{equation}\label{eq:exponential_family_prior}
    \pi(\eta) = \exp\left(\eta^\top\xi - \nu A(\eta) - \psi(\xi, \nu)\right),
\end{equation}
where $\psi(\xi, \nu)$ is chosen such that \eqref{eq:exponential_family_prior} integrates to one. Simple manipulations show that the density of the $\alpha_n$-posterior has the form
\begin{align}
        \pi_{n,\alpha_n}(\eta|X^n) 
        \label{eq:alpha_post_exponential_short}&= \exp\big(\eta^\top [\alpha_nT(X^n) + \xi] -[\alpha_nn + \nu]A(\eta) - \psi(\alpha_nT(X^n) + \xi, \alpha_nn + \nu)\big).
\end{align}

With this simple construction, we show that when $\alpha_n \asymp \frac{1}{n}$, a BvM result like that of \eqref{eq:BvM_result} in Corollary \ref{cor:BvM} does not hold. We state this in the next proposition, which shows that even the weaker notion of covergence in distribution cannot hold in this case.
\begin{Proposition}
    \label{prop:counter_example_exp_family}
    Suppose the model in \eqref{eq:exponential_family_likelihood} is well specified with true parameter value given by $\eta^*$ and  that the characteristic function of the $\alpha_n$-posterior is continuous in $\xi_n$ and $\nu_n$. Then, if $\alpha_nn\to \alpha_0 \geq0$ and $\frac{1}{n}T(X^n) \rightarrow g(\eta^*)$ in $f_{0,n}$-probability, the $\alpha_n$-posterior \eqref{eq:alpha_post_exponential_short} converges weakly to a distribution belonging to the same family as \eqref{eq:exponential_family_prior}. 
\end{Proposition}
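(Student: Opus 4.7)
The proof strategy exploits the conjugate structure: by \eqref{eq:alpha_post_exponential_short}, the $\alpha_n$-posterior already sits in the same exponential family as the prior \eqref{eq:exponential_family_prior}, with updated ``prior-like'' parameters
$\xi_n = \alpha_n T(X^n) + \xi$ and $\nu_n = \alpha_n n + \nu$. So rather than arguing directly about the posterior density, the plan is to establish a limit for $(\xi_n, \nu_n)$ and then transfer that limit to the posterior distribution itself through its characteristic function.

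The first step is to identify the probability limit of $(\xi_n, \nu_n)$. Since $\alpha_n n \to \alpha_0$ is deterministic by hypothesis, $\nu_n \to \alpha_0 + \nu$. For $\xi_n$, write $\xi_n - \xi = (\alpha_n n)\cdot \tfrac{1}{n} T(X^n)$ and apply Slutsky together with $\tfrac{1}{n}T(X^n) \to g(\eta^*)$ in $f_{0,n}$-probability to conclude $\xi_n \to \xi_\infty := \alpha_0 g(\eta^*) + \xi$ in $f_{0,n}$-probability. Combined, $(\xi_n,\nu_n) \to (\xi_\infty,\nu_\infty)$ in $f_{0,n}$-probability, where $\nu_\infty = \alpha_0 + \nu$.

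The second step is to upgrade this parameter convergence to weak convergence of the posterior. Let $\varphi_n(t)$ denote the characteristic function of the $\alpha_n$-posterior. By \eqref{eq:alpha_post_exponential_short}, $\varphi_n(t)$ depends on $X^n$ only through $(\xi_n, \nu_n)$, and by hypothesis it is continuous in these parameters. Therefore, for each fixed $t$, $\varphi_n(t) \to \varphi_\infty(t)$ in $f_{0,n}$-probability, where $\varphi_\infty$ is the characteristic function of the member of the prior family \eqref{eq:exponential_family_prior} with parameters $(\xi_\infty, \nu_\infty)$. A standard subsequence argument then passes from in-probability to almost-sure convergence along a further subsequence, on which L\'evy's continuity theorem yields almost-sure weak convergence of the $\alpha_n$-posterior to the distribution with characteristic function $\varphi_\infty$; since this holds along every subsequence, the full sequence of $\alpha_n$-posteriors converges weakly in $f_{0,n}$-probability to this limit, which is a member of the same family as the prior.

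The main technical care needed is to verify that $(\xi_\infty, \nu_\infty)$ lies in the natural parameter space of the prior family so that the proposed limit distribution is well-defined and $\psi(\xi_\infty,\nu_\infty)$ is finite; for a regular conjugate family this is essentially automatic since the space is open and the convergence is along a sequence of admissible parameters, but it deserves an explicit line in the proof. The punchline is that the limiting distribution belongs to the prior family rather than concentrating as a point mass at $\eta^*$, which demonstrates that no BvM-type convergence in the sense of \eqref{eq:BvM_result} can hold when $\alpha_n n \to \alpha_0 < \infty$ and gives the desired counterexample.
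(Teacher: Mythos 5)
Your proposal is correct and follows essentially the same route as the paper's proof: conjugacy places the $\alpha_n$-posterior in the prior family with parameters $(\xi_n,\nu_n)$, these converge in $f_{0,n}$-probability to $(\alpha_0 g(\eta^*)+\xi,\ \alpha_0+\nu)$, and the assumed continuity of the characteristic function plus L\'evy's continuity theorem transfers this to weak convergence toward a member of the same family. Your added care about the subsequence argument for in-probability convergence of the random characteristic functions and about the admissibility of the limit parameters is a reasonable refinement that the paper glosses over, but it does not change the argument.
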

The proof of Proposition \ref{prop:counter_example_exp_family} is in Appendix \ref{app:counterexample_expfam_proof}.  Interpreting Proposition \ref{prop:counter_example_exp_family}, if the prior distribution \eqref{eq:exponential_family_prior} is not Gaussian, then the $\alpha_n$-posterior converges weakly to a distribution that is not Gaussian. Hence, a BvM-type theorem does not hold. Moreover,  when  \eqref{eq:exponential_family_likelihood} and \eqref{eq:exponential_family_prior} are Gaussian, even though  \eqref{eq:alpha_post_exponential_short} and its limiting distribution are Gaussian, the limiting distribution does not match the limiting Gaussian given in Corollary \ref{cor:BvM}. See 
Appendix \ref{app:counterexamples} for details and some examples of explicit limiting calculations for conjugate priors.

\subsection{Asymptotic normality of the $\hat{\alpha}_n$-posterior mean estimator}\label{sec:BayesEstimator}

The main result in this section, Theorem \ref{thm:BayesEstimator}, quantifies the difference between the $\hat\alpha_n$-posterior mean and the MLE, which, in turn, implies asymptotic normality of the $\hat{\alpha}_n$-posterior mean for any random sequence, $\hat{\alpha}_n$, such that $\frac{1}{\sqrt{n}} \ll \hat{\alpha}_n \ll 1$ with probability tending to one (Corollary \ref{cor:BayesEstimator}).

\subsubsection{Main result}\label{sec:BayesEstimator_result}
 We will require the following assumptions. Throughout, $\alpha_n$ denotes an arbitrary positive sequence satisfying $\frac{1}{n}\ll\alpha_n\ll1$ unless otherwise specified.
\begin{itemize}
    \item[\textbf{(A1')}] The prior, $\pi(\theta)$, is positive and twice continuously differentiable in  an open ball of $\theta^*$. Furthermore, the prior mean is finite.
    
    \item[\textbf{(A2')}] 
    For $n$ sufficiently large, there exist a matrix $H_n\in\mathbb{R}^{p\times p}$ that is symmetric and positive definite with high probability, a tensor $S_n\in\mathbb{R}^{p\times p\times p}$, and a constant $M < \infty$ such that for all $\epsilon > 0$ and some $\delta>0$, 
    \begin{equation}\label{eq:tildeR_remainder_control}
        \mathbb{P}_{f_{0,n}}\Big(\sup_{h\in\mathcal{H}_n}\tilde{R}_n(h) - \frac{M}{\alpha_n n}\|h\|_2^4 > 0\Big) < \epsilon,
    \end{equation}
    where $\mathcal{H}_n\equiv\{h\in\mathbb{R}^p|\frac{\|h\|_2}{\sqrt{\alpha_nn}} \leq \delta\}$ and
    \begin{equation*}
        \begin{split}
            \tilde{R}_n(h)
            &\equiv\alpha_n\Big[\log f_n\Big(X^n\Big|\hat{\theta}+\frac{h}{\sqrt{\alpha_nn}}\Big)-\log f_n(X^n|\hat{\theta})\Big] +\frac{1}{2}h^\top H_nh - \frac{1}{6\sqrt{\alpha_n n}}\langle S_n, h^{\otimes 3}\rangle.
        \end{split}
    \end{equation*}
    
    \item[\textbf{(A3')}] For all $\epsilon > 0$ and all $\delta' > 0$, there exists a constant $c\equiv c(\epsilon,\delta') > 0$ such that for $n$ sufficiently large, 
    \begin{equation}\label{eq:A3'}
       \mathbb{P}_{f_{0,n}}\Big(\sup_{\theta\in B_{\hat{\theta}}(\delta')^\mathsf{c}}\frac{1}{n}\log f_n(X^n|\theta) - \frac{1}{n}\log f_n(X^n|\hat{\theta}) > -c \Big) < \epsilon.
    \end{equation}
\end{itemize}

Assumption \textbf{(A1')} adds an additional differentiability requirement on the prior on top of Assumption \textbf{(A1)}. Assumption \textbf{(A2')} can be thought of as a higher order LAN condition. The standard LAN condition in \textbf{(A2)} is a quadratic approximation of the log-likelihood ratio centered at the pseudo-true parameter. Assumption \textbf{(A2')} is a cubic approximation of the log-likelihood ratio centered at the MLE. We provide sufficient conditions for verifying \textbf{(A2')} in Appendix \ref{app:likelihood_suff}. Both assumptions \textbf{(A1')} and \textbf{(A2')} are needed for the Laplace approximation that underpins our main result, as discussed below in Sections \ref{sec:postmean_discussion} and \ref{sec:laplace_approx}. These sort of smoothness assumptions are typical for Laplace approximation arguments \cite{kass_validity_1990, shun1995laplace, katsevich_laplace_2025, de2014asymptotic, erdelyi1956asymptotic}. Assumption \textbf{(A3')} is a condition on the salience of the MLE that is related to standard regularity conditions used to establish consistency of M-estimators \cite{van_der_vaart_asymptotic_1998}. A similar condition was stated in \cite{lehmann_asymptotic_1998} (pg.\ 489, assumption (B3)) to establish a BvM theorem for standard posteriors under classical conditions.  It has also been employed to establish posterior concentration (see \cite[Condition 1, eq.\ (8)]{syring_gibbs_2023}, \cite[Assumption S.1]{mclatchie_predictive_2025}, and \cite[Assumption (2)]{miller_asymptotic_2021}), which, in turn, implies that the posterior moments are bounded in probability as required in Assumption \textbf{(A3)}. See Appendix \ref{app:post_conc_suff} for more details.

We are now ready to state the main result of this section. The proof is in Appendix~\ref{app:BayesEstimatorProof}.

\begin{Theorem} \label{thm:BayesEstimator} Assume \textbf{(A0)}, \textbf{(A1')}, \textbf{(A2')}, and \textbf{(A3')} hold. Denote the $\alpha_n$-posterior mean estimator by $\hat{\theta}^\text{B} = [\hat{\theta}^\text{B}_1,\ldots,\hat{\theta}^\text{B}_p]^\top$, where $\hat{\theta}^{\text{B}}_j = \int_{\mathbb{R}^p}\theta_j\pi_{n,\alpha_n}(\theta|X^n)d\theta$ for $j=1,\ldots,p$. Then, 
\begin{align}
    \label{eq:diff}\hat{\theta}^{\text{B}}-\hat{\theta} &= O_{f_{0,n}}\Big(\frac{1}{\alpha_nn}\Big).
\end{align}
Furthermore, suppose that for a data-dependent, positive sequence $\hat{\alpha}_n$ there exists a positive sequence $\alpha_n\to0$ with $\frac{1}{n}\ll\alpha_n\ll1$ such that ${\hat{\alpha}_n}/{\alpha_n}\to1$ in $f_{0,n}$-probability. Then, the result in \eqref{eq:diff} holds with $\pi_{n,\alpha_n}(\theta|X^n)$ replaced with $\pi_{n,\hat{\alpha}_n}(\theta|X^n)$.
\end{Theorem}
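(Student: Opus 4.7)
\emph{Plan.} The strategy is a precision Laplace approximation of the numerator and denominator of the posterior mean. After the change of variables $h = \sqrt{\alpha_n n}\,(\theta - \hat\theta)$, the $j$-th component satisfies $\hat\theta^B_j - \hat\theta_j = (\alpha_n n)^{-1/2}\,N_n/D_n$, where $D_n = \int e^{L_n(h)}\,\pi(\hat\theta + h/\sqrt{\alpha_n n})\,dh$, $N_n$ is the same integral with an extra factor of $h_j$, and $L_n(h) := \alpha_n[\log f_n(X^n|\hat\theta + h/\sqrt{\alpha_n n}) - \log f_n(X^n|\hat\theta)]$. It then suffices to show $N_n/D_n = O_{f_{0,n}}(1/\sqrt{\alpha_n n})$, since multiplying by the prefactor $(\alpha_n n)^{-1/2}$ yields the advertised rate.

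First I would discard the ``far'' region $\{\|\theta - \hat\theta\|_2 > \delta\}$ in both $N_n$ and $D_n$: by Assumption \textbf{(A3')} the integrand there is bounded by $f_n(X^n|\hat\theta)^{\alpha_n} e^{-c\alpha_n n}\pi(\theta)$ with $f_{0,n}$-probability tending to one, and integrating against the prior (finite mean by \textbf{(A1')}) gives an $e^{-c\alpha_n n}$ correction that is negligible since $\alpha_n n \to \infty$. On the remaining region $\mathcal{H}_n$, Assumption \textbf{(A2')} provides
\[
L_n(h) = -\tfrac{1}{2} h^\top H_n h + \frac{1}{6\sqrt{\alpha_n n}} \langle S_n, h^{\otimes 3}\rangle + \tilde R_n(h), \qquad |\tilde R_n(h)| \leq \tfrac{M}{\alpha_n n}\|h\|_2^4,
\]
and the twice continuously differentiable prior from \textbf{(A1')} yields $\pi(\hat\theta + h/\sqrt{\alpha_n n}) = \pi(\hat\theta) + (\alpha_n n)^{-1/2} h^\top \nabla\pi(\hat\theta) + O(\|h\|_2^2/(\alpha_n n))$. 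Invoking Lemma \ref{lem:lap_int_lem} on both integrals, the denominator reduces to $D_n = \pi(\hat\theta)(2\pi)^{p/2}|H_n|^{-1/2}(1 + O(1/(\alpha_n n)))$ because every $(\alpha_n n)^{-1/2}$ correction pairs an odd-order monomial in $h$ against a centered Gaussian and vanishes. For the numerator, the leading Gaussian integral $\int h_j e^{-h^\top H_n h/2} dh$ is zero by symmetry, so the first non-vanishing contribution appears at order $(\alpha_n n)^{-1/2}$ from pairing $h_j$ with either $h^\top \nabla\pi(\hat\theta)$ or $\langle S_n, h^{\otimes 3}\rangle$; both generate non-zero even Gaussian moments proportional to $H_n^{-1}\nabla\pi(\hat\theta)$ and a contraction of $S_n$ against $H_n^{-1}$. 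Combining yields $N_n/D_n = O_{f_{0,n}}(1/\sqrt{\alpha_n n})$ and hence \eqref{eq:diff}.

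The main technical obstacle is showing that the quartic remainder $\tilde R_n(h)$, once multiplied by $h_j$ and integrated against the Gaussian weight, collapses into an $O(1/(\alpha_n n))$ contribution uniformly in the random $H_n$ and $S_n$; this is exactly the role of the novel Laplace bound in Lemma \ref{lem:lap_int_lem}. Without the quartic control supplied by \textbf{(A2')}, the argument would only yield rate $1/\sqrt{\alpha_n n}$, which is consistent with the paper's sharpness claim that $1/\sqrt{n} \ll \hat\alpha_n$ is needed for asymptotic MLE-equivalence.

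For the data-dependent extension, I would condition on the event $\Omega_\eta = \{|\hat\alpha_n/\alpha_n - 1| < \eta\}$, which has $f_{0,n}$-probability tending to one by hypothesis. On $\Omega_\eta$, $\hat\alpha_n$ inherits the regime $1/n \ll \hat\alpha_n \ll 1$, and every ingredient of the Laplace expansion---tail control via \textbf{(A3')}, the cubic expansion from \textbf{(A2')}, and prior smoothness from \textbf{(A1')}---is stable under the bounded multiplicative perturbation $\hat\alpha_n/\alpha_n$. Since $1/(\hat\alpha_n n) \asymp 1/(\alpha_n n)$ on $\Omega_\eta$, the deterministic $O(1/(\alpha_n n))$ bound for $\pi_{n,\alpha_n}$ transfers to $\pi_{n,\hat\alpha_n}$ after a standard $\varepsilon$--$\eta$ argument.
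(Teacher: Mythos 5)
Your proposal is correct and follows essentially the same route as the paper: Theorem \ref{thm:BayesEstimator} is deduced by applying the second part of Lemma \ref{lem:lap_int_lem} with $q(\theta)=\theta_j$ coordinatewise (after checking \textbf{(ALap)} for $b(\theta)=\theta_j\pi(\theta)$ via Proposition \ref{prop:prior_suff}), and the improvement from $1/\sqrt{\alpha_n n}$ to $1/(\alpha_n n)$ comes from exactly the odd-Gaussian-moment cancellation you describe. The only cosmetic differences are that you center $\theta_j-\hat\theta_j$ before expanding whereas the paper expands numerator and denominator separately and takes the ratio, and that the paper handles the data-dependent $\hat\alpha_n$ by rewriting the posterior with the tilted likelihood $f_n(X^n|\theta)^{\hat\alpha_n/\alpha_n}$ and re-verifying \textbf{(A0)}, \textbf{(A2')}, \textbf{(A3')} for it (Propositions \ref{prop:random_alpha_(A0)(A2)} and \ref{prop:random_alpha_(A2')(A3')}), which is the formal version of your conditioning-on-$\Omega_\eta$ argument.
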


A direct corollary of Theorem \ref{thm:BayesEstimator} is asymptotic normality of the $\hat{\alpha}_n$-posterior mean if $\hat{\alpha}_n$ decays sufficiently slowly (Corollary \ref{cor:BayesEstimator}). Indeed, the result of Theorem \ref{thm:BayesEstimator} tells us that $\sqrt{n}(\hat{\theta}^{\text{B}} - \theta^*) = \sqrt{n}(\hat{\theta}^{\text{B}} - \hat{\theta}) + \sqrt{n}(\hat{\theta} - \theta^*)=O_{f_{0,n}}(\frac{1}{\alpha_n\sqrt{n}}) + \sqrt{n}(\hat{\theta} - \theta^*)$. If $\frac{1}{\sqrt{n}}\ll\alpha_n\ll1$, then the first term is $o_{f_{0,n}}(1)$ and Assumption \textbf{(A0)} guarantees that the dominating term is asymptotically normal.

\begin{Corollary}  \label{cor:BayesEstimator} 
Assume the conditions of Theorem \ref{thm:BayesEstimator} hold. Then, for any sequence $\alpha_n$ such that $\frac{1}{\sqrt{n}}\ll\alpha_n\ll 1$, the $\alpha_n$-posterior mean estimator is asymptotically equivalent to $\hat{\theta}$. That is, $\sqrt{n}(\hat{\theta}^{\text{B}}-\theta^*) \overset{d}{\rightarrow} N(0,\tilde{V}_\theta),$ where $\tilde{V}_\theta$ is the asymptotic variance of the MLE. Furthermore, this result remains valid for the $\hat{\alpha}_n$-posterior mean estimator as described in Theorem \ref{thm:BayesEstimator}. 
\end{Corollary}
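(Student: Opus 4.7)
The proof is essentially a direct consequence of Theorem \ref{thm:BayesEstimator} combined with the assumed asymptotic normality of the MLE in \textbf{(A0)}, and the paper already sketches the argument in the paragraph preceding the corollary statement. My plan is to formalize this sketch as a short Slutsky-type argument.

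First, I would write the decomposition
\begin{equation*}
\sqrt{n}(\hat{\theta}^{\text{B}} - \theta^*) \;=\; \sqrt{n}(\hat{\theta}^{\text{B}} - \hat{\theta}) + \sqrt{n}(\hat{\theta} - \theta^*).
\end{equation*}
Theorem \ref{thm:BayesEstimator} applies because the assumptions of Corollary \ref{cor:BayesEstimator} include those of Theorem \ref{thm:BayesEstimator} and the rate constraint $\frac{1}{\sqrt{n}} \ll \alpha_n \ll 1$ is strictly stronger than the $\frac{1}{n} \ll \alpha_n \ll 1$ required there. Thus $\hat{\theta}^{\text{B}} - \hat{\theta} = O_{f_{0,n}}(1/(\alpha_n n))$, and multiplying by $\sqrt{n}$ yields $\sqrt{n}(\hat{\theta}^{\text{B}} - \hat{\theta}) = O_{f_{0,n}}(1/(\alpha_n \sqrt{n}))$.

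Next, I would invoke the rate assumption $\frac{1}{\sqrt{n}} \ll \alpha_n$, which gives $\alpha_n \sqrt{n} \to \infty$ and hence $\sqrt{n}(\hat{\theta}^{\text{B}} - \hat{\theta}) = o_{f_{0,n}}(1)$. By Assumption \textbf{(A0)}, $\sqrt{n}(\hat{\theta} - \theta^*) \xrightarrow{d} N(0, \tilde{V}_{\theta})$ under $f_{0,n}$. A standard application of Slutsky's theorem then gives $\sqrt{n}(\hat{\theta}^{\text{B}} - \theta^*) \xrightarrow{d} N(0, \tilde{V}_{\theta})$.

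For the data-dependent extension, suppose $\hat{\alpha}_n$ and its deterministic companion $\alpha_n$ satisfy $\hat{\alpha}_n / \alpha_n \to 1$ in $f_{0,n}$-probability with $\frac{1}{\sqrt{n}} \ll \alpha_n \ll 1$. The second statement of Theorem \ref{thm:BayesEstimator} directly yields $\hat{\theta}^{\text{B}} - \hat{\theta} = O_{f_{0,n}}(1/(\alpha_n n))$ where $\hat{\theta}^{\text{B}}$ is now the $\hat{\alpha}_n$-posterior mean, and the remainder of the argument is identical.

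There is no substantive obstacle here: the entire analytical content is carried by Theorem \ref{thm:BayesEstimator}, whose rate bound $1/(\alpha_n n)$ was chosen precisely so that the threshold $\alpha_n \gg 1/\sqrt{n}$ separates the regime where posterior-mean centering is asymptotically equivalent to MLE-centering from the regime where it is not. The only care required is in confirming that the companion-sequence framework of Theorem \ref{thm:BayesEstimator} transfers cleanly, which it does because $O_{f_{0,n}}(1/(\alpha_n n))$ for $\hat{\theta}^{\text{B}} - \hat{\theta}$ depends on $\alpha_n$, not $\hat{\alpha}_n$, once the ratio $\hat{\alpha}_n/\alpha_n \to 1$ has been used inside the theorem's proof.
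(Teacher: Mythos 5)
Your proposal is correct and matches the paper's own argument: the paper proves this corollary in the paragraph immediately preceding the statement via the identical decomposition $\sqrt{n}(\hat{\theta}^{\text{B}} - \theta^*) = \sqrt{n}(\hat{\theta}^{\text{B}} - \hat{\theta}) + \sqrt{n}(\hat{\theta} - \theta^*)$, the bound $O_{f_{0,n}}(1/(\alpha_n \sqrt{n}))$ from Theorem~\ref{thm:BayesEstimator}, and Assumption \textbf{(A0)} plus Slutsky for the dominating term. Your treatment of the data-dependent case also matches, since the second part of Theorem~\ref{thm:BayesEstimator} already handles the $\hat{\alpha}_n$ case.
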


\begin{Remark}
    If $\log f_n(X^n|\theta^*)$ is differentiable at $\theta^*$, then $\tilde{V}_{\theta^*} = V_{\theta^*}^{-1}M_{\theta^*}V_{\theta^*}^{-1}$, where $V_{\theta^*}$ is as defined in \textbf{(A2)} and  $M_{\theta^*}$ is the limit in $f_{0,n}$-probability as $n\rightarrow\infty$ of $\frac{1}{n}\dot{\ell}_{n,\theta^*}\dot{\ell}_{n,\theta^*}^T$ and $\dot{\ell}_{n,\theta^*} = \nabla_{\theta}\log f_n(X^n|\theta)|_{\theta=\theta^*}$ is the score function of the model $f_n(X^n|\theta)$ at $\theta^*$.
\end{Remark}

\subsubsection{Discussion}\label{sec:postmean_discussion}

Corollary \ref{cor:BvM} shows that the total variation distance between the $\hat\alpha_n$-posterior and a normal distribution centered at the MLE converges to zero in $f_{0,n}$-probability when  $\frac{1}{n}\ll\hat\alpha_n\ll1$ with probability tending to one. This suggests that the posterior mean might be asymptotically equivalent to the MLE in this regime. However,  this is only the case in the more restricted regime where $\frac{1}{\sqrt{n}}\ll\hat\alpha_n\ll1$ with probability tending to one, which is precisely the result stated in Corollary \ref{cor:BayesEstimator}.

To see that Corollary \ref{cor:BayesEstimator} is sharp, it suffices to consider the Bayesian linear regression example of Section \ref{sec:motivating_examples} with a deterministic tempering parameter $\alpha_n$. In that example, the $\alpha_n$-posterior mean is the ridge regression estimator, which is  consistent when $\frac{1}{n}\ll\alpha_n\ll1$ but only equivalent to the MLE when $\frac{1}{\sqrt{n}}\ll\alpha_n\ll1$. We also illustrate this fact numerically in an  example that cannot be worked out in closed form. We consider a well-specified Bayesian logistic regression model with a  Gaussian prior with mean zero and standard deviation 100 on the regression coefficients. 
We sample from the $\alpha_n$-posterior under three settings of $\alpha_n$: $\alpha_n \asymp n^{-3/4}$, $\alpha_n \asymp n^{-1/2}$, and $\alpha_n \asymp n^{-1/4}$. We note that $\alpha_n \asymp n^{-1/4}$ is compatible with the regime specified by Theorem \ref{thm:BayesEstimator}, while $\alpha_n \asymp n^{-3/4}$ falls outside this regime with $\alpha_n \asymp n^{-1/2}$ on the border. We see in the left panel of Figure \ref{fig:post-mean-diff} that the $\sqrt{n}$-scaled difference between the posterior mean and the MLE diverges when $\alpha_n \asymp n^{-3/4}$, vanishes when $\alpha_n \asymp n^{-1/4}$, and is constant order when $\alpha_n \asymp n^{-1/2}$. These behaviors are also reflected in the right panel of Figure \ref{fig:post-mean-diff}, which shows the $\sqrt{n}$-rescaled difference between the $\alpha_n$-posterior mean and the true parameter value.

\begin{figure}
    \centering
    \includegraphics{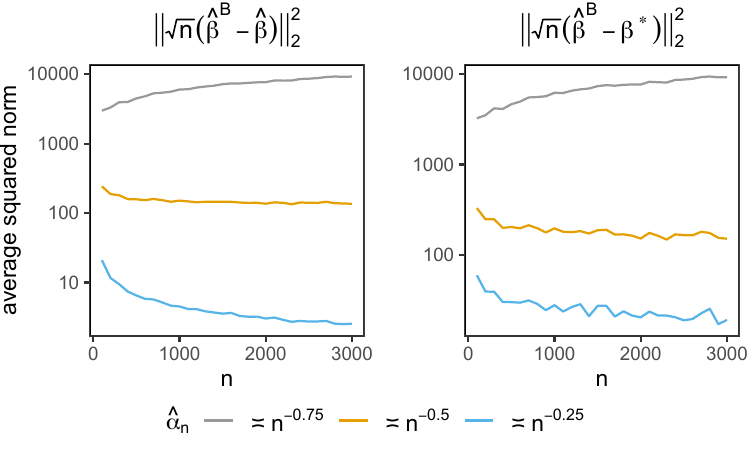}
    \caption{The squared norm of $\sqrt{n}(\hat{\beta}^\text{B} - \hat{\beta})$ (left) and $\sqrt{n}(\hat{\beta}^\text{B} - \beta^*)$ (right) averaged over 100 replications, as a function of sample size. The $\alpha_n$-posterior mean was obtained by samples from the $\alpha_n$-posterior, where $\alpha_n = 0.5n^{-3/4}$ (gray), $\alpha_n = 0.5n^{-1/2}$ (orange), and $\alpha_n = 0.5n^{-1/4}$ (blue). The data was generated from the i.i.d.\ model $\mathbb{P}(Y_i = 1|x_i) = [1 + \exp(-x_i^\top  \beta^*)]^{-1}$, where $\{x_i\} \overset{iid}{\sim}\mathcal{N}(0, I_3)$ and $\beta^* =[1, -0.5, 0.1]$. }
    \label{fig:post-mean-diff}
\end{figure}

Usually, given a moment convergence result like Theorem \ref{thm:moments_alt}, a result along the lines of Theorem \ref{thm:BayesEstimator} is a simple consequence requiring only slight changes in proof technique (see \cite[Ch.\ 6, Theorems 8.2-8.3]{lehmann_asymptotic_1998} for an example for standard posteriors). However, this approach no longer works when $\alpha_n\to0$  as it would only lead to $\sqrt{n}(\hat{\theta}^{\text{B}}-\hat{\theta}) = o_{f_{0,n}}(\frac{1}{\sqrt{\alpha_n}})$, which is too loose.  Indeed, using Theorem \ref{thm:moments_alt} setting $k=1$, it is easy to see that
\begin{align*}
\sqrt{\alpha_nn}\|\hat{\theta}^B-\hat{\theta}\|_1
    &= \sqrt{\alpha_nn}\Big\|\int(\theta-\theta^*)\pi_{n,\alpha_n}(\theta|X^n)d\theta-\int(\theta-\theta^*)\phi\Big(\theta\Big|\hat{\theta},\frac{1}{\alpha_nn}V_{\theta^*}^{-1}\Big)d\theta\Big\|_1 \\
    &\le \int \|\sqrt{\alpha_nn}(\theta-\theta^*)\|_1\Big|\pi_{n,\alpha_n}(\theta|X^n) - \phi\Big(\theta|\hat{\theta},\frac{1}{\alpha_nn}V_{\theta^*}^{-1}\Big)\Big|d\theta = o_{f_{0,n}}(1).
\end{align*}
In contrast, the corresponding result from Corollary \ref{cor:BayesEstimator} is that $\sqrt{n}(\hat{\theta}^{\text{B}}-\hat{\theta}) = O_{f_{0,n}}(\frac{1}{\alpha_n\sqrt{n}})$, which is $o_{f_{0,n}}(1)$ if $\alpha_n\sqrt{n}\to\infty$ (i.e., $\frac{1}{\sqrt{n}}\ll\alpha_n$).

The key ingredient in the proof of Theorem \ref{thm:BayesEstimator} is a new Laplace approximation. This is a natural approach since the coordinates of the posterior mean estimator are defined by 
\begin{equation}\label{eq:post_mean_ratio}
    \begin{split}
        \hat{\theta}^{\text{B}}_j
        &= \frac{\int_{\mathbb{R}^p} \theta_j\pi(\theta)\exp\left(\alpha_n\log f_n(X^n|\theta)\right)d\theta}{\int_{\mathbb{R}^p}\pi(\theta)\exp\left(\alpha_n\log f_n(X^n|\theta)\right)d\theta}, \quad \mbox{ for } j=1,\dots,p,
    \end{split}
\end{equation}
which, for some function $b:\mathbb{R}^p\rightarrow\mathbb{R}$, is a ratio of two integrals of the form 
\begin{equation}\label{eq:laplace_gen}
    \int_{\mathbb{R}^p} b(\theta)\exp\Big(\alpha_n\log f_n(X^n|\theta)\Big)d\theta.
\end{equation}
In particular, \eqref{eq:post_mean_ratio}  uses $b(\theta) = \theta_j\pi(\theta)$ in the numerator and $b(\theta) = \pi(\theta)$ in the denominator. Laplace approximations are asymptotic expansions to Laplace-type integrals such as \eqref{eq:laplace_gen}. In this work, we refer to ``Laplace approximation" as an approximation of Laplace-type integrals \cite{lindley_use_1961, lindley_approximate_1980, kass_approximate_1989, kass_validity_1990} and not a BvM \cite{katsevich_improved_2025} or an approximation of the normalizing constant of the posterior \cite{bhattacharya_nonasymptotic_2020, miller_asymptotic_2021}, although these are all interrelated concepts.

\subsubsection{Laplace approximation}\label{sec:laplace_approx}

We provide a general Laplace approximation for integrals of the form  \eqref{eq:laplace_gen} in Lemma \ref{lem:lap_int_lem}. For this we require the following  regularity condition on the function $b$ in \eqref{eq:laplace_gen}.  

\begin{itemize}
    \item [\textbf{(ALap)}] For a function 
    $b:\mathbb{R}^p\to\mathbb{R}$,  
    assume the following conditions. \textbf{1.\ } $b$ is integrable on $\mathbb{R}^p$.
        \textbf{2.\ } $b(\theta)$ is finite in an open ball around $\theta^*$.
        \textbf{3.\ } There exist a vector $v\in\mathbb{R}^p$ and a constant $M < \infty$ such that for all $\epsilon > 0$ and some $\delta > 0$,
        \begin{equation}\label{eq:b_remainder_control}
            \mathbb{P}_{f_{0,n}}\Big(\sup_{h\in\mathcal{H}_{n,b}} R_b(h) - \frac{M}{\alpha_n n}\|h\|_2^2 > 0\Big) < \epsilon,
        \end{equation}
        where $\mathcal{H}_{n,b}\equiv\{h\in\mathbb{R}^p|\frac{\|h\|_2}{\sqrt{\alpha_nn}} \leq \delta\}$ and 
        $R_b(h) \equiv [b(\hat{\theta} + \frac{h}{\sqrt{\alpha_n n}}) - b(\hat{\theta})] - \frac{v^\top h}{\sqrt{\alpha_n n}}.$
\end{itemize}

\begin{Lemma}\label{lem:lap_int_lem}
    Suppose assumptions \textbf{(A0)}, \textbf{(A1')}, \textbf{(A2')}, and \textbf{(A3')} hold. Given a function $q:\mathbb{R}^p\to\mathbb{R}$, define $b(\theta)\equiv q(\theta)\pi(\theta)$, where this choice of $b$ is assumed to satisfy \textbf{(ALap)}. For any sequence $\alpha_nn\rightarrow\infty$, 
    the following hold for $n$ sufficiently large:
\begin{enumerate}
    \item For any $\alpha_n > 0$, with $H_n$ defined in Assumption \textbf{(A2')}, the following expansion holds 
\begin{equation}
\label{eq:lap_lem_result}
        \int_{\mathbb{R}^p}  b(\theta) e^{\alpha_n  \log f_n(X^n|\theta)  } d\theta =e^{\alpha_n \log f_n(X^n|\hat{\theta}) }|H_n|^{-\frac{1}{2}}\Big(\frac{2\pi}{\alpha_n n}\Big)^{\frac{p}{2}}\Big( b(\hat{\theta}) + O_{f_{0,n}}\Big(\frac{1}{\alpha_n n}\Big)\Big).
    \end{equation}
    \item The expectation of $q(\theta)$  with respect to the $\alpha_n$-posterior satisfies the  approximation
    \begin{equation}\label{eq:cond_exp_ratio}
        \begin{split}
            \int_{\mathbb{R}^p} q(\theta)\pi_{n,\alpha_n}(\theta|X^n)d\theta
            &= q(\hat{\theta}) + O_{f_{0,n}}\Big(\frac{1}{\alpha_n n}\Big).
        \end{split}
    \end{equation}
\end{enumerate}
    Furthermore, suppose for a data-dependent, positive sequence, $\hat{\alpha}_n$, there exists a positive sequence $\alpha_n$ with $\alpha_nn\to\infty$ such that $\frac{\hat{\alpha}_n}{\alpha_n}\to1$ in $f_{0,n}$-probability. Then the result in \eqref{eq:cond_exp_ratio} holds with $\pi_{n,\alpha_n}(\theta|X^n)$ replaced with $\pi_{n,\hat{\alpha}_n}(\theta|X^n)$.
\end{Lemma}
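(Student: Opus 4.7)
My plan is a classical Laplace analysis carried out with care to track the $\alpha_n$-dependence. I would split the integral in \eqref{eq:laplace_gen} as $\int_{B_{\hat\theta}(\delta)} + \int_{B_{\hat\theta}(\delta)^{\mathsf c}}$ with $\delta$ as in \textbf{(A2')}. On the tail, \textbf{(A3')} gives $\log f_n(X^n|\theta)-\log f_n(X^n|\hat\theta) \le -cn$ uniformly, with $f_{0,n}$-probability tending to one, so the tail is bounded by $e^{\alpha_n\log f_n(X^n|\hat\theta)}e^{-c\alpha_n n}\int|b|$. Since $b$ is integrable and $\alpha_n n\to\infty$, this is super-polynomially smaller than the target $e^{\alpha_n\log f_n(X^n|\hat\theta)}|H_n|^{-1/2}(2\pi/\alpha_n n)^{p/2}$ and is absorbed into the $O_{f_{0,n}}(1/(\alpha_n n))$ remainder.

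For the local piece, I would change variables to $h=\sqrt{\alpha_n n}(\theta-\hat\theta)$, producing Jacobian $(\alpha_n n)^{-p/2}$ and restricting $h$ to $\mathcal H_n$. By \textbf{(A2')} the exponent equals $\alpha_n\log f_n(X^n|\hat\theta) - \tfrac12 h^\top H_n h + \tfrac{1}{6\sqrt{\alpha_n n}}\langle S_n, h^{\otimes 3}\rangle + \tilde R_n(h)$ with $|\tilde R_n(h)|\le M\|h\|_2^4/(\alpha_n n)$ on $\mathcal H_n$, while \textbf{(ALap)} gives $b(\hat\theta+h/\sqrt{\alpha_n n}) = b(\hat\theta) + v^\top h/\sqrt{\alpha_n n} + R_b(h)$ with $|R_b(h)|\le M\|h\|_2^2/(\alpha_n n)$. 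I would then expand
\[
\exp\!\Big(\tfrac{1}{6\sqrt{\alpha_n n}}\langle S_n, h^{\otimes 3}\rangle + \tilde R_n(h)\Big) = 1 + \tfrac{1}{6\sqrt{\alpha_n n}}\langle S_n, h^{\otimes 3}\rangle + \tfrac{1}{72\alpha_n n}\langle S_n, h^{\otimes 3}\rangle^2 + \tilde R_n(h) + (\text{h.o.t.}),
\]
integrate term-by-term against the Gaussian kernel $e^{-\tfrac12 h^\top H_n h}$, and extend the domain of integration from $\mathcal H_n$ to $\mathbb R^p$ (the Gaussian tails outside $\mathcal H_n$ decay super-polynomially in $\alpha_n n$). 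The leading pairing is $b(\hat\theta)(2\pi)^{p/2}|H_n|^{-1/2}$; odd monomials in $h$—in particular $b(\hat\theta)\langle S_n,h^{\otimes 3}\rangle$ and $v^\top h$—vanish by Gaussian symmetry; and the remaining pairings ($b(\hat\theta)$ against $\tilde R_n$ and against the cubic-squared term, $v^\top h/\sqrt{\alpha_n n}$ against the cubic, and $R_b$ against $1$) each contribute $O_{f_{0,n}}(1/(\alpha_n n))$ upon applying the quartic/quadratic remainder bounds together with boundedness of Gaussian moments of $\|h\|_2$. This yields \eqref{eq:lap_lem_result}.

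Part 2 follows by writing $\int q(\theta)\pi_{n,\alpha_n}(\theta|X^n)d\theta$ as the ratio of \eqref{eq:lap_lem_result} applied with $b(\theta)=q(\theta)\pi(\theta)$ in the numerator and $b(\theta)=\pi(\theta)$ in the denominator. The common prefactor $e^{\alpha_n\log f_n(X^n|\hat\theta)}|H_n|^{-1/2}(2\pi/\alpha_n n)^{p/2}$ cancels, and \textbf{(A1')} together with $\hat\theta\to\theta^*$ in $f_{0,n}$-probability guarantees $\pi(\hat\theta)$ is bounded away from zero with probability tending to one, so simple algebra gives $q(\hat\theta)+O_{f_{0,n}}(1/(\alpha_n n))$. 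For the data-dependent extension I would restrict to $\mathcal E_n^{\varepsilon}=\{|\hat\alpha_n/\alpha_n-1|<\varepsilon\}$, which has $f_{0,n}$-probability tending to one. On $\mathcal E_n^{\varepsilon}$, $\sqrt{\hat\alpha_n n}$ is uniformly equivalent to $\sqrt{\alpha_n n}$, so the expansions in \textbf{(A2')} and \textbf{(ALap)}—stated in terms of the deterministic $\alpha_n$—translate verbatim to $\hat\alpha_n$ up to constants that are absorbed in $O_{f_{0,n}}(1/(\alpha_n n))=O_{f_{0,n}}(1/(\hat\alpha_n n))$.

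The main technical obstacle is the cubic contribution $\tfrac{1}{6\sqrt{\alpha_n n}}\langle S_n, h^{\otimes 3}\rangle$, which is an order larger than the $O(1/(\alpha_n n))$ target: its first-order pairing against the Gaussian must vanish exactly by symmetry, and its square—together with $\tilde R_n$, $R_b$, and cross-terms—must be shown to contribute strictly at order $1/(\alpha_n n)$ using uniform control of high-order Gaussian moments of $h$ on $\mathcal H_n$. This is precisely why \textbf{(A2')} is imposed as a cubic expansion with sharp quartic remainder rather than a purely quadratic LAN-type condition.
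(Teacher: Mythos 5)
Your proposal is correct and follows essentially the same Laplace-type argument as the paper: split at $B_{\hat\theta}(\delta)$, control the tail via \textbf{(A3')}, change variables to $h=\sqrt{\alpha_n n}(\theta-\hat\theta)$ on the local piece, expand the exponent via \textbf{(A2')} and $b$ via \textbf{(ALap)}, exploit Gaussian symmetry to kill the odd-order pairings ($v^\top h$ and $\langle S_n,h^{\otimes 3}\rangle$), bound the remaining quadratic/quartic-remainder pairings at $O_{f_{0,n}}(1/(\alpha_n n))$, and deduce Part~2 by cancelling the common prefactor in the ratio of two such expansions. The one place where the paper is more careful is the data-dependent extension: rather than arguing informally that the bounds ``translate verbatim'' on $\{|\hat\alpha_n/\alpha_n-1|<\varepsilon\}$, it rewrites $\pi_{n,\hat\alpha_n}$ as the \emph{deterministic}-$\alpha_n$ posterior for the tilted likelihood $\tilde f_n = f_n^{\hat\alpha_n/\alpha_n}$ and verifies (Proposition~\ref{prop:random_alpha_(A2')(A3')}) that $\tilde f_n$ inherits \textbf{(A0)}, \textbf{(A2')}, and \textbf{(A3')}, which is cleaner because it avoids having to re-derive the $\hat\alpha_n$-scaled domain $\mathcal{H}_n$ and the remainder inequalities for a random tempering parameter.
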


The proof of Lemma \ref{lem:lap_int_lem} can be found in Appendix \ref{app:laplace_lemma_proof}. It follows most closely the arguments advanced in \cite[Theorem 1]{kass_validity_1990} while accounting for the tempering parameter $\alpha_n$ and making fewer differentiability assumptions, which are sufficient for our purposes. Expansions of Laplace-type integrals are found in \cite{erdelyi1956asymptotic, de2014asymptotic, kirwin2010} and are obtained specifically for integrals with respect to Bayesian posteriors in \cite{lindley_use_1961, lindley_approximate_1980, kass_approximate_1989, shun1995laplace, katsevich_laplace_2025}. 
We note that even though \textbf{(ALap)} looks different from the differentiability assumption on $b$ stated in \cite{kass_validity_1990}, both assumptions can be verified with the regularity conditions made in Proposition \ref{prop:prior_suff}.

Finally, the conditions in Lemma \ref{lem:lap_int_lem} are also roughly similar to those in \cite{katsevich_laplace_2025}, as we require smoothness of the prior and log-likelihood up to the order of the expansion with bounded derivatives (Propositions \ref{prop:prior_suff} and \ref{prop:likelihood_suff}) and control of the growth of both outside a neighborhood of $\hat{\theta}$ to ensure integrability of the left hand side of \eqref{eq:lap_lem_result} (implicitly Assumption \textbf{(A1')} and Assumption \textbf{(A3')}). However, the conditions in \cite{katsevich_laplace_2025} involve more careful consideration of the dimension dependence and growth rate of the derivatives in order to obtain a tight approximation in the high dimensional setting considered in that work.

We conclude this section with the proposition below (proof in Appendix \ref{app:prior_suff}), which gives sufficient conditions on $q$ that are easy to check and imply Condition 3 of Assumption \textbf{(ALap)}.   Indeed, to apply the results of Lemma \ref{lem:lap_int_lem} in the proof of Theorem \ref{thm:BayesEstimator}, we need to verify Assumption \textbf{(ALap)} for functions of the form $b(\theta) = q(\theta)\pi(\theta)$.

\begin{Proposition}\label{prop:prior_suff}
    Assume \textbf{(A0)} and \textbf{(A1')} hold. Suppose $b:\mathbb{R}^p\to\mathbb{R}$ is of the form $b(\theta)=q(\theta)\pi(\theta)$ for some $q:\mathbb{R}^p\to\mathbb{R}$ and that $q$ is twice continuously differentiable on $B_{\theta^*}(\gamma)$ for some $\gamma > 0$. 
    Then, Condition 3 of Assumption \textbf{(ALap)} holds with $\delta=\gamma/2$, $v=\nabla b(\hat{\theta}) = \pi(\hat\theta)\nabla q(\hat\theta) + q(\hat\theta)\nabla\pi(\hat\theta)$, and $M = p\sup_{\theta\in B_{\theta^*}(\gamma)}  \max_{1 \leq i,j \leq p}| [\nabla^2b(\theta)]_{i,j}|$ for $\alpha_nn$ large enough. 
\end{Proposition}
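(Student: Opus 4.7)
\begin{Proof}[Proof proposal.]
The plan is to verify Condition 3 of Assumption \textbf{(ALap)} by a direct second-order Taylor expansion of $b$ about $\hat{\theta}$, using that $b=q\pi$ inherits $C^2$ regularity from $q$ and $\pi$ in a neighborhood of $\theta^*$, and then using \textbf{(A0)} to move this neighborhood from $\theta^*$ to $\hat{\theta}$ with $f_{0,n}$-probability tending to one.

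First, by Assumption \textbf{(A1')} the prior $\pi$ is twice continuously differentiable on an open ball around $\theta^*$, which we may shrink to $B_{\theta^*}(\gamma)$ without loss of generality. Combined with the hypothesis that $q\in C^2(B_{\theta^*}(\gamma))$, this implies $b=q\pi\in C^2(B_{\theta^*}(\gamma))$, and in particular the entries of $\nabla^2 b$ are bounded on (the closure of) this region, so that the constant $M=p\sup_{\theta\in B_{\theta^*}(\gamma)}\max_{1\le i,j\le p}|[\nabla^2 b(\theta)]_{i,j}|$ is finite.

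Next, fix $\epsilon>0$. By Assumption \textbf{(A0)}, $\hat{\theta}\to\theta^*$ in $f_{0,n}$-probability, so for all $n$ sufficiently large the event $E_n\equiv\{\hat{\theta}\in B_{\theta^*}(\gamma/2)\}$ satisfies $\mathbb{P}_{f_{0,n}}(E_n^{\mathsf{c}})<\epsilon$. On $E_n$, for any $h\in\mathcal{H}_{n,b}$ (so $\|h\|_2/\sqrt{\alpha_n n}\le\delta=\gamma/2$) and any $t\in[0,1]$, the triangle inequality gives
\begin{equation*}
    \Big\|\hat{\theta}+t\frac{h}{\sqrt{\alpha_n n}}-\theta^*\Big\|_2\le \|\hat{\theta}-\theta^*\|_2+\frac{\|h\|_2}{\sqrt{\alpha_n n}}<\gamma/2+\gamma/2=\gamma,
\end{equation*}
so the entire line segment between $\hat{\theta}$ and $\hat{\theta}+h/\sqrt{\alpha_n n}$ lies in $B_{\theta^*}(\gamma)$, where $b$ is $C^2$.

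Taking $v=\nabla b(\hat{\theta})$ as prescribed and applying Taylor's theorem with the integral form of the remainder on $E_n$, we obtain
\begin{equation*}
    R_b(h)=\frac{1}{\alpha_n n}\int_0^1 (1-t)\, h^\top\nabla^2 b\!\Big(\hat{\theta}+t\frac{h}{\sqrt{\alpha_n n}}\Big)h\, dt.
\end{equation*}
Using the elementary bound $|h^\top A h|\le\max_{i,j}|A_{i,j}|\,\|h\|_1^2\le p\max_{i,j}|A_{i,j}|\,\|h\|_2^2$ and the fact that each $\nabla^2 b(\hat{\theta}+th/\sqrt{\alpha_n n})$ has entries bounded by $\sup_{\theta\in B_{\theta^*}(\gamma)}\max_{i,j}|[\nabla^2 b(\theta)]_{i,j}|=M/p$, we obtain on $E_n$,
\begin{equation*}
    \sup_{h\in\mathcal{H}_{n,b}}R_b(h)\le \frac{1}{\alpha_n n}\cdot\Big(\int_0^1(1-t)\,dt\Big)\cdot M\,\|h\|_2^2=\frac{M}{2\alpha_n n}\|h\|_2^2\le\frac{M}{\alpha_n n}\|h\|_2^2.
\end{equation*}
Hence for $n$ large enough, $\mathbb{P}_{f_{0,n}}(\sup_{h\in\mathcal{H}_{n,b}}R_b(h)-M\|h\|_2^2/(\alpha_n n)>0)\le\mathbb{P}_{f_{0,n}}(E_n^{\mathsf{c}})<\epsilon$, which is exactly Condition 3 of \textbf{(ALap)}. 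The main (only) subtlety is ensuring the Taylor expansion can be carried out on an event of probability tending to one; this is handled by shifting the $C^2$-neighborhood from $\theta^*$ to $\hat{\theta}$ via \textbf{(A0)} and the triangle inequality, and the rest is routine calculation.
\end{Proof}
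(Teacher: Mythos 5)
Your proof is correct and takes essentially the same route as the paper's: a first-order Taylor expansion of $b$ about $\hat{\theta}$ with integral remainder, identification of $R_b(h)$ with that remainder, use of \textbf{(A0)} to place $\hat{\theta}$ in $B_{\theta^*}(\gamma/2)$ with high probability so the segment $\hat{\theta}+th/\sqrt{\alpha_n n}$ stays in $B_{\theta^*}(\gamma)$, and the bound $|h^\top A h|\le p\max_{i,j}|A_{i,j}|\|h\|_2^2$ (the paper packages this as Lemma~\ref{lem:l1_l2_tensor_bound}) to bound the remainder by $\tfrac{M}{2\alpha_n n}\|h\|_2^2$. The only cosmetic difference is that the paper tracks two nested events $\mathcal{E}_1,\mathcal{E}_2$ while you argue directly via the triangle inequality; and note your ``shrink to $B_{\theta^*}(\gamma)$ WLOG'' should really read ``replace $\gamma$ by the smaller of $\gamma$ and the (A1') radius,'' a harmless imprecision that the paper shares.
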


\subsection{BvM-type theorem for $\hat{\alpha}_n^{\text{mix}}$-posterior}\label{sec:alpha_mix}

One observation gathered from the empirical studies in Section~\ref{sec:motivating_examples} is that, in some settings, the limiting distributions of $\alpha$'s selected in data-driven ways will have a mixed distribution, characterized by a point mass at infinity with the remaining probability mass converging to zero slower than $1/n$ (see Figure \ref{fig:lim_dist_mix}, Table \ref{tab:conv_rates}, and Figure \ref{fig:conv-rate} in Appendix \ref{sec:alpha_lambda}). Our aim is to formalize the limiting behavior of the resulting posterior, which we will refer to as the $\hat{\alpha}_n^{\text{mix}}$-posterior, in this section.  The object of interest is  $ \pi_{n,\hat{\alpha}_n^{\text{mix}}}(\theta|X^n)$, where letting $q_n \equiv q + o_{f_{0,n}}(1)$ for some $q \in [0,1]$, we  write 
\begin{equation}
\label{eq:mix}
    \pi_{n,\hat{\alpha}_n^{\text{mix}}}(\theta|X^n) = q_n\pi_{n,\hat{\alpha}_n}(\theta|X^n) + (1-q_n)\pi_{n,\hat{\gamma}_n}(\theta|X^n),
\end{equation}
where for all $\epsilon > 0$, for all $B > 0$, and a sequence $\frac{1}{n}\ll\alpha_n\ll1$,
\begin{equation}
\label{eq:mix_alpha}
\mathbb{P}_{f_{0,n}}\Big(1 - \epsilon < \frac{\hat{\alpha}_n}{\alpha_n} < 1 + \epsilon\Big) \rightarrow 1 \quad \text{ and } \quad \mathbb{P}_{f_{0,n}}\big(\hat{\gamma}_n > B\big) \to 1.
\end{equation}
Essentially what is being studied in \eqref{eq:mix}, is a mixture posterior distribution $\pi_{n,\hat{\alpha}_n^{\text{mix}}}(\theta|X^n)$, where the tempering $\alpha$ is a point mass at infinity with probability $1-q_n$ and is shrinking slowly enough with probability $q_n$. 
In what follows, we investigate the limiting distribution of the mixed posterior, $\pi_{n,\hat{\alpha}_n^{\text{mix}}}(\theta|X^n)$.

We first need to establish a key intermediate result formalizing the behavior of the $\infty$-posterior defined as $\pi_{n,\infty}(\theta|X^n)\equiv\lim_{n \rightarrow\infty}\pi_{n, \hat{\gamma}_n}(\theta|X^n)$ where $\hat{\gamma}_n$ diverges in probability. The limiting distribution of the  tempered posterior given by \cite[Theorem 1]{avella_medina_robustness_2022} suggests that when driving the tempering parameter to infinity, the resulting $\infty$-posterior should converge to a mass point at $\hat{\theta}$, which we denote by $\delta_{\hat{\theta}}$. Similar results have been claimed informally in the literature, e.g.\ in \cite{mclatchie_predictive_2025}, or established for Gibbs posteriors in terms of weak convergence \cite{hwang_laplaces_1980}. This suggests that the limiting distribution of the mixed posterior $\pi_{n,\hat{\alpha}_n^{\text{mix}}}(\theta|X^n)$ from \eqref{eq:mix} will also be a mixture distribution between a point mass at the MLE with probability $1-q$ and a limiting Gaussian (characterized by the BvM results from Section~\ref{sec:BvM+moments}; see Corollary \ref{cor:BvM}) with probability $q$. This result is formalized in Theorem~\ref{thm:BvM_mix_data-dep} below.

Unlike the convergence results of the previous sections, if $\pi_{n,\hat{\gamma}_n}(\theta|X^n)$  is absolutely continuous with respect to the Lebesgue measure for each $n$, then convergence in total variation cannot be achieved. Indeed, in this case $d_{\text{TV}}(\pi_{n,\hat{\gamma}_n}(\theta|X^n), \delta_{\hat{\theta}})=1$ for each fixed $n$. Although convergence of the $\infty$-posterior to $\delta_{\hat{\theta}}$ does not hold in total variation distance, it does hold in $p$-Wasserstein distance. We formalize this in the result below.
\begin{Proposition}\label{prop:alpha_inf}
Assume \textbf{(A0)}, \textbf{(A1)}, and  \textbf{(A3')} hold. Moreover, assume that $(i)$ $f_n(X^n|\theta)$ is continuous at $\hat\theta$ and $(ii)$ $\int \|\theta\|^p \,\pi(\theta) d\theta < \infty$. Then, for any $t > 0$ and any deterministic sequence $\gamma_n \rightarrow \infty$ as $n \rightarrow \infty$, in $f_{0,n}$-probability,
\begin{equation}
\label{eq:conv_1}
    \textrm{d}_{\textrm{W}_p} \big(\pi_{n,\gamma_n},\delta_{\hat\theta}\big) \to 0.
\end{equation}
Furthermore, if a random sequence $\hat\gamma_n$ is growing (i.e., it satisfies \eqref{eq:mix_alpha}), then the result in \eqref{eq:conv_1} remains true with $\hat{\gamma}_n$ replacing $\gamma_n$.
\end{Proposition}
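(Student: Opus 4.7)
First, observe that since $\delta_{\hat\theta}$ is a point mass, the $p$-Wasserstein distance simplifies to $d_{\textrm{W}_p}(\mu,\delta_{\hat\theta})^p = \int_{\mathbb{R}^p} \|\theta - \hat\theta\|_2^p \, d\mu(\theta)$ for any probability measure $\mu$ with a finite $p$-th moment. Thus it suffices to show that
\begin{equation*}
I_n \equiv \int_{\mathbb{R}^p} \|\theta - \hat\theta\|_2^p \, \pi_{n,\gamma_n}(\theta|X^n) \, d\theta \to 0
\end{equation*}
in $f_{0,n}$-probability. I would proceed by a posterior-concentration argument, splitting $I_n$ at a small ball $B_{\hat\theta}(\delta)$.

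For arbitrary $\delta > 0$, the contribution from $B_{\hat\theta}(\delta)$ to $I_n$ is trivially at most $\delta^p$, so only the contribution $I_n^{(2)}(\delta)$ from $B_{\hat\theta}(\delta)^c$ needs to be controlled. Writing out the $\gamma_n$-posterior and dividing numerator and denominator by $f_n(X^n|\hat\theta)^{\gamma_n}$ gives
\begin{equation*}
I_n^{(2)}(\delta) = \frac{\int_{B_{\hat\theta}(\delta)^c} \|\theta - \hat\theta\|_2^p \, \rho_n(\theta)^{\gamma_n} \pi(\theta) \, d\theta}{\int_{\mathbb{R}^p} \rho_n(\theta)^{\gamma_n} \pi(\theta) \, d\theta},
\end{equation*}
where $\rho_n(\theta) \equiv f_n(X^n|\theta)/f_n(X^n|\hat\theta) \in [0,1]$ since $\hat\theta$ is the MLE.

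For the numerator, Assumption \textbf{(A3')} delivers $\rho_n(\theta) \leq e^{-cn}$ uniformly on $B_{\hat\theta}(\delta)^c$ with $f_{0,n}$-probability approaching one, for some $c = c(\delta) > 0$; combined with the moment assumption $\int \|\theta\|^p \pi(\theta)\, d\theta < \infty$ and $\hat\theta = O_{f_{0,n}}(1)$ from \textbf{(A0)}, the numerator is bounded by $e^{-cn\gamma_n} \cdot O_{f_{0,n}}(1)$. For the denominator, continuity of $f_n(X^n|\cdot)$ at $\hat\theta$ implies that for any $\eta \in (0,1)$ there exists $\epsilon_n > 0$ with $\rho_n(\theta) \geq 1 - \eta$ on $B_{\hat\theta}(\epsilon_n)$; using \textbf{(A1)} together with $\hat\theta \to \theta^*$, the prior mass of $B_{\hat\theta}(\epsilon_n)$ is bounded below by a positive constant times $\epsilon_n^p$ for $n$ large. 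Balancing the decay rates by taking $\eta_n = 1 - e^{-cn/2}$, so that $(1-\eta_n)^{\gamma_n} = e^{-cn\gamma_n/2}$, yields $I_n^{(2)}(\delta) \lesssim e^{-cn\gamma_n/2}/\epsilon_n^p$ in probability. This vanishes provided $\epsilon_n$ does not shrink faster than polynomially in $n$. The main technical obstacle is this quantitative control of $\epsilon_n$: raw continuity is insufficient, but under the usual local smoothness of $\log f_n$ near the MLE (a quadratic decay on scale $n^{-1/2}$), one obtains $\epsilon_n$ at worst of order $n^{-1/2}$, which is more than enough. Sending $\delta \to 0$ then concludes the deterministic case.

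For the random $\hat\gamma_n$ satisfying \eqref{eq:mix_alpha}, I would exploit that the bound just derived is monotonically decreasing in $\gamma_n$: on the event $\{\hat\gamma_n \geq B\}$ one has $I_n^{(2)}(\hat\gamma_n) \leq I_n^{(2)}(B)$. Given any threshold $\varepsilon > 0$, choose $B$ large so that $\mathbb{P}_{f_{0,n}}(\hat\gamma_n < B) \to 0$, and then apply the deterministic result at $\gamma_n = B$ (letting $n \to \infty$) to obtain $I_n^{(2)}(B) \to 0$. A standard event decomposition combining these two facts gives $d_{\textrm{W}_p}(\pi_{n,\hat\gamma_n}, \delta_{\hat\theta}) \to 0$ in $f_{0,n}$-probability, completing the proof.
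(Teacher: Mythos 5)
Your overall strategy is the paper's: reduce $d_{\textrm{W}_p}(\pi_{n,\gamma_n},\delta_{\hat\theta})^p$ to the posterior $p$-th moment of $\|\theta-\hat\theta\|_2$, split at a small ball around $\hat\theta$, bound the tail as a ratio of integrals, use \textbf{(A3')} for the numerator together with the prior moment condition and $\hat\theta=O_{f_{0,n}}(1)$, and use \textbf{(A1)} plus $\hat\theta\to\theta^*$ to get positive prior mass near $\hat\theta$ for the denominator. Your treatment of the random $\hat\gamma_n$ via the event $\{\hat\gamma_n\ge B\}$ is also essentially the paper's (which takes a supremum of the bound over $\gamma\in[N_\gamma,\infty)$); just be careful to phrase the monotonicity claim about the derived upper bound rather than about $I_n^{(2)}(\gamma)$ itself, since monotonicity of the posterior tail expectation in the temperature is not automatic.

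The genuine gap is in your lower bound on the denominator. By balancing exponential rates — taking $\eta_n=1-e^{-cn/2}$ so the denominator contributes $e^{-cn\gamma_n/2}\epsilon_n^p$ — you force yourself to exhibit a ball $B_{\hat\theta}(\epsilon_n)$ on which $\rho_n\ge e^{-cn/2}$ with $\epsilon_n$ shrinking at most polynomially. As you concede, the only local regularity the proposition assumes is raw continuity of $f_n(X^n|\cdot)$ at $\hat\theta$, which gives no such quantitative control; the "usual local smoothness of $\log f_n$" you then invoke is a rate-$n$ quadratic bound of the type in Condition~2 or \textbf{(A2')}, neither of which is a hypothesis here. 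So as written the proof does not go through under the stated assumptions. The paper sidesteps this entirely by never balancing rates in $n\gamma_n$: it weakens \textbf{(A3')} to $\sup_{B_{\hat\theta}(r)^{\mathsf c}}f_n\le\rho(r)f_{\max}$ with a \emph{fixed} constant $\rho(r)<1$, lower-bounds the denominator by $(c(r_0)f_{\max})^{\gamma_n}\,\pi(\bar B_{\hat\theta}(r_0))$ on a \emph{fixed} small ball with $c(r_0)\to1$ as $r_0\to0$ coming from continuity alone, chooses $r_0$ so that $\rho(r)<c(r_0)$, and concludes because $(\rho(r)/c(r_0))^{\gamma_n}\to0$ using only $\gamma_n\to\infty$ — no rate, no smoothness. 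The fix for your argument is the same: take $\eta$ fixed (so the denominator is at least $(1-\eta)^{\gamma_n}$ times a fixed prior mass) and keep only a fixed base $\rho<1$ in the numerator, rather than the full $e^{-cn}$.
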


The proof of Proposition \ref{prop:alpha_inf} is in Appendix \ref{app:BvM_inf}. 
We now state our main result regarding the limiting distribution of the  $\hat{\alpha}_n^{\text{mix}}$-posterior and the proof is in Appendix \ref{app:BvM_mix_data-dep}.

\begin{Theorem}\label{thm:BvM_mix_data-dep}
    Consider the mixed posterior $\pi_{n,\hat{\alpha}_n^{\text{mix}}}(\theta|X^n)$ described in \eqref{eq:mix} and \eqref{eq:mix_alpha}. Assume the conditions of Theorem \ref{thm:moments_alt} (such that the result in \eqref{eq:moment_result} holds with $\pi_{n,\alpha_n}(\theta|X^n)$ replaced with $\pi_{n,\hat{\alpha}_n}(\theta|X^n)$) and Proposition \ref{prop:alpha_inf} hold. Assume further that $\int \|\theta\|^m \,\pi(\theta) d\theta < \infty$ for some $m > p$.   
    Then, letting $\phi_n := \phi(\theta | \hat{\theta}, \frac{1}{\alpha_nn}V_{\theta^*}^{-1})$, we have in $f_{0,n}$-probability 
    \begin{equation}\label{eq:conv_2}
        \textrm{d}_{\textrm{W}_p} \Big(\pi_{n,\hat{\alpha}_n^{\text{mix}}}, \, q  \phi_n +(1-q)\delta_{\hat\theta}\Big)\to 0.
    \end{equation}
\end{Theorem}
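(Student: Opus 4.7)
The plan is to establish \eqref{eq:conv_2} by a triangle-inequality decomposition that separately handles the convergence of the mixture components and of the mixture weights. Introduce the intermediate random measure $\tilde{\nu}_n := q_n \phi_n + (1-q_n)\delta_{\hat\theta}$, whose weights match those of $\pi_{n,\hat{\alpha}_n^{\text{mix}}}$ but whose components match those of the target. Then
\[
d_{\text{W}_p}\bigl(\pi_{n,\hat{\alpha}_n^{\text{mix}}},\, q\phi_n + (1-q)\delta_{\hat\theta}\bigr) \le d_{\text{W}_p}\bigl(\pi_{n,\hat{\alpha}_n^{\text{mix}}},\, \tilde{\nu}_n\bigr) + d_{\text{W}_p}\bigl(\tilde{\nu}_n,\, q\phi_n + (1-q)\delta_{\hat\theta}\bigr).
\]

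For the first term, I would invoke the standard convexity inequality $d_{\text{W}_p}\bigl(\sum_i a_i \mu_i, \sum_i a_i \nu_i\bigr)^p \le \sum_i a_i \, d_{\text{W}_p}(\mu_i, \nu_i)^p$, obtained by randomizing over the optimal couplings of each component pair. This gives
\[
d_{\text{W}_p}(\pi_{n,\hat{\alpha}_n^{\text{mix}}}, \tilde{\nu}_n)^p \le q_n \, d_{\text{W}_p}(\pi_{n,\hat\alpha_n}, \phi_n)^p + (1-q_n)\, d_{\text{W}_p}(\pi_{n,\hat\gamma_n}, \delta_{\hat\theta})^p.
\]
Proposition \ref{prop:alpha_inf} disposes of the second summand directly. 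For the first, I would upgrade Theorem \ref{thm:moments_alt} (a moment-weighted total-variation bound) to a Wasserstein bound via the transport-through-a-reference-point inequality
\[
d_{\text{W}_p}(\mu,\nu)^p \le 2^{p-1} \int \|\theta - \theta_0\|_2^p \,d|\mu - \nu|(\theta),
\]
which follows by coupling $\min(\mu,\nu)$ on the diagonal and routing the Hahn-decomposition excess through $\theta_0$. Choosing $\theta_0 = \hat\theta$ and applying $\|\theta-\hat\theta\|_2^p \le 2^{p-1}(\|\theta-\theta^*\|_2^p + \|\hat\theta-\theta^*\|_2^p)$, Theorem \ref{thm:moments_alt} with $k = p$ yields $o_{f_{0,n}}((\alpha_n n)^{-p/2})$ for the first piece, while the case $k = 0$ combined with $\|\hat\theta - \theta^*\|_2 = O_{f_{0,n}}(n^{-1/2})$ from Assumption \textbf{(A0)} yields $o_{f_{0,n}}(1)\cdot O_{f_{0,n}}(n^{-p/2})$; both are $o_{f_{0,n}}(1)$ since $\alpha_n n \to \infty$.

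For the second term the components are shared but the weights differ. Writing (WLOG) $q_n \ge q$, both mixtures agree on $q\phi_n + (1-q_n)\delta_{\hat\theta}$, which I would couple on the diagonal at zero cost; the remaining mass of size $|q_n - q|$ is $\phi_n$ on one side and $\delta_{\hat\theta}$ on the other, which admits the unique coupling sending every point to $\hat\theta$. This yields
\[
d_{\text{W}_p}(\tilde{\nu}_n, q\phi_n + (1-q)\delta_{\hat\theta})^p \le |q_n - q|\, d_{\text{W}_p}(\phi_n, \delta_{\hat\theta})^p = |q_n - q| \int \|\theta - \hat\theta\|_2^p\, d\phi_n(\theta).
\]
The integral is the $p$-th absolute moment of a mean-zero Gaussian with covariance $(\alpha_n n)^{-1} V_{\theta^*}^{-1}$, hence $O((\alpha_n n)^{-p/2})$, and $|q_n - q| = o_{f_{0,n}}(1)$ by assumption, so this term vanishes as well.

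The main technical obstacle is ensuring Theorem \ref{thm:moments_alt} actually delivers the moment-weighted convergence at order $k = p$ for the random exponent $\hat\alpha_n$. This is precisely where the hypothesis $\int \|\theta\|_2^m \pi(\theta)\,d\theta < \infty$ for some $m > p$ enters: it provides the slack needed to verify Assumption \textbf{(A3)} at order $k_0 = p$ (via the sufficient-condition machinery in Appendix \ref{app:post_conc_suff}), so that the rescaled $\hat\alpha_n$-posterior $p$-moments remain bounded in $f_{0,n}$-probability and the dominated-convergence-style passage from the fixed-sequence result of Theorem \ref{thm:moments_alt} to the data-dependent $\hat\alpha_n$-version remains uniform enough to be useful here. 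The rest is routine bookkeeping of the Wasserstein convexity and reference-point-routing bounds described above.
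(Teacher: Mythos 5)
Your proof is correct and takes a genuinely different route from the paper's. Both use a triangle inequality through an intermediate mixture, but yours is the mirror image: your $\tilde\nu_n$ carries the original weights $q_n$ with the target components $\phi_n,\delta_{\hat\theta}$, while the paper's intermediate $q\pi_{n,\hat\alpha_n}+(1-q)\pi_{n,\hat\gamma_n}$ carries the target weights with the original components. The more substantial difference is the machinery: the paper controls each leg by separately bounding total variation and $m$-th moments and then invoking a TV-to-$W_p$ interpolation (Lemma \ref{lem:tv-wp}/Corollary \ref{cor:random}), which is exactly what forces the extra hypothesis $\int\|\theta\|^m\pi\,d\theta<\infty$ for some $m>p$. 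You instead use the elementary reference-point bound $d_{W_p}(\mu,\nu)^p\le 2^{p-1}\int\|\theta-\theta_0\|_2^p\,d|\mu-\nu|$, which feeds directly on the moment-weighted TV conclusion of Theorem \ref{thm:moments_alt} at $k=0$ and $k=p$, plus an explicit coupling of the shared mass for the weight-shift leg. This is cleaner and, notably, does not use the $m>p$ prior-moment hypothesis at all --- you effectively prove a slightly stronger statement. One misattribution in your closing paragraph: the $m>p$ condition is not what underwrites Assumption \textbf{(A3)} at order $k_0=p$. Proposition \ref{prop:A3_suff} requires only a bounded prior and Conditions \ref{condition1}--\ref{condition2}, and \textbf{(A3)} already asserts boundedness of \emph{all} rescaled posterior moments; nor does the $m>p$ condition play any role in passing Theorem \ref{thm:moments_alt} from $\alpha_n$ to $\hat\alpha_n$. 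In the paper's proof it is used solely to apply Proposition \ref{prop:alpha_inf} at order $m$ (so that $d_{\mathrm{W}_m}(\pi_{n,\hat\gamma_n},\delta_{\hat\theta})\to 0$, feeding the Wasserstein interpolation in Corollary \ref{cor:random}), a step your argument bypasses entirely.
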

\section{Discussion}\label{sec:discussion}
The utility of posterior tempering depends on optimally tuning the tempering parameter, $\alpha$. While practical recommendations have been suggested for tuning $\hat{\alpha}_n$ from the data, no statistical guarantees exist to guide inference for the resulting tempered $\hat{\alpha}_n$-posterior. In this work, we prove (i) consistency of the $\hat{\alpha}_n$-posterior moments (Theorem \ref{thm:moments_alt}) (ii) asymptotic normality of the $\hat{\alpha}_n$-posterior mean (Theorem \ref{thm:BayesEstimator}/Corollary \ref{cor:BayesEstimator}) and (iii) a BvM-type theorem for the mixed $\hat{\alpha}_n$-posterior (Theorem \ref{thm:BvM_mix_data-dep}) in asymptotic regimes of $\hat{\alpha}_n$ that had not been considered in previous theoretical analyses of tempered posteriors.

These novel regimes --  vanishing random sequences, diverging random sequences, and mixture distributions — are validated by the numerical experiments we conduct on simulated and real-world data. By establishing guarantees in these regimes, we bridge the gap between theory and practice. Our results reveal that $\hat{\alpha}_n$-posteriors behave differently than $\alpha$-posteriors with a fixed, nonrandom $\alpha$. For any fixed $\alpha > 0$, the $\alpha$-posterior is asymptotically normal \cite{avella_medina_robustness_2022}. In contrast, there are regimes of $\hat{\alpha}_n$ that arise empirically where the $\hat{\alpha}_n$-posterior is not asymptotically normal (e.g., Theorem \ref{thm:BvM_mix_data-dep}, Proposition \ref{prop:counter_example_exp_family}). Our results in these regimes inform the type of inferences that can be made from such $\hat{\alpha}_n$-posteriors.

While our asymptotic guarantees hold for tempered posteriors, we believe our proof techniques can be used to extend our results to modern posterior constructions like generalized posteriors or variational approximations of $\hat{\alpha}_n$-posteriors. The latter extension would be particularly relevant, as variational inference approximates $\hat{\alpha}_n$-posteriors that cannot be computed in closed form.

Our work provides the foundation for several interesting future directions. Our empirical work on a Bayesian linear regression model revealed that Bayesian cross-validation selects a tempering parameter that has a mixture distribution, a finding that was also reported in \cite{mclatchie_predictive_2025} for a normal location model. One line of work could be to theoretically understand why this mixture distribution arises, which may reveal other statistical models where cross-validation is unstable. Another direction is to generalize our results about the $\hat\alpha_n$-posterior mean to point estimators that are risk function minimizers (e.g., the $\hat\alpha_n$-posterior median). The Laplace approximation that we developed in Lemma \ref{lem:lap_int_lem} could be utilized for this purpose. Finally, our results hold for fixed-dimensional parametric models. It would be interesting to investigate the asymptotic behavior of $\hat\alpha_n$-posteriors in high-dimensional parametric settings where the parameter dimension increases with the sample size, or in semiparametric settings.
\end{doublespace}

\printbibliography

\newpage
\appendix

\section{Proofs of main results}\label{app:main_results}

\subsection{Proof of Theorem \ref{thm:moments_alt}}\label{app:main_results_thm2}
\begin{proof}
Our proof approach follows closely the proof of \cite[Theorem 1]{ray_asymptotics_2023}, which adapts arguments from the BvM results of \cite[Theorem 1]{avella_medina_robustness_2022} and \cite[Theorem 2.1]{kleijn_bernstein-von-mises_2012}, to show convergence of the posterior moments instead.
Let $Z$ denote the integral we are trying to control; namely,
\begin{align}
        Z &\equiv \int_{\mathbb{R}^p} \hspace{-4pt}\|\sqrt{\alpha_nn}(\theta-\theta^*)\|_2^k\Big|\pi_{n,\alpha_n}(\theta|X^n) - \phi\Big(\hspace{-1.1pt}\theta\Big|\hat{\theta},\frac{1}{\alpha_nn}V_{\theta^*}^{-1} \hspace{-1.1pt} \Big)\Big|d\theta 
        \label{eq:Z0_def}= \int_{\mathbb{R}^p}\hspace{-4pt} \|h\|_2^k|\pi^{LAN}_{n,\alpha_n}(h) - \phi_n(h)|dh,
\end{align}
where \eqref{eq:Z0_def} follows from the change of variable $h = \sqrt{\alpha_nn}(\theta-\theta^*)$ using the transformed densities 
\begin{equation}\label{eq:scaled_densities}
    \begin{split}
        \pi_{n,\alpha_n}^{\text{LAN}}(h) &\equiv \frac{1}{(\alpha_nn)^{\frac{p}{2}}} \pi_{n,\alpha_n}\Big(\theta^*+\frac{h}{\sqrt{\alpha_nn}}  \Big|  X^n\Big) \, \text{ and } \,
        \phi_n(h) \equiv \frac{1}{(\alpha_nn)^{\frac{p}{2}}}\phi\left(h \big| \sqrt{\alpha_nn}(\hat{\theta}-\theta^*),V_{\theta^*}^{-1}\right).
    \end{split}
\end{equation}
To establish \eqref{eq:moment_result}, it suffices to show that $\mathbb{P}_{f_{0,n}}(Z > t)$ converges to 0 for any $t > 0$.

Let  $\{x\}^+ = \max\{0,x\}$. For vectors $g,h\in \mathbb{R}^p$, define the random variable
\begin{equation}\label{fn+}
    \begin{split}
        & f_n(g,h)=\Big\{1-\frac{\phi_n(h)\,\pi_{n,\alpha_n}^{\text{LAN}}(g|X^n) }{\pi_{n,\alpha_n}^{\text{LAN}}(h|X^n) \,\phi_n(g)}\Big\}^+.
    \end{split}
\end{equation}
For arbitrary $\epsilon > 0$ and $\eta > 0$,  define an event
\begin{equation}\label{eq:events_moments}
    \begin{split}
         \mathcal{A} \equiv \mathcal{A}_n &= \Big\{ \sup_{g,h\in \bar{B}_{0}(r_n)}f_n(g,h) \leq \eta \Big\},
    \end{split}
\end{equation} 
where $r_n$ is the growing sequence in Lemma \ref{lem:f(g,h)} for which there exists a $N_0 \equiv N(\eta,\epsilon)$ such that for $\alpha_nn > N_0$, the random variable $f_n(g,h)$ is well defined with probability at least $1-\frac{\epsilon}{2}$ over $\bar{B}_{0}(r_n)$.

Notice that
\begin{align}\label{Zbound}
    \mathbb{P}_{f_{0,n}}(Z > t) \leq \mathbb{P}_{f_{0,n}}(\{Z > t\}\cap\mathcal{A}) + \mathbb{P}_{f_{0,n}}(\mathcal{A}^\mathsf{c}),
\end{align}
so to complete the proof, we aim to show that the first term in \eqref{Zbound} converges to $0$, since by Lemma \ref{lem:f(g,h)}, we have $\mathbb{P}_{f_{0,n}}(\mathcal{A}^\mathsf{c}) <  \epsilon$ for  all $\alpha_nn > N_0$.
In what follows, we focus on the first term of \eqref{Zbound}. Throughout the proof, for constants $0 < M_1 < \infty$ and $0 < M_2 < \infty$ to be specified later, we let
\begin{equation}\label{eq:Lem3_eta_ep}
    \eta \equiv \eta(t, M_1, M_2) = \frac{t}{8} \min\Big\{ \frac{1}{M_1}, \frac{1}{M_2}\Big\}.
\end{equation}
Notice $\eta$ is positive for any $t > 0$, allowing us to appeal to Lemma \ref{lem:f(g,h)}, and we will show that this choice of $\eta$ results in $\mathbb{P}_{f_{0,n}}(\{Z > t\}\cap\mathcal{A}) \to 0$ for any $t > 0$.

By Lemma \ref{lem:f(g,h)}, the density $\pi_{n,\alpha_n}^{\text{LAN}}$ is positive uniformly on $\bar{B}_0(r_n)$ with probability at least $1-\frac{\epsilon}{2}$ whenever $\alpha_nn > N_0$. Furthermore, $\phi_n(h)$ is positive everywhere, as it is a Gaussian density. Hence, whenever $\alpha_nn \geq N_0$, we can apply Lemma~\ref{lem:TVbound} with $s(h) = \|h\|_2^k$ and $K =  \bar{B}_0(r_n)$ -- which is compact for any fixed $\alpha_nn$ -- to obtain
\begin{align}
    Z
    \nonumber&= \int_{\mathbb{R}^p}\|h\|_2^k\left|\pi_{n,\alpha_n}^{\text{LAN}}(h|X^n) - \phi_n(h)\right|dh \\
    \label{lem3bd}&\leq \Big[ \sup_{g',h'\in \bar{B}_{0}(r_n)}f_n(g',h')\Big]\int_{\mathbb{R}^p}\|h\|_2^k\pi_{n,\alpha_n}^{\text{LAN}}(h|X^n)dh 
    + \Big[ \sup_{g',h'\in \bar{B}_{0}(r_n)}f_n(g',h')\Big]\int_{\mathbb{R}^p}\|h\|_2^k\phi_n(h)dh \\
    \nonumber& \qquad + \int_{\bar{B}_{0}(r_n)^\mathsf{c}}\|h\|_2^k \pi_{n,\alpha_n}^{\text{LAN}}(h|X^n)dh 
    + \int_{\bar{B}_{0}(r_n)^\mathsf{c}}\|h\|_2^k\phi_n(h)dh.
\end{align}
Label the four terms in \eqref{lem3bd} as $T_1$, $T_2$, $T_3$, and $T_4$. By the fact that 
$$\{T_1 + T_2 + T_3 + T_4 > t\} \cap \mathcal{A} \quad  \subseteq \quad \cup_{j=1}^4 \{T_j > \frac{t}{4}\} \cap \mathcal{A} \quad \subseteq \quad \cup_{j=1}^4 \Big\{\{T_j > \frac{t}{4}\} \cap \mathcal{A}\Big\},$$ 
along with a union bound, we find
\begin{align}
    \nonumber \mathbb{P}_{f_{0,n}}(\{Z > t\}\cap\mathcal{A})
    \leq \mathbb{P}_{f_{0,n}}(\{T_1 + T_2 + T_3 + T_4 > t\}\cap\mathcal{A}) 
    &\leq \mathbb{P}_{f_{0,n}}\Big(\cup_{j=1}^4\Big\{\{T_j > \frac{t}{4}\}\cap\mathcal{A}\Big\}\Big) \\
    \label{EZbd}&\leq \sum_{j=1}^4\mathbb{P}_{f_{0,n}}\Big(\{T_j > \frac{t}{4}\}\cap\mathcal{A}\Big).
\end{align}
Let $P_j \equiv \mathbb{P}_{f_{0,n}}(\{T_j > \frac{t}{4}\}\cap\mathcal{A})$. In what follows, we argue that, for any $\epsilon > 0$ and any $t > 0$, each can be  upper bounded by $\epsilon/4$ (possibly for $\alpha_nn$ large enough). \

\textbf{Term $\mathbf{P_1}$ of \eqref{EZbd}.} We have 
\begin{align}\label{eq:tildeT1}
    P_1
    = \mathbb{P}_{f_{0,n}}\Big(\Big\{ \Big[ \sup_{g',h'\in \bar{B}_{0}(r_n)}f_n(g',h')\Big]\int_{\mathbb{R}^p}\|h\|_2^k\pi_{n,\alpha_n}^{\text{LAN}}(h|X^n)dh > \frac{t}{4}\Big\}\cap\mathcal{A}\Big).
\end{align}
Notice that the supremum in \eqref{eq:tildeT1} is bounded by $\frac{t}{8} \min\{ \frac{1}{M_1}, \frac{1}{M_2}\}$ under the event $\mathcal{A}$. Hence, 
\begin{align*}\label{eq:tildeT1bd1}
    &P_1
    \leq \mathbb{P}_{f_{0,n}}\Big(\Big[ \frac{t}{8} \min\Big\{ \frac{1}{M_1}, \frac{1}{M_2}\Big\}\Big]\int_{\mathbb{R}^p}\|h\|_2^k\pi_{n,\alpha_n}^{\text{LAN}}(h|X^n)dh > \frac{t}{4}\Big) \nonumber \\
    &\leq \mathbb{P}_{f_{0,n}}\hspace{-1pt}\Big(   \int_{\mathbb{R}^p}\hspace{-1pt}\|h\|_2^k\pi_{n,\alpha_n}^{\text{LAN}}(h|X^n)dh > 2\max\{M_1, M_2\}\hspace{-1pt}\Big) \leq \mathbb{P}_{f_{0,n}}\hspace{-1pt}\Big(   \int_{\mathbb{R}^p}\hspace{-1pt}\|h\|_2^k\pi_{n,\alpha_n}^{\text{LAN}}(h|X^n)dh > M_1\hspace{-1pt}\Big),
\end{align*}
as $2\max\{M_1, M_2\} \geq 2M_1 \geq M_1$.
By Assumption \textbf{(A3)}, for all $\epsilon > 0$, there exists a constant $M_1 \equiv M_1(k) < \infty$ and an integer $N_1\equiv N_1(M_1, \epsilon)$ such that for all $\alpha_nn > N_1$, 
\begin{align*}
\mathbb{P}_{f_{0,n}}\Big(\int_{\mathbb{R}^p}\|h\|_2^{k}\pi_{n,\alpha_n}^{\text{LAN}}(h|X^n)dh > M_1\Big) < \frac{\epsilon}{4}.
\end{align*}
Choosing this $M_1$ in our definition of $\eta$ from \eqref{eq:Lem3_eta_ep} gives $P_1 \leq \frac{\epsilon}{4}$
 whenever $\alpha_nn > N_1$.

\textbf{Term $\mathbf{P_2}$ of \eqref{EZbd}.} Recall the definition of $P_2$:
\begin{equation}\label{eq:tildeT2}
    P_2 
    = \mathbb{P}_{f_{0,n}}\Big(\Big\{\Big[ \sup_{g',h'\in \bar{B}_{0}(r_n)}f_n(g',h')\Big]\int_{\mathbb{R}^p}\|h\|_2^k\phi_n(h)dh > \frac{t}{4}\Big\}\cap\mathcal{A}\Big).
\end{equation}
As in our bound for $P_1$, notice that the supremum in \eqref{eq:tildeT2} is bounded by $\frac{t}{8} \min\{ \frac{1}{M_1}, \frac{1}{M_2}\}$ under the event $\mathcal{A}$. Hence, 
$
    P_2
    \leq \mathbb{P}_{f_{0,n}}(\int_{\mathbb{R}^p}\|h\|_2^k\phi_n(h)dh > M_2) \leq \frac{\epsilon}{4},$
where the final inequality follows
by Assumption \textbf{(A0)} and Lemma \ref{lem:gauss_int_bound2},
from which we have that for all $\epsilon > 0$, there exist a constant $M_2 \equiv M_2(k) < \infty$ and an integer $N_2\equiv N_2(M_2, \epsilon)$ such that
\begin{equation}
    \begin{split}\label{eq:tildeT2E2c}
        &\mathbb{P}_{f_{0,n}}\Big(\int_{\mathbb{R}^p} \|h\|_2^k \phi_n(h) dh > M_2\Big) < \frac{\epsilon}{4},
    \end{split}
\end{equation}
for all $\alpha_nn > N_2$.
This is the $M_2$ we use in our definition of $\eta$ from \eqref{eq:Lem3_eta_ep}.

\textbf{Term $\mathbf{P_3}$ of \eqref{EZbd}.} We bound $P_3$, by appealing to Lemma~\ref{MomentKc2}. Under Assumption \textbf{(A3)}, letting $k_0 = k(1+\gamma)$ for arbitrary $\gamma > 0$, we have that $\int_{\mathbb{R}^p} \|h\|_2^{k(1+\gamma)}\pi_{n,\alpha_n}^{\text{LAN}}(h|X^n)dh = O_{f_{0,n}}(1)$. Hence, we can apply Lemma~\ref{MomentKc2} with $f_Z(z)$ being $\pi_{n,\alpha_n}^{\text{LAN}}(z|X^n)$ and conclude that there exists $N_3\equiv  N_3(\gamma, k, t, \epsilon)$ such that for all $\alpha_nn > N_3$,
\begin{equation*} 
\begin{split}
    P_3 &= \mathbb{P}_{f_{0,n}} \Big(\Big\{\int_{\bar{B}_{0}(r_n)^\mathsf{c}} \|h\|_2^k\pi_{n,\alpha_n}^{\text{LAN}}(h|X^n)dh > \frac{t}{4}\Big\}\cap\mathcal{A}\Big) \\
    &\leq \mathbb{P}_{f_{0,n}} \Big(\int_{\bar{B}_{0}(r_n)^\mathsf{c}} \|h\|_2^k\pi_{n,\alpha_n}^{\text{LAN}}(h|X^n)dh > \frac{t}{4}\Big) < \frac{\epsilon}{4}.
    \end{split}
\end{equation*}

\textbf{Term $\mathbf{P_4}$ of \eqref{EZbd}.} 
Recall that
\begin{align}\label{eq:tildeT4}
    P_4
    = \mathbb{P}_{f_{0,n}}\Big(\Big\{\int_{\bar{B}_{0}(r_n)^\mathsf{c}}\|h\|_2^k\phi_n(h)dh > \frac{t}{4}\Big\}\cap\mathcal{A}\Big) \leq \mathbb{P}_{f_{0,n}}\Big(\int_{\bar{B}_{0}(r_n)^\mathsf{c}}\|h\|_2^k\phi_n(h)dh > \frac{t}{4}\Big).
\end{align}
Note by H{\"o}lder's inequality,
\begin{equation}\label{eq:tildeT3bd1}
    \begin{split}
        \int_{\bar{B}_{0}(r_n)^\mathsf{c}}\|h\|_2^k \phi_n(h)dh 
        &= \int_{\mathbb{R}^p} \|h\|_2^k(1-\mathbbm{1}\{\|h\|_2\leq r_n\}) \phi_n(h) dh \\
        &\leq \Big(\int_{\mathbb{R}^p}\|h\|_2^{k(1+\gamma)}\phi_n(h)dh\Big)^{\frac{1}{1+\gamma}}\Big(\int_{\mathbb{R}^p}(1-\mathbbm{1}\{\|h\|_2\leq r_n\})\phi_n(h)dh \Big)^{\frac{\gamma}{1+\gamma}}.
    \end{split}
\end{equation}
To bound $P_4$, define the following event $\mathcal{E}_4 = \{\int_{\mathbb{R}^p} \|h\|_2^{k(1+\gamma)} \phi_n(h) dh < M_4^{1+\gamma}\}$ for some $M_4$ (to be specified later).
By Assumption \textbf{(A0)} and Lemma \ref{lem:gauss_int_bound2} applied with $k=k(1+\gamma)$, for all $\epsilon > 0$, there exist a constant $M_4 \equiv M_4(k, \gamma) < \infty$ and an integer $N_4\equiv N_4(\epsilon, M_4)$ such that for all $\alpha_nn > N_4$,
\begin{align*}
    \mathbb{P}_{f_{0,n}}(\mathcal{E}_4^\mathsf{c}) = \mathbb{P}_{f_{0,n}}\Big(\int_{\mathbb{R}^p} \|h\|_2^{k(1+\gamma)} \phi_n(h) dh > M_4^{1+\gamma}\Big) < \frac{\epsilon}{4}.
\end{align*}
Furthermore, note that because $r_n\rightarrow\infty$ and $\phi_n(h) \leq |V_{\theta^*}|^{1/2}$ for all $h\in\mathbb{R}^p$, we see that $(1-\mathbbm{1}\{\|h\|_2\leq r_n\})\phi_n(h) \rightarrow 0$ for any fixed $h$. Furthermore, $(1-\mathbbm{1}\{\|h\|_2\leq r_n\})\phi_n(h) \leq \phi_n(h)$, which is a proper density; hence, integrable. Thus, by the dominated convergence theorem, there exists $N_5\equiv N_5(t, M_4, \gamma)$ such that for all $\alpha_nn > \max(1, N_5)$,
\begin{align}\label{eq:tildeT3bd2}
    \int_{\mathbb{R}^p} (1-\mathbbm{1}\{\|h\|_2\leq r_n\}) \phi_n(h) dh < \Big(\frac{t}{8}\frac{1}{M_4}\Big)^{\frac{1+\gamma}{\gamma}}.\
\end{align}
Hence, by \eqref{eq:tildeT4}, \eqref{eq:tildeT3bd1}, and \eqref{eq:tildeT3bd2} we can bound $P_4$ as follows: whenever $\alpha_nn > \max(1, N_4, N_5)$,
\begin{align}
    &P_4
    \nonumber\leq \mathbb{P}_{f_{0,n}}\Big(\Big\{\int_{\bar{B}_{0}(r_n)^\mathsf{c}}\|h\|_2^k\phi_n(h)dh > \frac{t}{4}\Big\}\cap\mathcal{E}_4\Big) + \mathbb{P}_{f_{0,n}}(\mathcal{E}_4^\mathsf{c}) \\
    \nonumber&\leq \mathbb{P}_{f_{0,n}}\Big(\Big\{\Big(\int_{\mathbb{R}^p}\|h\|_2^{k(1+\gamma)}\phi_n(h)dh\Big)^{\frac{1}{1+\gamma}}\Big(\int_{\mathbb{R}^p} \hspace{-1.2pt}(1-\mathbbm{1}\{\|h\|_2\leq r_n\})\phi_n(h)dh\Big)^{\frac{\gamma}{1+\gamma}} > \frac{t}{4}\Big\} \cap \mathcal{E}_4\Big) + \frac{\epsilon}{4} \\
    \nonumber&\leq \mathbb{P}_{f_{0,n}}\Big(M_4\Big(\frac{t}{8}\frac{1}{M_4}\Big) > \frac{t}{4}\Big) + \frac{\epsilon}{4} = \frac{\epsilon}{4}
\end{align}

Now that we have bounded terms $P_1 - P_4$, using \eqref{Zbound} we find the result in equation \eqref{eq:moment_result}.

To show that the result in \eqref{eq:moment_result} holds with $\pi_{n,\hat{\alpha}_n}(\theta|X^n)$ replacing $\pi_{n,\alpha_n}(\theta|X^n)$, we rewrite the $\hat{\alpha}_n$-posterior as follows:
\begin{equation*}
    \pi_{n,\hat{\alpha}_n}(\theta|X^n) = \frac{f_n(X^n|\theta)^{\hat{\alpha}_n}\pi(\theta)}{\int f_n(X^n|\theta)^{\hat{\alpha}_n}\pi(\theta)d\theta} 
    = \frac{[f_n(X^n|\theta)^{\frac{\hat{\alpha}_n}{\alpha_n}}]^{\alpha_n}\pi(\theta)}{\int [f_n(X^n|\theta)^{\frac{\hat{\alpha}_n}{\alpha_n}}]^{\alpha_n}\pi(\theta) d\theta} = \frac{\tilde{f}_n(X^n|\theta)^{\alpha_n}\pi(\theta)}{\int \tilde{f}_n(X^n|\theta)^{\alpha_n}\pi(\theta)d\theta},
\end{equation*}
where $\tilde{f}_n(X^n|\theta)^{\alpha_n} = f_n(X^n|\theta)^{\frac{\hat{\alpha}_n}{\alpha_n}}$. Hence, we obtain the desired result by applying Theorem \ref{thm:moments_alt} with $\tilde{f}_n(X^n|\theta)$ and $\pi(\theta)$, from which the following converges to 0 in $f_{0,n}$-probability:
\begin{equation}\label{eq:moment_result_random}
    \int_{\mathbb{R}^p}\|\sqrt{\alpha_nn}(\theta-\theta^*)\|_2^k\left|\frac{\tilde{f}_n(X^n|\theta)^{\alpha_n}\pi(\theta)}{\int \tilde{f}_n(X^n|\theta)^{\alpha_n}\pi(\theta)d\theta} - \phi\Big(\theta\big|\hat{\theta},\frac{1}{\alpha_nn}V_{\theta^*}^{-1}\Big)\right|d\theta.
\end{equation}
To apply Theorem \ref{thm:moments_alt}, we need to verify that $\tilde{f}_n(X^n|\theta)$ and $\pi(\theta)$ satisfy assumptions \textbf{(A0)}--\textbf{(A3)}. Note that \textbf{(A1)} is already assumed to hold.
First notice that the objects $\hat{\theta}$, $\theta^*$, and $V_{\theta^*}$ appear in \eqref{eq:moment_result_random}. Hence, we must show not only that \textbf{(A0)} and \textbf{(A2)} hold for $\tilde{f}_n(X^n|\theta)$, but that they hold with the same $\hat{\theta}$, $\theta^*$, and $V_{\theta^*}$ for which assumptions \textbf{(A0)} and \textbf{(A2)} hold for $f_n(X^n|\theta)$. We verify this in Proposition \ref{prop:random_alpha_(A0)(A2)}. By Proposition \ref{prop:random_alpha(A3)}, Assumption \textbf{(A3)} holds for $\tilde{f}_n(X^n|\theta)$ (since we have assumed $f_n(X^n|\theta)$ and $\pi(\theta)$ satisfy the conditions of Proposition \ref{prop:A3_suff}). 
\end{proof}

\subsection{Proof of Theorem \ref{thm:BayesEstimator}}\label{app:BayesEstimatorProof}
\begin{proof}
Recall that the posterior mean is $\hat{\theta}^{\text{B}} = [\hat{\theta}^{\text{B}}_1,\ldots,\hat{\theta}^{\text{B}}_p]^\top$ where $\hat{\theta}^{\text{B}}_j = \int_{\mathbb{R}^p}\theta_j\pi_{n,\alpha_n}(\theta|X^n)d\theta$.
Suppose, for a moment, that the conditions of Lemma \ref{lem:lap_int_lem} hold (we will verify them later). Then, appealing to the second result of Lemma \ref{lem:lap_int_lem} for each $j=1,\ldots,p$, we have
$
    \hat{\theta}^{\text{B}}_j = \hat{\theta}_j + O_{f_{0,n}}(\frac{1}{\alpha_nn}).$
This implies that there exists a constant $M_j > 0$ and an integer $N_j\equiv N_j(\epsilon,p)$ such that for $\alpha_nn > N_{j}$,
\begin{equation}\label{eq:post_mean_coordinate_diff}
    \mathbb{P}_{f_{0,n}}\Big(\Big|\hat{\theta}^{\text{B}}_j-\hat{\theta}_j\Big| > \frac{M_j}{\alpha_nn}\Big) < \frac{\epsilon}{p}.
\end{equation}
We will use \eqref{eq:post_mean_coordinate_diff} to show that 
\begin{equation}\label{eq:post_mean_diff}
    \hat{\theta}^{\text{B}} - \hat{\theta} = O_{f_{0,n}}\Big(\frac{1}{\alpha_nn}\Big),
\end{equation}
or in other words, there exists a constant $M$ and an integer $N$ such that for $\alpha_nn > N$, we have
$
\mathbb{P}_{f_{0,n}}(\|\hat{\theta}^{\text{B}}-\hat{\theta}\|_2 > \frac{M}{\alpha_nn}) < \epsilon.
$
To show this, notice
$
\|\hat{\theta}^{\text{B}} - \hat{\theta}\|^2_2 = \sum_{j=1}^p(\hat{\theta}^{\text{B}}_j - \hat{\theta}_j)^2 \leq p  \max_{j \in [p]}\{(\hat{\theta}^{\text{B}}_j-\hat{\theta}_j)^2\}.
$
Hence, from a union bound we find
\begin{align}
\nonumber \mathbb{P}_{f_{0,n}}\Big(\|\hat{\theta}^{\text{B}} - \hat{\theta}\|_2 > \frac{M}{\alpha_nn} \Big) &\leq \mathbb{P}_{f_{0,n}}\Big( \max_{j \in [p]}\{(\hat{\theta}^{\text{B}}_j-\hat{\theta}_j)^2\} > \frac{M^2}{p(\alpha_nn)^2}\Big) \\
&= \nonumber \mathbb{P}_{f_{0,n}}\Big(\cup_{j=1}^p \hspace{-1pt}\Big\{(\hat{\theta}^{\text{B}}_j-\hat{\theta}_j)^2 > \frac{M^2}{p(\alpha_nn)^2}\Big\} \Big) \\
&\leq  \sum_{j=1}^p \mathbb{P}_{f_{0,n}}\Big(\Big|\hat{\theta}^{\text{B}}_j-\hat{\theta}_j\Big| > \frac{M}{\sqrt{p}\alpha_nn}\Big). \label{eq:union_bd}
\end{align}
Now, let $M = \sqrt{p}\max\{M_1,\ldots,M_p\}$, for the $M_j$ coming from  \eqref{eq:post_mean_coordinate_diff}. 

By \eqref{eq:union_bd} and \eqref{eq:post_mean_coordinate_diff}, for $\alpha_nn > \max(N_1,\ldots,N_p)$, we have 
\begin{equation*}
    \begin{split}
    \mathbb{P}_{f_{0,n}}\Big(\|\hat{\theta}^{\text{B}} - \hat{\theta}\|_2 > \frac{M}{\alpha_nn}\Big) &\leq \sum_{j=1}^p \mathbb{P}_{f_{0,n}}\Big(\Big|\hat{\theta}^{\text{B}}_j-\hat{\theta}_j\Big| > \frac{M}{\sqrt{p}\alpha_nn}\Big) \\
        &= \sum_{j=1}^p\mathbb{P}_{f_{0,n}}\Big(\Big|\hat{\theta}^{\text{B}}_j-\hat{\theta}_j\Big| > \frac{\sqrt{p}\max\{M_1,\ldots,M_p\}}{\sqrt{p}\alpha_nn}\Big) \\
        &\leq \sum_{j=1}^p \mathbb{P}_{f_{0,n}}\Big(\Big|\hat{\theta}^{\text{B}}_j-\hat{\theta}_j\Big| > \frac{M_j}{\alpha_nn}\Big) < \sum_{j=1}^p \frac{\epsilon}{p} = \epsilon.\
        \end{split}
\end{equation*}
It remains to verify the conditions of Lemma \ref{lem:lap_int_lem}. We have  assumed \textbf{(A0)}, \textbf{(A1')}, \textbf{(A2')}, and \textbf{(A3')} to hold, so it remains to verify that Assumption \textbf{(ALap)} holds for $b(\theta) = \theta_j\pi(\theta)$ for each $j=1,\ldots,p$:
\begin{enumerate}
    \item We need to verify $\int_{\mathbb{R}^p}|\theta_j\pi(\theta)|d\theta < \infty$. This follows from Assumption \textbf{(A1')}. Indeed,
    \[\int_{\mathbb{R}^p}|\theta_j\pi(\theta)|d\theta = \int_{\mathbb{R}^p}|\theta_j|\pi(\theta)d\theta \leq \int_{\mathbb{R}^p}\|\theta\|_1\pi(\theta)d\theta < \infty.\] 
    \item We need to verify that there exists a $\delta > 0$ such that $\theta_j\pi(\theta)$ is finite on $B_{\theta^*}(\delta)$. Indeed,
    \[|\theta_j\pi(\theta)|\leq |\theta_j|\pi(\theta) \leq \|\theta\|_1\pi(\theta) \leq \sqrt{p}\|\theta\|_2\pi(\theta) \leq \sqrt{p}(\|\theta-\theta^*\|_2 + \|\theta^*\|_2)\pi(\theta). 
    \] 
    By Assumption \textbf{(A1')}, there exists a $\delta > 0$ such that $\pi(\theta)$ is finite on $B_{\theta^*}(\delta)$; hence, the right side of the above is finite. Indeed,  
    $\|\theta-\theta^*\|_2 < \delta$ on $B_{\theta^*}(\delta)$ and $\|\theta^*\|_2$ is finite for all $\theta^*\in\mathbb{R}^p$.
    \item We verify condition \eqref{eq:b_remainder_control} by appealing to Proposition \ref{prop:prior_suff} with $q(\theta)=\theta_j$. This choice of $q$ is twice differentiable everywhere; hence, Assumption \textbf{(ALap)} holds with the $\delta$ from the above.
\end{enumerate}

By Lemma \ref{lem:lap_int_lem}, the result in \eqref{eq:cond_exp_ratio} holds with $\pi_{n,\hat{\alpha}_n}(\theta|X^n)$ replacing $\pi_{n,\alpha_n}(\theta|X^n)$. Hence, the result in \eqref{eq:diff} also holds with $\pi_{n,\hat{\alpha}_n}(\theta|X^n)$ replacing $\pi_{n,\alpha_n}(\theta|X^n)$, using a similar argument to that just given.
\end{proof}

\subsection{Proof of Laplace approximation, Lemma \ref{lem:lap_int_lem}}\label{app:laplace_lemma_proof}
\begin{proof}
    We will first prove the approximation in \eqref{eq:cond_exp_ratio}, assuming that \eqref{eq:lap_lem_result} holds, and then prove \eqref{eq:lap_lem_result}. We first establish results with $\pi_{n,\alpha_n}$ and then verify them for $\pi_{n,\hat\alpha_n}$. Notice that the expectation in \eqref{eq:cond_exp_ratio} is a ratio of integrals of the form \eqref{eq:lap_lem_result}, since
    \begin{equation}\label{eq:cond_exp_ratio_laplace}
        \int_{\mathbb{R}^p} q(\theta)\pi_{n,\alpha_n}(\theta|X^n)d\theta
        =\frac{\int_{\mathbb{R}^p} q(\theta)\pi(\theta)e^{\alpha_n \log f_n(X^n|\theta)}d\theta}{\int_{\mathbb{R}^p} \pi(\theta)e^{\alpha_n \log f_n(X^n|\theta)}d\theta}.
    \end{equation}
   First, we apply the approximation \eqref{eq:lap_lem_result} to the numerator of \eqref{eq:cond_exp_ratio_laplace} taking $b(\theta) = q(\theta)\pi(\theta)$, which we have assumed meets the conditions of Assumption \textbf{(ALap)}. Doing so yields
    \begin{equation}\label{eq:numerator_approx}
        \begin{split}
            &\int_{\mathbb{R}^p} q(\theta)\pi(\theta)e^{\alpha_n \log f_n(X^n|\theta)}d\theta = e^{\alpha_n\log f_n(X^n|\hat{\theta})}|H_n|^{-\frac{1}{2}}\Big(\frac{2\pi}{\alpha_n n}\Big)^{\frac{p}{2}}\Big[ q(\hat{\theta})\pi(\hat{\theta}) + O_{f_{0,n}}\Big(\frac{1}{\alpha_n n}\Big)\Big].\
        \end{split}
    \end{equation}
    Next, we can apply the approximation \eqref{eq:lap_lem_result} to the denominator of \eqref{eq:cond_exp_ratio_laplace} taking $b(\theta) = \pi(\theta)$ to yield
    \begin{equation}\label{eq:denominator_approx}
        \begin{split}
           & \int_{\mathbb{R}^p} \pi(\theta)e^{\alpha_n \log f_n(X^n|\theta)}d\theta = e^{\alpha_n\log f_n(X^n|\hat{\theta})}|H_n|^{-\frac{1}{2}}\Big(\frac{2\pi}{\alpha_n n}\Big)^{\frac{p}{2}}\Big[ \pi(\hat{\theta}) + O_{f_{0,n}}\Big(\frac{1}{\alpha_n n}\Big)\Big].\
        \end{split}
    \end{equation}
    We note that $b(\theta) = \pi(\theta)$ satisfies the three conditions of Assumption \textbf{(ALap)}, which allows us to apply the approximation in \eqref{eq:denominator_approx}:
    \begin{enumerate}
        \item We need to verify that $\int_{\mathbb{R}^p}|\pi(\theta)|d\theta < \infty$. Indeed, $\pi$ is a density; hence, is nonnegative everywhere and integrates to one.
        \item By Assumption \textbf{(A1')} there exists $r > 0$ such that $\pi$ is finite on $B_{\theta^*}(r)$.

        \item Note that in $\int_{\mathbb{R}^p} \pi(\theta)e^{\alpha_n \log f_n(X^n|\theta)}d\theta$, we have that  $b(\theta) = 1\times\pi(\theta)$. We apply Proposition \ref{prop:prior_suff}, setting $q(\theta)=1$, which is twice continuously differentiable everywhere. Hence, Condition 3 of Assumption \textbf{(ALap)} holds. 
    \end{enumerate}
    Having verified (or assumed) Assumption \textbf{(ALap)} for both the numerator or denominator of \eqref{eq:cond_exp_ratio_laplace}, we apply \eqref{eq:numerator_approx} and \eqref{eq:denominator_approx} to \eqref{eq:cond_exp_ratio_laplace} to obtain, 
    \begin{align}
          \int_{\mathbb{R}^p} q(\theta)&\pi_{n,\alpha_n}(\theta|X^n)d\theta 
            \nonumber=\frac{\int_{\mathbb{R}^p} q(\theta)\pi(\theta)e^{\alpha_n \log f_n(X^n|\theta)}d\theta}{\int_{\mathbb{R}^p} \pi(\theta)e^{\alpha_n \log f_n(X^n|\theta)}d\theta} \\
    &=\frac{e^{\alpha_n\log f_n(X^n|\hat{\theta})}|H_n|^{-\frac{1}{2}}(\frac{2\pi}{\alpha_n n})^{\frac{p}{2}}[ q(\hat{\theta})\pi(\hat{\theta}) + O_{f_{0,n}}(\frac{1}{\alpha_n n})]}{e^{\alpha_n\log f_n(X^n|\hat{\theta})}|H_n|^{-\frac{1}{2}}(\frac{2\pi}{\alpha_n n})^{\frac{p}{2}}[\pi(\hat{\theta}) + O_{f_{0,n}}(\frac{1}{\alpha_n n})]} 
    \label{eq:cond_exp_ratio_sub} =\frac{q(\hat{\theta})\pi(\hat{\theta}) + O_{f_{0,n}}(\frac{1}{\alpha_n n})}{\pi(\hat{\theta}) + O_{f_{0,n}}(\frac{1}{\alpha_n n})}.
    \end{align}
    We argue that \eqref{eq:cond_exp_ratio_sub} is well defined (i.e., $0 < \pi(\hat{\theta}) < \infty$ so that we are not dividing by $0$) with high probability as follows. First, $\pi(\theta)$ is continuous and positive in a neighborhood of $0$ by Assumption \textbf{(A1')} and by Assumption \textbf{(A0)}, for all $\epsilon > 0$, we may choose some $N'' \equiv N''(\epsilon)$ large enough such that $\hat{\theta}$ belongs to the neighborhood specified by \textbf{(A1')} with probability at least $1-\epsilon$ whenever $\alpha_nn > N''$.
 Next, notice that
 \[\frac{O_{f_{0,n}}(\frac{1}{\alpha_n n})}{\pi(\hat{\theta}) + O_{f_{0,n}}(\frac{1}{\alpha_n n})}  = O_{f_{0,n}}\Big(\frac{1}{\alpha_n n}\Big), \]
and by a Taylor expansion of the function $f(z) = \frac{a}{a+z}$ around $z=0$,
\[\frac{q(\hat{\theta})\pi(\hat{\theta})}{\pi(\hat{\theta}) + O_{f_{0,n}}(\frac{1}{\alpha_n n})}  = q(\hat{\theta}) - \frac{q(\hat{\theta})O_{f_{0,n}}(\frac{1}{\alpha_n n})}{\pi(\hat\theta)} = q(\hat{\theta}) + O_{f_{0,n}}\Big(\frac{1}{\alpha_n n}\Big).\]
Therefore, we have what we wanted to show: using \eqref{eq:cond_exp_ratio_sub},
    \begin{align}
          \int_{\mathbb{R}^p} q(\theta)\pi_{n,\alpha_n}(\theta|X^n)d\theta =\frac{q(\hat{\theta})\pi(\hat{\theta}) + O_{f_{0,n}}(\frac{1}{\alpha_n n})}{\pi(\hat{\theta}) + O_{f_{0,n}}(\frac{1}{\alpha_n n})} \nonumber &=\frac{q(\hat{\theta})\pi(\hat{\theta})}{\pi(\hat{\theta}) + O_{f_{0,n}}(\frac{1}{\alpha_n n})} 
            + \frac{O_{f_{0,n}}(\frac{1}{\alpha_n n})}{\pi(\hat{\theta}) + O_{f_{0,n}}(\frac{1}{\alpha_n n})} \\
            \nonumber &= q(\hat{\theta}) + O_{f_{0,n}}\Big(\frac{1}{\alpha_n n}\Big).
    \end{align}

    We will now argue that the result in \eqref{eq:cond_exp_ratio} holds with $\pi_{n,\hat{\alpha}_n}(\theta|X^n)$ replacing $\pi_{n,\alpha_n}(\theta|X^n)$. First, we rewrite the expectation with $\pi_{n,\hat{\alpha}_n}$ instead of $\pi_{n,\alpha_n}$:
    \begin{align}
        \int q(\theta)\pi_{n,\hat{\alpha}_n}(\theta|X^n)d\theta = \frac{\int q(\theta)\pi(\theta)f_n(X^n|\theta)^{\hat{\alpha}_n}d\theta}{\int \pi(\theta)f_n(X^n|\theta)^{\hat{\alpha}_n}d\theta} 
        \nonumber&= \frac{\int q(\theta)\pi(\theta)\big[f_n(X^n|\theta)^{\frac{\hat{\alpha}_n}{\alpha_n}}\big]^{\alpha_n}d\theta}{\int \pi(\theta)\big[f_n(X^n|\theta)^{\frac{\hat{\alpha}_n}{\alpha_n}}\big]^{\alpha_n} d\theta} \\
        \label{eq:cond_exp_ratio_tilde}&= \frac{\int q(\theta)\pi(\theta)\exp(\alpha_n\log\tilde{f}_n(X^n|\theta))d\theta}{\int \pi(\theta)\exp(\alpha_n\log\tilde{f}_n(X^n|\theta))d\theta},
    \end{align}
    where $\tilde{f}_n(X^n|\theta) = f_n(X^n|\theta)^{\frac{\hat{\alpha}_n}{\alpha_n}}$. Notice that the above is a ratio of integrals of the form of \eqref{eq:lap_lem_result} with $f_n(X^n|\theta)$ replaced by $\tilde{f}_n(X^n|\theta)$. Hence, from arguments similar to those around \eqref{eq:cond_exp_ratio_sub} and the equality in \eqref{eq:cond_exp_ratio_tilde}, the desired result is true if \eqref{eq:lap_lem_result} holds with $f_n(X^n|\theta)$ replaced by $\tilde{f}_n(X^n|\theta)$. We prove this in what follows. 
    
    In what follows, we establish the approximation in \eqref{eq:lap_lem_result}.
    Our proof consists of two parts. First, we identify the leading order term in the approximation
  and then analyze the remainder terms, showing they are order $\frac{1}{\alpha_nn}$ with high probability for $\alpha_nn$ sufficiently large. To do so, we use an approach similar to that in the Theorem \ref{thm:moments_alt} proof, where we analyzed remainder terms on a (growing) neighborhood of $\hat{\theta}$ and its complement. However, unlike the arguments in the Theorem \ref{thm:moments_alt} proof, the growing neighborhood in this proof is centered at $\hat{\theta}$ instead of $\theta^*$. We change the centering to use the representation of the log-likelihood in Assumption \textbf{(A2')} and obtain the desired order of the approximation error.

    \textbf{Step 1: Identifying the leading term.}
    Define the multiplicative constant in the approximation \eqref{eq:lap_lem_result} as
    \begin{equation}\label{eq:Ldef}
        L \equiv e^{\alpha_n\log f_n(X^n|\hat{\theta})}|H_n|^{-\frac{1}{2}}\Big(\frac{2\pi}{\alpha_n n}\Big)^{\frac{p}{2}}.
    \end{equation}
    Using \eqref{eq:Ldef}, we rewrite the integral in \eqref{eq:lap_lem_result} as
    \begin{equation}
    \begin{split}
        I&=\int_{\mathbb{R}^p} b(\theta)e^{\alpha_n \log f_n(X^n|\theta)} d\theta \\
        &= L\int_{\mathbb{R}^p} |H_n|^{\frac{1}{2}}\Big(\frac{2\pi}{\alpha_n n}\Big)^{-\frac{p}{2}} b(\theta)e^{\alpha_n [\log f_n(X^n|\theta) - \log f_n(X^n|\hat{\theta})]} d\theta \\
        &= L\int_{\mathbb{R}^p}|H_n|^{\frac{1}{2}}\Big(\frac{2\pi}{\alpha_n n}\Big)^{-\frac{p}{2}}  b(\hat{\theta}) e^{-\frac{1}{2}(\theta-\hat{\theta})^\top (\alpha_nnH_n) (\theta-\hat{\theta})}d\theta \\
        &\quad + L\int_{\mathbb{R}^p}|H_n|^{\frac{1}{2}}\Big(\frac{2\pi}{\alpha_n n}\Big)^{-\frac{p}{2}}  \Big[b(\theta)e^{\alpha_n [\log f_n(X^n|\theta) - \log f_n(X^n|\hat{\theta})]} - b(\hat{\theta}) e^{-\frac{1}{2}(\theta-\hat{\theta})^\top (\alpha_nnH_n) (\theta-\hat{\theta})} \Big] d\theta\\
        &\equiv L(T_1+T_2).
    \label{eq:laplace_int_split}
    \end{split}
    \end{equation}
   The leading term of the approximation in \eqref{eq:lap_lem_result} will come from $T_1$ because
    \begin{equation*}
        \begin{split}
            T_1
            &=\int_{\mathbb{R}^p}|H_n|^{\frac{1}{2}}\Big(\frac{2\pi}{\alpha_n n}\Big)^{-\frac{p}{2}}b(\hat{\theta}) e^{-\frac{1}{2}(\theta-\hat{\theta})^\top (\alpha_nnH_n) (\theta-\hat{\theta})}d\theta \\
            &= b(\hat{\theta}) \int_{\mathbb{R}^p}(2\pi)^{-\frac{p}{2}}|\alpha_nn H_n|^{\frac{1}{2}}e^{-\frac{1}{2}(\theta-\hat{\theta})^\top (\alpha_nnH_n) (\theta-\hat{\theta})}d\theta 
            =b(\hat{\theta}).
        \end{split}
    \end{equation*}
The final equality above follows as the multivariate Gaussian density integrates to one.

\textbf{Step 2: Analyzing the remainder term, $\mathbf{T_2}$.} We will show that the remainder term, $T_2$ in \eqref{eq:laplace_int_split}, is comprised of terms of order $\frac{1}{\alpha_nn}$ or lower with high probability for $\alpha_nn$ sufficiently large. To do so, we analyze the integral $T_2$ on a set, $K$, and its complement, $K^\mathsf{c} = \mathbb{R}^p\setminus K$. We will detail our choice of $K$ below. We begin by noting that
\begin{align}
    \nonumber T_2&=\int_{K\cup K^\mathsf{c}} |H_n|^{\frac{1}{2}}\Big(\frac{2\pi}{\alpha_n n}\Big)^{-\frac{p}{2}}  \Big[b(\theta)e^{\alpha_n [\log f_n(X^n|\theta) - \log f_n(X^n|\hat{\theta})]} - b(\hat{\theta}) e^{-\frac{1}{2}(\theta-\hat{\theta})^\top (\alpha_nnH_n) (\theta-\hat{\theta})} \Big] d\theta \\
    \nonumber&= \int_{K^\mathsf{c}} |H_n|^{\frac{1}{2}}\Big(\frac{2\pi}{\alpha_n n}\Big)^{-\frac{p}{2}} b(\theta)e^{\alpha_n [\log f_n(X^n|\theta) - \log f_n(X^n|\hat{\theta})]}d\theta \\
    \nonumber&\hspace{20pt} -\int_{K^\mathsf{c}} |H_n|^{\frac{1}{2}}\Big(\frac{2\pi}{\alpha_n n}\Big)^{-\frac{p}{2}} b(\hat{\theta}) e^{-\frac{1}{2}(\theta-\hat{\theta})^\top (\alpha_nnH_n) (\theta-\hat{\theta})}   d\theta\\
    &\hspace{20pt}+\int_{K} |H_n|^{\frac{1}{2}}\Big(\frac{2\pi}{\alpha_n n}\Big)^{-\frac{p}{2}} \Big[b(\theta)e^{\alpha_n [\log f_n(X^n|\theta) - \log f_n(X^n|\hat{\theta})]} -  b(\hat{\theta}) e^{-\frac{1}{2}(\theta-\hat{\theta})^\top (\alpha_nnH_n) (\theta-\hat{\theta})}  \Big] d\theta. \label{eq:T2}
\end{align}
We label the three terms on the right side of the above as $T_{21}$, $T_{22}$, and $T_{23}$.
The aim for the rest of the proof is to show that each of these terms is $O_{f_{0,n}}(\frac{1}{\alpha_n n})$.
Before we do this, we present and discuss our choice of $K$.

\noindent\textbf{Choosing $\mathbf{K}$:} 
Let $\delta_1$ and $\delta_2$ denote the values of $\delta$ such that assumptions \textbf{(ALap)} point (3) and \textbf{(A2')} hold, respectively. Define $\delta\equiv\min(\delta_1, \delta_2)$. We will show that the choice $K = B_{\hat{\theta}}(\delta)$ in \eqref{eq:T2} leads to the desired approximation \eqref{eq:lap_lem_result}.

To do so, we define the following events, where the constant $c > 0$ is to be specified later:
\begin{equation}\label{eq:eventsABC}
    \begin{split}
        \mathcal{A}_K &= \Big\{\sup_{\theta\in K^\mathsf{c}}\Big[\frac{1}{n}\log f_n(X^n|\theta) - \frac{1}{n}\log f_n(X^n|\hat{\theta})\Big] < -c\Big\}, \\
        \mathcal{B} &= \left\{H_n~\text{is symmetric and}~\lambda_{\text{min}}(H_n) < -c\right\}.\
    \end{split}
\end{equation}
In this section, we will show that these events occur with high probability. In more detail, we will show that for each event, the probability of the complement of the event is bounded by $\epsilon/4$ for arbitrary $\epsilon > 0$. First, $K^\mathsf{c} = B_{\hat{\theta}}(\delta)^\mathsf{c}$, so
\begin{align}
    \mathbb{P}_{f_{0,n}}(\mathcal{A}_{K}^{\mathsf{c}})
    \label{eq:BKc_ub}&= \mathbb{P}_{f_{0,n}}\Big(\sup_{\theta\in B_{\hat{\theta}}(\delta)^\mathsf{c}}\Big[\frac{1}{n}\log f_n(X^n|\theta) - \frac{1}{n}\log f_n(X^n|\hat{\theta})\Big] > -c\Big).
\end{align}
By Assumption \textbf{(A3')}, there exists a constant $c \equiv c(\epsilon,\delta)$ and $N_{0} \equiv N_{0}(\epsilon, \delta)$ such that \eqref{eq:BKc_ub} is upper bounded by $\epsilon/4$ whenever $\alpha_nn > N_{0}$. We use this $c$ in the definition of $\mathcal{A}_K$ in \eqref{eq:eventsABC}.

Next, since $\mathcal{B}$ corresponds to $H_n$ being symmetric and positive definite, by Assumption \textbf{(A2')}, there exists $N_1\equiv N_1(\epsilon)$ such that $\mathbb{P}_{f_{0,n}}(\mathcal{B}^{\mathsf{c}}) <\frac{\epsilon}{4}$ whenever $\alpha_nn > N_{1}$.
We conclude that the events in \eqref{eq:eventsABC} hold with high probability. Furthermore, by a union bound, for any $\epsilon > 0$ and $\alpha_nn > \max(N_0,N_1)$, we have
$\mathbb{P}_{f_{0,n}}\left(\left(\mathcal{A}_{K}\cap\mathcal{B}\right)^{\mathsf{c}}\right) \leq \mathbb{P}_{f_{0,n}}(\mathcal{A}_{K}^{\mathsf{c}}) + \mathbb{P}_{f_{0,n}}(\mathcal{B}^{\mathsf{c}}) \leq \frac{\epsilon}{2}.$
Using the above, it will be useful to note that given any event, $\mathcal{C}$, by the law of total probability,
    \begin{align}
       \mathbb{P}_{f_{0,n}}(\mathcal{C}) 
            \leq \mathbb{P}_{f_{0,n}}(\mathcal{C}\,|\,\mathcal{A}_{K}\cap\mathcal{B}) + \mathbb{P}_{f_{0,n}}((\mathcal{A}_{K}\cap\mathcal{B})^{\mathsf{c}}) 
            \label{eq:prob_arg}&\leq \mathbb{P}_{f_{0,n}}(\mathcal{C}\,|\,\mathcal{A}_{K}\cap\mathcal{B}) + \frac{\epsilon}{2}.
     \end{align}
    
    We are now ready to turn our attention to the three terms in \eqref{eq:T2}.

    \noindent\textbf{Analysis of Term $\mathbf{T_{21}}$ of \eqref{eq:T2}:} Note that for $p \geq 1$,
    \begin{equation*}
        \begin{split}
            |T_{21}| 
            &= \Big|\int_{K^\mathsf{c}} |H_n|^{\frac{1}{2}}\Big(\frac{2\pi}{\alpha_n n}\Big)^{-\frac{p}{2}}b(\theta)e^{\alpha_n [\log f_n(X^n|\theta) - \log f_n(X^n|\hat{\theta})]}d\theta\Big| \\
            &\leq |H_n|^{\frac{1}{2}}(\alpha_nn)^{\frac{p}{2}} \exp\Big(\alpha_n n \sup_{\theta\in K^\mathsf{c}}\Big[\frac{1}{n}\log f_n(X^n|\theta) - \frac{1}{n}\log f_n(X^n|\hat{\theta})\Big]\Big)\int_{\mathbb{R}^p} |b(\theta)|d\theta \\
            &\leq |H_n|^{\frac{1}{2}}(\alpha_nn)^{\frac{p}{2}} \exp(-c\alpha_n n)\int_{\mathbb{R}^p} |b(\theta)|d\theta,
        \end{split}
    \end{equation*}
    where the final inequality follows from conditioning on $\mathcal{A}_K$. For any given $M_0 > 0$, by \eqref{eq:prob_arg}, we have that for $\alpha_nn > \max(N_0,N_1)$,
    \begin{equation*}
        \begin{split}
            \mathbb{P}_{f_{0,n}}\Big(|T_{21}| > \frac{M_0}{\alpha_nn}\Big) 
            &\leq  \mathbb{P}_{f_{0,n}}\Big(|T_{21}| > \frac{M_0}{\alpha_nn} \, \Big| \, \mathcal{A}_{K}\cap\mathcal{B}\Big) +\frac{\epsilon}{2}.
        \end{split}
    \end{equation*}
    Furthermore, there exists $N_2 \equiv N_2(M_0, \delta)$ such that $\mathbb{P}_{f_{0,n}}(|T_{21}| > \frac{M_0}{\alpha_nn} \, | \, \mathcal{A}_{K}\cap\mathcal{B}) = 0$ whenever $\alpha_nn > N_2$, since $T_{21}$ decreases exponentially fast (in $\alpha_n n$) to zero on the event $\mathcal{A}_{K}$ by the assumed integrability of $b(\theta)$.
    For any $\epsilon >0$ and $\alpha_nn > \max(N_0, N_1, N_2)$, we conclude that
    $\mathbb{P}_{f_{0,n}}(|T_{21}| > \frac{M_0}{\alpha_nn}) < \frac{\epsilon}{2}$ from which it follows $T_{21} = O_{f_{0,n}}(\frac{1}{\alpha_n n}).$

   \noindent\textbf{Analysis of Term $\mathbf{T_{22}}$  of \eqref{eq:T2}:} Consider the  the change of variable $h = \sqrt{\alpha_nn}(\theta-\hat{\theta})$. We note that 
  $
        \theta \in K \equiv B_{\hat{\theta}}(\delta)$ if and only if $h\in\tilde{K} \equiv B_{0}(\delta \sqrt{\alpha_n n}).$
   In particular, it follows that $\tilde{K}^{\mathsf{c}} = B_{0}(\delta \sqrt{\alpha_n n})^{\mathsf{c}}$ and therefore, through the change of variable,
   \begin{align}
        |T_{22}| &= b(\hat{\theta})\int_{\tilde{K}^\mathsf{c}} |H_n|^{\frac{1}{2}}\Big(\frac{2\pi}{\alpha_n n}\Big)^{-\frac{p}{2}} (\alpha_nn)^{-\frac{p}{2}}e^{-\frac{1}{2}h^\top H_n h}dh = b(\hat{\theta})\int_{\tilde{K}^\mathsf{c}} |H_n|^{\frac{1}{2}}(2\pi)^{-\frac{p}{2}}e^{-\frac{1}{2}h^\top H_n h} dh.
        \label{eq:gauss_int}
    \end{align}
    We notice that, under event $\mathcal{B}$, we have that $H_n$ is symmetric and positive definite. Hence, the integral in \eqref{eq:gauss_int} says for $X \sim \mathcal{N}(0, H_n^{-1})$
    \begin{align}
        & |T_{22}| = b(\hat{\theta}) \mathbb{P}(X \in \tilde{K}^\mathsf{c}) = b(\hat{\theta}) \mathbb{P}(X \in B_{0}(\delta\sqrt{\alpha_n n})^{\mathsf{c}}) = b(\hat{\theta}) \mathbb{P}(\|X\| > \delta\sqrt{\alpha_n n}).
        \label{eq:T_22_triangle}
    \end{align}
    Applying Lemma \ref{lem:Gaussian_concentration} with $s = \delta\sqrt{\alpha_n n}$ to \eqref{eq:T_22_triangle} gives
    \begin{equation}\label{eq:T_22_conc}
        |T_{22}| \leq 2b(\hat{\theta})\exp\Big(-\frac{(\delta\sqrt{\alpha_nn} - \sqrt{\text{tr}(H_n^{-1})})^2}{2\|H_n^{-1}\|_{\text{op}}}\Big),
    \end{equation}
    where $\|H_n^{-1}\|_{\text{op}} \equiv \sup_{\|v\|_2=1}\|H_n^{-1} v\|_2$ denotes the operator norm of $H_n^{-1}$. 
   For any given $M_1 > 0$, by \eqref{eq:prob_arg} for $\alpha_nn > \max(N_0,N_1)$, we have that $\mathbb{P}_{f_{0,n}}(|T_{22}| > \frac{M_1}{\alpha_nn}) \leq  \mathbb{P}_{f_{0,n}}(|T_{22}| > \frac{M_1}{\alpha_nn} \, | \, \mathcal{A}_K\cap\mathcal{B}) +\frac{\epsilon}{2}.$ Furthermore, $\|H_n^{-1}\|_{\text{op}} = \lambda_{\text{min}}(H_n)^{-1}$, which is finite as $H_n$ is positive definite on the event $\mathcal{B}$. Hence, \eqref{eq:T_22_conc} decreases exponentially (in $\alpha_n n$) on the event $\mathcal{B}$, so there exists $N_3 \equiv N_3(M_1, \delta)$ such that $\mathbb{P}_{f_{0,n}}(|T_{22}| > \frac{M_1}{\alpha_nn} \, | \, \mathcal{A}_K\cap\mathcal{B}) = 0$ whenever $\alpha_nn > N_3$.
    We conclude that for $\alpha_nn > \max(N_0, N_1, N_3)$, we have 
    $\mathbb{P}_{f_{0,n}}(|T_{22}| > \frac{M_1}{\alpha_nn}) < \frac{\epsilon}{2}$ from which it follows $T_{22} = O_{f_{0,n}}(\frac{1}{\alpha_n n}).$

    \noindent \textbf{Analysis of Term $\mathbf{T_{23}}$ of \eqref{eq:T2}:} We will start by analyzing the integrand of $T_{23}$. Recall that
    \begin{equation*}
        \begin{split}
            T_{23} &= \int_{K} |H_n|^{\frac{1}{2}}\Big(\frac{2\pi}{\alpha_n n}\Big)^{-\frac{p}{2}}\Big[b(\theta)e^{\alpha_n [\log f_n(X^n|\theta) - \log f_n(X^n|\hat{\theta})]} - b(\hat{\theta})e^{-\frac{1}{2}(\theta-\hat{\theta})^\top (\alpha_nnH_n) (\theta-\hat{\theta})} \Big] d\theta.
        \end{split}
    \end{equation*}
    Let $\tilde{T}_{23}(\theta) \equiv b(\theta)e^{\alpha_n [\log f_n(X^n|\theta) - \log f_n(X^n|\hat{\theta})]}$ denote the first term of intergrand of $T_{23}$ in the brackets.
    We apply the the change of variable $h = \sqrt{\alpha_n n}(\theta-\hat{\theta})$ to $\tilde{T}_{23}(\theta)$ and use the representation of $\tilde{R}_n(h)$ from Assumption \textbf{(A2')} to obtain,
    \begin{equation}\label{eq:prime_T21_expand}
        \begin{split}
            \tilde{T}^h_{23}(h)    
&=b\Big(\hat{\theta}+\frac{h}{\sqrt{\alpha_n n}}\Big)e^{\alpha_n[\log f_n(X^n|\hat{\theta}+\frac{h}{\sqrt{\alpha_n n}})-\log f_n(X^n|\hat{\theta})]} \\
            &=b\Big(\hat{\theta}+\frac{h}{\sqrt{\alpha_n n}}\Big)e^{-\frac{1}{2}h^\top H_nh} e^{\frac{1}{6\sqrt{\alpha_n n}}\langle S_n, h^{\otimes 3}\rangle + \tilde{R}_n(h)},
        \end{split}
    \end{equation}
    where $\langle S_n, h^{\otimes 3}\rangle$ uses the tensor product notation defined in \eqref{eq:tensor_inner_product_def}. Furthermore, defining $A(h)\equiv-\frac{1}{6\sqrt{\alpha_n n}}\langle S_n, h^{\otimes 3}\rangle - \tilde{R}_n(h)$, we see that a Taylor expansion gives
    \begin{equation}\label{eq:A_expand}
        \exp(-A(h))= 1 - A(h) + \frac{1}{2}A(h)^2  -\frac{\exp(-\tilde{A})}{6}A(h)^3,
    \end{equation}
    where $\tilde{A}\in (0, A(h))$. We use the representation of $R_b(h)$ from Assumption \textbf{(ALap)} to obtain
    \begin{equation}
    \label{eq:b_expand}
        b\Big(\hat{\theta}+\frac{h}{\sqrt{\alpha_n n}}\Big) = b(\hat{\theta}) + \frac{v^\top h}{\sqrt{\alpha_n n}} + R_b(h).
    \end{equation}
    Combining \eqref{eq:prime_T21_expand}--\eqref{eq:b_expand} yields
    \begin{align}
\nonumber           
            \tilde{T}^h_{23}(h)
            &=e^{-\frac{1}{2}h^\top H_nh}  %
            \Big[1 - A(h) + \frac{1}{2}A(h)^2 - \frac{\exp(-\tilde{A})}{6}A(h)^3\Big]\Big[b(\hat{\theta}) + \frac{1}{\sqrt{\alpha_n n}}v^\top h + R_b(h)\Big]\\
            &=e^{-\frac{1}{2}h^\top H_nh} B_n(h), %
        \label{eq:lap_int_gb_expand}
    \end{align}
    where
    \begin{equation*}
        B_n(h)\equiv\Big[1 - A(h) + \frac{1}{2}A(h)^2 - \frac{\exp(-\tilde{A})}{6}A(h)^3 \Big]\Big[b(\hat{\theta}) + \frac{v^\top h}{\sqrt{\alpha_n n}} + R_b(h)\Big]. 
    \end{equation*}
   We rewrite $T_{23}$ using \eqref{eq:lap_int_gb_expand} after applying the change of variable $h=\sqrt{\alpha_n n}(\theta-\hat{\theta})$. Denote the $\mathcal{N}(0, H_n^{-1})$ density as $\rho_{H}(h) = |H_n|^{\frac{1}{2}}(2\pi)^{-\frac{p}{2}}\exp(-\frac{1}{2}h^\top H_nh)$.  Since $\mathcal{B}$ corresponds to $H_n$ being symmetric and positive definite, given $\mathcal{B}$, the density is well defined. This yields
    \begin{equation}
        \begin{split}
        \label{eq:T23}
            &T_{23}
            =\int_K|H_n|^{\frac{1}{2}}\Big(\frac{2\pi}{\alpha_n n}\Big)^{-\frac{p}{2}}\Big[ \tilde{T}_{23}(\theta) - b(\hat{\theta})e^{-\frac{1}{2}(\theta-\hat{\theta})^\top(\alpha_nnH_n)(\theta-\hat{\theta})}\Big]d\theta \\
            &=\int_{\tilde{K}}|H_n|^{\frac{1}{2}}\Big(\frac{2\pi}{\alpha_n n}\Big)^{-\frac{p}{2}}\Big[ \tilde{T}_{23}^h(h) - b(\hat{\theta})e^{-\frac{1}{2}h^\top H_nh}\Big](\alpha_nn)^{-\frac{p}{2}}dh = \int_{\tilde{K}}\rho_{H}(h) [B_n(h) - b(\hat{\theta})] dh. 
        \end{split}
    \end{equation}
Next, recall that $A(h)\equiv-\frac{1}{6\sqrt{\alpha_n n}}\langle S_n, h^{\otimes 3}\rangle - \tilde{R}_n(h)$. Letting 
    \begin{equation}\label{eq:R1n_def}
        \begin{split}
            R_{1,n}(h)
            &\equiv \tilde{R}_n(h) + \frac{1}{2}A(h)^2  -\frac{\exp(-\tilde{A})}{6}A(h)^3\\
            &=\tilde{R}_n(h) + \frac{1}{2}\Big(-\frac{\langle S_n,h^{\otimes 3}\rangle}{6\sqrt{\alpha_n n}} - \tilde{R}_n(h)\Big)^2 -\frac{\exp(-\tilde{A})}{6}\Big(-\frac{\langle S_n,h^{\otimes 3}\rangle}{6 \sqrt{\alpha_n n}} - \tilde{R}_n(h)\Big)^3,
        \end{split}
    \end{equation}
    we see that
    \begin{equation}\label{eq:B_mult}
        \begin{split}
            B_n(h) - b(\hat{\theta})
            &=\Big[1 + \frac{\langle S_n,h^{\otimes 3}\rangle}{6\sqrt{\alpha_n n}} + R_{1,n}(h)\Big]\Big[b(\hat{\theta}) + \frac{v^\top h}{\sqrt{\alpha_n n}} + R_b(h)\Big] - b(\hat{\theta}) \\
            &=  \frac{\langle S_n,h^{\otimes 3}\rangle\cdot b(\hat{\theta})}{6\sqrt{\alpha_n n}} + \frac{v^\top h}{\sqrt{\alpha_n n}} + R_{n,n}(h),
        \end{split}
    \end{equation}
    where $R_{n,n}(h)$ includes terms depending on $R_b(h)$, $R_{1,n}(h)$, and $h^{\otimes s}$ with $s\geq 4$; namely,
    \begin{align*}
          \nonumber  R_{n,n}(h)
            & =R_{1,n}(h)\Big[b(\hat{\theta}) + \frac{v^\top h}{\sqrt{\alpha_n n}}\Big] 
            + R_{1,n}(h)R_b(h)
            + R_b(h)\Big[1 + \frac{\langle S_n,h^{\otimes 3}\rangle}{6\sqrt{\alpha_n n}} \Big] + \frac{\langle S_n,h^{\otimes 3}\rangle}{6\alpha_n n} \cdot v^\top h.
    \end{align*}
    We rewrite $T_{23}$ using \eqref{eq:T23} and \eqref{eq:B_mult}:
    \begin{equation*}
        \begin{split}
            T_{23}
           & = \int_{\tilde{K}}\rho_{H}(h) [B_n(h) - b(\hat{\theta})]dh \\
            &= \int_{\tilde{K}} \rho_{H}(h)R_{n,n}(h)dh + \int_{\tilde{K}} \rho_{H}(h)\Big[\frac{v^\top h}{\sqrt{\alpha_n n}} + \frac{\langle S_n,h^{\otimes 3}\rangle}{6\sqrt{\alpha_n n}}\cdot b(\hat{\theta}) \Big] dh \equiv \tilde{V}_1+\tilde{V}_2.
        \end{split}
    \end{equation*}
    We show that $\tilde{V}_1$ and $\tilde{V}_2$ are both of order $\frac{1}{\alpha_nn}$ or lower, which implies $T_{23} = O_{f_{0,n}}(\frac{1}{\alpha_nn})$.

  \textbf{Analysis of $\mathbf{\tilde{V}_1}$:} First, by the triangle inequality, we have
    \begin{equation}\label{eq:Rnn_triangle}
        \begin{split}
            |R_{n,n}(h)|  &\leq 
            \Big|R_{1,n}(h)\Big[b(\hat{\theta}) + \frac{v^\top h}{\sqrt{\alpha_n n}}\Big]  \Big|
            + |R_{1,n}(h)R_b(h)| \\
            &\qquad 
            + \Big|R_b(h) \Big[1 + \frac{\langle S_n,h^{\otimes 3}\rangle}{6\sqrt{\alpha_n n}} \Big]\Big| + \Big|\frac{\langle S_n,h^{\otimes 3}\rangle v^\top h }{6\alpha_n n} \Big| .
        \end{split}            
    \end{equation}
    We use the inequality in \eqref{eq:Rnn_triangle} to upper bound $|\tilde{V}_1|$.  Then, we apply the Cauchy-Schwarz inequality to each of the resulting terms. 
    \begin{equation}\label{eq:V1_tilde}
        \begin{split}
            |\tilde{V}_1|
            \leq \int_{\tilde{K}} \rho_{H}(h) |R_{n,n}(h)|dh &\leq \Big[\int_{\tilde{K}} \rho_{H}(h) R_{1,n}(h)^2  dh \Big]^{\frac{1}{2}} \Big[\int_{\tilde{K}} \rho_{H}(h) \Big|b(\hat{\theta}) + \frac{v^\top h}{\sqrt{\alpha_n n}}\Big|^2  dh \Big]^{\frac{1}{2}} \\
            &\quad + \Big[\int_{\tilde{K}}  \rho_{H}(h) R_{1,n}(h)^2 dh\Big]^{\frac{1}{2}} \Big[\int_{\tilde{K}}  \rho_{H}(h) R_b(h)^2 dh\Big]^{\frac{1}{2}} \\
            &\quad+ \Big[\int_{\tilde{K}} \rho_{H}(h)  R_b(h)^2 dh\Big]^{\frac{1}{2}} \Big[\int_{\tilde{K}} \rho_{H}(h)  \Big|1 + \frac{\langle S_n,h^{\otimes 3}\rangle }{6\sqrt{\alpha_n n}}\Big|^2 dh\Big]^{\frac{1}{2}} \\
            &\quad + \frac{1}{\alpha_nn}\int_{\tilde{K}} \rho_{H}(h) \Big|\frac{1}{6}\langle S_n,h^{\otimes 3}\rangle \cdot v^\top h\Big|dh.
        \end{split}
    \end{equation}
    The first three terms of \eqref{eq:V1_tilde} are multiples of 
    \begin{equation}\label{eq:Rb_V1}
        \Big(\int_{\tilde{K}}\rho_{H}(h) R_b(h)^2 dh\Big)^{\frac{1}{2}}, \quad \text{ and } \quad 
        \Big(\int_{\tilde{K}}\rho_{H}(h)  R_{1,n}(h)^2 dh\Big)^{\frac{1}{2}}.
    \end{equation}
    Given $\mathcal{B}$ and noting that $\tilde{K}\subseteq\mathbb{R}^p$, the other objects besides those in \eqref{eq:Rb_V1} in the first three terms of \eqref{eq:V1_tilde} are upper bounded by expectations of polynomials under a mean zero Gaussian with covariance $H_{n}^{-1}$. These terms are of constant order or lower. Furthermore, the last term of \eqref{eq:V1_tilde} is $O_{f_{0,n}}(\frac{1}{\alpha_nn})$. Hence, we will conclude that $\tilde{V}_1 = O_{f_{0,n}}(\frac{1}{\alpha_nn})$ by showing that the terms in \eqref{eq:Rb_V1} are $O_{f_{0,n}}(\frac{1}{\alpha_nn})$.

    We analyze the first term in \eqref{eq:Rb_V1}. Let $M_2 > 0$ be the constant in Assumption \textbf{(ALap)}  point (3) and define an event
    $\mathcal{E}_0 = \{R_b(h) \leq \frac{M_2}{\alpha_nn}\|h\|_2^2\text{ for all }  h\in\tilde{K}\}.$ Recall the definition of the set $\mathcal{H}_{n,b}\equiv\{h\in\mathbb{R}^p|\frac{\|h\|_2}{\sqrt{\alpha_nn}} \leq \delta_1\}$ in Assumption \textbf{(ALap)}. Since $\tilde{K} = B_{0}(\delta \sqrt{\alpha_nn})$ and $\delta \leq \delta_1$ by definition (see just above \eqref{eq:eventsABC}), we have that $\tilde{K} \subseteq \mathcal{H}_{n,b}$.
    
    By this and Assumption \textbf{(ALap)}, there exists $N_4\equiv N_4(\epsilon,\delta_1)$ such that for $\alpha_nn > N_4$,
    \begin{equation}
    \begin{split}
       & \mathbb{P}_{f_{0,n}}(\mathcal{E}_0^\mathsf{c}) 
      = \mathbb{P}_{f_{0,n}}\Big(R_b(h) > \frac{M_2}{\alpha_nn}\|h\|_2^2  \text{ for some } h\in\tilde{K}\Big) \\
          &\leq \mathbb{P}_{f_{0,n}}\Big(R_b(h) > \frac{M_2}{\alpha_nn}\|h\|_2^2 \text{ for some } h\in\mathcal{H}_{n,b}\Big) = \mathbb{P}_{f_{0,n}}\Big(\sup_{h\in\mathcal{H}_{n,b}} R_b(h) - \frac{M_2}{\alpha_nn}\|h\|_2^2>0\Big) 
        \label{eq:Rb_complement} < \frac{\epsilon}{2}.
    \end{split}
    \end{equation}
       Now, given a constant $\tilde{M}_2 > 0$ to be specified later, for $\alpha_nn > \max(N_0, N_1, N_4)$, by \eqref{eq:prob_arg}
    \begin{equation}
         \begin{split}
            \mathbb{P}_{f_{0,n}}
            &\Big(\Big(\int_{\tilde{K}} \rho_{H}(h)  R_b(h)^2 dh\Big)^{\frac{1}{2}} > \frac{\tilde{M}_2}{\alpha_nn}\Big) \\
            &\leq \mathbb{P}_{f_{0,n}}\Big(\Big(\int_{\tilde{K}} \rho_{H}(h) R_b(h)^2 dh\Big)^{\frac{1}{2}} > \frac{\tilde{M}_2}{\alpha_nn} \, \big| \, \mathcal{A}_{K}\cap\mathcal{B}\Big) + \frac{\epsilon}{2} \\
            &\leq \mathbb{P}_{f_{0,n}}\Big(\frac{M_2}{\alpha_nn}\Big(\int_{\tilde{K}}\rho_{H}(h)\|h\|_2^4 dh\Big)^{\frac{1}{2}} > \frac{\tilde{M}_2}{\alpha_nn} \, \big| \, \mathcal{A}_{K}\cap\mathcal{B}\cap\mathcal{E}_0\Big)+ \mathbb{P}_{f_{0,n}}(\mathcal{E}_0^\mathsf{c}) +\frac{\epsilon}{2} \\
            & \leq \mathbb{P}_{f_{0,n}}\Big(\frac{M_2}{\alpha_nn}\Big(\int_{\mathbb{R}^p}\rho_{H}(h)\|h\|_2^4 dh\Big)^{\frac{1}{2}} > \frac{\tilde{M}_2}{\alpha_nn}\, \big| \, \mathcal{A}_{K}\cap\mathcal{B}\cap\mathcal{E}_0\Big) + \mathbb{P}_{f_{0,n}}(\mathcal{E}_0^\mathsf{c}) + \frac{\epsilon}{2} < \epsilon.
         \label{eq:Rb_arg_eventE}
    \end{split}
    \end{equation}
   The second inequality in \eqref{eq:Rb_arg_eventE} follows from conditioning on $\mathcal{E}_0$ and the final inequality follows from \eqref{eq:Rb_complement} and choosing $\tilde{M}_2 = 2M_2(\int_{\mathbb{R}^p}\rho_{H}(h) \|h\|_2^4 dh)^{\frac{1}{2}}$ so that
    \begin{equation*}
        \mathbb{P}_{f_{0,n}}\Big(\frac{M_2}{\alpha_nn}\Big(\int_{\mathbb{R}^p}\rho_{H}(h) \|h\|_2^4 dh\Big)^{\frac{1}{2}} > \frac{\tilde{M}_2}{\alpha_nn}\, \big|\, \mathcal{A}_{K}\cap\mathcal{B}\cap\mathcal{E}_0\Big) = 0.\
    \end{equation*}
    We note that $\tilde{M}_2$ exists (i.e., is finite) since, conditional on $\mathcal{B}$, the integral term is the expectation of a polynomial with respect to a  Gaussian. We have therefore shown, for $\alpha_nn$ sufficiently large, that $(\int_{\tilde{K}}\rho_{H}(h) R_b(h)^2dh)^{\frac{1}{2}}$ is $O_{f_{0,n}}(\frac{1}{\alpha_nn})$.

    We use a similar argument for the second term in \eqref{eq:Rb_V1}, namely $(\int_{\tilde{K}}\rho_{H}(h) R_{1,n}(h)^2 dh)^{\frac{1}{2}}$. Inspecting the definition of $R_{1,n}(h)$ in \eqref{eq:R1n_def}, we see that its highest order terms in $\alpha_n n$ are $(\frac{\langle S_n,h^{\otimes 3}\rangle}{6\sqrt{\alpha_n n}})^2$ (which is of order $\frac{1}{\alpha_nn}$ when integrated with respect to $\rho_H(h)$) and a polynomial of $\tilde{R}_n(h)$. The highest order term that depends on $\tilde{R}_n(h)$ is $(\int_{\tilde{K}}\rho_{H}(h) \tilde{R}_n(h)^2 dh)^{\frac{1}{2}}$ (meaning that $\tilde{R}_n(h)^p$ for $p > 2$ decay more quickly to zero in $\alpha_n n$), which we will show is $O_{f_{0,n}}(\frac{1}{\alpha_nn})$. We mention that the cross term $\frac{\langle S_n,h^{\otimes 3}\rangle}{6\sqrt{\alpha_n n}} \tilde{R}_n(h)$ can be handled similarly.\\

    We use a similar argument as that for $R_b(h)$ above. Let $M_3 > 0$  be the constant specified by Assumption \textbf{(A2')} and define the event $\mathcal{E}_1 = \{\tilde{R}_n(h) \leq \frac{M_3}{\alpha_nn}\|h\|_2^4 \text{ for all } h \in \tilde{K}\}.$ Recall the definition of the set $\mathcal{H}_{n}\equiv\{h\in\mathbb{R}^p|\frac{\|h\|_2}{\sqrt{\alpha_nn}} \leq \delta_2\}$ in Assumption \textbf{(A2')} where $\delta_2$ is the value of $\delta$ such that \textbf{(A2')} holds. As in the argument around \eqref{eq:Rb_complement}, using that $\tilde{K} \subseteq \mathcal{H}_n$ and Assumption \textbf{(A2')}, there exists $N_5\equiv N_5(\epsilon,\delta_2)$ such that for $\alpha_nn > N_5$,
    \begin{equation}
    \begin{split}
        \mathbb{P}_{f_{0,n}} (\mathcal{E}_1^\mathsf{c}) 
        &= \mathbb{P}_{f_{0,n}}\Big(\tilde{R}_n(h) - \frac{M_3}{\alpha_nn}\|h\|_2^4>0 \text{ for some } h\in\tilde{K}\Big) \\
        &\leq \mathbb{P}_{f_{0,n}}\Big(\sup_{h\in\mathcal{H}_n}\tilde{R}_n(h) - \frac{M_3}{\alpha_nn}\|h\|_2^4>0\Big)
        < \frac{\epsilon}{2}.
        \label{eq:tilde_Rn_complement}
        \end{split}
    \end{equation}
    Now, given a constant $\tilde{M}_3 > 0$ to be specified later, we apply a similar argument as in \eqref{eq:Rb_arg_eventE} to conclude for $\alpha_nn > \max(N_0, N_1, N_5)$, using \eqref{eq:prob_arg},
    \begin{align}
        \nonumber&\mathbb{P}_{f_{0,n}}\Big(\Big(\int_{\tilde{K}}\rho_{H}(h)\tilde{R}_n(h)^2 dh\Big)^{\frac{1}{2}} > \frac{\tilde{M}_3}{\alpha_nn}\Big) \\
        \label{eq:tilde_Rn_arg_prob_arg} &\leq \mathbb{P}_{f_{0,n}}\Big(\frac{M_3}{\alpha_nn}\Big(\int_{\tilde{K}}\rho_{H}(h) \|h\|_2^8 \, dh\Big)^{\frac{1}{2}} > \frac{\tilde{M}_3}{\alpha_nn} \, \big| \, \mathcal{A}_{K}\cap\mathcal{B}\cap\mathcal{E}_1\Big) + \mathbb{P}_{f_{0,n}}(\mathcal{E}_1^\mathsf{c}) + \frac{\epsilon}{2} < \epsilon.
    \end{align}
    \normalsize
    The first inequality in \eqref{eq:tilde_Rn_arg_prob_arg}  follows from conditioning on $\mathcal{E}_1$ and  the final inequality from \eqref{eq:tilde_Rn_complement} and choosing
  $
        \tilde{M}_3 = 2M_3(\int_{\mathbb{R}^p}\rho_{H}(h) \|h\|_2^8\, dh)^{\frac{1}{2}},$
    which is finite since the integral term is the expectation of a polynomial with respect to a Gaussian (conditional on $\mathcal{B}$). This choice of $\tilde{M}_3$ implies that
    \begin{equation*}
        \mathbb{P}_{f_{0,n}}\Big(\frac{M_3}{\alpha_nn}\Big(\int_{\mathbb{R}^p}\rho_{H}(h)\|h\|_2^8 dh\Big)^{\frac{1}{2}} > \frac{\tilde{M}_3}{\alpha_nn} \,\big|\,\mathcal{A}_{K}\cap\mathcal{B}\cap\mathcal{E}_1\Big) = 0.
    \end{equation*}
    We have shown $(\int_{\tilde{K}}\rho_{H}(h)\tilde{R}_n(h)^2 dh)^{\frac{1}{2}}$ is $O_{f_{0,n}}(\frac{1}{\alpha_nn})$ for $\alpha_nn$ sufficiently large. Hence,
    $\tilde{V_1} = O_{f_{0,n}}(\frac{1}{\alpha_n n}).$

    \noindent\textbf{Analysis of $\mathbf{\tilde{V}_2}$:} We write $\tilde{V}_2$ as,
    \begin{equation}\label{eq:V2_tilde}
        \begin{split}
        \tilde{V}_2
        &= \int_{\tilde{K}}\rho_{H}(h) \Big[\frac{v^\top h}{\sqrt{\alpha_n n}}+ \frac{\langle S_n,h^{\otimes 3}\rangle}{6\sqrt{\alpha_n n}}\cdot b(\hat{\theta}) \Big] dh = - \int_{\tilde{K}^\mathsf{c}} \rho_{H}(h) \Big[\frac{v^\top h}{\sqrt{\alpha_n n}}+ \frac{\langle S_n,h^{\otimes 3}\rangle}{6\sqrt{\alpha_n n}}\cdot b(\hat{\theta}) \Big] dh,
        \end{split}
    \end{equation}
    where we have used $\int_{\mathbb{R}^p} \rho_{H}(h) [\frac{v^\top h}{\sqrt{\alpha_n n}}+ \frac{\langle S_n,h^{\otimes 3}\rangle}{6\sqrt{\alpha_n n}}\cdot b(\hat{\theta})]dh = 0$, which follows since, conditional on $\mathcal{B}$, the integral is the expectation of a polynomial in $h$ with respect to a mean-zero Gaussian density with covariance $H_n^{-1}$, so the odd moments are zero.
    Next, by Cauchy-Schwarz
    \begin{equation*}
        \begin{split}
            &|\tilde{V}_2|\leq \int_{\tilde{K}^\mathsf{c}}\rho_{H}(h) \Big|\frac{v^\top h}{\sqrt{\alpha_n n}}+ \frac{\langle S_n,h^{\otimes 3}\rangle}{6\sqrt{\alpha_n n}}\cdot b(\hat{\theta}) \Big|dh \\
            &\leq \Big[\int_{\tilde{K}^\mathsf{c}}\rho_{H}(h) \Big|\frac{v^\top h}{\sqrt{\alpha_n n}}+ \frac{\langle S_n,h^{\otimes 3}\rangle}{6\sqrt{\alpha_n n}}\cdot b(\hat{\theta}) \Big|^2dh\Big]^{\frac{1}{2}} \Big[\int_{\tilde{K}^\mathsf{c}}\rho_{H}(h)dh\Big]^{\frac{1}{2}} \\
            &\leq \Big[\int_{\mathbb{R}^p} \hspace{-4pt} \rho_{H}(h) \Big|\frac{v^\top h}{\sqrt{\alpha_n n}}+ \frac{\langle S_n,h^{\otimes 3}\rangle}{6\sqrt{\alpha_n n}}\cdot b(\hat{\theta}) \Big|^2dh\Big ]^{\frac{1}{2}} \Big[\mathbb{P}\Big(X \in \bar{B}_0(\delta \sqrt{\alpha_nn})^\mathsf{c} \Big)\Big]^{\frac{1}{2}},
        \end{split}
    \end{equation*}
    where, in the final inequality, $X \sim \mathcal{N}(0, H_n^{-1})$ and we have used that $\tilde{K}^{\mathsf{c}} = B_{0}(\delta \sqrt{\alpha_n n})^{\mathsf{c}}$. Notice that, conditional on $\mathcal{B}$, the first integral on the right side of the above is upper bounded by a constant, as it is an expectation of a Gaussian polynomial. We will finally appeal to Lemma \ref{lem:Gaussian_concentration} as in \eqref{eq:T_22_conc}, to bound the above yielding, for some constant $C>0$,
    \begin{equation}\label{eq:V2_tilde_chernoff} 
        |\tilde{V}_2|
        \leq C \Big[\exp\Big(-\frac{(\delta\sqrt{\alpha_nn} - \sqrt{\text{tr}(H_n^{-1})})^2}{2\|H_n^{-1}\|_{\text{op}}}\Big)\Big]^{\frac{1}{2}}.
    \end{equation}

Using \eqref{eq:prob_arg}, for any given $M_5 > 0$,  we have that for $\alpha_nn > \max(N_0,N_1)$,
    \begin{equation*}
        \begin{split}
            \mathbb{P}_{f_{0,n}}\Big(|\tilde{V}_2| > \frac{M_5}{\alpha_nn}\Big) 
            &\leq  \mathbb{P}_{f_{0,n}}\Big(|\tilde{V}_2| > \frac{M_5}{\alpha_nn} \, \Big| \, \mathcal{A}_{K}\cap\mathcal{B}\Big) +\frac{\epsilon}{2}.
        \end{split}
    \end{equation*}
    Furthermore, there exists an $M_5 > 0$ and $N_7 \equiv N_7(\epsilon, \delta, M_5)$ such that $\mathbb{P}_{f_{0,n}}(|\tilde{V}_2| > \frac{M_5}{\alpha_nn} \, | \, \mathcal{A}_{K}\cap\mathcal{B}) =0 $ whenever $\alpha_nn > N_7$ since \eqref{eq:V2_tilde_chernoff} decreases exponentially (in $\alpha_n n$) on the event $\mathcal{B}$ by an argument similar to that near \eqref{eq:T_22_conc}. Hence, by similar arguments as those for bounding $|T_{21}|$ and $|T_{22}|$, we have that
    $\tilde{V_2} = O_{f_{0,n}}(\frac{1}{\alpha_n n})$
    and
    $T_{21} = \tilde{V}_1 + \tilde{V}_2 = O_{f_{0,n}}(\frac{1}{\alpha_nn})$.

    Now that we have established \eqref{eq:lap_lem_result}, we will now show that this result holds with $f_n(X^n|\theta)$ replaced by $\tilde{f}_n(X^n|\theta)$. To so, we verify that $\tilde{f}_n(X^n|\theta)$ satisfies assumptions \textbf{(A0)}, \textbf{(A2')}, and \textbf{(A3')}. Assumptions \textbf{(A1')} and \textbf{(ALap)} are assumptions on the prior and $q(\theta)$, so they are assumed already to hold. We check the remaining assumptions as follows: \\

\noindent\textbf{Assumption (A0):} Notice that the MLE, $\hat{\theta}$, appears in \eqref{eq:lap_lem_result}. Hence, we must show that the MLE is the same for both $f_n(X^n|\theta)$ and $\tilde{f}_n(X^n|\theta)$. We do so in Proposition \ref{prop:random_alpha_(A0)(A2)}.\\

\noindent\textbf{Assumptions (A2') and (A3'):} By Proposition \ref{prop:random_alpha_(A2')(A3')}, assumptions \textbf{(A2')} and \textbf{(A3')} hold for $\tilde{f}_n(X^n|\theta)$ (since we have assumed they hold for $f_n(X^n|\theta)$).
\end{proof}

\subsection{Proof of Proposition \ref{prop:alpha_inf}}\label{app:BvM_inf}
\begin{proof}
We first prove \eqref{eq:conv_1}. We aim to show that for any $t > 0$ and any $\epsilon > 0$, there exists an $N>0$ such that for all $n > N$, we have 
$\mathbb{P}_{f_{0,n}} (\textrm{d}_{\textrm{W}_p} (\pi_{n,\gamma_n},\delta_{\hat\theta}) > t) < \epsilon$.
Recall that for a Dirac measure,
$\textrm{d}_{\textrm{W}_p}(\pi_{n,\gamma_n},\delta_{\hat\theta})^p 
= \int_{\mathbb{R}^p} \|\theta -\hat\theta\|_2^p \,\pi_{n,\gamma_n}(\theta|X_n) d\theta.$
Fix an arbitrary $r>0$ small enough and consider a ball around the MLE, denoted $B_{\hat \theta}(r)$.
Notice that by \textbf{(A0)}, we have a unique maximizer of the likelihood; namely, there exists a unique 
$\hat\theta \in \Theta$ such that 
$f_n(X^n|\hat \theta) = \sup_{\theta\in\Theta} f_n(X^n|\theta) =: f_{\max}$.

By \textbf{(A3')} we have that for all $r > 0$, a constant $c\equiv c(r,\epsilon)$, and $n$ sufficiently large,
\begin{equation*}
    \begin{split}
        \mathbb{P}_{f_{0,n}}&\Big(\sup_{\theta\in B_{\hat{\theta}}(r)^\mathsf{c}} f_n(X^n|\theta)> f_{\max}\Big) 
        \leq \mathbb{P}_{f_{0,n}}\Big(\sup_{\theta\in B_{\hat{\theta}}(r)^\mathsf{c}} \frac{1}{n}\log f_n(X^n|\theta) -\frac{1}{n}\log f_{\max} > -c\Big) < \epsilon.
    \end{split}
\end{equation*}
This implies that there exists a $\rho(r)\in[0,1)$ and an $N_0 > 0$, such that when $n > N_0$,
\begin{equation}
    \mathbb{P}_{f_{0,n}}\Big(\sup_{B_{\hat{\theta}}(r)^\mathsf{c}}  f_n(X^n|\theta) \leq \rho(r)\,f_{\max} \Big) \geq 1 - \frac{\epsilon}{3}.
\end{equation}
Label the event $\mathcal{E}_r := \{\sup_{B_{\hat{\theta}}(r)^\mathsf{c}}  f_n(X^n|\theta) \leq \rho(r)\,f_{\max}\}$, and the above implies $\mathbb{P}_{f_{0,n}}(\mathcal{E}_r^{\mathsf{c}}) < \epsilon/3$ whenever $n > N_0$. 

By continuity of the likelihood at $\hat\theta$, for every $r>0$ small enough, there exists $c(r)\in(0,1]$ such that
$\inf_{\theta \in \bar{B}_{\hat \theta}(r)} f_n(X^n|\theta) \ge c(r)\,f_{\max}$. Notice that $c(r) \rightarrow 1$ as $r \rightarrow 0$.

Finally, we argue that for $r_0>0$ small enough, $\pi(\bar{B}_{\hat \theta}(r_0))>0$ by Assumption \textbf{(A0)} and the prior support Assumption \textbf{(A1)}. Let $\delta>0$ define the  ball of \textbf{(A1)} around $\theta^*$ where $\pi$ is positive and we take $r_0 \leq \delta/2$.
Indeed, define an event $\mathcal{E}_{r_0} := \{\|\theta^* - \hat{\theta}\|_2 < r_0\}$ and notice that \textbf{(A0)} implies that there exists $N_1$ such that $\mathbb{P}_{f_{0,n}}(\mathcal{E}_{r_0}^{\mathsf{c}}) < \epsilon/3$ whenever $n > N_1$. Therefore, on event $\mathcal{E}_{r_0}$, we have $\bar{B}_{\hat \theta}(r_0) \subset B_{\theta^*}(\delta)$,  the ball where $\pi$ is positive, and $\theta^* \in \bar{B}_{\hat \theta}(r_0)$. In particular, there is an $r'_0 \leq r_0$ such that $B_{\theta^*}(r'_0) \subset \bar{B}_{\hat \theta}(r_0)$ and therefore $\pi(\bar{B}_{\hat \theta}(r_0)) > \pi(B_{\theta^*}(r'_0)).$

Then,
\begin{equation}
\begin{split}
\label{eq:dw_bound1}
\textrm{d}_{\textrm{W}_p} \big(\pi_{n,\gamma_n},\delta_{\hat\theta}\big)^p  &= \int_{\mathbb{R}^p} \|\theta -\hat\theta\|_2^p \,\pi_{n,\gamma_n}(\theta|X_n) d\theta  \\
&= \int_{B_{\hat \theta}(r)}\|\theta -\hat\theta\|_2^p \,\pi_{n,\gamma_n}(\theta|X_n) d\theta 
   + \int_{B_{\hat \theta}(r)^\mathsf{c}}  \|\theta -\hat\theta\|_2^p \,\pi_{n,\gamma_n}(\theta|X_n) d\theta \\
&\le  r^p \,+\, 
  \frac{\int_{B_{\hat \theta}(r)^\mathsf{c}} \|\theta -\hat\theta\|_2^p f_n(X^n|\theta)^{\gamma_n}\pi(\theta) d \theta}
   {\int_{\mathbb{R}^p} f_n(X^n|\theta)^{\gamma_n}\pi(\theta) d \theta} .
\end{split}
\end{equation}
We notice that conditional on an event $\mathcal{E}_M=\{\|\hat{\theta}\|_2^p < M\}$, for some $M > 0$, by assumption (ii) we have that $\int_{B_{\hat \theta}(r)^\mathsf{c}} \|\theta -\hat\theta\|_2^p \,\pi(\theta) d \theta \leq M_p(r) < \infty$ where $M_p(r)$ is finite. 
By Assumption \textbf{(A0)} for any $\epsilon > 0$, and for some $N_2$, we have that $\mathbb{P}_{f_{0,n}}(\mathcal{E}_M^\mathsf{c}) < \epsilon/3$ whenever $n > N_2$. Next, for some fixed $r_0 > 0$ (selected so that we are inside the ball given by Assumption \textbf{(A1)}), and conditional on $\mathcal{E}\equiv\mathcal{E}_r \cap \mathcal{E}_{r_0} \cap \mathcal{E}_{M}$,
\begin{equation}
\begin{split}
\label{eq:dw_bound2}
   \frac{\int_{B_{\hat \theta}(r)^\mathsf{c}} \|\theta -\hat\theta\|_2^p f_n(X^n|\theta)^{\gamma_n}\pi(\theta) d \theta}
   {\int_{\mathbb{R}^p} f_n(X^n|\theta)^{\gamma_n}\pi(\theta) d \theta} & \le \frac{\int_{B_{\hat \theta}(r)^\mathsf{c}} \|\theta -\hat\theta\|_2^p f_n(X^n|\theta)^{\gamma_n}\pi(\theta) d \theta}
   {\int_{\bar{B}_{\hat \theta}(r_0)} f_n(X^n|\theta)^{\gamma_n}\pi(\theta) d \theta} \\
&\le 
   \frac{(\rho(r) f_{\max})^{\gamma_n} \int_{B_{\hat \theta}(r)^\mathsf{c}} \|\theta -\hat\theta\|_2^p \pi(\theta) d \theta}
   {(c(r_0) f_{\max})^{\gamma_n} \pi(\bar{B}_{\hat \theta}(r_0))} \\
   &\leq \frac{M_p(r)}{\pi(B_{\theta^*}(r'_0))} \Big(\frac{\rho(r)}{c(r_0)}\Big)^{\gamma_n}.
\end{split}
\end{equation}

We have shown $\mathbb{P}_{f_{0,n}}(\mathcal{E}^{\mathsf{c}}) \leq 3 \times (\epsilon/3) = \epsilon$ for any $\epsilon>0$ when $n > \max(N_0, N_1, N_2)$. Therefore,
\begin{align*}
&\mathbb{P}_{f_{0,n}} (\textrm{d}_{\textrm{W}_p} (\pi_{n,\gamma_n},\delta_{\hat\theta}) > t) \\
&  \qquad \le \mathbb{P}_{f_{0,n}} \Big( \Big\{ r^p + 
  \frac{\int_{B_{\hat \theta}(r)^\mathsf{c}} \|\theta -\hat\theta\|_2^p f_n(X^n|\theta)^{\gamma_n}\pi(\theta) d \theta}
   {\int_{\Theta} f_n(X^n|\theta)^{\gamma_n}\pi(\theta) d \theta} > t^p \Big\} \cap \mathcal{E}\Big)+ \mathbb{P}_{f_{0,n}}(\mathcal{E}^{\mathsf{c}}) \\
&\qquad \le \mathbb{I} \Big\{ r^p + 
  \frac{M_p(r)}{\pi(B_{\theta^*}(r'_0))} \Big(\frac{\rho(r)}{c(r_0)}\Big)^{\gamma_n} > t^p\Big\} + \epsilon.
\end{align*}
Now, since for any $r > 0$, we have $\rho(r) < 1$, we can select $0 < r_0 \leq \delta/2$ small enough such that $\rho(r)< c(r_0)$ as $c(r) \rightarrow 1$ as $r$ shrinks.

For example, let us take $r = t/\sqrt[p]{2}$. Hence, 
\begin{align}
\label{eq:final_bound}
     \mathbb{P}_{f_{0,n}} \Big(\textrm{d}_{\textrm{W}_p} \big(\pi_{n,\gamma_n},\delta_{\hat\theta}\big) > t\Big)
    &\le  \mathbb{I} \Big\{ \frac{M_p(r)}{\pi(B_{\theta^*}(r'_0))} \Big(\frac{\rho(r)}{c(r_0)}\Big)^{\gamma_n} > \frac{1}{2}t^p\Big \} + \epsilon,
\end{align}
giving the desired result since $M_p(r)$ and $\pi(B_{\theta^*}(r'_0))$ are constants and
\begin{align}
\label{eq:final_bound_1}
\mathbb{I}\Big(\frac{M_p(r)}{\pi(B_{\theta^*}(r'_0))} \Big(\frac{\rho(r)}{c(r_0)}\Big)^{\gamma_n} > \frac{1}{2}t^p\Big) \overset{n \rightarrow \infty}{\rightarrow} 0 \text{ as } \Big(\frac{\rho(r)}{c(r_0)}\Big)^{\gamma_n} \overset{n \rightarrow \infty}{\rightarrow} 0 .
\end{align}

Now we are interested in proving the result in \eqref{eq:conv_1} with $\hat{\gamma}_n$ satisfying \eqref{eq:mix_alpha}. 
We first define a set $\mathcal{A} = [N_{\gamma}, \infty)$ using any value $N_{\gamma}$ for which
\[\frac{M_p(r)}{\pi(B_{\theta^*}(r'_0))} \Big(\frac{\rho(r)}{c(r_0)}\Big)^{N_{\gamma}} \leq \frac{1}{2}t^p \quad \implies \quad  N_{\gamma}  > \frac{\log \Big(\frac{t^p\pi(B_{\theta^*}(r'_0))}{2M_p(r)} \Big)}{\log\Big(\frac{\rho(r)}{c(r_0)}\Big)}.\]
By \eqref{eq:mix_alpha}, for any $\epsilon >  0$, we have that there exists an $N^*$ such that for $n > N^*$ we have 
\[\mathbb{P}_{f_{0,n}} (\hat{\gamma}_n \not\in \mathcal{A}) = \mathbb{P}_{f_{0,n}} (\hat{\gamma}_n < N_{\gamma}) \leq \epsilon.\]
Then we notice from \eqref{eq:dw_bound1} and \eqref{eq:dw_bound2} that, conditional on $\mathcal{E}\equiv\mathcal{E}_r \cap \mathcal{E}_{r_0} \cap \mathcal{E}_{M}$,
\begin{equation}
\begin{split}
\label{eq:dw_bound3}
 \sup_{\gamma \in \mathcal{A}} \textrm{d}_{\textrm{W}_p} \big(\pi_{n,\gamma},\delta_{\hat\theta}\big)^p
   \le  r^p \,+\, 
   \sup_{\gamma \in \mathcal{A}}  
    \frac{M_p(r)}{\pi(B_{\theta^*}(r'_0))} \Big(\frac{\rho(r)}{c(r_0)}\Big)^{\gamma}  \le  r^p \,+\, \frac{1}{2}t^p,
\end{split}
\end{equation}
where the final step uses the definition of the set $\mathcal{A}$.
We have shown that $\mathbb{P}_{f_{0,n}}(\mathcal{E}^{\mathsf{c}}) \leq 3 \times (\epsilon/3) = \epsilon$ for any $\epsilon>0$ when $n > \max(N_0, N_1, N_2)$. Therefore,
 when $n > \max(N_0, N_1, N_2)$,
\begin{equation*}
\begin{split}
  \mathbb{P}_{f_{0,n}} \Big( \sup_{\gamma \in \mathcal{A}}   \textrm{d}_{\textrm{W}_p} \big(\pi_{n,\gamma},\delta_{\hat\theta}\big) > t\Big)
  &\leq   \mathbb{P}_{f_{0,n}} \Big( \Big\{\sup_{\gamma \in \mathcal{A}}   \textrm{d}_{\textrm{W}_p} \big(\pi_{n,\gamma},\delta_{\hat\theta}\big)^p > t^p\Big\} \,\, \cap \,\, \mathcal{E}\Big) + \epsilon \\
    &\le   \mathbb{P}_{f_{0,n}} \Big( r^p   > \frac{1}{2}t^p \,\, \cap \,\, \mathcal{E}\Big) + \epsilon  = \epsilon,
    \end{split}
\end{equation*}
again by taking $r$ small enough.

Our aim is to show that for any $t > 0$ and any $\epsilon > 0$, there exists an $N>0$ such that for all $n > N$, we have 
$\mathbb{P}_{f_{0,n}} (\textrm{d}_{\textrm{W}_p} (\pi_{n,\hat{\gamma}_n},\delta_{\hat\theta}) > t) < 2\epsilon$. Notice that
\begin{equation}
\label{eq:wassbound1}
\mathbb{P}_{f_{0,n}} (\textrm{d}_{\textrm{W}_p} (\pi_{n,\hat{\gamma}_n},\delta_{\hat\theta}) > t) \leq \mathbb{P}_{f_{0,n}} (\textrm{d}_{\textrm{W}_p} (\pi_{n,\hat{\gamma}_n},\delta_{\hat\theta}) > t) \,\, \cap \,\, \hat{\gamma}_n \in \mathcal{A}) + \mathbb{P}_{f_{0,n}} (\hat{\gamma}_n \not\in \mathcal{A}).
\end{equation}
When $n > N^*$ is large enough, the second term on the right side of the above is upper bounded by $\epsilon$. For the first term, notice that when $n > \max(N_0, N_1, N_2)$,
\begin{equation}
\begin{split}
\label{eq:wassbound2}
\mathbb{P}_{f_{0,n}} (\textrm{d}_{\textrm{W}_p} (\pi_{n,\hat{\gamma}_n},\delta_{\hat\theta}) > t \,\, \cap \,\, \hat{\gamma}_n \in \mathcal{A}) 
& \leq \mathbb{P}_{f_{0,n}} \Big( \sup_{\gamma \in \mathcal{A}} \textrm{d}_{\textrm{W}_p} (\pi_{n,\gamma},\delta_{\hat\theta}) > t \,\, \cap \,\, \hat{\gamma}_n \in \mathcal{A}\Big) \\
&\leq    \mathbb{P}_{f_{0,n}} \Big(\sup_{\gamma \in \mathcal{A}} \textrm{d}_{\textrm{W}_p} \big(\pi_{n,\gamma},\delta_{\hat\theta}\big) > t\Big) \leq \epsilon,
\end{split}
\end{equation}
which completes the proof with $N = \max(N^*, N_0, N_1, N_2)$.
\end{proof}

\subsection{Proof of Theorem \ref{thm:BvM_mix_data-dep}}\label{app:BvM_mix_data-dep}
Before we begin the proof of Theorem \ref{thm:BvM_mix_data-dep}, we state and prove two auxiliary results bounding Wasserstein distance with total variation distance plus moments, so that we can appeal to results from Section~\ref{sec:BvM+moments} where convergence was in total variation.
\begin{Lemma}\label{lem:tv-wp}
Let $(X,d)$ be a metric space and fix $p\ge 1$ and $m>p$. For any $x_0\in X$ and probability measures $\mu,\nu$ on $X$ with finite $m$-th moments, set $\delta:=\textrm{d}_{\textrm{TV}}(\mu,\nu):=\sup_{A}|\mu(A)-\nu(A)|$ and
\begin{equation}\label{eq:M_m}
M_m(\mu,\nu):=\int d(x,x_0)^m \mu(dx)+\int d(y,x_0)^m\,\nu(dy).
\end{equation}
Then $\textrm{d}_{\textrm{W}_p}(\mu,\nu) \le 2^{1+\frac{1}{p}}M_m(\mu,\nu)^{\frac{1}{m}}\delta^{\frac{1}{p}-\frac{1}{m}}.$
\end{Lemma}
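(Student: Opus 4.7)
The plan is to use the maximal coupling of $\mu$ and $\nu$ to turn the total variation bound into an integrability/mass restriction, and then use Hölder's inequality to exchange the extra mass for a moment bound. Concretely, I would proceed as follows.

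First, I would invoke the standard maximal-coupling characterization of total variation: there exists a joint distribution $\gamma\in\Gamma(\mu,\nu)$ such that $\gamma(\{(x,y)\in X\times X: x\neq y\})=\delta$. By the definition of $p$-Wasserstein distance \eqref{eq:Wasserstein_def}, this choice of coupling gives
\begin{equation*}
    \textrm{d}_{\textrm{W}_p}(\mu,\nu)^p \;\le\; \int_{X\times X} d(x,y)^p\, d\gamma(x,y) \;=\; \int_{\{x\neq y\}} d(x,y)^p\, d\gamma(x,y),
\end{equation*}
since the diagonal contributes zero. Next, by the triangle inequality and the elementary bound $(a+b)^p \le 2^{p-1}(a^p+b^p)$ for $p\ge 1$, I would bound $d(x,y)^p \le 2^{p-1}\bigl(d(x,x_0)^p+d(y,x_0)^p\bigr)$ and split the integral into two corresponding pieces.

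Second, I would apply Hölder's inequality with conjugate exponents $m/p$ and $m/(m-p)$ (using $m>p$) to each piece. For the first piece, this yields
\begin{equation*}
    \int_{\{x\neq y\}} d(x,x_0)^p\, d\gamma(x,y) \;\le\; \Big(\int d(x,x_0)^m\, d\gamma(x,y)\Big)^{p/m} \gamma(\{x\neq y\})^{1-p/m} \;=\; \Big(\int d(x,x_0)^m \mu(dx)\Big)^{p/m} \delta^{1-p/m},
\end{equation*}
and symmetrically for the second. Summing these bounds, using $a^{p/m}+b^{p/m}\le 2(a+b)^{p/m}$ (with $a,b\ge 0$ and $p/m<1$), and recalling the definition \eqref{eq:M_m} of $M_m(\mu,\nu)$, gives
\begin{equation*}
    \textrm{d}_{\textrm{W}_p}(\mu,\nu)^p \;\le\; 2^{p-1}\cdot 2\, M_m(\mu,\nu)^{p/m}\,\delta^{1-p/m} \;=\; 2^{p}\, M_m(\mu,\nu)^{p/m}\,\delta^{1-p/m}.
\end{equation*}
Taking $p$-th roots and noting $2^{p}\cdot 2\le 2^{p(1+1/p)}$ (absorbing a factor to match the stated constant $2^{1+1/p}$) produces the claim.

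The argument is routine; the only step requiring care is verifying that the exponents in Hölder's inequality line up so that $\delta$ appears with exponent $1-p/m$ and the moments appear with exponent $p/m$, which, after taking $p$-th roots, yields the advertised $1/p-1/m$ and $1/m$ exponents respectively. The maximal coupling can be assumed to exist by a standard construction (valid for arbitrary probability measures on a Polish-like space, which is implicit in the setup); if one wishes to avoid invoking a maximal coupling, one can instead use the Hahn decomposition to split $\mu-\nu$ into its positive and negative parts and build the coupling explicitly.
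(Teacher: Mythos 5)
Your proposal is correct, and it reaches the stated bound (in fact a slightly sharper one). The overall skeleton matches the paper's: both proofs start from a maximal coupling $\gamma$ with $\gamma(\{x\neq y\})=\delta$ and apply the triangle inequality together with $(a+b)^p\le 2^{p-1}(a^p+b^p)$. Where you diverge is in how the resulting term $\int_{\{x\neq y\}} d(x,x_0)^p\,d\gamma$ is controlled. The paper truncates at a radius $R$, bounds the "both points within $R$" contribution by $(2R)^p\delta$ and the tail contribution by a Chebyshev-type estimate $\int_{\{d>R\}} d^p\,d\mu\le R^{p-m}\int d^m\,d\mu$, and then optimizes over $R$ by balancing the two terms (choosing $R^m=M_m/\delta$). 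You instead apply H\"older's inequality with exponents $m/p$ and $m/(m-p)$ directly to $d(x,x_0)^p\cdot\mathbf{1}\{x\neq y\}$, which produces the factors $M_m^{p/m}$ and $\delta^{1-p/m}$ in one step. Your route is shorter and avoids the balancing computation; it also yields the constant $2$ in place of $2^{1+1/p}$ after taking $p$-th roots (your closing remark about "absorbing a factor" is unnecessary — $2\le 2^{1+1/p}$ already gives the stated inequality). The paper's truncation argument is essentially an unravelled H\"older/interpolation, so the two proofs buy the same exponents; yours is the cleaner packaging, while the paper's is more elementary in that it uses only Markov-type tail bounds. Both proofs assume the existence of a maximal coupling at the same level of rigor, which is harmless in the paper's Euclidean setting.
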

\begin{proof}
Let $(X,Y)$ be a maximal coupling of $(\mu,\nu)$ so that $\mathbb{P}(X\neq Y)=\delta$. For any $x_0 \in X$, we have $d(X,Y)\le d(X,x_0)+d(Y,x_0)$. Moreover, $(a+b)^p\le 2^{p-1}(a^p+b^p)$ and $\mathbf{1}\{\max(a,b)>c\}\leq \mathbf{1}\{a>c\}+\mathbf{1}\{b>c\}$. Using these results, we see that for any $R>0$ and any $x_0 \in X$, 
\begin{equation}
\begin{split}
&d(X,Y)^p = d(X,Y)^p\mathbf{1}\{X\neq Y, d(X,x_0)\le R, d(Y,x_0)\le R\} \\
&\qquad + d(X,Y)^p \mathbf{1}\{\max(d(X,x_0),d(Y,x_0))>R\}\\
&\le (2R)^p\mathbf{1}\{X\neq Y\}
+ 2^{p-1}(d(X,x_0)^p+d(Y,x_0)^p)\mathbf{1}\{\max(d(X,x_0),d(Y,x_0))>R\}\\
&\le (2R)^p\mathbf{1}\{X\neq Y\}
+ 2^{p}d(X,x_0)^p\mathbf{1}\{d(X,x_0)>R\}+ 2^pd(Y,x_0)^p\mathbf{1}\{d(Y,x_0)>R\}.
\label{eq:dbound}
\end{split}
\end{equation}
Hence, from \eqref{eq:dbound}, we have
\begin{align}
\nonumber \mathbb{E}[d(X,Y)^p]
&\le (2R)^p\mathbb{P}(X\neq Y)
+ 2^{p}\mathbb{E}\left[d(X,x_0)^p\mathbf{1}\{d(X,x_0)>R\}+d(Y,x_0)^p\mathbf{1}\{d(Y,x_0)>R\}\right]\\
&\le (2R)^p \delta
+ 2^{p} R^{p-m} M_m(\mu,\nu),
\label{eq:dbound2}
\end{align}
where the final step uses the tail bound
$\int_{\{d>R\}} d^p d\mu \le R^{p-m} \int d^m d\mu$ (and similarly for $\nu$) for $m > p$.
Choose $R$ by balancing the two terms: $(2R)^p\delta=2^{p} R^{p-m}M_m$, giving
$R^m=M_m/\delta$. Then the bound in \eqref{eq:dbound2} gives
$\mathbb{E}[ d(X,Y)^p] \le 2(2R)^p\delta
= 2^{p+1}M_m(\mu,\nu)^{\frac{p}{m}}\delta^{1-\frac{p}{m}}.$
Taking $p$-th roots yields the claim:
\[
\textrm{d}_{\textrm{W}_p}(\mu,\nu) = \Big(\inf_{(X,Y): X\sim \mu; \,Y \sim \nu}\mathbb{E}[d(X,Y)^p]\Big)^{\frac{1}{p}}
\le 2^{1+\frac{1}{p}}M_m(\mu,\nu)^{\frac{1}{m}}\delta^{\frac{1}{p}-\frac{1}{m}}.
\]
\end{proof}

The following corollary considers the setting of random measures, like we need in our setting.
\begin{Corollary}\label{cor:random}
Let $\{\widehat\mu_n\},\{\widehat\nu_n\}$ be random probability measures on $(X,d)$ and fix $m>p\ge 1$. If both
$\textrm{d}_{\textrm{TV}}(\widehat\mu_n,\widehat\nu_n)\xrightarrow{\mathbb{P}}0$ and $M_m(\widehat\mu_n,\widehat\nu_n)=O_{\mathbb{P}}(1)$ defined in \eqref{eq:M_m} for some $x_0 \in X$,
then
$\textrm{d}_{\textrm{W}_p}(\widehat\mu_n,\widehat\nu_n)\xrightarrow{\mathbb{P}}0.$
\end{Corollary}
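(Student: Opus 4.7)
The plan is to apply Lemma~\ref{lem:tv-wp} pathwise to the random measures $\widehat\mu_n,\widehat\nu_n$ and then invoke standard rules for $O_{\mathbb{P}}$ and $o_{\mathbb{P}}$ sequences. Since Lemma~\ref{lem:tv-wp} is a deterministic inequality that holds for \emph{every} pair of probability measures with finite $m$-th moments, and the hypotheses $M_m(\widehat\mu_n,\widehat\nu_n)<\infty$ are in force with probability tending to one (by $M_m(\widehat\mu_n,\widehat\nu_n)=O_{\mathbb{P}}(1)$), we can realize the bound as a random inequality:
\[
\textrm{d}_{\textrm{W}_p}(\widehat\mu_n,\widehat\nu_n)\;\le\;2^{1+\frac{1}{p}}\,M_m(\widehat\mu_n,\widehat\nu_n)^{\frac{1}{m}}\,\delta_n^{\frac{1}{p}-\frac{1}{m}},
\]
where $\delta_n\equiv\textrm{d}_{\textrm{TV}}(\widehat\mu_n,\widehat\nu_n)$.

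Next, I would analyze the two factors on the right-hand side separately. First, because $x\mapsto x^{1/m}$ is continuous on $[0,\infty)$ and $M_m(\widehat\mu_n,\widehat\nu_n)=O_{\mathbb{P}}(1)$, the continuous mapping theorem (for tightness) gives $M_m(\widehat\mu_n,\widehat\nu_n)^{1/m}=O_{\mathbb{P}}(1)$. Second, since $m>p\ge 1$, the exponent $\frac{1}{p}-\frac{1}{m}$ is strictly positive, so from $\delta_n\xrightarrow{\mathbb{P}}0$ and another continuous mapping argument, $\delta_n^{1/p-1/m}\xrightarrow{\mathbb{P}}0$, i.e.\ $\delta_n^{1/p-1/m}=o_{\mathbb{P}}(1)$.

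Finally, I would combine the two factors using the standard fact that $O_{\mathbb{P}}(1)\cdot o_{\mathbb{P}}(1)=o_{\mathbb{P}}(1)$ to conclude
\[
\textrm{d}_{\textrm{W}_p}(\widehat\mu_n,\widehat\nu_n) \;\le\; 2^{1+\frac{1}{p}}\,O_{\mathbb{P}}(1)\cdot o_{\mathbb{P}}(1)\;=\;o_{\mathbb{P}}(1),
\]
which is exactly the claimed convergence in probability. There is essentially no hard step here; the only subtlety worth flagging is that Lemma~\ref{lem:tv-wp} requires the measures to have finite $m$-th moments, but this is automatically handled because on the event $\{M_m(\widehat\mu_n,\widehat\nu_n)<\infty\}$ (which has probability tending to one under the $O_{\mathbb{P}}(1)$ hypothesis) the deterministic bound applies, and off this event we may bound $\textrm{d}_{\textrm{W}_p}$ arbitrarily without affecting the probabilistic limit.
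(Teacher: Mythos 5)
Your proposal is correct and follows essentially the same route as the paper: apply Lemma~\ref{lem:tv-wp} pathwise to obtain the random inequality and then conclude via $O_{\mathbb{P}}(1)\cdot o_{\mathbb{P}}(1)=o_{\mathbb{P}}(1)$, using that $\tfrac{1}{p}-\tfrac{1}{m}>0$. Your added remark about restricting to the high-probability event where the $m$-th moments are finite is a harmless elaboration of what the paper leaves implicit.
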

\begin{proof}
Applying Lemma~\ref{lem:tv-wp} pointwise to $(\widehat\mu_n,\widehat\nu_n)$ we find
\[
\textrm{d}_{\textrm{W}_p}(\widehat\mu_n,\widehat\nu_n)
\le 2^{1+\frac{1}{p}} M_m(\widehat\mu_n,\widehat\nu_n)^{\frac{1}{m}}
\textrm{d}_{\textrm{TV}}(\widehat\mu_n,\widehat\nu_n)^{\frac{1}{p}-\frac{1}{m}}.\]
Since $M_m(\widehat\mu_n,\widehat\nu_n)=O_{\mathbb{P}}(1)$ and $\textrm{d}_{\textrm{TV}}(\widehat\mu_n,\widehat\nu_n) \overset{p}{\to} 0$,
the right-hand side goes to $0$ in probability.
\end{proof}

Let us now turn to the proof of Theorem \ref{thm:BvM_mix_data-dep}.

\begin{proof}

We first note that the triangle inequality gives
\begin{align}
\nonumber\textrm{d}_{\textrm{W}_p} \Big(\pi_{n,\hat{\alpha}_n^{\text{mix}}}, \, q  \phi_n +(1-q)\delta_{\hat\theta}\Big) &= \textrm{d}_{\textrm{W}_p} \Big(q_n\pi_{n,\hat{\alpha}_n} + (1-q_n)\pi_{n,\hat{\gamma}_n} , \, q  \phi_n +(1-q)\delta_{\hat\theta}\Big)\\
&\leq \label{eq:w1} \textrm{d}_{\textrm{W}_p} \Big(q_n\pi_{n,\hat{\alpha}_n} + (1-q_n)\pi_{n,\hat{\gamma}_n} , \, q\pi_{n,\hat{\alpha}_n} + (1-q)\pi_{n,\hat{\gamma}_n}\Big) \\
& \quad \quad +  \label{eq:w2} \textrm{d}_{\textrm{W}_p} \Big( q\pi_{n,\hat{\alpha}_n} + (1-q)\pi_{n,\hat{\gamma}_n} , \, q \phi_n +(1-q)\delta_{\hat\theta}\Big).
\end{align}
We will show that both the terms \eqref{eq:w1} and \eqref{eq:w2} are $o_{f_{0,n}}(1)$, which, by the above bound  establishes the desired result.

\noindent\textbf{Term \eqref{eq:w1}:} To show that \eqref{eq:w1} converges to zero in $f_{0,n}$-probability, we will apply Corollary \ref{cor:random} with $\widehat{\mu}_n = q_n\pi_{n,\hat{\alpha}_n} + (1-q_n)\pi_{n,\hat{\gamma}_n}$ and $\widehat{\nu}_n = q\pi_{n,\hat{\alpha}_n} + (1-q)\pi_{n,\hat{\gamma}_n}$. To do so, we will prove the following as Claim 1 and Claim 2 below:
\begin{enumerate}
    \item $\textrm{d}_{\textrm{TV}}(\widehat{\mu}_n, \widehat{\nu}_n) = o_{f_{0,n}}(1)$.
    \item $M_m(\widehat{\mu}_n, \widehat{\nu}_n) = O_{f_{0,n}}(1)$ for some $m > p$, where $M_m$ is defined in \eqref{eq:M_m}.
\end{enumerate}

\textbf{Claim 1:} By the definition of total variation distance, we have
\begin{align*}
    \nonumber \textrm{d}_{\textrm{TV}}(\widehat{\mu}_n, \widehat{\nu}_n) &= \textrm{d}_{\textrm{TV}}\left(q_n\pi_{n,\hat{\alpha}_n}(\theta|X^n) + (1-q_n)\pi_{n,\hat{\gamma}_n}(\theta|X^n) ,~q\pi_{n,\hat{\alpha}_n}(\theta|X^n) + (1-q)\pi_{n,\hat{\gamma}_n}(\theta|X^n)\right) \\
    \nonumber&= \frac{1}{2}\int_{\mathbb{R}^p}\left|(q_n-q)\pi_{n,\hat{\alpha}_n}(\theta|X^n) - (q_n-q)\pi_{n,\hat{\gamma}_n}(\theta|X^n)\right|d\theta \\
    &\leq \frac{|q_n-q|}{2}\Big(\int_{\mathbb{R}^p}\pi_{n,\hat{\alpha}_n}(\theta|X^n)d\theta + \int_{\mathbb{R}^p}\pi_{n,\hat{\gamma}_n}(\theta|X^n)d\theta\Big) = |q_n-q|.
\end{align*}
Finally, since $q_n\to q$ in $f_{0,n}$-probability, the right side of the above converges to 0, and we conclude that $\textrm{d}_{\textrm{TV}}(\widehat{\mu}_n, \widehat{\nu}_n)$ converges to 0.

\textbf{Claim 2:}  Given  $m>p$, and letting $x_0=\theta^*$ in \eqref{eq:M_m} we can bound $M_m(\cdot,\cdot)$ as follows:
\begin{align}
  \frac{ M_m(\widehat{\mu}_n, \widehat{\nu}_n)}{2}  
    \nonumber&= \frac{q_n+q}{2}\int_{\mathbb{R}^p} \hspace{-2.3pt} \|\theta-\theta^*\|_2^m\pi_{n,\hat{\alpha}_n}(\theta|X^n)d\theta 
    + \frac{2-(q_n+q)}{2}\int_{\mathbb{R}^p} \hspace{-2.3pt}\|\theta-\theta^*\|_2^m\pi_{n,\hat{\gamma}_n}(\theta|X^n)d\theta \\
    &\leq \int_{\mathbb{R}^p}\|\theta-\theta^*\|_2^m\pi_{n,\hat{\alpha}_n}(\theta|X^n)d\theta 
    + \int_{\mathbb{R}^p}\|\theta-\theta^*\|_2^m\pi_{n,\hat{\gamma}_n}(\theta|X^n)d\theta=:T_1+T_2. \label{eq:M_alpha_gamma_hat}
\end{align}
Notice that we have shown
$\frac{1}{2}M_m(\widehat{\mu}_n, \widehat{\nu}_n) = T_1 + T_2 = M_m(\pi_{n,\hat{\alpha}_n},\pi_{n,\hat{\gamma}_n}),$
and to prove the claim, it suffices to argue that the above is $O_{f_{0,n}}(1)$.
We will see that in fact both $T_1$ and $T_2$ are $o_{f_{0,n}}(1)$ in what follows. Indeed, the change of variable $h = \sqrt{\alpha_nn}(\theta-\theta^*)$ as well as Theorem \ref{thm:moments_alt} and  $(\alpha_nn)^{-m/2}\to0$ show that
\begin{align}
    \nonumber 
    T_1 
    &\leq\int_{\mathbb{R}^p} \|\theta-\theta^*\|_2^m \phi\Big(\theta\big|\hat{\theta}, \frac{1}{\alpha_nn}V_{\theta^*}^{-1}\Big)d\theta + \int_{\mathbb{R}^p} \|(\theta-\theta^*)\|_2^m\Big|\pi_{n,\hat{\alpha}_n}(\theta|X^n) - \phi\Big(\theta\big|\hat{\theta}, \frac{1}{\alpha_nn}V_{\theta^*}^{-1}\Big)\Big|d\theta  \\
    \label{eq:T1_bound} & = (\alpha_nn)^{-\frac{m}{2}}\int_{\mathbb{R}^p} (\alpha_nn)^{-\frac{p}{2}}  \|h\|_2^m \phi\Big(h \, \big| \, \sqrt{\alpha_nn}(\hat{\theta}-\theta^*), V_{\theta^*}^{-1}\Big) dh + o_{f_{0,n}}(1) = o_{f_{0,n}}(1),
\end{align}
 The last equality follows from  Lemma \ref{lem:gauss_int_bound2} and $(\alpha_nn)^{-m/2}\to0$.

We now use the bound $\|\theta-\theta^*\|_2^m \leq 2^{m-1}\|\theta-\hat{\theta}\|_2^m + 2^{m-1}\|\hat{\theta}-\theta^*\|_2^m$ to obtain
\begin{align*}
   \frac{T_{2}}{2^{m-1}}  
    &\leq \int_{\mathbb{R}^p} \big( \|\theta-\hat{\theta}\|_2^m  + \|\hat{\theta}-\theta^*\|_2^m\big)\pi_{n,\hat{\gamma}_n}(\theta|X^n)d\theta= d_{\text{W}_{\text{m}}}(\pi_{n,\hat{\gamma}_n},\delta_{\hat{\theta}})^m + \|\hat{\theta}-\theta^*\|_2^m.
\end{align*}
Since we have assumed that the conditions of Proposition \ref{prop:alpha_inf} hold for some $m > p$, the first term of the above converges to $0$ in $f_{0,n}$-probability by Proposition \ref{prop:alpha_inf}. Furthermore, by \textbf{(A0)} of Theorem \ref{thm:moments_alt}, the second term is $o_{f_{0,n}}(1)$. Hence, $T_{2} = o_{f_{0,n}}(1)$.

\noindent\textbf{Term \eqref{eq:w2}:} 
By convexity of the $d_{\textrm{W}_p}(P,Q)^p$ (Lemma \ref{lem:Wasserstein_convex}) and  Proposition \ref{prop:alpha_inf},  we have that
\begin{align}
  \nonumber  \textrm{d}_{\textrm{W}_p}(q\pi_{n,\hat{\alpha
  }_n} + (1-q)\pi_{n,\hat{\gamma}_n},~q \phi_n +(1-q)\delta_{\hat\theta})^p &\leq q\textrm{d}_{\textrm{W}_p}(\pi_{n,\hat{\alpha}_n},\phi_n)^p + (1-q)\textrm{d}_{\textrm{W}_p}(\pi_{n,\hat{\gamma}_n} ,\delta_{\hat\theta})^p\\
    & = q\textrm{d}_{\textrm{W}_p}(\pi_{n,\hat{\alpha}_n},\phi_n)^p+o_{f_{0,n}}(1).\label{eq:dtv_mix_bound}
\end{align}
The remaining term in \eqref{eq:dtv_mix_bound} converges to zero in $f_{0,n}$-probability by Corollary~\ref{cor:random} with $\hat{\mu}_n = \pi_{n,\hat{\alpha}_n}$ and $\hat{\nu}_n = \phi_n$. Indeed, Corollary~\ref{cor:BvM} gives that $\textrm{d}_{\textrm{TV}}(\widehat\mu_n,\widehat\nu_n)\to 0$ in $f_{0,n}$-probability; hence, to apply Corollary~\ref{cor:random} we need to show that $M_m(\widehat\mu_n,\widehat\nu_n)=O_{f_{0,n}}(1),$ for $M_m$ defined in \eqref{eq:M_m}. Notice that
\begin{equation*}
    M_m(\widehat\mu_n,\widehat\nu_n) = \int \|\theta - \theta^*\|^m_2  \pi_{n,\hat{\alpha}_n} (\theta|X^n)d\theta +\int \|\theta - \theta^*\|^m_2  \phi\Big(\theta | \hat{\theta}, \frac{1}{\alpha_nn}V_{\theta^*}^{-1}\Big) d\theta.
\end{equation*}
The first term on the right side of the above is the $T_1$ of \eqref{eq:M_alpha_gamma_hat} and the second term is the same as the first term of \eqref{eq:T1_bound}, both of which are $o_{f_{0,n}}(1)$ as shown above.
\end{proof}

\section{Technical lemmas}\label{app:technical_lemmas}

\begin{Lemma}\label{lem:f(g,h)}
Assume that \textbf{(A0)}--\textbf{(A2)} hold. Then for any $\eta > 0$ and $\epsilon > 0$, there exists an integer $N(\eta, \epsilon)$ and a sequence $r_n \equiv r_n(\eta, \epsilon) \rightarrow \infty$, such that the following hold for any fixed $n$ with $\alpha_nn > N(\eta, \epsilon)$:
\begin{enumerate}
    \item Over $\bar{B}_{0}(r_n)$, the density $\pi_{n,\alpha_n}^{\text{LAN}}(\cdot|X^n)$ given in \eqref{eq:scaled_densities} is positive and the random variable $f_n(g,h)$ given in \eqref{fn+} is well defined with probability at least $1-\epsilon/2$.
    \item We have that 
    \begin{equation}\label{eq:lemma1_results}
    \begin{split}
        \mathbb{P}_{f_{0,n}}\Big(\sup_{g,h\in \bar{B}_{0}(r_n)}f_n(g,h) > \eta \Big) < \epsilon.
    \end{split}
    \end{equation}
\end{enumerate}
\end{Lemma}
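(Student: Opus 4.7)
The plan is to show that the logarithm of the ratio inside $f_n(g,h)$ simplifies dramatically under the LAN expansion of \textbf{(A2)}, with all leading-order terms canceling exactly against their counterparts coming from $\phi_n$, so that the ratio tends to $1$ uniformly over $\bar{B}_0(r_n)\times\bar{B}_0(r_n)$ in $f_{0,n}$-probability for a suitable sequence $r_n \to \infty$.

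First I would compute the four log-densities in $f_n(g,h)$ explicitly from \eqref{eq:scaled_densities}. Applying the LAN expansion of \textbf{(A2)} at both $g$ and $h$, and using the identity $\sqrt{\alpha_n n}(\hat{\theta}-\theta^*) = \sqrt{\alpha_n}\,\Delta_{n,\theta^*}$ when expanding $\log\phi_n(g) - \log\phi_n(h)$, the linear pieces $\sqrt{\alpha_n}(g-h)^\top V_{\theta^*}\Delta_{n,\theta^*}$ and the quadratic pieces $-\tfrac{1}{2}(g^\top V_{\theta^*} g - h^\top V_{\theta^*} h)$ cancel, leaving
\begin{equation*}
    \log\frac{\phi_n(h)\,\pi_{n,\alpha_n}^{\text{LAN}}(g|X^n)}{\pi_{n,\alpha_n}^{\text{LAN}}(h|X^n)\,\phi_n(g)} = R_n(g) - R_n(h) + \log\pi\Big(\theta^* + \tfrac{g}{\sqrt{\alpha_n n}}\Big) - \log\pi\Big(\theta^* + \tfrac{h}{\sqrt{\alpha_n n}}\Big).
\end{equation*}
Note that the prior and normalizing-constant factors in $\pi_{n,\alpha_n}^{\text{LAN}}$ cancel cleanly because the normalization depends only on $X^n$, not on $g$ or $h$.

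Next I would construct a diagonal sequence $r_n \to \infty$ with $r_n/\sqrt{\alpha_n n}\to 0$ along which both remainder terms above are uniformly small. Since \textbf{(A2)} gives $\sup_{h\in\bar{B}_0(r)}|R_n(h)| \to 0$ in $f_{0,n}$-probability for \emph{each} fixed $r>0$, a standard diagonalization produces $r_n\to\infty$ along which $\sup_{h\in\bar{B}_0(r_n)}|R_n(h)|\to 0$ in probability; slowing the growth further if necessary, we may also assume $r_n/\sqrt{\alpha_n n}\to 0$. Assumption \textbf{(A1)} then gives that $\pi$ is positive and continuous on an open ball about $\theta^*$, hence uniformly continuous on a compact sub-ball, so once $r_n/\sqrt{\alpha_n n}$ is sufficiently small the points $\theta^* + g/\sqrt{\alpha_n n}$ and $\theta^* + h/\sqrt{\alpha_n n}$ lie in this sub-ball for every $g,h\in\bar{B}_0(r_n)$, and the prior log-difference converges to $0$ uniformly. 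Combining the two facts, the displayed log-ratio is $o_{f_{0,n}}(1)$ uniformly over $(g,h)\in\bar{B}_0(r_n)^2$; since $\{1-e^x\}^+ \leq |x|$ when $|x|$ is small, the bound in claim (2) follows. For claim (1), positivity of $\pi_{n,\alpha_n}^{\text{LAN}}(\cdot|X^n)$ on $\bar{B}_0(r_n)$ is inherited from positivity of the prior on the relevant sub-ball together with finiteness of $\alpha_n\log f_n(X^n|\theta^* + h/\sqrt{\alpha_n n})$, which holds on the high-probability event where the LAN remainder is bounded.

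The main obstacle is producing the single diagonal sequence $r_n\to\infty$, since \textbf{(A2)} only controls $R_n$ on balls of \emph{fixed} radius; this is resolved by a routine diagonal extraction that keeps $r_n/\sqrt{\alpha_n n}\to 0$ so that the uniform-continuity argument for $\log\pi$ via \textbf{(A1)} still applies. Everything else reduces to the algebraic cancellation displayed above.
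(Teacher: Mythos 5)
Your proposal is correct and takes essentially the same route as the paper's proof: the LAN expansion cancels the linear and quadratic terms against those of $\phi_n$, leaving only $R_n(g)-R_n(h)$ plus the prior log-ratio, which are controlled on balls of fixed radius via \textbf{(A2)} and the continuity/positivity of $\pi$ from \textbf{(A1)}, and then extended to a growing radius $r_n$ by a diagonal extraction that keeps $r_n/\sqrt{\alpha_n n}$ small enough for the prior argument to apply. The paper merely organizes this as ``fixed radius first, then diagonalize,'' but the substance — including the treatment of well-definedness in claim (1) — is the same.
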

\begin{proof}
Our proof consists of three steps, adapting the arguments of \cite[Lemma 5]{avella_medina_robustness_2022}. In Step 1, we check that $f_n(g,h)$ is well defined over balls of  fixed (in $\alpha_nn$) radius $r>0$. In Step 2, 
we prove \eqref{eq:lemma1_results} where the supremum is taken over balls of  fixed radius $r>0$. In Step 3, we construct a sequence $r_n\rightarrow\infty$ where such that both results hold for $g,h\in\bar{B}_0(r_n)$ (i.e., where the results of Step 1 and Step 2 hold to give the final result).\\ 

\noindent\textbf{Step 1: Check that $f_n(g,h)$ is well defined on a ball of fixed radius.} We show that the function $f_n(g,h)$ is well defined on $\bar{B}_{0}(r)$ with high probability for $\alpha_nn$ (which we specify later) sufficiently large. Recall the definition of $f_n(g,h)$:
\begin{equation*}
\begin{split}
    f_n(g,h)
    &= \left\{1-\frac{\phi_n(h)\pi_{n,\alpha_n}^{\text{LAN}}(g|X^n)}{\pi_{n,\alpha_n}^{\text{LAN}}(h|X^n)\phi_n(g)}\right\}^+.\
\end{split}
\end{equation*}
We say $f_n(g,h)$ is well defined, if for $g,h\in\bar{B}_0(r)$, both $\pi_{n,\alpha}^{LAN}(h|X^n)$ and $\phi_n(g)$ are positive (i.e., we are not dividing by 0). 

Recall the definition of the $\alpha_n$-posterior in \eqref{alphaposterior} and the scaled densities in \eqref{eq:scaled_densities}. Then, 
\begin{equation*}   
    \begin{split}    
        \frac{\pi_{n,\alpha_n}^{\text{LAN}}\left(g|X^n\right)}{\pi_{n,\alpha_n}^{\text{LAN}}\left(h|X^n\right)}
        &= \left[\frac{f_n(X^n|\theta^* + \frac{g}{\sqrt{\alpha_nn}})}{f_n(X^n|\theta^* + \frac{h}{\sqrt{\alpha_nn}})}\right]^{\alpha_n}  \hspace{-2pt} \frac{\pi_n(g)}{\pi_n(h)} = \frac{\exp\left(\alpha_n\log f_n\left(X^n|\theta^* + \frac{g}{\sqrt{\alpha_nn}}\right)\right)}{\exp\left(\alpha_n\log f_n\left(X^n|\theta^* + \frac{h}{\sqrt{\alpha_nn}}\right)\right)} \frac{\pi_n(g)}{\pi_n(h)},
    \end{split}
\end{equation*}
where the density of the prior distribution of the transformation $\sqrt{\alpha_nn}(\theta-\theta^*)$ is denoted
\begin{equation}
    \begin{split}
        \pi_n(h) & \equiv (\alpha_nn)^{-p/2}\pi(\theta^* + h/\sqrt{\alpha_nn}).
    \label{eq:scaled_densities2}
    \end{split}
\end{equation}
 By the definition in \eqref{fn+}, with $$s_n(h)=\exp\Big(\alpha_n\log\Big(\frac{f_n(X^n|\theta^* + \frac{h}{\sqrt{\alpha_nn}})}{ f_n(X^n|\theta^*)}\Big)\Big),$$
we have
\begin{equation}\label{fn+s}
\begin{split}
    f_n(g,h)
    &= \Big\{1-\frac{\phi_n(h)\pi_{n,\alpha_n}^{\text{LAN}}(g|X^n)}{\pi_{n,\alpha_n}^{\text{LAN}}(h|X^n)\phi_n(g)}\Big\}^+ = \left\{1-\frac{\phi_n(h)s_n(g)\pi_n(g)}{\phi_n(g)s_n(h)\pi_n(h)}\right\}^+.
\end{split}
\end{equation}
To show that \eqref{fn+s} is well defined on the ball, we justify positivity of $\phi_n(\cdot)$, $s_n(\cdot)$, and $\pi_n(\cdot)$. This is obvious for $\phi_n(\cdot)$, as it is the Gaussian density. 

First we consider the positivity of $s_n(\cdot)$, which follows from \textbf{(A2)}. For any $h\in\bar{B}_0(r)$,
\begin{align}
    s_n(h) 
    \nonumber&= \exp\left(\alpha_n\left[\log f_n\Big(X^n\Big|\theta^*+\frac{h}{\sqrt{\alpha_nn}}\Big)-\log f_n(X^n|\theta^*)\right]\right)  \\
    \nonumber&= \exp\left(\alpha_n \left[\log f_n\Big(X^n\Big|\theta^*+\frac{h}{\sqrt{\alpha_nn}}\Big)-\log f_n(X^n|\theta^*)\right]-R_n(h)\right) \exp\left(R_n(h)\right) \\
    \label{eq:sn(h)_eq}&= \exp\left(\sqrt{\alpha_n}h^\top  V_{\theta^*}\Delta_{n,\theta^*} - \frac{1}{2}h^\top  V_{\theta^*}h\right)\exp\left(R_n(h)\right),
\end{align}
where \eqref{eq:sn(h)_eq} follows from the definition of $R_n(h)$ in Assumption \textbf{(A2)}. To conclude that $s_n(h)$ is positive with high probability on the ball, it suffices to show that neither of the terms inside $\exp(\cdot)$ in \eqref{eq:sn(h)_eq} diverges to $-\infty$ uniformly over $h\in\bar{B}_0(r)$. 

We have that $\Delta_{n,\theta^*} = O_{f_{0,n}}(1)$ by \textbf{(A0)} and $V_{\theta^*}$ is positive-definite by \textbf{(A2)}. From this, we have  $\sup_{h\in\bar{B}_0(r)} |h^\top  V_{\theta^*}\Delta_{n,\theta^*}| = O_{f_{0,n}}(1)$ and therefore, $\sup_{h\in\bar{B}_0(r)} \sqrt{\alpha_n} |h^\top  V_{\theta^*}\Delta_{n,\theta^*}| = o_{f_{0,n}}(1)$. Furthermore, by \textbf{(A2)}, we have that $\sup_{h\in\bar{B}_0(r)}|R_n(h)| = o_{f_{0,n}}(1)$. We conclude that for any $\epsilon > 0$, there exists $t(\epsilon) > 0$ and $N_0\equiv N_0(\epsilon,t(\epsilon),r$) such that the following event
\begin{equation*}
    \mathcal{E} = \Big\{\inf_{h\in\bar{B}_0(r)} \sqrt{\alpha_n}h^\top  V_{\theta^*}\Delta_{n,\theta^*} - \frac{1}{2}h^\top  V_{\theta^*}h > -t \quad \cap \quad  \inf_{h\in\bar{B}_0(r)} R_n(h) > -t\Big\},
\end{equation*}
satisfies, for $\alpha_nn > N_0$,
\begin{equation}\label{eq:sn(h)_prob}
    \begin{split}
        \mathbb{P}_{f_{0,n}}\left(\mathcal{E}\right) \geq 1-{\epsilon}/{2}.
    \end{split}   
\end{equation}

It remains to justify the positivity of $\pi_n(\cdot)$, given in \eqref{eq:scaled_densities2}, on the ball. We verify that  $\theta^* + h/\sqrt{\alpha_nn}$ and $\theta^* + g/\sqrt{\alpha_nn}$ belong to the ball $B_{\theta^*}(\delta)$, for any $g, h\in\bar{B}_0(r)$, where $\pi(\theta)$ is continuous and positive by Assumption \textbf{(A1)}. Indeed, notice that for any $r > 0$, there exists an integer $N_1 \equiv N_1(r,\delta) \equiv \lceil\frac{4r^2}{\delta^2}\rceil > 0$ such that $\theta^* + h/\sqrt{\alpha_nn} \in B_{\theta^*}(\delta)$ whenever $h \in \bar{B}_{0}(r)$ and $\alpha_n n \geq N_1(r, \delta)$. To see this, note that if $\|h\|_2 \leq r$ and $\alpha_n n \geq \frac{4r^2}{\delta^2}$, then 
$$\frac{\|h\|_2}{\sqrt{\alpha_nn}} \leq  \frac{r}{\sqrt{\alpha_nn}} \leq  \frac{r}{\sqrt{\frac{4r^2}{\delta^2}}} = \frac{\delta}{2} < \delta.$$
Then as $\|h\|_2/\sqrt{\alpha_nn} < \delta$ and $\|g\|_2/\sqrt{\alpha_nn} < \delta$, we have that $\theta^* + h/\sqrt{\alpha_nn}$ and $\theta^* + g/\sqrt{\alpha_nn}$ belong to the ball $B_{\theta^*}(\delta)$. Hence, positivity of $\pi_n(\cdot)$ on the ball is immediate when $\alpha_n n > N_1$.

By this conclusion and \eqref{eq:sn(h)_prob}, we have that for $\alpha_nn > \max(N_0, N_1)$, \eqref{fn+s} is well defined over $\bar{B}_0(r)$ with $f_{0,n}$-probability at least $1-\epsilon/2$.\

Notice that we have shown a result that is similar to what we would like to prove in result 1, but for $g,h\in\bar{B}_0(r)$, where $r$ is fixed in $\alpha_nn$. In Step 3, we will show that these same steps can be proven when $g,h\in\bar{B}_0(r_n)$, for a specific growing sequence to complete the proof of result 1.

\noindent \textbf{Step 2: Show the statement in \eqref{eq:lemma1_results} for fixed $r > 0$.}
For any sequence $h_n \in \bar{B}_{0}(r)$, such that $h_n\rightarrow h$, notice that \textbf{(A2)} implies
\begin{align}
    \log(s_n(h_n)) 
    \nonumber&= \alpha_n\Big[\log f_n\big(X^n \big|\theta^*+\frac{h_n}{\sqrt{\alpha_nn}}\big)-\log f_n(X^n|\theta^*)
    \Big] \\
    &= \sqrt{\alpha_n}h_n^\top  V_{\theta^*}\Delta_{n,\theta^*} - \frac{1}{2}h_n^\top  V_{\theta^*}h_n + o_{f_{0,n}}(1) 
    \label{eq:logsn(h)} = - \frac{1}{2}h_n^\top  V_{\theta^*}h_n + o_{f_{0,n}}(1),
\end{align}
where we used $\alpha_n\rightarrow0$ and $V_{\theta^*}\Delta_{n,\theta^*} = O_{f_{0,n}}(1)$ by \textbf{(A0)} as $V_{\theta^*}$ is positive definite by \textbf{(A2)}.

By definition of the density $\phi_n(h_n)$, we have 
$\log(\phi_n(h_n)) = -\frac{p}{2}\log(2\pi) + \frac{1}{2}\log(\det(V_{\theta^*})) - \frac{1}{2}h_n^\top  V_{\theta^*}h_n.$
Thus,
\begin{equation}\label{logLAN}
\log \left[\frac{s_n(h_n)}{\phi_n(h_n)}\right] \hspace{-1pt} =  o_{f_{0,n}}(1) + \frac{p}{2}\log(2\pi) - \frac{1}{2}\log(\det(V_{\theta^*})).\
\end{equation}
In \eqref{eq:sn_phin_ratio}--\eqref{bn+} below, we will implicitly condition on $\mathcal{E}$ and take $\alpha_nn > N_1$ so that each of the objects in \eqref{eq:sn_phin_ratio}--\eqref{eq:pi_ratio} is well defined. By \eqref{eq:logsn(h)}--\eqref{logLAN}, we notice that
\begin{equation}\label{eq:sn_phin_ratio}
\begin{split}
\log\left[\frac{\phi_n(h_n)s_n(g_n)}{\phi_n(g_n)s_n(h_n)}\right] = o_{f_{0,n}}(1).
\end{split}
\end{equation}
By \eqref{eq:scaled_densities2} and continuity of $\pi$, it follows $\pi(\theta^* + g_n/\sqrt{\alpha_nn}), \pi(\theta^* + h_n/\sqrt{\alpha_nn}) \rightarrow \pi(\theta^*)$, 
\begin{equation}\label{eq:pi_ratio}
\begin{split}
\log\left[\frac{\pi_n(g_n)}{\pi_n(h_n)}\right] = \log\left[\frac{\pi(\theta^* + g_n/\sqrt{\alpha_nn})}{\pi(\theta^* + h_n/\sqrt{\alpha_nn})}\right] = o_{f_{0,n}}(1).
\end{split}
\end{equation}
By \eqref{eq:sn_phin_ratio}--\eqref{eq:pi_ratio}, we obtain
\begin{equation}\label{bn+}
\begin{split}
b_n(g_n, h_n) 
&\equiv  \log\Big[\frac{\phi_n(h_n)s_n(g_n)\pi_n(g_n)}{\phi_n(g_n)s_n(h_n)\pi_n(h_n)}\Big] =  o_{f_{0,n}}(1).\
\end{split}
\end{equation}

As $h_n$ and $g_n$ are arbitrary sequences in $\bar{B}_{0}(r)$, result \eqref{bn+} is equivalent to saying that for any fixed $r$, there exists an $N_2 \equiv N_2(r,\epsilon,\eta)$, such that  $\mathbb{P}_{f_{0,n}}(|b_n(g_n, h_n) | > \eta|\mathcal{E}) < \epsilon/2$ for $\alpha_n n > \max(N_1, N_2)$. Note,  we also take $\alpha_nn$ to be larger than $N_1$ so that $f_n(g,h)$ is well defined conditional on $\mathcal{E}$; moreover, when $\alpha_nn> N_0$ we have $P(\mathcal{E}) \geq 1 - \epsilon/2$. Because $f_n(g_n,h_n)$ is a continuous function of $|b_n(g_n, h_n)|$ and $g_n$ and $h_n$ are assumed to be convergent sequences in $\bar{B}_0(r)$, by the continuous mapping theorem, there exists an integer $\tilde{N}_2 \equiv \tilde{N}_2(r, \epsilon, \eta)$ such that for $\alpha_n n >  \max(N_1, \tilde{N}_2$),
\begin{equation}\label{eq:fn_prob_bound_cond}
\begin{split}
    &\mathbb{P}_{f_{0,n}}\Big(\sup_{g,h\in \bar{B}_{0}(r)}f_n(g,h) > \eta \, \Big| \, \mathcal{E}\Big)
    < \frac{\epsilon}{2}.
    \end{split}
\end{equation}
Then by \eqref{eq:sn(h)_prob} and \eqref{eq:fn_prob_bound_cond}, we have for $\alpha_nn > \max(N_0, N_1, \tilde{N}_2)$,
\begin{equation}\label{eq:fn_prob_bound}
\begin{split}
    \mathbb{P}_{f_{0,n}}\Big(\sup_{g,h\in \bar{B}_{0}(r)}f_n(g,h) > \eta\Big)
&\leq\mathbb{P}_{f_{0,n}}\Big(\sup_{g,h\in \bar{B}_{0}(r)}f_n(g,h) > \eta \Big|\mathcal{E}\Big) + \mathbb{P}_{f_{0,n}}(\mathcal{E}^\mathsf{c}) 
    < \frac{\epsilon}{2} + \frac{\epsilon}{2} = \epsilon.
    \end{split}
\end{equation}
Now we have shown a result similar to the second result of the lemma, but for the supremum taken over $\bar{B}_0(r)$, where the radius is fixed in $r$. In step 3, we show that \eqref{eq:fn_prob_bound} holds for our constructed $r_n$.\\

\noindent \textbf{Step 3: Show the statements of Lemma \ref{lem:f(g,h)} for $r_n \rightarrow \infty$.} Recall the definitions of $N_0(\epsilon,t(\epsilon),r)$ from Step 1, $N_1(r, \delta) = \lceil \frac{4r^2}{\delta^2} \rceil$ also from Step 1, and $\tilde{N}_2(r, \epsilon, \eta)$ from Step 2. Let $N^*(\epsilon, \eta) = \max\{N_0(\epsilon,t(\epsilon),1), N_1(1, \delta), \tilde{N}_2(1,\epsilon,\eta)\}$ and for all $\alpha_n n > N^*(\epsilon, \eta)$, let
\[
r_n = \max\left\{r\in\mathbb{R} \, \big| \, r\leq\delta\sqrt{\alpha_nn}/2 \quad \textrm{ and } \quad \alpha_n n > \max\{N_0(\epsilon, t(\epsilon), r), \tilde{N}_2(r,\epsilon,\eta)) \right\}.\
\]

For $\alpha_n n \leq N^*(\epsilon,\eta)$, we can define $r_n$ arbitrarily,  e.g, $r_n=1$.\
We need to check: \\
\textbf{(A) Check that $r_n$ is well defined (i.e.,\, there exists an $r$ that satisfies the definition of $r_n$):} Indeed, $r=1$ satisfies the $r_n$ definition for $\alpha_n n > N^*(\epsilon, \eta)$:
\[
    \frac{\delta\sqrt{\alpha_nn}}{2} > \frac{\delta\sqrt{N^*(\epsilon, \eta)}}{2} \geq \frac{\delta\sqrt{4/\delta^2}}{2} = 1 = r,
\]
where we have used that $N^*(\epsilon, \eta) \geq N_1(1, \delta) \geq {4}/{\delta^2}$. Next, notice that
$\alpha_n n > N^*(\epsilon, \eta) \geq \max\{N_0(\epsilon, t(\epsilon), 1), \Tilde{N}_2(1, \epsilon, \eta)\}  = \max\{N_0(\epsilon, t(\epsilon), r), \Tilde{N}_2(r, \epsilon, \eta)\}$ by the definition of $N^*(\epsilon,\eta)$.

\noindent \textbf{(B) Check that in fact $r_n \rightarrow \infty$:} Indeed, 
\begin{align*}    
    &r_n 
    = \max\Big\{r\in\mathbb{R} \, \big| \, r\leq\frac{\delta\sqrt{\alpha_nn}}{2} \quad \textrm{ and } \quad \alpha_n n > \max\{N_0(\epsilon, t(\epsilon), r), \tilde{N}_2(r,\epsilon,\eta))  \Big\} \\
    &= \min\Big\{ \max\Big\{r\in\mathbb{R}  \big|  r \leq \frac{\delta\sqrt{\alpha_nn}}{2} \Big\}, \max\Big\{r\in\mathbb{R}  \big| \alpha_n n > \max\{N_0(\epsilon, t(\epsilon), r), \tilde{N}_2(r,\epsilon,\eta))  \Big\}\Big\},
\end{align*}
Clearly $\delta\sqrt{\alpha_nn}/2\rightarrow\infty$, so $r_n\rightarrow\infty$ if $\max\big\{r\in\mathbb{R}|\, \alpha_n n > \max\{N_0(\epsilon, t(\epsilon), r), \tilde{N}_2(r,\epsilon,\eta)) \big\} \rightarrow \infty$. Notice that in order for $\max\big\{r\in\mathbb{R}| \, \alpha_n n > \max\{N_0(\epsilon, t(\epsilon), r), \tilde{N}_2(r,\epsilon,\eta))  \big\} \rightarrow \infty$ as $n$ grows, we need that both $N_0(\epsilon, t(\epsilon), r)$ and $\tilde{N}_2(r,\epsilon,\eta)$ remain finite as $r$ grows. Indeed, if the result is not true, there would exist an $r_{\max}$ such that eith $N_0(\epsilon, t(\epsilon), r) = \infty$ or $\tilde{N}_2(r, \epsilon, \eta) = \infty$ for $r > r_{\max}$. However, Step 2 holds for any finite $r$, e.g. $r = r_{\max} + 1$, and, in particular, the results of Step 2 imply that $N_0$ and $\tilde{N}_2$ are finite so this is a contradiction.

\noindent \textbf{(C) Show that for all $\alpha_n n > N^*(\epsilon,\eta)$ we have $\mathbb{P}_{f_{0,n}}(\sup_{g,h\in \bar{B}_{0}(r_n)}f_n(g,h) > \eta) < \epsilon. $}
Doing so amounts to repeating Step 1 and Step 2 for $g,h\in\bar{B}_0(r_n)$: \\

Step 1: First, we check that $f_n(g,h)$ is well defined on $\bar{B}_0(r_n)$. We verify \eqref{eq:sn(h)_prob} for $\bar{B}_0(r_n)$, which implies that $s_n(\cdot)$ is positive on $\bar{B}_0(r_n)$. To do so, we notice that for each $\alpha_nn$, the set $\bar{B}_{0}(r_n)$ is compact. That is, $\bar{B}_0(r_n) = \bar{B}_0(r')$ for some $r' > 0$. Hence, by \textbf{(A2)}, for all $\epsilon > 0$, there exists $t > 0$ and $N_0' \equiv N_0'(\epsilon,t,r')$ such that \eqref{eq:sn(h)_prob} holds for $\alpha_nn > N_0'$ on $\bar{B}_0(r')$. Second, we check that $\pi_n(\cdot)$ is positive for all $g,h\in\bar{B}_0(r_n)$. We recognize that $\theta^* + h/\sqrt{\alpha_nn} \in B_{\theta^*}(\delta)$ whenever $h\in \bar{B}_{0}(r_n)$ since $\|h\|_2/\sqrt{\alpha_nn} \leq r_n/\sqrt{\alpha_nn} \leq \delta/2$, which follows from the definition of $r_n$. This guarantees $f_n(g,h)$ is well defined as discussed in Step 1.\\ 

Step 2: By the $r_n$ definition, $\alpha_n n > \max\{N_0(\epsilon, t(\epsilon), r_n), \tilde{N}_2(r_n,\epsilon,\eta)\}$ for all $\alpha_n n > N^*(\epsilon,\eta)$. Moreover, the condition that $\alpha_n n > N_1(r_n, \delta) = \lceil \frac{4r_n^2}{\delta^2} \rceil$ in \eqref{eq:fn_prob_bound} is to guarantee that $\theta^* + h/\sqrt{\alpha_nn} \in B_{\theta^*}(\delta)$ whenever $h\in \bar{B}_{0}(r_n)$, which we have justified above. Hence, the bound in \eqref{eq:fn_prob_bound} holds and this completes the proof.

\end{proof}

\begin{Lemma}{\cite[Lemma 1]{ray_asymptotics_2023}}\label{lem:TVbound}
    Consider densities $\varphi$ and $\psi$ that are positive on $K$, a given compact set $K \subset \mathbb{R}^p$. Then, for any function, $s(h)$, that is nonnegative on all of $\mathbb{R}^p$,
\begin{align*}
    \int_{\mathbb{R}^p}s(h)|\varphi(h) - \psi(h)|dh  &\leq \Big[ \sup_{g,g'\in K}\tilde{f}(g,g')\Big]\int_{\mathbb{R}^p}s(h)(\psi(h) + \varphi(h) )dh  \\
    &\qquad + \int_{\mathbb{R}^p\setminus K}s(h) (\psi(h) + \varphi(h) )dh,
\end{align*}
where
\begin{equation*}
    \tilde{f}(g,g') \equiv \left\{ 1 - \frac{\varphi(g')\psi(g)}{\psi(g')\varphi(g)}\right\}^+.
\end{equation*}
\end{Lemma}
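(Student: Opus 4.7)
\textbf{Proof plan for Lemma \ref{lem:TVbound}.} The plan is to split the integral at $K$, dispatch the piece on $\mathbb{R}^p\setminus K$ by the triangle inequality, and then reduce the contribution on $K$ to a pointwise estimate controlled by $\sup_{g,g'\in K}\tilde f(g,g')$. First I write
\[
\int_{\mathbb{R}^p} s(h)|\varphi(h)-\psi(h)|\,dh = \int_{K} s(h)|\varphi(h)-\psi(h)|\,dh + \int_{\mathbb{R}^p\setminus K} s(h)|\varphi(h)-\psi(h)|\,dh,
\]
and bound the second summand by $\int_{\mathbb{R}^p\setminus K} s(h)(\varphi(h)+\psi(h))\,dh$ via $|a-b|\le a+b$ for $a,b\ge 0$. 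This already reproduces the second term on the right-hand side of the claimed bound.

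For the first summand, the target is the pointwise estimate
\[
|\varphi(h)-\psi(h)| \le \Big[\sup_{g,g'\in K}\tilde f(g,g')\Big]\bigl(\varphi(h)+\psi(h)\bigr), \qquad h\in K,
\]
since multiplying by $s(h)\ge 0$, integrating over $K$, and enlarging $\int_K s(\varphi+\psi)$ to $\int_{\mathbb{R}^p} s(\varphi+\psi)$ then produces the first summand of the target. The key algebraic observation is that, for every $g'\in K$ (where $\varphi,\psi$ are positive),
\[
\varphi(h)\,\tilde f(h,g') = \Big(\varphi(h)-\psi(h)\tfrac{\varphi(g')}{\psi(g')}\Big)^{+},
\]
by the very definition of $\tilde f$, and the mirrored identity with $g'$ and $h$ swapped controls the negative part. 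Writing $\varphi(h)-\psi(h) = [\varphi(h) - \psi(h)\varphi(g')/\psi(g')] + \psi(h)[\varphi(g')/\psi(g') - 1]$ and choosing $g'\in K$ appropriately (either minimizing $|\varphi(g')/\psi(g') - 1|$ or averaging in $g'$ against the probability weight $\psi(g')/\int_K\psi$) reduces the first bracket to a quantity dominated by $\varphi(h)\sup_{K\times K}\tilde f$, while the second bracket is a constant (in $h$) times $\psi(h)$ that can be bundled back into the pointwise bound.

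The main obstacle is precisely this pointwise comparison on $K$: the one-sided estimate $(\varphi(h) - \psi(h)\varphi(g')/\psi(g'))^{+} \le \varphi(h)\sup\tilde f$ is immediate from the definition of $\tilde f$, but promoting it to a two-sided inequality on $|\varphi(h)-\psi(h)|$ with the symmetric factor $\varphi(h)+\psi(h)$ requires careful handling of the residual $\psi(h)|\varphi(g')/\psi(g') - 1|$. In the degenerate configurations where $\varphi/\psi$ stays uniformly above $1$ or uniformly below $1$ on $K$, no choice of $g'\in K$ can make that residual vanish by itself; at that point I would invoke the density constraint $\int_{\mathbb{R}^p}\varphi = \int_{\mathbb{R}^p}\psi = 1$, which forces compensating mass of the opposing density on $\mathbb{R}^p\setminus K$ and allows the residual to be absorbed into the $\int_{\mathbb R^p\setminus K}s(\varphi+\psi)$ term already produced in the first step.
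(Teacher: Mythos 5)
The paper itself does not prove this lemma --- it imports it verbatim from the cited earlier work --- so your argument has to stand on its own. Your first two moves (splitting the integral at $K$, and bounding the $\mathbb{R}^p\setminus K$ piece by $\int_{\mathbb{R}^p\setminus K}s(\varphi+\psi)$ via $|a-b|\le a+b$) are correct and account for the second term of the bound. But the entire content of the lemma sits in the step you yourself flag as ``the main obstacle,'' and that step cannot be closed along the lines you propose. The pointwise inequality $|\varphi(h)-\psi(h)|\le \bigl[\sup_{g,g'\in K}\tilde f(g,g')\bigr]\,(\varphi(h)+\psi(h))$ on $K$ is false whenever the ratio $\varphi/\psi$ is constant and different from $1$ on $K$: then $\tilde f\equiv 0$ on $K\times K$ while $|\varphi-\psi|>0$ there. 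More importantly, your proposed rescue --- absorb the residual $\psi(h)\,|\varphi(g')/\psi(g')-1|$ into the $\int_{\mathbb{R}^p\setminus K}s(\varphi+\psi)$ term using $\int\varphi=\int\psi=1$ --- cannot be carried out with the constants as written. Take $\varphi$ the uniform density on $[0,1]$, $\psi$ the uniform density on $[0,2]$, $K=[0,1]$, $s\equiv1$. Then $\varphi/\psi\equiv2$ on $K$, so $\sup_{g,g'\in K}\tilde f(g,g')=0$, while $\int_{\mathbb{R}}|\varphi-\psi|=1$ and $\int_{K^{\mathsf c}}(\varphi+\psi)=\tfrac12$. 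The residual your plan must absorb is exactly $\int_K s\,\psi=\tfrac12$, and the complement term has no room for it; indeed the full claimed inequality reads $1\le\tfrac12$ in this configuration.

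So the gap is not merely an unexecuted step: no choice of, or averaging over, $g'\in K$ can finish the argument with a coefficient of $1$ on the complement integral. The mechanism that does work (and that the application in Theorem~\ref{thm:moments_alt} actually needs, since there both $\sup f_n$ and the tail masses vanish) is the Kleijn--van der Vaart one: write $\varphi(h)-\psi(h)=\varphi(h)\bigl(1-\tfrac{\psi(h)\varphi(g)}{\varphi(h)\psi(g)}\bigr)+\psi(h)\bigl(\tfrac{\varphi(g)}{\psi(g)}-1\bigr)$, integrate the free variable $g$ against the restriction of one density to $K$ renormalized to a probability measure, and control the leftover term by the discrepancy between $\int_K\varphi$ and $\int_K\psi$ --- which, by $\int\varphi=\int\psi=1$, is again a complement mass. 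This charges $\mathbb{R}^p\setminus K$ a second time, which is why the provable bound carries the complement integral with a factor of $2$ (the uniform example above is tight for that version). If you want to prove the statement in the form given here, you first need to recover that extra complement contribution; as it stands, your plan's final absorption step would be proving something false.
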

\begin{Remark}
   Unlike \cite[Lemma 1]{ray_asymptotics_2023}, Lemma~\ref{lem:TVbound} does not use the quantity $\tilde{f}^-(g,h)$ since, by \cite[Eqs.\ (21)-(22)]{ray_asymptotics_2023},  we see that $\sup_{g,h\in K}\tilde{f}^+(g,h) = \sup_{g,h\in K}\tilde{f}^-(g,h)$.
\end{Remark}

\begin{Lemma}\label{lem:gauss_int_bound2}
Suppose Assumption \textbf{(A0)} holds. Then for sequences $\alpha_n\to0$ and $\alpha_nn\to\infty$, there exists a constant $M$ such that
    \begin{equation}\label{eq:gauss_int_bound_result}
    \begin{split}
        \lim_{n\to\infty}&\mathbb{P}_{f_{0,n}}\Big(\int_{\mathbb{R}^p} \|h\|_2^k \phi_n(h) dh > M \Big) \\
        &= \lim_{n\to\infty}\mathbb{P}_{f_{0,n}}\Big(\int_{\mathbb{R}^p} (\alpha_nn)^{-\frac{p}{2}}  \|h\|_2^k \phi\Big(h  \big|  \sqrt{\alpha_nn}(\hat{\theta}-\theta^*), V_{\theta^*}^{-1}\Big) dh > M\Big) = 0.
    \end{split}
    \end{equation}
\end{Lemma}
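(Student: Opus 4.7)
The plan is to reduce the integral to the $k$-th moment of a shifted Gaussian and then use Assumption \textbf{(A0)} to show that the shift vanishes in probability. Pulling the deterministic prefactor out of the integrand in \eqref{eq:gauss_int_bound_result} writes the quantity of interest as $(\alpha_nn)^{-p/2}\mathbb{E}\|X\|_2^k$, where $X \sim \mathcal{N}(\mu_n, V_{\theta^*}^{-1})$ with $\mu_n := \sqrt{\alpha_nn}(\hat\theta - \theta^*)$ (and the expectation is taken with respect to this Gaussian, conditionally on the data, so the resulting quantity remains random through $\mu_n$). For $k \geq 1$ I would then apply $\|x\|_2^k \leq 2^{k-1}(\|x-\mu_n\|_2^k + \|\mu_n\|_2^k)$ and take Gaussian expectations, observing that $\mathbb{E}\|X - \mu_n\|_2^k = c_k$ is a finite constant depending only on $k$ and $V_{\theta^*}$, since the covariance $V_{\theta^*}^{-1}$ does not depend on $n$.

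The key step is controlling the bias $\|\mu_n\|_2^k$. Writing $\mu_n = \sqrt{\alpha_n}\cdot\sqrt{n}(\hat\theta - \theta^*)$, Assumption \textbf{(A0)} gives asymptotic normality of $\sqrt{n}(\hat\theta - \theta^*)$, which by tightness yields $\|\sqrt{n}(\hat\theta - \theta^*)\|_2^k = O_{f_{0,n}}(1)$. Since $\alpha_n \to 0$, this forces $\|\mu_n\|_2^k = o_{f_{0,n}}(1)$. Combining with the previous display gives $\mathbb{E}\|X\|_2^k = O_{f_{0,n}}(1)$, and multiplication by the vanishing deterministic factor $(\alpha_nn)^{-p/2} \to 0$ shows that $\int_{\mathbb{R}^p}\|h\|_2^k \phi_n(h)\,dh = o_{f_{0,n}}(1)$. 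Hence any fixed constant $M > 0$ satisfies \eqref{eq:gauss_int_bound_result}; the case $k=0$ is immediate since the integral equals $(\alpha_nn)^{-p/2}$.

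No substantive obstacle is expected here: the argument is a direct Gaussian moment computation. The only point requiring any care is extracting the $\sqrt{\alpha_n}$ factor from $\mu_n$ in order to leverage the parametric $\sqrt{n}$-rate from \textbf{(A0)}; without this, the bias would only be $O_{f_{0,n}}(1)$ and the proof would still go through for the stated boundedness-in-probability claim via the $(\alpha_nn)^{-p/2}$ prefactor. In fact the argument establishes convergence to zero in probability, which is strictly stronger than what the lemma asserts, and this slack is what makes the conclusion robust.
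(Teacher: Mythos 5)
Your proof is correct and, apart from notation, follows the same route as the paper: pull the $(\alpha_nn)^{-p/2}$ prefactor out, reduce to a $k$-th moment of a Gaussian with mean $\mu_n=\sqrt{\alpha_nn}(\hat\theta-\theta^*)$ and fixed covariance $V_{\theta^*}^{-1}$, split via $\|x\|_2^k\lesssim\|x-\mu_n\|_2^k+\|\mu_n\|_2^k$ so the centered term is a constant, and then write $\mu_n=\sqrt{\alpha_n}\cdot\sqrt{n}(\hat\theta-\theta^*)$ so that \textbf{(A0)} together with $\alpha_n\to0$ kills the bias term. The paper does the same decomposition after an explicit change of variable to $\theta\sim\mathcal{N}(0,V_{\theta^*}^{-1}/(\alpha_nn))$, arriving at the identical pair of terms. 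One small observation: the paper's displayed change of variable in fact treats $\phi_n$ as a \emph{proper} Gaussian density (the $(\alpha_nn)^{-p/2}$ factor appearing in the definition \eqref{eq:scaled_densities} and in the lemma's statement does not survive their line (31)), so the paper proves $O_{f_{0,n}}(1)$; you read the statement literally, keep that prefactor, and obtain the strictly stronger $o_{f_{0,n}}(1)$. Either way the displayed conclusion holds, and your closing remark that the $\sqrt{\alpha_n}$-extraction is the only step that actually needs $\alpha_n\to0$ is exactly the right observation.
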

\begin{proof}
        We notice that by the change of variable $h = \sqrt{\alpha_nn}(\theta + \hat{\theta} - \theta^*)$,
    \begin{equation}
\begin{split}
    \int_{\mathbb{R}^p}  \|h\|_2^k \phi_n(h) dh
    &= \int_{\mathbb{R}^p} (\alpha_nn)^{-\frac{p}{2}}  \|h\|_2^k \phi\Big(h \, \big| \, \sqrt{\alpha_nn}(\hat{\theta}-\theta^*), V_{\theta^*}^{-1}\Big) dh \\
   &= \int_{\mathbb{R}^p} (\alpha_nn)^{\frac{k}{2}} \|\theta - \hat{\theta} + \theta^*\|_2^k\phi\Big(\theta  \big|  0, \frac{V_{\theta^*}^{-1}}{\alpha_nn}\Big)d\theta \label{eq:t2_eq1}  \\
   &= (\alpha_nn)^{\frac{k}{2}} \mathbb{E}_{\mathcal{N}(0, \frac{V_{\theta^*}^{-1}}{\alpha_nn} )} [ \|\theta + \hat{\theta} - \theta^*\|_2^k ].
    \end{split}
    \end{equation}
    Throughout the remainder of the proof, we will use $\mathbb{E}$ rather than $\mathbb{E}_{\mathcal{N}(0, V_{\theta^*}^{-1}/(\alpha_nn))}$ to denote expectations taken with respect to $\theta \sim \mathcal{N}(0, V_{\theta^*}^{-1}/(\alpha_nn))$. It is not difficult to show that $(a + b)^{p} \leq 2^{p} (a^{p} + b^{p})$ for $p \geq 0$. Considering the right side of \eqref{eq:t2_eq1}, notice
    $\mathbb{E}  [ \|\theta + \hat{\theta} - \theta^*\|_2^k ]
        \leq \mathbb{E}  [ (\|\theta\|_2 + \|\hat{\theta} - \theta^*\|_2)^k]
        \leq 2^k \mathbb{E}  [ \|\theta\|_2^k] + 2^k\|\hat{\theta} - \theta^*\|_2^k.$
    We have shown that 
    \begin{equation}
        \label{eq:t2_eq2a}
        \int_{\mathbb{R}^p} \|h\|_2^k \phi_n(h) dh \leq 2^k  (\alpha_nn)^{\frac{k}{2}}  \mathbb{E}  [ \|\theta\|_2^k] + (2 \sqrt{\alpha_n})^{k} \|\sqrt{n}(\hat{\theta} - \theta^*)\|_2^k .
    \end{equation}
    Notice that $\|\theta\|_2^k \leq p^{\frac{k}{2} - 1} \sum_{i=1}^p |\theta_i|^{k}$ by H{\"o}lder's inequality. Next, since $\theta_i \sim \mathcal{N}(0, \frac{1}{\alpha_nn}[V_{\theta^*}^{-1}]_{ii})$,
    \[\mathbb{E}[|\theta_i|^{k}] = \frac{2^{\frac{k}{2}}}{\sqrt{\pi}} \Gamma \left(\frac{k+1}{2}\right) \left[\frac{[V_{\theta^*}^{-1}]_{ii}}{\alpha_nn}\right]^\frac{k}{2},\]
    for each $i \in [p]$.
    Therefore, the first term on the right side of \eqref{eq:t2_eq2a} is upper bounded by a constant: 
    \begin{align}
        2^k  (\alpha_nn)^{\frac{k}{2}}  \mathbb{E}  [ \|\theta\|_2^k ]
        &\leq  2^k  (\alpha_nn)^{\frac{k}{2}}p^{\frac{k}{2}-1}  \frac{2^\frac{k}{2}}{\sqrt{\pi}}  \Gamma \Big(\frac{k+1}{2}\Big)  \sum_{i=1}^p  \Big[\frac{[V_{\theta^*}^{-1}]_{ii}}{\alpha_nn}\Big]^{\frac{k}{2}} 
         \nonumber \\
        &=\frac{2^{\frac{3k}{2}}p^{\frac{k}{2}-1}}{\sqrt{\pi}}  \Gamma \Big(\frac{k+1}{2}\Big) \sum_{i=1}^p  \left[[V_{\theta^*}^{-1}]_{ii}\right]^{\frac{k}{2}}. \label{eq:t2_eq2b}
    \end{align}
    Hence, to show the result in \eqref{eq:gauss_int_bound_result}, it suffices to show that \eqref{eq:t2_eq2a} is bounded in $f_{0,n}$-probability. We do so as follows. By Assumption \textbf{(A0)}, we have that $\sqrt{n}(\hat{\theta} - \theta^*)$ converges in distribution to $\mathcal{N}(0, V)$ for some positive definite matrix, $V$. Let $Q\Lambda Q^\top$ denote the spectral decomposition of $V$. Then we have that $\Lambda^{-\frac{1}{2}}\sqrt{n}(\hat{\theta} - \theta^*)$ converges in distribution to $\mathcal{N}(0, I)$. By the continuous mapping theorem, $\|\Lambda^{-\frac{1}{2}}\sqrt{n}(\hat{\theta} - \theta^*)\|_2^{k} = (\|\Lambda^{-\frac{1}{2}}\sqrt{n}(\hat{\theta} - \theta^*)\|_2^{2})^{\frac{k}{2}}$
    converges in distribution to $(\chi^2_p)^{\frac{k}{2}}$ and, hence, is bounded in $f_{0,n}$-probability. Denote by $\lambda_1,\ldots,\lambda_p$ the diagonal elements of $\Lambda$ (i.e., the eigenvalues of $V$). Notice that
    \begin{equation*}
        \begin{split}
            \|\Lambda^{-\frac{1}{2}}\sqrt{n}(\hat{\theta} - \theta^*)\|_2^{k} 
        = \Big(\sum_{j=1}^p\frac{[\sqrt{n}(\hat{\theta}_j - \theta^*)_j]^2}{\lambda_j} \Big)^{\frac{k}{2}} &\geq \frac{1}{\lambda_{\text{max}}^{k/2}} \Big(\sum_{j=1}^p[\sqrt{n}(\hat{\theta}_j - \theta^*)_j]^2\Big)^{\frac{k}{2}} \\
            &= \frac{\|\sqrt{n}(\hat{\theta} - \theta^*)\|_2^{k}}{\lambda_{\text{max}}^{k/2}},
    \end{split}
    \end{equation*}
    from which it follows $ \|\sqrt{n}(\hat{\theta} - \theta^*)\|_2^{k} \leq (\sqrt{\lambda_{\text{max}}})^k\|\Lambda^{-\frac{1}{2}}\sqrt{n}(\hat{\theta} - \theta^*)\|_2^{k}$. 
   From the fact that $\|\Lambda^{-\frac{1}{2}}\sqrt{n}(\hat{\theta} - \theta^*)\|_2^{k}$ is bounded in $f_{0,n}$-probability and the assumption that $\alpha_n\to0$ we have the following:
    for any $\epsilon > 0$ and \emph{for any} $M_0 > 0$, there exists an integer $N_1 \equiv N(M_0, \epsilon, k)$ such that whenever $n > N_1$,
    \begin{equation}\label{eq:gauss_int_bound_term2}
        \mathbb{P}_{f_{0,n}}\left((2 \sqrt{\alpha_n})^{k} \|\sqrt{n}(\hat{\theta} - \theta^*)\|_2^k > M_0\right)
        \leq \mathbb{P}_{f_{0,n}}\left((2 \sqrt{\lambda_{\text{max}}\alpha_n})^{k} \|\Lambda^{-\frac{1}{2}}\sqrt{n}(\hat{\theta} - \theta^*)\|_2^{k} > M_0\right) 
        \leq \epsilon.\
    \end{equation}
    Using this and the bound in \eqref{eq:t2_eq2b}, taking $M$ to be a constant larger than the constant in \eqref{eq:t2_eq2b}, we have that whenever $n > N_1$,
    \begin{equation}\label{eq:gauss_int_bound_result2}
        \begin{split}
            \mathbb{P}_{f_{0,n}}&\Big(\int_{\mathbb{R}^p} \|h\|_2^k \phi_n(h) dh > M \Big) \\
            &\leq \mathbb{P}_{f_{0,n}}\Big(\frac{2^{\frac{3k}{2}}p^{\frac{k}{2}-1}}{\sqrt{\pi}}  \Gamma \Big(\frac{k+1}{2}\Big)  \sum_{i=1}^p  \left[[V_{\theta^*}^{-1}]_{ii}\right]^{\frac{k}{2}} + (2 \sqrt{\alpha_n})^{k} \|\sqrt{n}(\hat{\theta} - \theta^*)\|_2^k > M\Big) \\
            &= \mathbb{P}_{f_{0,n}}\Big((2 \sqrt{\alpha_n})^{k} \|\sqrt{n}(\hat{\theta} - \theta^*)\|_2^k > M - \frac{2^{\frac{3k}{2}}p^{\frac{k}{2}-1}}{\sqrt{\pi}}  \Gamma \Big(\frac{k+1}{2}\Big)  \sum_{i=1}^p  \left[[V_{\theta^*}^{-1}]_{ii}\right]^{\frac{k}{2}}\Big)
            < \epsilon.
        \end{split}
    \end{equation}
\end{proof}

\begin{Lemma}\label{MomentKc2}
    Suppose a random variable $Z \overset{d}{\equiv} Y|X^n$ has a density $f_Z(\cdot)$ on $\mathbb{R}^p$. For some $k \geq 0$, assume there exists a $\gamma > 0$ such that $\mathbb{E}[\|Z\|_2^{k(1+\gamma)}] = O_{f_{0,n}}(1)$. Then, for any $\epsilon > 0$, any $M > 0$, and any $r_n \rightarrow \infty$, there exists an integer $N > 0$ such that $\mathbb{P}_{f_{0,n}}(\int_{\bar{B}_0(r_n)^\mathsf{c}}\|z\|_2^{k} \, f_Z(z) dz > M) < \epsilon$ for all $\alpha_nn > N$.
\end{Lemma}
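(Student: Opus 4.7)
The plan is to reduce the tail integral to the finite higher moment via a simple truncation-weight trick, and then upgrade the deterministic bound to a high-probability bound using the assumed stochastic boundedness of $\mathbb{E}[\|Z\|_2^{k(1+\gamma)}]$.

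First, assume $k>0$ (the $k=0$ case is essentially trivial: the integral in question is a probability, hence bounded by $1$, so the conclusion holds for all $M \ge 1$ with $\epsilon=0$, and for $M<1$ follows by dominated convergence applied to the density $f_Z$). On the set $\{\|z\|_2 > r_n\}$ we have $(\|z\|_2/r_n)^{k\gamma} > 1$, so
\begin{equation*}
\int_{\bar{B}_0(r_n)^{\mathsf{c}}} \|z\|_2^{k}\, f_Z(z)\, dz
\;\le\; \int_{\bar{B}_0(r_n)^{\mathsf{c}}} \|z\|_2^{k}\Big(\frac{\|z\|_2}{r_n}\Big)^{k\gamma} f_Z(z)\, dz
\;\le\; \frac{1}{r_n^{k\gamma}} \int_{\mathbb{R}^p} \|z\|_2^{k(1+\gamma)} f_Z(z)\, dz.
\end{equation*}
This is a pathwise (in $X^n$) bound, and it is the core estimate in the argument. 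It can equivalently be derived from H\"older's inequality combined with a Markov tail bound, but the truncation form above is cleanest.

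Next, I would use the hypothesis that $\int_{\mathbb{R}^p} \|z\|_2^{k(1+\gamma)} f_Z(z)\, dz = O_{f_{0,n}}(1)$: for the given $\epsilon>0$, pick $M_0 \equiv M_0(\epsilon)>0$ and $N_0 \equiv N_0(\epsilon)$ such that for all $\alpha_n n > N_0$,
\begin{equation*}
\mathbb{P}_{f_{0,n}}\Big(\int_{\mathbb{R}^p} \|z\|_2^{k(1+\gamma)} f_Z(z)\, dz > M_0\Big) < \epsilon.
\end{equation*}
Because $r_n \to \infty$ and $k\gamma>0$, there exists $N_1 \equiv N_1(M,M_0,k,\gamma)$ such that $M_0/r_n^{k\gamma} \le M$ whenever $\alpha_n n > N_1$. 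Combining the deterministic estimate above with these two facts gives, for all $\alpha_n n > N \equiv \max(N_0,N_1)$,
\begin{equation*}
\mathbb{P}_{f_{0,n}}\Big(\int_{\bar{B}_0(r_n)^{\mathsf{c}}} \|z\|_2^{k} f_Z(z)\, dz > M\Big)
\;\le\; \mathbb{P}_{f_{0,n}}\Big(\tfrac{1}{r_n^{k\gamma}} \int \|z\|_2^{k(1+\gamma)} f_Z(z)\, dz > M\Big)
\;\le\; \mathbb{P}_{f_{0,n}}\Big(\int \|z\|_2^{k(1+\gamma)} f_Z(z)\, dz > M_0\Big) < \epsilon,
\end{equation*}
which is the desired conclusion.

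There is no real obstacle here: the only subtlety is making sure one can extract a deterministic power of $r_n$ that kills the remaining factor, which forces us to use the strictly larger moment of order $k(1+\gamma)$ rather than the $k$-th moment itself (otherwise Markov's inequality would only return a deterministic $O(1)$ bound and not a vanishing one). The $k=0$ case must be flagged separately as above, but is immediate from the fact that a density has bounded total mass and tails that vanish along any growing sequence of balls.
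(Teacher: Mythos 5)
Your proof is correct for $k>0$ and reaches the same conclusion as the paper by a genuinely simpler route. The paper's proof goes through the layer-cake (tail) formula for $\mathbb{E}[V]$ with $V=\|Z\|_2^k\mathbbm{1}\{\|Z\|_2>r_n\}$, splits the integral at $r_n^k$, and applies Markov's inequality to each piece, arriving at the bound $\tfrac{\gamma+1}{\gamma}\,r_n^{-k\gamma}\,\mathbb{E}[\|Z\|_2^{k(1+\gamma)}]$; your single pointwise inequality $\|z\|_2^k\le \|z\|_2^{k(1+\gamma)}/r_n^{k\gamma}$ on $\{\|z\|_2>r_n\}$ yields the same structure in one line with the slightly sharper constant $1$ in place of $(\gamma+1)/\gamma$. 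The concluding step — invoking $O_{f_{0,n}}(1)$ of the higher moment together with $r_n^{k\gamma}\to\infty$ — is identical in both arguments, and you correctly identify why the strictly larger moment order is indispensable.

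One caveat concerns your $k=0$ aside. Your claim that the $M<1$ case ``follows by dominated convergence applied to the density $f_Z$'' does not work as stated: $f_Z$ is the (random) density of $Y\mid X^n$ and changes with $n$, so there is no fixed density along which to pass to the limit, and indeed one can construct sequences of densities whose mass escapes to infinity faster than $r_n$, making $\int_{\bar B_0(r_n)^{\mathsf c}} f_Z>M$ with probability one. However, for $k=0$ the lemma's hypothesis is vacuous ($\mathbb{E}[\|Z\|_2^0]=1$), and the paper's own proof fares no better there — its final bound reduces to the constant $(\gamma+1)/\gamma$, which does not vanish. So this is a defect of the lemma statement shared by both proofs rather than a gap specific to yours; in the paper's applications a positive moment is always available, which is what actually rescues the $k=0$ case.
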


\begin{proof}
The proof of Lemma \ref{MomentKc2} follows almost verbatim from the proof of \cite[Lemma 2]{ray_asymptotics_2023}. Let $f_{\|Z\|_2^k}(\cdot)$ denote the density function of $\|Z\|_2^k$. Define the random variable 
\begin{equation*}
\begin{split}
    V 
    &\equiv \|Z\|_2^k\mathbbm{1}\{ \|Z\|_2 > r_n\}
    = \begin{cases}
    0 &\textrm{if } \|Z\|_2 \leq r_n, \\
    \|Z\|_2^k &\textrm{if } \|Z\|_2 > r_n.
    \end{cases}
\end{split}
\end{equation*}
For the remainder of the proof, we will take $\alpha_nn > N_0$, where $N_0$ is an integer large enough such that $r_n > 0$ for all $\alpha_nn > N_0$. Thus, the density of $V$ is given by
\begin{equation*}
    f_V(v) =
    \begin{cases}
    \mathbb{P}(\|Z\|_2 \leq r_n) &\textrm{ if } v = 0, \\
    f_{\|Z\|_2^k}(v) &\textrm{ if } v > r_n^{k}.
    \end{cases}
\end{equation*}
 Hence, we can represent the expectation of $V$ as 
\begin{equation}\label{markov0}
    \begin{split}
        \mathbb{E}[V]
        &= \int_0^{r_n^k} \mathbb{P}(V > t)dt + \int_{r_n^k}^\infty \mathbb{P}(V > t)dt.
    \end{split}
\end{equation}
Using the definition of $V$ and its density function, we study \eqref{markov0}.
Consider the first term on the right side,
\begin{equation}\label{markov}
    \begin{split}
        \int_0^{r_n^k}    \mathbb{P}(V > t)dt  = \int_0^{r_n^k} \int_t^\infty f_V(v)dvdt
        &\overset{(a)}{=} \int_0^{r_n^k} \mathbb{P}(\|Z\|_2^k > r_n^k)dt  \\
        &= r_n^k\mathbb{P}(\|Z\|_2^k > r_n^k) \overset{(b)}{\leq} \frac{ \mathbb{E}[\|Z\|_2^{k(1+\gamma)}]}{r_n^{k\gamma}},
    \end{split}
\end{equation}
where the $\gamma$ is such that $\mathbb{E}[\|Z\|_2^{k(1+\gamma)}] = O_{f_{0,n}}(1)$. 
Step $(a)$ in \eqref{markov} follows because, for $0 < t < r_n^k$, we have
$
\int_t^\infty f_V(v)dv = \int_{r_n^k}^\infty f_{\|Z\|_2^k}(v) dv = \mathbb{P}(\|Z\|_2^k > r_n^k),$
and step $(b)$ by Markov's inequality. For the second term on the right side of \eqref{markov0}, we again use Markov's inequality:
\begin{equation}\label{markov1}
    \begin{split}
       \int_{r_n^k}^\infty \mathbb{P}(V > t)dt &\leq   \int_{r_n^k}^\infty \frac{\mathbb{E}[V^{1+\gamma}]}{t^{1+\gamma}} dt
       \overset{(c)}{\leq} \int_{r_n^k}^\infty    \frac{\mathbb{E}[\|Z\|_2^{k(1+\gamma)}]}{t^{1+\gamma}} dt  
       =   \frac{\mathbb{E}[\|Z\|_2^{k(1+\gamma)}]}{\gamma r_n^{k\gamma}}.
    \end{split}
\end{equation}
Step $(c)$ in \eqref{markov1} follows because $V^{1 + \gamma}  = (\|Z\|_2^k\mathbbm{1}\{\|Z\|_2 > r_n\})^{1 + \gamma}  \leq \|Z\|_2^{k(1+\gamma)}$.

Plugging \eqref{markov} and \eqref{markov1} into \eqref{markov0}, we find
\begin{equation}\label{markov_new}
    \begin{split}
        \int_{\bar{B}_0(r_n)^\mathsf{c}}\|z\|_2^{k} \, f_Z(z) dz = 
        \mathbb{E}[V]
        &\leq \frac{\gamma + 1}{\gamma r_n^{k\gamma}}  \mathbb{E}\left[\|Z\|_2^{k(1+\gamma)}\right]. 
    \end{split}
\end{equation}
By \eqref{markov_new}, we have that
$
    \mathbb{P}_{f_{0,n}}(\int_{\bar{B}_0(r_n)^\mathsf{c}}\|z\|_2^{k} \, f_Z(z) dz > M)
    \leq \mathbb{P}_{f_{0,n}}(\frac{\gamma + 1}{\gamma r_n^{k\gamma}}  \mathbb{E}[\|Z\|_2^{k(1+\gamma)}] > M) 
    \leq \mathbb{P}_{f_{0,n}}(\frac{\gamma + 1}{\gamma r_n^{k\gamma}}  \mathbb{E}[\|Z\|_2^{k(1+\gamma)}] > M)$.
Since $r_n\to\infty$ and $\mathbb{E}[\|Z\|_2^{k(1+\gamma)}] = O_{f_{0,n}}(1)$ by assumption, for all $\epsilon > 0$, there exists $N\equiv N(\epsilon, k, \gamma) > N_0$ such that the right hand side of the above is upper bounded by $\epsilon$ whenever $\alpha_nn > N$. Hence, we have established the desired result. 
\end{proof}

\subsection{Auxiliary lemmas} 

\begin{Lemma}\label{lem:l1_l2_tensor_bound}
   
    Given a vector $v\in\mathbb{R}^p$, we have $
        \|v^{\otimes k}\|_1 \leq p^{\frac{k}{2}} \left\|v\right\|_2^k$ for $\|v^{\otimes k}\|_1$ defined in \eqref{eq:tensor_norm_def}.
\end{Lemma}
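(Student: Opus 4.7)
The plan is to reduce the tensor norm to a product of one-norms and then invoke the standard inequality between $\ell_1$ and $\ell_2$ norms on $\mathbb{R}^p$. The key observation is that the summand $|v_{i_1}\cdots v_{i_k}|$ factors as $|v_{i_1}|\cdots|v_{i_k}|$, so the nested sum in the definition of $\|v^{\otimes k}\|_1$ is really a product of sums.

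First I would write out
\begin{equation*}
\|v^{\otimes k}\|_1 = \sum_{i_1=1}^{p} \cdots \sum_{i_k=1}^{p}|v_{i_1}|\cdots |v_{i_k}| = \Bigl(\sum_{i=1}^p |v_i|\Bigr)^k = \|v\|_1^k,
\end{equation*}
using that the multiple sum over a product factors. Next, I would apply Cauchy--Schwarz to the inner product of $(|v_1|,\ldots,|v_p|)$ with $(1,\ldots,1)$ to obtain
\begin{equation*}
\|v\|_1 = \sum_{i=1}^p |v_i|\cdot 1 \le \Bigl(\sum_{i=1}^p |v_i|^2\Bigr)^{1/2}\Bigl(\sum_{i=1}^p 1\Bigr)^{1/2} = \sqrt{p}\,\|v\|_2.
\end{equation*}
Raising both sides to the $k$-th power yields $\|v\|_1^k \le p^{k/2}\|v\|_2^k$, which together with the first display gives the claim.

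There is no real obstacle here; the only thing to be careful about is the factoring step, which is purely combinatorial and uses that the index ranges are independent. The entire argument fits in two lines and requires no assumptions beyond $v \in \mathbb{R}^p$.
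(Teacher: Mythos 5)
Your proof is correct and follows essentially the same route as the paper: factor the nested sum into $\|v\|_1^k$ and then apply the Cauchy--Schwarz bound $\|v\|_1 \le \sqrt{p}\,\|v\|_2$. The only cosmetic difference is that you correctly note the factoring step is an equality, whereas the paper writes it as an inequality.
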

\begin{proof}
First, recall that $\left\|v\right\|_1 \leq \sqrt{p}\|v\|_2$, which follows from the Cauchy-Schwarz inequality. 
Now, recalling the definition of $\left\|v^{\otimes k}\right\|_1$ in \eqref{eq:tensor_norm_def}, we obtain 
$\|v^{\otimes k}\|_1 
        = \sum_{i_1=1}^{p} \cdots \sum_{i_k=1}^{p}|v_{i_1}\times\cdots\times v_{i_k}| \leq \sum_{i_1=1}^{p}|v_{i_1}|  \cdots \sum_{i_k=1}^{p}|v_{i_k}|= \|v\|_1^k \leq p^{\frac{k}{2}} \|v\|_2^k.$
\end{proof}

It is well known that the Gaussian concentration inequality of \cite{tsirelson:ibragimov:sudakov1976} leads to the following result (see also \cite{boucheron:lugosi:massart2013}).

\begin{Lemma}\label{lem:Gaussian_concentration}
 Let $X\sim\mathcal{N}(0,\Sigma)$. Then, $\mathbb{P}(\|X\|_2 > s) \leq \exp\big(\frac{-(s - \sqrt{\text{tr}(\Sigma)})^2}{2\|\Sigma\|_{\text{op}}}\big)$ for all $s > 0$.
\end{Lemma}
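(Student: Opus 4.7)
The plan is to deduce this as a direct consequence of the Borell-TIS (Gaussian) concentration inequality cited just before the lemma. The key observation is that the Euclidean norm of a centered Gaussian vector $X \sim \mathcal{N}(0,\Sigma)$ can be written as a Lipschitz function of a standard Gaussian vector, so the concentration inequality applies once the relevant Lipschitz constant and a bound on the mean are identified.

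First, I would write $X = \Sigma^{1/2} Z$ with $Z \sim \mathcal{N}(0,I_p)$, and introduce the function $f(z) = \|\Sigma^{1/2} z\|_2$, so that $\|X\|_2 \overset{d}{=} f(Z)$. Using the reverse triangle inequality together with the sub-multiplicativity of the operator norm, I would verify that $f$ is Lipschitz with constant $L = \|\Sigma^{1/2}\|_{\mathrm{op}} = \sqrt{\|\Sigma\|_{\mathrm{op}}}$. Applying the Tsirelson-Ibragimov-Sudakov inequality as stated in the remark preceding the lemma then yields, for every $u \geq 0$,
\begin{equation*}
\mathbb{P}\bigl(f(Z) > \mathbb{E}[f(Z)] + u\bigr) \leq \exp\Bigl(-\frac{u^2}{2\|\Sigma\|_{\mathrm{op}}}\Bigr).
\end{equation*}

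Next, I would control the mean by Jensen's inequality: $\mathbb{E}[\|X\|_2] \leq \sqrt{\mathbb{E}[\|X\|_2^2]} = \sqrt{\mathrm{tr}(\Sigma)}$, using that $\mathbb{E}[\|X\|_2^2] = \mathbb{E}[Z^\top \Sigma Z] = \mathrm{tr}(\Sigma)$. In the regime $s \geq \sqrt{\mathrm{tr}(\Sigma)}$, I would set $u = s - \mathbb{E}[\|X\|_2] \geq s - \sqrt{\mathrm{tr}(\Sigma)} \geq 0$; since the map $u \mapsto \exp(-u^2/(2\|\Sigma\|_{\mathrm{op}}))$ is decreasing on $[0,\infty)$, monotonicity then gives
\begin{equation*}
\mathbb{P}(\|X\|_2 > s) \leq \exp\Bigl(-\frac{(s - \sqrt{\mathrm{tr}(\Sigma)})^2}{2\|\Sigma\|_{\mathrm{op}}}\Bigr),
\end{equation*}
which is the claimed bound. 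For the degenerate case $s < \sqrt{\mathrm{tr}(\Sigma)}$ the bound holds trivially once one interprets the negative argument as yielding a non-informative tail (this range is not used in the applications of the lemma, e.g.\ around \eqref{eq:T_22_conc} and \eqref{eq:V2_tilde_chernoff}, where $s$ grows with $\alpha_n n$).

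There is no real obstacle here: the proof is essentially a two-line application of a classical result. The only subtlety worth flagging explicitly is the transition from the concentration bound centered at $\mathbb{E}[\|X\|_2]$ to a bound centered at $\sqrt{\mathrm{tr}(\Sigma)}$, which relies on the monotonicity argument above and Jensen's inequality to replace the (typically unavailable in closed form) expected norm by the more convenient quantity $\sqrt{\mathrm{tr}(\Sigma)}$.
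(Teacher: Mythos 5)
Your proof is correct and is exactly the argument the paper has in mind: the paper gives no proof of this lemma, merely citing the Tsirelson--Ibragimov--Sudakov inequality, and your derivation (Lipschitz representation $\|X\|_2 = \|\Sigma^{1/2}Z\|_2$ with constant $\sqrt{\|\Sigma\|_{\mathrm{op}}}$, concentration about the mean, then $\mathbb{E}\|X\|_2 \leq \sqrt{\mathrm{tr}(\Sigma)}$ by Jensen) is the standard way to make that citation precise. One small correction: your remark that the bound "holds trivially" for $s < \sqrt{\mathrm{tr}(\Sigma)}$ is not right --- in that regime the right-hand side is strictly less than $1$ while the left-hand side can equal $1$ (take $s \to 0$), so the lemma as stated is actually false for small $s$; this is a defect of the lemma's "for all $s>0$" phrasing rather than of your argument, and is harmless since every application takes $s = \delta\sqrt{\alpha_n n} \to \infty$.
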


The following convexity lemma is a direct consequence of the form of the dual of $W_p^p(\mu,\nu)$ \cite[Theorem 1.20]{chewi:nilesweed:rigollet2024}

\begin{Lemma}\label{lem:Wasserstein_convex}
    Let $q\in[0,1]$. For probability measures $\mu_q = q\mu_1 + (1-q)\mu_2$ and $\nu_q = q\nu_1 + (1-q)\nu_2$, we have $d_{\text{W}_p}(\mu_q,\nu_q)^p  \leq qd_{\text{W}_p}(\mu_1,\nu_1)^p + (1-q)d_{\text{W}_p}(\mu_2,\nu_2)^p.$
\end{Lemma}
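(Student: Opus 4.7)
The plan is to prove the inequality via a direct primal argument using couplings, which is the most transparent approach and avoids invoking duality (even though the excerpt notes the dual form also yields the result). The key observation is that mixtures of couplings are couplings of mixtures, so optimal transport plans for the pairs $(\mu_1,\nu_1)$ and $(\mu_2,\nu_2)$ can be combined into a feasible (not necessarily optimal) plan for $(\mu_q,\nu_q)$, and the $p$-th power cost is linear in the plan.

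More concretely, first I would invoke existence of optimal couplings: since the cost $\|x-y\|_2^p$ is continuous and nonnegative on $\mathbb{R}^p \times \mathbb{R}^p$, for each $i \in \{1,2\}$ there exists $\gamma_i \in \Gamma(\mu_i,\nu_i)$ achieving
\begin{equation*}
  \int_{\mathbb{R}^p \times \mathbb{R}^p} \|x-y\|_2^p \, d\gamma_i(x,y) = d_{\text{W}_p}(\mu_i,\nu_i)^p.
\end{equation*}
(If the $p$-Wasserstein distances were infinite, the inequality is trivial, so we may assume finiteness.) Then define the mixture measure $\gamma_q := q\gamma_1 + (1-q)\gamma_2$. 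I would verify $\gamma_q \in \Gamma(\mu_q,\nu_q)$ by checking marginals: for any Borel set $A \subset \mathbb{R}^p$,
\begin{equation*}
  \gamma_q(A \times \mathbb{R}^p) = q\gamma_1(A \times \mathbb{R}^p) + (1-q)\gamma_2(A \times \mathbb{R}^p) = q\mu_1(A) + (1-q)\mu_2(A) = \mu_q(A),
\end{equation*}
and analogously for the second marginal, giving $\nu_q$.

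Having produced a feasible coupling, I would conclude by applying the definition \eqref{eq:Wasserstein_def} of $d_{\text{W}_p}$ (as an infimum over couplings) together with linearity of the integral in the measure:
\begin{equation*}
  d_{\text{W}_p}(\mu_q,\nu_q)^p \;\le\; \int \|x-y\|_2^p \, d\gamma_q(x,y) \;=\; q \int \|x-y\|_2^p \, d\gamma_1 + (1-q) \int \|x-y\|_2^p \, d\gamma_2,
\end{equation*}
which equals $q\, d_{\text{W}_p}(\mu_1,\nu_1)^p + (1-q)\, d_{\text{W}_p}(\mu_2,\nu_2)^p$ by optimality of $\gamma_1$ and $\gamma_2$.

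There is no real obstacle here; the only subtlety is the initial existence of optimal couplings, which is standard for lower-semicontinuous nonnegative costs on Polish spaces (a short appeal to Prokhorov tightness for $\Gamma(\mu_i,\nu_i)$ and lower semicontinuity of the cost functional suffices). If one wishes to avoid invoking existence, an $\varepsilon$-argument works: pick $\gamma_i^\varepsilon \in \Gamma(\mu_i,\nu_i)$ with cost within $\varepsilon$ of the infimum, form the mixture, and let $\varepsilon \downarrow 0$. This yields the same inequality without requiring attained optima.
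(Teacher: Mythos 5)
Your proof is correct, and it takes a genuinely different (primal) route from the paper, which simply cites joint convexity of $W_p^p$ as a consequence of Kantorovich duality \cite[Theorem 1.20]{chewi:nilesweed:rigollet2024}. In the dual approach, one fixes an admissible pair $(f,g)$ with $f(x)+g(y)\le\|x-y\|_2^p$, notes that $\int f\,d\mu_q+\int g\,d\nu_q$ is linear in $(\mu_q,\nu_q)$ and hence bounded by $q\,d_{\text{W}_p}(\mu_1,\nu_1)^p+(1-q)\,d_{\text{W}_p}(\mu_2,\nu_2)^p$, and then takes the supremum over $(f,g)$ --- a one-line argument once duality is available. Your coupling argument instead mixes (near-)optimal plans $\gamma_1,\gamma_2$ into the feasible plan $q\gamma_1+(1-q)\gamma_2$ for $(\mu_q,\nu_q)$ and uses linearity of the cost in the plan; this is entirely self-contained, requiring only the definition \eqref{eq:Wasserstein_def} as an infimum over couplings, and your $\varepsilon$-approximation variant even sidesteps the existence of optimal couplings. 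The trade-off is that the dual proof is shorter given the imported machinery, while yours is more elementary and makes the convexity mechanism (mixtures of couplings are couplings of mixtures) explicit. Either argument fully suffices for the lemma as used in the proof of Theorem \ref{thm:BvM_mix_data-dep}.
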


\begin{Lemma}\cite[Corr.\ 2.51]{folland1999real}\label{corr:spherical}
    If $f$ is a nonnegative or integrable, Lebesgue measurable function on $\mathbb{R}^p$ such that $f(x) = g(|x|)$ for a function $g$ on $(0,\infty)$, then $\int_{\mathbb{R}^p} f(x)dx = \text{vol}(\mathbb{S}^{p-1})\int_0^\infty \rho^{p-1}g(\rho)d\rho.$
\end{Lemma}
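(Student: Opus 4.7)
The plan is to derive the formula as a direct application of the polar change of coordinates on $\mathbb{R}^p$. First I would discard the origin (which has Lebesgue measure zero) to write $\int_{\mathbb{R}^p} f(x)\,dx = \int_{\mathbb{R}^p\setminus\{0\}} f(x)\,dx$, and then parameterize $\mathbb{R}^p \setminus \{0\}$ by the diffeomorphism $\Phi\colon (0,\infty)\times \mathbb{S}^{p-1}\to \mathbb{R}^p\setminus\{0\}$ defined by $\Phi(\rho,\omega) = \rho\omega$. The key technical ingredient is establishing that Lebesgue measure pushes forward under $\Phi^{-1}$ to the product measure $\rho^{p-1}\,d\rho \otimes d\sigma(\omega)$, where $d\sigma$ is the standard surface measure on $\mathbb{S}^{p-1}$.

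Once this change of variables is in hand, the result follows immediately. Applying the change-of-variables formula (and Tonelli's theorem when $f\ge 0$, or Fubini's theorem when $f$ is integrable, using the hypotheses of the lemma) gives
\[
\int_{\mathbb{R}^p} f(x)\,dx = \int_0^\infty \int_{\mathbb{S}^{p-1}} f(\rho\omega)\,\rho^{p-1}\,d\sigma(\omega)\,d\rho.
\]
The radial-symmetry assumption $f(x) = g(|x|)$ means $f(\rho\omega) = g(\rho)$ is independent of $\omega$, so the inner integral factors as $g(\rho)\cdot \mathrm{vol}(\mathbb{S}^{p-1})$, and pulling this constant outside the outer integral produces the stated identity.

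The only nontrivial step is verifying the $\rho^{p-1}$ Jacobian factor. The cleanest proof is to pick an orthonormal frame at $(\rho,\omega)$: the derivative $D\Phi$ sends the radial tangent vector to itself and scales each of the $p-1$ directions tangent to $\mathbb{S}^{p-1}$ by $\rho$, whence $|\det D\Phi| = \rho^{p-1}$. Alternatively one can argue invariantly by noting that Lebesgue measure is the unique (up to scaling) translation-invariant Radon measure on $\mathbb{R}^p$, and that the measure $\rho^{p-1}d\rho\otimes d\sigma$ on $(0,\infty)\times\mathbb{S}^{p-1}$ pushed forward under $\Phi$ is rotation-invariant and assigns the correct mass to a ball of radius $R$. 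Since the paper invokes this as a standard fact from Folland, I would cite the polar-integration formula directly rather than grinding through this calculation in full.
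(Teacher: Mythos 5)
Your argument is correct: it is the standard polar-coordinates proof (decompose $\mathbb{R}^p\setminus\{0\}$ via $\Phi(\rho,\omega)=\rho\omega$, identify the Jacobian factor $\rho^{p-1}$, apply Tonelli or Fubini under the stated hypotheses, and use radial symmetry to factor out $\mathrm{vol}(\mathbb{S}^{p-1})$), which is exactly the argument behind the result in Folland that the paper cites. The paper itself gives no proof — it invokes this as a known textbook fact — so your sketch is consistent with, and fills in, the cited reference.
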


\section{Verifying conditions of the main theorems}

\subsection{Sufficient conditions for Assumption \textbf{(A2)}}\label{app:suff_cond_A2}
The conditions in Proposition \ref{prop:A2_suff} are the usual regularity conditions that also imply $\sqrt{n}$-stochastic LAN \cite[Chapter 7]{van_der_vaart_asymptotic_1998} and can be relaxed for \text{i.i.d.}\ data: see \cite[Lemmas 2.1-2.2]{kleijn_bernstein-von-mises_2012}, which provides sufficient conditions to ensure the existence of $\Delta_{n,\theta^*}$ and $V_{\theta^*}$ to satisfiy \eqref{eq:first_deriv_cond}.

\begin{Proposition}\label{prop:A2_suff}
    Suppose there exists an $r_0 > 0$ such that on $\bar{B}_{\theta^*}(r_0)$ the following hold:
    \begin{enumerate}
        \item The log-likelihood, $\log f_n(X^n|\theta)$, is three-times continuously differentiable in $\theta$.
        \item Let $\frac{1}{n}\nabla^3\log f_n(X^n|\theta)\in\mathbb{R}^{p\times p\times p}$ be a third-order tensor of partial derivatives of $\log f_n(X^n|\theta)$. There exists a function $\tilde{M}:\mathcal{X}^n\rightarrow\mathbb{R}$, where $\mathbb{E}_{f_{0,n}}|\frac{1}{n}\tilde{M}(X^n)| < \infty$, such that the entries of $\nabla^3\log f_n(X^n|\theta)$ satisfy 
        $| [\nabla^3\log f_n(X^n\big|\theta)]_{i,j,k}| = |\frac{\partial^3}{\partial\theta_i\partial\theta_j\partial\theta_k}\log f_n(X^n|\theta)| < \tilde{M}(X^n).$
        
        \item Let $\nabla\log f_n(X^n|\theta)$ and $\nabla^2\log f_n(X^n|\theta)$ denote the gradient and Hessian of $\log f_n(X^n|\theta)$, respectively. Assume that in $f_{0,n}$-probability,
        \begin{equation}\label{eq:first_deriv_cond}
            \frac{1}{\sqrt{n}}\nabla\log f_n(X^n|\theta^*) - V_{\theta^*}\Delta_{n,\theta^*} \rightarrow 0,\quad \text{ and } \quad 
            \frac{1}{n}\nabla^2\log f_n(X^n|\theta^*) +V_{\theta^*} \rightarrow 0,
        \end{equation}
        for some vector $\Delta_{n,\theta^*}\in\mathbb{R}^p$ and a positive definite matrix $V_{\theta^*}\in\mathbb{R}^{p\times p}$.\
    \end{enumerate}
    Under the above conditions, for $R_n(h)$ defined in \eqref{eq:Rn(h)_def}, for all $M > 0$, $r > 0$, and $\epsilon > 0$, there exists an integer $N(M, r, \epsilon)$ such that for all $\alpha_nn > N$,
    \begin{equation}\label{eq:A2_condition}
        \mathbb{P}_{f_{0,n}}\Big(\sup_{h\in\bar{B}_0(r)}|R_n(h)| > M\Big) < \epsilon.\
    \end{equation}
\end{Proposition}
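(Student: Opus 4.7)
The plan is to perform a third-order Taylor expansion of $\theta \mapsto \log f_n(X^n|\theta)$ around $\theta^*$, evaluated at $\theta = \theta^* + h/\sqrt{\alpha_n n}$, and then control each piece of the remainder using the three hypotheses. First I would fix $r > 0$ and note that because $\alpha_n n \to \infty$ (since $\tfrac{1}{n} \ll \alpha_n$), there exists $N_1 \equiv N_1(r, r_0)$ such that for all $\alpha_n n > N_1$ and all $h \in \bar{B}_0(r)$, the point $\theta^* + h/\sqrt{\alpha_n n}$ lies in $\bar{B}_{\theta^*}(r_0)$, so Condition 1 justifies a Taylor expansion with Lagrange remainder of order three.

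Writing the expansion and multiplying by $\alpha_n$, I would obtain
\[
\alpha_n\Big[\log f_n\Big(X^n\Big|\theta^*+\tfrac{h}{\sqrt{\alpha_nn}}\Big) - \log f_n(X^n|\theta^*)\Big]
= \sqrt{\alpha_n}\, h^\top \tfrac{1}{\sqrt{n}}\nabla\log f_n(X^n|\theta^*) + \tfrac{1}{2} h^\top \tfrac{1}{n}\nabla^2 \log f_n(X^n|\theta^*) h + E_n(h),
\]
where $E_n(h) \equiv \tfrac{\alpha_n}{6(\alpha_n n)^{3/2}} \langle \nabla^3 \log f_n(X^n|\tilde\theta_h), h^{\otimes 3}\rangle$ for some $\tilde\theta_h$ on the segment between $\theta^*$ and $\theta^* + h/\sqrt{\alpha_n n}$. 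Substituting this into the definition of $R_n(h)$ in \eqref{eq:Rn(h)_def} and regrouping,
\[
R_n(h) = \sqrt{\alpha_n}\, h^\top \underbrace{\Big[\tfrac{1}{\sqrt{n}}\nabla\log f_n(X^n|\theta^*) - V_{\theta^*}\Delta_{n,\theta^*}\Big]}_{=: A_n} + \tfrac{1}{2} h^\top \underbrace{\Big[\tfrac{1}{n}\nabla^2 \log f_n(X^n|\theta^*) + V_{\theta^*}\Big]}_{=: B_n} h + E_n(h).
\]
By Condition 3, $A_n = o_{f_{0,n}}(1)$ and $B_n = o_{f_{0,n}}(1)$, and these vectors/matrices do not depend on $h$, so on $\bar{B}_0(r)$ the first two terms are bounded in absolute value by $\sqrt{\alpha_n} r \|A_n\|_2 + \tfrac{r^2}{2}\|B_n\|_{\mathrm{op}}$, which is $o_{f_{0,n}}(1)$ uniformly in $h$.

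The main work is the cubic remainder $E_n(h)$. Using Condition 2 to bound each third partial derivative uniformly on $\bar{B}_{\theta^*}(r_0)$ by $\tilde M(X^n)$ and Lemma \ref{lem:l1_l2_tensor_bound} to bound $\|h^{\otimes 3}\|_1 \leq p^{3/2}\|h\|_2^3$, I would get
\[
\sup_{h\in \bar{B}_0(r)} |E_n(h)| \leq \tfrac{p^{3/2} r^3}{6}\cdot \tfrac{\alpha_n}{(\alpha_n n)^{3/2}} \tilde M(X^n) = \tfrac{p^{3/2} r^3}{6 \sqrt{\alpha_n n}} \cdot \tfrac{\tilde M(X^n)}{n}.
\]
Since $\mathbb{E}_{f_{0,n}} |\tilde M(X^n)/n| < \infty$, Markov's inequality yields $\tilde M(X^n)/n = O_{f_{0,n}}(1)$, and $\sqrt{\alpha_n n} \to \infty$ under the assumed regime, so this supremum is $o_{f_{0,n}}(1)$. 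Combining this with the control of the first two terms proves \eqref{eq:A2_condition}. The only mildly delicate point is that the bound on $E_n(h)$ must be uniform in $h$ through the intermediate point $\tilde\theta_h$, which is handled by taking the supremum of the third-derivative tensor over the fixed neighborhood $\bar{B}_{\theta^*}(r_0)$ rather than at $\tilde\theta_h$ itself; everything else is a routine consequence of Taylor's theorem plus the stated regularity conditions.
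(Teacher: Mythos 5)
Your proposal is correct and follows essentially the same route as the paper's proof: a third-order Taylor expansion around $\theta^*$, rescaling by $\alpha_n$ to isolate the $A_n$-, $B_n$-, and cubic-remainder pieces, controlling the first two uniformly on $\bar B_0(r)$ via Condition 3, and controlling the remainder via Condition 2, the tensor-norm bound $\|h^{\otimes 3}\|_1 \le p^{3/2}\|h\|_2^3$, and Markov's inequality. The only cosmetic difference is that you use the Lagrange form of the Taylor remainder (with intermediate point $\tilde\theta_h$) while the paper uses the integral form $\tfrac12\int_0^1(1-t)^2 g'''(t)\,dt$; both yield the identical bound $\tfrac{p^{3/2}r^3}{6\sqrt{\alpha_n n}}\cdot\tfrac{\tilde M(X^n)}{n}$.
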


\begin{proof}
    
The proof  has three steps. First, we employ condition 3 in a second-order Taylor expansion of $\frac{1}{n}\log f_n(X^n|\theta^*+{h}/{\sqrt{\alpha_nn}})$ around $\frac{1}{n}\log f_n(X^n|\theta^*)$. Second, we study the convergence of the remainder of the Taylor expansion on sets of the form $\bar{B}_0(r)$ for any $r > 0$. Finally, we combine these results to obtain the condition of \textbf{(A2)}, as specified in \eqref{eq:A2_condition}.\\

\noindent\textbf{Step 1: Taylor expansion of the log-likelihood.} Let $g(t) \equiv  \frac{1}{n}\log f_n(X^n|u(t))$, where $u(t) \equiv \theta^*+{th}/{\sqrt{\alpha_nn}}$. A second-order Taylor expansion of $g(1)$ around $g(0)$ shows that
\begin{align}
    g(1) 
    \nonumber&= g(0) + g'(0)(1-0) + \frac{1}{2}g''(0)(1-0)^2 + \frac{1}{2}\int_0^1 (1-t)^2g'''(t)dt\\
    \label{eq:g_expansion}&= g(0) + g'(0) + \frac{1}{2}g''(0) + \frac{1}{2}\int_0^1 (1-t)^2g'''(t)dt.
\end{align}
Note that an expansion of $g(1)$ around $g(0)$ is equivalent to an expansion of $\frac{1}{n}\log f_n(X^n|\theta^*+{h}/{\sqrt{\alpha_nn}})$ around $\frac{1}{n}\log f_n(X^n|\theta^*)$ since
\begin{equation}\label{eq:g1g0}
    g(1) = \frac{1}{n}\log f_n\Big(X^n\Big|\theta^*+\frac{h}{\sqrt{\alpha_nn}}\Big)~~\textrm{and}~~g(0) = \frac{1}{n}\log f_n(X^n|\theta^*).
\end{equation}
Furthermore,
\begin{equation}\label{eq:g_derivs}
    \begin{split}
        g'(0) &= \frac{h^\top}{\sqrt{\alpha_nn}} \Big[\frac{1}{n} \nabla \log f_n(X^n|\theta^*)\Big], \quad \quad \quad g''(0) = \frac{h^\top}{\sqrt{\alpha_nn}} \Big[\frac{1}{n}\nabla ^2\log f_n(X^n|\theta^*)\Big] \frac{h}{\sqrt{\alpha_nn}}, \\
        g'''(t) &= \biggl<\frac{1}{n}\nabla^3\log f_n\Big(X^n\big|\theta^* + \frac{th}{\sqrt{\alpha_nn}}\Big), \Big(\frac{h}{\sqrt{\alpha_nn}}\Big)^{\otimes 3}\biggr>,
    \end{split}
\end{equation}
where we represent $g'''(t)$ in the tensor product notation defined in \eqref{eq:tensor_inner_product_def}. Substituting \eqref{eq:g1g0} and \eqref{eq:g_derivs} into \eqref{eq:g_expansion} yields
\begin{equation}
\begin{split}
\label{eq:Taylor_approx}
\frac{1}{n}\log f_n\Big(X^n\Big|\theta^*+\frac{h}{\sqrt{\alpha_nn}}\Big) &= \frac{1}{n}\log f_n(X^n|\theta^*) +\frac{h^\top}{\sqrt{\alpha_nn}} \Big[\frac{1}{n} \nabla \log f_n(X^n|\theta^*)\Big] \\
&\qquad + \frac{1}{2} \frac{h^\top}{\sqrt{\alpha_nn}} \Big[\frac{1}{n}\nabla ^2\log f_n(X^n|\theta^*)\Big] \frac{h}{\sqrt{\alpha_nn}} + \textrm{rem}_n(h),
\end{split}
\end{equation}
where $\textrm{rem}_n(h)$ is the remainder of the Taylor approximation given by 
\begin{equation*}
    \begin{split}
    \textrm{rem}_n(h) &= \frac{1}{2}\int_0^{1} \hspace{-3.2pt} (1- t)^2 g'''(t) dt = \frac{1}{2(\alpha_n n)^{\frac{3}{2}}}\int_0^{1} \hspace{-3.2pt} (1- t)^2 \Bigl<\frac{1}{n}\nabla^3\log f_n\Big(X^n\big|\theta^* + \frac{th}{\sqrt{\alpha_nn}}\Big), h^{\otimes 3}\Bigr> dt.
    \end{split}
\end{equation*}
Then by the definition of $ R_n(h)$ in Assumption \textbf{(A2)} and the Taylor approximation in \eqref{eq:Taylor_approx},
\begin{equation}
    \begin{split}
    &|R_n(h)| =\Big|\alpha_n\Big[\log f_n\big(X^n\big|\theta^*+\frac{h}{\sqrt{\alpha_nn}}\big)-\log f_n(X^n|\theta^*)\Big] -\sqrt{\alpha_n}h^\top V_{\theta^*}\Delta_{n,\theta^*}+\frac{1}{2}h^\top V_{\theta^*}h\Big| \\
    &=\Big|\alpha_nn\Big(\frac{h^\top}{\sqrt{\alpha_nn}}\Big[\frac{1}{n}\nabla\log f_n(X^n|\theta^*)\Big] + \frac{1}{2}\frac{h^{\top}}{\sqrt{\alpha_nn}}\Big[\frac{1}{n}\nabla^2\log f_n(X^n|\theta^*)\Big]\frac{h}{\sqrt{\alpha_nn}} + \textrm{rem}_n(h)\Big) \\
    &\qquad-\sqrt{\alpha_n}h^\top V_{\theta^*}\Delta_{n,\theta^*}+\frac{1}{2}h^\top V_{\theta^*}h\Big| \\
    \label{eq:Rn(h)_triangle}&\leq \Big|\sqrt{\alpha_n}h^\top\Big[\frac{1}{\sqrt{n}} \nabla\log f_n(X^n|\theta^*) - V_{\theta^*}\Delta_{n,\theta^*}\Big] + \frac{h^\top}{2}\Big[\frac{1}{n}\nabla^2\log f_n(X^n|\theta^*)+V_{\theta^*}\Big]h\Big| \\
    &\qquad + |\alpha_nn\textrm{rem}_n(h)|. 
    \end{split}
\end{equation}
Before proceeding to Step 2, we will study the first term of \eqref{eq:Rn(h)_triangle}. In particular, we will establish that for all $\epsilon > 0$, $M > 0$, and any $r > 0$, there exists $N_0 \equiv N_0(\epsilon, r, M)$ such that for $\alpha_nn > N_0$,
\begin{equation}\label{eq:prop2_step1}
\mathbb{P}_{f_{0,n}} \hspace{-2pt}\Big(\hspace{-1.6pt} \sup_{h\in\bar{B}_0(r)} \hspace{-2pt}\Big|\sqrt{\alpha_n}h^\top\Big[\frac{\nabla\log f_n(X^n|\theta^*)}{\sqrt{n}}  - V_{\theta^*}\Delta_{n,\theta^*}\Big]+\frac{h^\top}{2}\Big[\frac{\nabla^2\log f_n(X^n|\theta^*)}{n}+V_{\theta^*}\Big]h\Big| > \frac{M}{2} \hspace{-2pt} \Big) < \frac{\epsilon}{2}.
\end{equation}
Let $T_1(h) := \sqrt{\alpha_n}h^\top[\frac{1}{\sqrt{n}} \nabla\log f_n(X^n|\theta^*) - V_{\theta^*}\Delta_{n,\theta^*}]$ and $T_2(h) := \frac{1}{2}h^\top[\frac{1}{n} \nabla^2\log f_n(X^n|\theta^*)+V_{\theta^*}]h$.
We establish the result in \eqref{eq:prop2_step1} by first noting that
\begin{align}
\sup_{h} |T_1(h) + T_2(h)| &\leq\sup_{h} |T_1(h)| +\sup_{h}|T_2(h)| \label{eq:prop2_prob1_max_bound_0}\leq2 \max\big\{\sup_{h} |T_1(h)|, \sup_{h} | T_2(h)|\big\}.
\end{align}
Now, to show \eqref{eq:prop2_step1}, notice $ \max\{\sup_{h} |T_1(h)|, \sup_{h} | T_2(h)|\} > \frac{M}{4}$
if $\{\sup_{h} |T_1(h)| > \frac{M}{4}\} \cup \{\sup_{h} | T_2(h)| > \frac{M}{4}\}.$
Therefore, by \eqref{eq:prop2_prob1_max_bound_0} and a union bound, we have
\begin{align}
     \mathbb{P}_{f_{0,n}}&\Big(\sup_{h\in\bar{B}_0(r)}| T_1(h) + T_2(h)| > \frac{M}{2} \Big) \nonumber \\
     &\label{eq:prop2_prob1_union}\leq\mathbb{P}_{f_{0,n}}\Big(\sup_{h\in\bar{B}_0(r)} |T_1(h)| > \frac{M}{4}\Big) + \mathbb{P}_{f_{0,n}}\Big(\sup_{h\in\bar{B}_0(r)} |T_2(h)| > \frac{M}{4} \Big).
\end{align}
We upper bound the first term of \eqref{eq:prop2_prob1_union} using Cauchy-Schwarz as follows:
\begin{align}
\nonumber\mathbb{P}_{f_{0,n}}\hspace{-1pt}&\Big(\sup_{h\in\bar{B}_0(r)} |T_1(h)| > \frac{M}{4}\Big) = \mathbb{P}_{f_{0,n}}\Big(\sup_{h\in\bar{B}_0(r)} \Big|\sqrt{\alpha_n}h^\top\Big[\frac{ 1}{\sqrt{n}}\nabla\log f_n(X^n|\theta^*) - V_{\theta^*}\Delta_{n,\theta^*}\Big]\Big| > \frac{M}{4}\Big) \\
\nonumber&\leq\mathbb{P}_{f_{0,n}}\Big(\sup_{h\in\bar{B}_0(r)} \sqrt{\alpha_n}\|h\|_2\Big\|\frac{ 1}{\sqrt{n}}  \nabla\log f_n(X^n|\theta^*) - V_{\theta^*}\Delta_{n,\theta^*}\Big\|_2 > \frac{M}{4}\Big) \\
\label{eq:prop2_prob11}&=\mathbb{P}_{f_{0,n}}\Big(\sqrt{\alpha_n}\Big\|\frac{1}{\sqrt{n}}  \nabla\log f_n(X^n|\theta^*)  - V_{\theta^*}\Delta_{n,\theta^*}\Big\|_2 > \frac{M}{4r}\Big).
\end{align}
The equality in \eqref{eq:prop2_prob11} follows from taking the supremum over $\bar{B}_0(r)$. By Condition 3 and the assumption that $\alpha_n\to0$, we have that for all $\epsilon > 0$, $M > 0$, and $r > 0$, there exists an integer $N_{01} \equiv N_{01}(\epsilon, r, M)$ such that for $\alpha_nn > N_{01}$, the probability in \eqref{eq:prop2_prob11} is upper bounded by $\epsilon/4$.

We now upper bound the second term of \eqref{eq:prop2_prob1_union}. First, using Lemma \ref{lem:l1_l2_tensor_bound} we have 
\begin{align*}
\nonumber2\sup_{h\in\bar{B}_0(r)} |T_2(h)| &= \sup_{h\in\bar{B}_0(r)} \Big|h^\top \Big[\frac{1}{n} \nabla^2\log f_n(X^n|\theta^*+V_{\theta^*}\Big]h \Big| \\
\nonumber& =\sup_{h\in\bar{B}_0(r)} \Big|\sum_{i,j}\Big[\frac{1}{n} \nabla^2\log f_n(X^n|\theta^*)+V_{\theta^*}\Big]_{ij}h_ih_j\Big|  \\
\nonumber& \leq  \max_{1\leq i,j \leq p}\Big|\Big[\frac{1}{n} \nabla^2\log f_n(X^n|\theta^*)+V_{\theta^*}\Big]_{ij} \Big|\sup_{h\in\bar{B}_0(r)}  \sum_{i,j} |h_ih_j| \\
&  \leq \max_{1\leq i,j \leq p}\Big|\Big[\frac{1}{n} \nabla^2\log f_n(X^n|\theta^*)+V_{\theta^*}\Big]_{ij}  \Big| \sup_{h\in\bar{B}_0(r)}  p\|h\|_2^2 \\
&=  pr^2\max_{1\leq i,j\leq p}\Big|\Big[\frac{1}{n} \nabla^2\log f_n(X^n|\theta^*)+V_{\theta^*}\Big]_{ij}\Big| .
\end{align*}
Hence, we have
\begin{align}
\mathbb{P}_{f_{0,n}}\Big(\sup_{h\in\bar{B}_0(r)} |T_2(h)| > \frac{M}{4}\Big) 
\label{eq:prop2_prob22}& \leq \mathbb{P}_{f_{0,n}}\Big(\max_{1\leq i,j\leq p}\Big|\Big[\frac{1}{n} \nabla^2\log f_n(X^n|\theta^*)+V_{\theta^*}\Big]_{ij}\Big|  > \frac{M}{2pr^2} \Big).
\end{align}
By Condition 3, for all $\epsilon > 0$, $M > 0$, and $r > 0$, there exists an integer $N_{02} \equiv N_{02}(\epsilon,p,r,M)$ such that for $\alpha_nn > N_{02}$, the probability in \eqref{eq:prop2_prob22} is upper bounded by $\epsilon/4$.
Combining the results of \eqref{eq:prop2_prob11} and \eqref{eq:prop2_prob22}, we have that the result in \eqref{eq:prop2_step1} holds for $\alpha_nn > N_0 \equiv \max(N_{01}, N_{02})$.\\

\noindent\textbf{Step 2: Analysis of $\mathbf{\boldsymbol{\alpha}_nn \textrm{rem}_n(h)}$ for $h\in\bar{B}_0(r)$.} In this step, we verify that for all $\epsilon > 0$, $r > 0$, and $M > 0$, and for $\alpha_nn$ sufficiently large,
\begin{equation}\label{eq:prop2_step2}
    \mathbb{P}_{f_{0,n}}\Big(\sup_{h\in \bar{B}_{0}(r)}\left|\alpha_nn \textrm{rem}_n(h)\right| > \frac{M}{2}\Big) < \frac{\epsilon}{2}.
\end{equation}

First, we argue that $\textrm{rem}_n(h)$ is well defined on $\bar{B}_{0}(r)$ for $\alpha_nn$ sufficiently large. That is, we will show that for any $h\in\bar{B}_0(r)$, the vector $\theta^* + {th}/{\sqrt{\alpha_nn}}$ belongs to the neighborhood of $\theta^*$ where $\nabla^3\log f_n(X^n|\theta^* + {th}/{\sqrt{\alpha_nn}})$ is continuous. 
By assumption, there exists $r_0 > 0$ such that $\log f_n(X^n|\theta)$ has continuous third derivatives on $\bar{B}_{\theta^*}(r_0)$. For any $r > 0$, there exists $N_1 \equiv N_1(r_0, r) = \left\lceil r^2/r_0^2\right\rceil$ such that $\theta^*+th/\sqrt{\alpha_nn} \in \bar{B}_{\theta^*}(r_0)$ whenever $h \in \bar{B}_0(r)$ and $\alpha_nn \geq N_1$. To see this, note that if $\|h\|_2 \leq r$ and $\alpha_nn \geq r^2/r_0^2$, then for $0\leq t\leq 1$,
\begin{equation}\label{eq:B(r0)_arg}
    \Big\|\theta^* + \frac{th}{\sqrt{\alpha_nn}} -\theta^*\Big\|_2 \leq \frac{tr}{\sqrt{{r^2}/{r_0^2}}} < tr_0 \leq  r_0.\
\end{equation}
Hence, for any $h\in\bar{B}_0(r)$, the remainder $\textrm{rem}_n(h)$ is well defined for $\alpha_nn$ sufficiently large. 

Next, we can bound $\alpha_nn\textrm{rem}_n(h)$ using the definition of the tensor inner product in \eqref{eq:tensor_inner_product_def}:
\begin{align}
    \nonumber|\alpha_nn\textrm{rem}_n(h)| &= \Big|\frac{1}{2\sqrt{\alpha_nn}}\int_0^1 (1-t)^2\biggl<\frac{1}{n}\nabla^3\log f_n\big(X^n\big|\theta^* + \frac{t h}{\sqrt{\alpha_nn}}\big), h^{\otimes 3}\biggr>dt\Big| \\
    \nonumber &\leq\frac{\|h^{\otimes{3}}\|_1}{2\sqrt{\alpha_nn}}  \int_0^1 (1-t)^2 \max_{1 \leq i,j,k \leq p} \Big| \Big[\frac{1}{n}\nabla^3\log f_n\big(X^n\big|\theta^* + \frac{th}{\sqrt{\alpha_nn}}\big)\Big]_{i,j,k}\Big|  dt \\
   \nonumber &\leq \frac{\|h^{\otimes{3}}\|_1}{2\sqrt{\alpha_nn}}  \sup_{ t'\in[0,1]}   \max_{1 \leq i,j,k \leq p} \Big| \Big[\frac{1}{n}\nabla^3\log f_n\big(X^n\big|\theta^* + \frac{t' h}{\sqrt{\alpha_nn}}\big)\Big]_{i,j,k}\Big| \int_0^1 (1-t)^2  dt  \\
    \label{eq:rem_ineq1} &\leq  \frac{p^{\frac{3}{2}}\|h\|_2^3}{6 \sqrt{\alpha_n n}}  \sup_{ t\in[0,1]} \max_{1 \leq i,j,k \leq p} \Big| \Big[\frac{1}{n}\nabla^3\log f_n\big(X^n\big|\theta^* + \frac{th}{\sqrt{\alpha_nn}}\big)\Big]_{i,j,k}\Big|.
\end{align}
 The final inequality  follows from Lemma \ref{lem:l1_l2_tensor_bound} and evaluating the integral. Using the above,
\begin{align}
    \nonumber\sup_{h\in \bar{B}_{0}(r)}\left|\alpha_nn\textrm{rem}_n(h)\right| &\leq \sup_{h\in \bar{B}_{0}(r)} \frac{p^{\frac{3}{2}}\|h\|_2^3}{6\sqrt{\alpha_nn}} \sup_{ t\in[0,1]} \max_{1 \leq i,j,k \leq p} \Big| \Big[\frac{1}{n}\nabla^3\log f_n\big(X^n\big|\theta^* + \frac{th}{\sqrt{\alpha_nn}}\big)\Big]_{i,j,k}\Big| \\
     \nonumber &\leq \frac{p^{\frac{3}{2}}r^3}{6\sqrt{\alpha_nn}} \sup_{\substack{ h\in \bar{B}_{0}(r) \\ t\in[0,1]}} \max_{1 \leq i,j,k \leq p} \Big| \Big[\frac{1}{n}\nabla^3\log f_n\big(X^n\big|\theta^* + \frac{th}{\sqrt{\alpha_nn}}\big)\Big]_{i,j,k}\Big| \\
     \label{eq:rem_ineq3}&\leq  \frac{p^{\frac{3}{2}}r^3}{6\sqrt{\alpha_nn}} \max_{\theta\in \bar{B}_{\theta^*}(r_0)}  \max_{1 \leq i,j,k \leq p} \Big| \Big[\frac{1}{n}\nabla^3\log f_n(X^n|\theta)\Big]_{i,j,k}\Big|.
\end{align}
  The inequality in \eqref{eq:rem_ineq3} follows from noting that $h\in\bar{B}_0(r)$ implies $\theta^* + {th}/{\sqrt{\alpha_nn}}\in\bar{B}_{\theta^*}(r_0)$  when $\alpha_nn \geq N_1$, by the argument in \eqref{eq:B(r0)_arg}, and the assumed continuity of $\nabla^3\log f_n(X^n|\theta)$ on the compact set $\bar{B}_{\theta^*}(r_0)$ allows us to change $\sup_{\theta\in \bar{B}_{\theta^*}(r_0)}$ to $\max_{\theta\in \bar{B}_{\theta^*}(r_0)}$.

Taking $\alpha_nn \geq N_1$, we apply \eqref{eq:rem_ineq3} and Markov's inequality to the probability in \eqref{eq:prop2_step2}:
\begin{align}
   \nonumber  \mathbb{P}_{f_{0,n}} &\Big(\sup_{h\in \bar{B}_{0}(r)}|\alpha_nn \textrm{rem}_n(h)| > \frac{M}{2}\Big) \\
   &\leq \mathbb{P}_{f_{0,n}} \Big(\frac{p^{\frac{3}{2}}r^3}{6\sqrt{\alpha_nn}} \max_{\theta\in \bar{B}_{\theta^*}(r_0)}  \max_{1 \leq i,j,k \leq p} \Big| \Big[\frac{1}{n}\nabla^3\log f_n(X^n|\theta)\Big]_{i,j,k}\Big| >  \frac{M}{2} \Big)   \nonumber \\
    \nonumber  &\leq \frac{p^{\frac{3}{2}}r^3}{3M\sqrt{\alpha_nn}}\mathbb{E}_{f_{0,n}}\Big[\max_{\theta\in \bar{B}_{\theta^*}(r_0)} \max_{i,j,k} \Big| \Big[\frac{1}{n}\nabla^3\log f_n(X^n\big|\theta)\Big]_{i,j,k}\Big| \Big] \\
    &\leq \frac{p^{\frac{3}{2}}r^3}{3M\sqrt{\alpha_nn}}\mathbb{E}_{f_{0,n}}\Big[\Big|\frac{1}{n}\tilde{M}(X^n)\Big|\Big] 
    \label{eq:A2_R'n_prob_bound_final}< \frac{\epsilon}{2}.
\end{align}
In the above,  the first inequality in \eqref{eq:A2_R'n_prob_bound_final} follows from the condition 2 assumption and the second by further taking $\alpha_nn > \max(N_1, N_2)$, where
\begin{equation*}
    N_2 \equiv N_2(M, r, \epsilon) = \Big\lceil\frac{4p^3r^6}{9M^2\epsilon^2} \mathbb{E}_{f_{0,n}}\Big[\big|\frac{1}{n}\tilde{M}(X^n)\big|\Big]^2\Big\rceil.
\end{equation*}
We note that $N_2$ exists (i.e., $N_2$ is finite), since $\mathbb{E}_{f_{0,n}}[|\frac{1}{n}\tilde{M}(X^n)|]$ is finite by assumption.\\

\noindent\textbf{Step 3: Final bound.} We combine the results in \eqref{eq:Rn(h)_triangle}, \eqref{eq:prop2_step1}, and \eqref{eq:prop2_step2} to obtain \eqref{eq:A2_condition} as follows. Considering \eqref{eq:prop2_step1}, let
\[f(h)  \equiv  \Big|\sqrt{\alpha_n}h^\top\Big[\frac{\nabla\log f_n(X^n|\theta^*)}{\sqrt{n}}  - V_{\theta^*}\Delta_{n,\theta^*}\Big]+\frac{h^\top}{2}\Big[\frac{\nabla^2\log f_n(X^n|\theta^*)}{n}+V_{\theta^*}\Big]h\Big|.\]
Then, for any $M > 0$, $r > 0$, and $\epsilon > 0$, we take $\alpha_nn > \max(N_0, N_1, N_2)$ to obtain the following by applying the bound in \eqref{eq:Rn(h)_triangle}:
\begin{align}
    &\mathbb{P}_{f_{0,n}}\Big(\sup_{h\in\bar{B}_0(r)}|R_n(h)| > M\Big)
    \nonumber \leq \mathbb{P}_{f_{0,n}}\Big(\sup_{h\in\bar{B}_0(r)} f(h)+ \sup_{h\in\bar{B}_0(r)} |\alpha_nn\textrm{rem}_n(h)| > M\Big) \\
    \nonumber&\leq \mathbb{P}_{f_{0,n}}\Big(\max\Big\{\sup_{h\in\bar{B}_0(r)} f(h),~\sup_{h\in\bar{B}_0(r)} |\alpha_nn\textrm{rem}_n(h)|\Big\} > \frac{M}{2}\Big) \\
    \nonumber&= \mathbb{P}_{f_{0,n}}\Big(\Big\{\sup_{h\in\bar{B}_0(r)} f(h) > \frac{M}{2}\Big\} \cup \Big\{\sup_{h\in\bar{B}_0(r)}|\alpha_nn\textrm{rem}_n(h)| > \frac{M}{2}\Big\}\Big) \\
    &\leq \mathbb{P}_{f_{0,n}}\Big(\sup_{h\in\bar{B}_0(r)} f(h) > \frac{M}{2}\Big) + \mathbb{P}_{f_{0,n}}\Big(\sup_{h\in\bar{B}_0(r)}|\alpha_nn\textrm{rem}_n(h)| > \frac{M}{2}\Big) \label{eq:prop2_step3_ineqfinal}\leq \frac{\epsilon}{2} + \frac{\epsilon}{2} = \epsilon. 
\end{align}
The second inequality in \eqref{eq:prop2_step3_ineqfinal} follows from \eqref{eq:prop2_step1} and \eqref{eq:prop2_step2}. 
This completes the proof.

\end{proof}

\subsection{Sufficient conditions for Assumption (A3)}\label{app:post_conc_suff}
The next proposition shows that Conditions \ref{condition1} and \ref{condition2} imply \textbf{(A3)}, even for improper priors.

\begin{Proposition}\label{prop:A3_suff}
    Suppose that assumptions \textbf{(A0)}--\textbf{(A1)} hold and assume that $\pi(\theta) \leq \kappa$ for some $\kappa > 0$. Furthermore, assume Conditions \ref{condition1} and \ref{condition2}. Then, Assumption \textbf{(A3)} holds; that is, for all $\epsilon > 0$ and all $k_0 \geq 0$ there exist a constant $M < \infty$, and an integer $N$ such that for $\alpha_nn\geq N$, we have $\mathbb{P}_{f_{0,n}}(\mathbb{E}_{\pi_{n,\alpha_n}(\theta|X^n)}[\|\sqrt{\alpha_nn}(\theta-\theta^*)\|_2^{k_0}] > M) < \epsilon.$
\end{Proposition}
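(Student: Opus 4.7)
The plan is to express the posterior moment as a ratio $N_n/Z_n$, where $Z_n = \int f_n(X^n|\theta)^{\alpha_n}\pi(\theta)\,d\theta$ is the normalizing constant and $N_n$ is the same integral weighted by $\|\sqrt{\alpha_n n}(\theta - \theta^*)\|_2^{k_0}$, and then to decompose $N_n = N_n^{(1)} + N_n^{(2)}$ using the ball $B_{\theta^*}(r)$ (with $r$ from Condition \ref{condition1}) and its complement. Condition \ref{condition2} will govern the near-$\theta^*$ piece through a Gaussian-type envelope around $\hat\theta$, while Condition \ref{condition1} supplies polynomial/exponential decay in the tail. The overall strategy is the classical ``Gaussian core, polynomial tail'' posterior-concentration argument, adapted to the tempered likelihood and a random center $\hat\theta$. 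All bounds are meant to hold on a single high-probability event obtained by intersecting the events on which Conditions \ref{condition1}--\ref{condition2} and \textbf{(A0)} hold, after a union bound.

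For the denominator, I would restrict to a small ball $B_{\hat\theta}(r')$ on which Condition \ref{condition2} gives $\log f_n(X^n|\theta) - \log f_n(X^n|\hat\theta) \geq -nc_3\|\theta-\hat\theta\|_2^2$. By \textbf{(A0)}, for $n$ large, $\hat\theta$ lies (with high probability) in the neighborhood of $\theta^*$ where \textbf{(A1)} gives $\pi \geq \pi_0 > 0$, so one can pick such an $r'$. A direct Gaussian integration then yields $Z_n \gtrsim f_n(X^n|\hat\theta)^{\alpha_n}(\alpha_n n)^{-p/2}$.

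For the near-field piece $N_n^{(1)}$, I would apply the upper quadratic bound from Condition \ref{condition2} on $B_{\hat\theta}(2r) \supset B_{\theta^*}(r)$ (valid for $n$ large by \textbf{(A0)}), use $\pi \leq \kappa$, and split $\|\theta - \theta^*\|_2^{k_0} \leq 2^{k_0-1}(\|\theta - \hat\theta\|_2^{k_0} + \|\hat\theta - \theta^*\|_2^{k_0})$. A Gaussian moment calculation controls the first summand by $(\alpha_n n)^{-p/2}$ times a $k_0$-dependent constant, and the second produces a factor $\|\sqrt{\alpha_n n}(\hat\theta - \theta^*)\|_2^{k_0} = \alpha_n^{k_0/2}\|\sqrt{n}(\hat\theta - \theta^*)\|_2^{k_0}$ which is $O_{f_{0,n}}(1)$ by \textbf{(A0)}. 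Dividing by the denominator bound yields $N_n^{(1)}/Z_n = O_{f_{0,n}}(1)$.

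For $N_n^{(2)}$, Condition \ref{condition1} gives $f_n(X^n|\theta)^{\alpha_n} \leq f_n(X^n|\theta^*)^{\alpha_n}\exp(-\alpha_n L n\gamma(\|\theta - \theta^*\|_2))$ uniformly on $B_{\theta^*}(r)^{\mathsf{c}}$. Using $\pi \leq \kappa$ and passing to spherical coordinates via Lemma \ref{corr:spherical} reduces the tail numerator to $(\alpha_n n)^{k_0/2}\int_r^\infty \rho^{k_0+p-1}e^{-\alpha_n L n\gamma(\rho)}\,d\rho$. The lower bound $\gamma(\rho) \geq c_0\log(1+c_1\rho) - c_2$ combined with the hypothesis $r > e^{c_2/c_0}/c_1$ (which forces $c_0\log(1+c_1 r) > c_2$) makes the exponent negative at $\rho = r$, so for $\alpha_n n$ large the tail integral decays exponentially fast in $\alpha_n n$. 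Together with $f_n(X^n|\theta^*)^{\alpha_n}/f_n(X^n|\hat\theta)^{\alpha_n} \leq 1$ (definition of the MLE) and the denominator bound, this piece of $N_n/Z_n$ is $o_{f_{0,n}}(1)$. The main technical obstacle is precisely this tail estimate: the exponential decay from Condition \ref{condition1} must dominate the polynomial blow-up $(\alpha_n n)^{(k_0+p)/2}$ coming from the moment prefactor and the $(\alpha_n n)^{p/2}$ absorbed from the denominator normalization, and the quantitative threshold $r > e^{c_2/c_0}/c_1$ in Condition \ref{condition1} is exactly what secures a strictly negative exponent at $\rho=r$ so that this domination holds.
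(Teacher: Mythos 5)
Your proposal is correct and follows essentially the same route as the paper: the same high-probability event intersection, the same denominator lower bound $Z_n \gtrsim f_n(X^n|\hat\theta)^{\alpha_n}(\alpha_nn)^{-p/2}$ from Condition \ref{condition2} plus prior positivity, the same Gaussian-envelope moment bound for the near field after centering at $\hat\theta$, and the same spherical-coordinates tail estimate where $r > e^{c_2/c_0}/c_1$ makes the exponential decay in $\alpha_nn$ dominate the $(\alpha_nn)^{(k_0+p)/2}$ prefactor. The only difference is bookkeeping: the paper routes the moment through the layer-cake formula $\mathbb{E}[V^{k_0}] = k_0\int_0^\infty t^{k_0-1}\mathbb{P}(V>t)\,dt$ and bounds posterior tail probabilities, whereas you bound the numerator integrals directly, which is equivalent.
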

\begin{proof}
First, we define the following events for constants $L, r, c_3, c_4$ and function $\gamma$ from Conditions \ref{condition1} and \ref{condition2}. We take an arbitrary constant $M_0>0$ to be defined just after.
\begin{equation}\label{eq:A3_events_def}
    \begin{split}
        \mathcal{A} &= \left\{\log f_n(X^n|\theta)-\log f_n(X^n|\theta^*) \leq -Ln\gamma(\|\theta-\theta^*\|_2) \text{ for all } \theta\in B_{\theta^*}(r)^\mathsf{c}\right\}, \\
        \mathcal{B} &= \big\{-nc_3\|\theta-\hat{\theta}\|_2^2 \leq \log f_n(X^n|\theta)-\log f_n(X^n|\hat{\theta}) \leq -nc_4\|\theta-\hat{\theta}\|_2^2 \text{ for all } \theta\in B_{\hat{\theta}}(2r)\big\}, \\
        \mathcal{C} &= \{\|\hat{\theta}-\theta^*\|_2 < r\}, \quad \quad \quad \quad  \mathcal{D} = \{\|\sqrt{n}(\hat{\theta}-\theta^*)\|_2 < M_0\}.
    \end{split}
\end{equation}
Let $\epsilon >0$ be arbitrary.
By Condition \ref{condition1}, there exists $N_0 \equiv N_0(L, c_0, c_1, c_2, r, \epsilon)$ such that $\mathbb{P}_{f_{0,n}}(\mathcal{A}^\mathsf{c}) < \epsilon/4$ whenever $\alpha_nn > N_0$. By Condition \ref{condition2}, there exists $N_1 \equiv N_1(c_0, c_1, c_2, c_3, c_4, r, \epsilon)$ such that $\mathbb{P}_{f_{0,n}}(\mathcal{B}^\mathsf{c}) < \epsilon/4$ whenever $\alpha_nn > N_1$. By Assumption \textbf{(A0)}, there exists $N_2 \equiv N_2(r, \epsilon)$ such that $\mathbb{P}_{f_{0,n}}(\mathcal{C}^\mathsf{c}) < \epsilon/4$ whenever $\alpha_nn > N_2$ and there exists $M_0$ and $N_3 \equiv N_3(M_0, \epsilon)$ such that $\mathbb{P}_{f_{0,n}}(\mathcal{D}^\mathsf{c}) < \epsilon/4$ whenever $\alpha_nn > N_3$. 

Let $\mathcal{E} = \{\mathcal{A}\cap\mathcal{B}\cap\mathcal{C}\cap\mathcal{D}\}$. By a union bound, whenever $\alpha_nn > \max(N_0, N_1, N_2, N_3)$, we have $\mathbb{P}_{f_{0,n}}(\mathcal{E}^\mathsf{c}) \leq \mathbb{P}_{f_{0,n}}(\mathcal{A}^\mathsf{c}) + \mathbb{P}_{f_{0,n}}(\mathcal{B}^\mathsf{c}) + \mathbb{P}_{f_{0,n}}(\mathcal{C}^\mathsf{c}) + \mathbb{P}_{f_{0,n}}(\mathcal{D}^\mathsf{c}) < \epsilon.$
Hence, when $\alpha_nn > \max(N_0, N_1, N_2, N_3)$, using the bound $(a+b)^{k_0}\leq 2^{k_0}(a^{k_0} + b^{k_0})$ for ${k_0}\geq0$,
\begin{equation}
\begin{split}
\label{eq:Pbound1}
\mathbb{P}_{f_{0,n}}&\big(\mathbb{E}_{\pi_{n,\alpha_n}(\theta|X^n)}[\|\sqrt{\alpha_nn}(\theta-\theta^*)\|_2^{k_0}] > M\big) \\
&\leq \mathbb{P}_{f_{0,n}}\big(\mathbb{E}_{\pi_{n,\alpha_n}(\theta|X^n)}[\|\sqrt{\alpha_nn}(\theta-\theta^*)\|_2^{k_0}] > M \cap\mathcal{E}\big) + {\epsilon}
\\
    &\leq \mathbb{P}_{f_{0,n}}\big(2^{k_0}\mathbb{E}_{\pi_{n,\alpha_n}(\theta|X^n)}[\|\sqrt{\alpha_nn}(\theta-\hat{\theta})\|_2^{k_0}] + 2^{k_0}\|\sqrt{\alpha_nn}(\hat{\theta}-\theta^*)\|_2^{k_0} > M\cap\mathcal{E}\big) + {\epsilon} \\
    &\leq \mathbb{P}_{f_{0,n}}\big(2^{k_0}\mathbb{E}_{\pi_{n,\alpha_n}(\theta|X^n)}[\|\sqrt{\alpha_nn}(\theta-\hat{\theta})\|_2^{k_0}] > M - (2\sqrt{\alpha_n}M_0)^{k_0}\cap\mathcal{E}\big) +{\epsilon},
    \end{split}
\end{equation}
where the last  inequality comes from the intersection with event $\mathcal{D}$.

Now, for any $\tau > 0$, define $M' \equiv 2^{-k_0}(M - (2\tau M_0)^{k_0})$ and note that since $\alpha_n\to0$ there exists $N_4 \equiv N_4(\tau)$ such that $\sqrt{\alpha_n} < \tau$ when $\alpha_nn > N_4$. In particular, choose $\tau$ small enough that $M' >0$. Then, $M - (2\sqrt{\alpha_n}M_0)^{k_0} \geq M - (2\tau M_0)^{k_0}$ when $\alpha_nn > N_4$. Hence, from \eqref{eq:Pbound1},
\begin{equation}
\label{eq:Pbound2}
    \mathbb{P}_{f_{0,n}}(\mathbb{E}_{\pi_{n,\alpha_n}(\theta|X^n)}[\|\sqrt{\alpha_nn}(\theta-\theta^*)\|_2^{k_0}] > M) \leq \mathbb{P}_{f_{0,n}}(\mathbb{E}_{\pi_{n,\alpha_n}(\theta|X^n)}[\|\sqrt{\alpha_nn}(\theta-\hat{\theta})\|_2^{k_0}] > M'\cap\mathcal{E}) + \epsilon.
\end{equation}

Moving forward, our aim is to show that there is some $M'$ large enough that
\begin{align}
\label{eq:new_goal}
\mathbb{P}_{f_{0,n}}\big(\mathbb{E}_{\pi_{n,\alpha_n}(\theta|X^n)}[\|\sqrt{\alpha_nn}(\theta-\hat{\theta})\|_2^{k_0}] > M'\cap\mathcal{E}\big) = 0 .
\end{align}
That is, for the remainder of the proof, we will study $\mathbb{E}_{\pi_{n,\alpha_n}(\theta|X^n)}[\|\sqrt{\alpha_nn}(\theta-\hat{\theta})\|_2^{k_0}]$ on the event $\mathcal{E}$ and show that the probability on the right side of \eqref{eq:Pbound2} equals $0$. 
By the tail formula of the expectation, we have the following:
\begin{align}
    k_0^{-1}\mathbb{E}_{\pi_{n,\alpha_n}(\theta|X^n)}[\|\sqrt{\alpha_nn}(\theta-\hat{\theta})\|_2^{k_0}]
  &= k_0^{-1}\int_0^\infty \mathbb{P}_{\pi_{n,\alpha_n}(\theta|X^n)}(\|\sqrt{\alpha_nn}(\theta-\hat{\theta})\|_2^{k_0} > s)ds \nonumber \\
    \label{eq:tail_changeofv}&= \int_0^\infty t^{k_0-1}\mathbb{P}_{\pi_{n,\alpha_n}(\theta|X^n)}(\|\sqrt{\alpha_nn}(\theta-\hat{\theta})\|_2 > t)dt,
\end{align}
where in \eqref{eq:tail_changeofv}, we used the change of variable $s = t^{k_0} \implies ds = k_0t^{k_0-1} dt$. Now from \eqref{eq:tail_changeofv},
\begin{align}
k_0^{-1}\mathbb{E}_{\pi_{n,\alpha_n}(\theta|X^n)}[\|\sqrt{\alpha_nn}(\theta-\hat{\theta})\|_2^{k_0}] &= \int_0^{2r\sqrt{\alpha_nn}} t^{k_0-1}\mathbb{P}_{\pi_{n,\alpha_n}(\theta|X^n)}(\|\sqrt{\alpha_nn}(\theta-\hat{\theta})\|_2 > t)dt \nonumber \\
& \qquad + \int_{2r\sqrt{\alpha_nn}}^\infty t^{k_0-1}\mathbb{P}_{\pi_{n,\alpha_n}(\theta|X^n)}(\|\sqrt{\alpha_nn}(\theta-\hat{\theta})\|_2 > t)dt. \label{eq:Pbound3}
\end{align}
Label the terms on the right side of \eqref{eq:Pbound3} as $I_1$ and $I_2$.
Letting $M'' = M'/(2k_0)$, we showed
\begin{align}
\label{eq:what_we_have}
    \mathbb{P}_{f_{0,n}}\big(\mathbb{E}_{\pi_{n,\alpha_n}(\theta|X^n)}[\|\sqrt{\alpha_nn}(\theta-\hat{\theta})\|_2^{k_0}] > M'\cap\mathcal{E}\big) 
    &= \mathbb{P}_{f_{0,n}}\big(I_1 + I_2 > M'/k_0\cap\mathcal{E}\big).
\end{align}
We next notice,
\begin{align*}
   \mathbb{P}_{f_{0,n}}\big(I_1 + I_2 > M'/k_0\cap\mathcal{E}\big)
   &\leq \mathbb{P}_{f_{0,n}}\big(\max(I_1,I_2) > M''\cap\mathcal{E}\big) \\
   &= \mathbb{P}_{f_{0,n}}\big(\{I_1 > M'' \cup I_2 > M''\}\cap\mathcal{E}\big) \\
    \nonumber&= \mathbb{P}_{f_{0,n}}\big(\{I_1 > M''\cap\mathcal{E}\} \cup \{I_2 > M''\cap\mathcal{E}\}\big) \\
    \nonumber&\leq \mathbb{P}_{f_{0,n}}\big(I_1 > M''\cap\mathcal{E}\big) + \mathbb{P}_{f_{0,n}}\big(I_2 > M''\cap\mathcal{E}\big).
\end{align*}
Hence, from the above and \eqref{eq:what_we_have}, we have that
\begin{align}\label{eq:A3prob_ineq2}
   \mathbb{P}_{f_{0,n}}\big(\mathbb{E}_{\pi_{n,\alpha_n}(\theta|X^n)}[\|\sqrt{\alpha_nn}(\theta-\hat{\theta})\|_2^{k_0}] > M'\cap\mathcal{E}\big)  \leq \mathbb{P}_{f_{0,n}}(I_1 > M''\cap\mathcal{E}) + \mathbb{P}_{f_{0,n}}(I_2 > M''\cap\mathcal{E}).
\end{align}
We will show there exists some $M''$ (more precisely, there exists some $M$ since $2 k_0 M'' = M' = 2^{-k_0}(M - (2 \tau M_0)^{k_0})$) such that both probabilities on the right side of \eqref{eq:A3prob_ineq2} are 0.

\noindent\textbf{First Term of \eqref{eq:A3prob_ineq2}:} We will show that $I_1$ defined in \eqref{eq:Pbound3} is bounded above by a constant for $\alpha_nn$ sufficiently large conditioned on $\mathcal{E}$. Hence, we may choose $M''$ to be larger than this constant to make $\mathbb{P}_{f_{0,n}}(I_1 > M''\cap\mathcal{E}) = 0$. First, considering $I_1$ defined in \eqref{eq:Pbound3}, our goal is to obtain a bound for 
\begin{align*}
    \mathbb{P}_{\pi_{n,\alpha_n}(\theta|X^n)}&(\|\sqrt{\alpha_nn}(\theta-\hat{\theta})\|_2 > t) = \mathbb{P}_{\pi_{n,\alpha_n}(\theta|X^n)}\Big(\|\theta-\hat{\theta}\|_2 > \frac{t}{\sqrt{\alpha_nn}}\Big)\\
     &= \mathbb{P}_{\pi_{n,\alpha_n}(\theta|X^n)}\Big(2r > \|\theta-\hat{\theta}\|_2 > \frac{t}{\sqrt{\alpha_nn}}\Big) + \mathbb{P}_{\pi_{n,\alpha_n}(\theta|X^n)}\Big(\|\theta-\hat{\theta}\|_2 > 2r\Big).
\end{align*}
Hence, for $I_1$ of \eqref{eq:Pbound3}
\begin{align}
\label{eq:I11}
I_1 &= \int_0^{2r\sqrt{\alpha_nn}}t^{k_0-1}  \mathbb{P}_{\pi_{n,\alpha_n}(\theta|X^n)}\Big(2r > \|\theta-\hat{\theta}\|_2 > \frac{t}{\sqrt{\alpha_nn}}\Big)dt \\
&\qquad + \int_0^{2r\sqrt{\alpha_nn}} t^{k_0-1}\mathbb{P}_{\pi_{n,\alpha_n}(\theta|X^n)}\Big(\|\theta-\hat{\theta}\|_2 > 2r\Big)dt. \nonumber
\end{align}
We label the terms of \eqref{eq:I11} as $I_{11}$ and $I_{12}$ and in what follows we show that $I_{11}$ and $I_{12}$ are bounded above by a constant for $\alpha_nn$ sufficiently large, conditional on $\mathcal{E}$.

\noindent\textbf{Term $I_{11}$ of \eqref{eq:I11}:} Let $\ell_n(\theta) = \frac{1}{n}\log f_n(X^n|\theta)$. Then
\begin{equation}
\label{eq:I11_first}
    \mathbb{P}_{\pi_{n,\alpha_n}(\theta|X^n)}\Big(2r > \|\theta-\hat{\theta}\|_2 > \frac{t}{\sqrt{\alpha_nn}}\Big)
    = \frac{\int_{2r > \|\theta-\hat{\theta}\|_2 > t/\sqrt{\alpha_nn}} e^{\alpha_nn(\ell_n(\theta)-\ell_n(\hat{\theta}))}\pi(\theta)d\theta}{\int e^{\alpha_nn(\ell_n(\theta)-\ell_n(\hat{\theta}))}\pi(\theta)d\theta} \equiv \frac{\text{num}}{\text{den}}.
\end{equation}
We will bound the probability in the above by upper bounding $\text{num}$ and lower bounding $\text{den}$. On the event $\mathcal{B}$, we have that
\begin{align*}
   \text{num}
    &\leq \int_{2r > \|\theta-\hat{\theta}\|_2 > \frac{t}{\sqrt{\alpha_nn}}}e^{-\alpha_nnc_4\|\theta-\hat{\theta}\|_2^2}\pi(\theta)d\theta
    \leq \kappa\int_{2r > \|\theta-\hat{\theta}\|_2 > \frac{t}{\sqrt{\alpha_nn}}} e^{-\alpha_nnc_4\|\theta-\hat{\theta}\|_2^2}d\theta,
\end{align*}
where the second inequality follows from the bound on the prior $\pi(\theta)\leq\kappa$. Applying the change of variable $h = \sqrt{\alpha_nn}(\theta-\hat{\theta}) \implies d\theta = (\alpha_nn)^{-p/2}dh$ to the above, we obtain
\begin{align}
    \text{num}
    \nonumber&\leq \kappa(\alpha_nn)^{-\frac{p}{2}}\int_{2r\sqrt{\alpha_nn} > \|h\|_2 > t} e^{-c_4\|h\|_2^2}dh\leq \kappa(\alpha_nn)^{-\frac{p}{2}}\int_{\|h\|_2 > t} e^{-c_4\|h\|_2^2}dh.
\end{align}
In particular, as $\int_{\|h\|_2 >t} (2\pi)^{-\frac{p}{2}}(2c_4)^{\frac{p}{2}}e^{-\frac{2c_4}{2}\|h\|_2^2}dh = \mathbb{P}(\|Z\|_2 >t\sqrt{2 c_4})$ for a standard Gaussian vector $Z \sim \mathcal{N}(0, \mathbb{I})$, we have 
$\int_{\|h\|_2 >t} (2\pi)^{-\frac{p}{2}}(2c_4)^{\frac{p}{2}}\exp\{-\frac{2c_4}{2}\|h\|_2^2\}dh \leq \exp\{-c_4(t - \sqrt{p/(2c_4)})^2\}$ by Lemma \ref{lem:Gaussian_concentration}. Hence, letting $\tilde{C} \equiv \kappa(2\pi)^{p/2}(2c_4)^{-p/2}$, the above bound implies 
\begin{align}
    \text{num} \leq \kappa(\alpha_nn)^{-\frac{p}{2}}(2\pi)^{\frac{p}{2}}(2c_4)^{-\frac{p}{2}} e^{-c_4(t - \sqrt{p/(2c_4)})^2}
    \label{eq:N_bound}&\equiv (\alpha_nn)^{-\frac{p}{2}}\tilde{C}e^{-c_4(t - \sqrt{p/(2c_4)})^2}.
\end{align}

Similarly, we may lower bound $ \text{den}$. Before we do so, note by Assumption \textbf{(A1)} that there exists $r' > 0$ such that $\pi(\theta) > 0$ on $B_{\theta^*}(r')$. Furthermore, $B_{\theta^*}(r)\subseteq B_{\hat{\theta}}(2r)$ on the event $\mathcal{C}$. This follows from the triangle inequality, as $\|\theta-\hat{\theta}\|_2 \leq \|\theta-\theta^*\|_2 + \|\theta^*-\hat{\theta}\|_2 < r + r = 2r$. Hence, $\theta\in B_{\theta^*}(r) \implies \theta\in B_{\hat{\theta}}(2r)$. Let $\tilde{r} = \min(r',r)$. Then, on the event $\mathcal{C}\cap\mathcal{B}$,
\begin{align*}
    \text{den} 
    = \int e^{\alpha_nn(\ell_n(\theta)-\ell_n(\hat{\theta}))}\pi(\theta)d\theta
    &\geq  \int_{B_{\theta^*}(\tilde{r})} e^{\alpha_nn(\ell_n(\theta)-\ell_n(\hat{\theta}))}\pi(\theta)d\theta \\
    &\geq \inf_{\theta'\in B_{\theta^*}(\tilde{r})}\pi(\theta')\int_{B_{\theta^*}(\tilde{r})} e^{\alpha_nn(\ell_n(\theta)-\ell_n(\hat{\theta}))}d\theta \\
    &\geq L(\theta^*, \tilde{r})\int_{B_{\theta^*}(\tilde{r})} e^{-\alpha_nnc_3\|\theta-\hat{\theta}\|_2^2} d\theta,
\end{align*}
where $L(\theta^*, \tilde{r}) \equiv \inf_{\theta\in B_{\theta^*}(\tilde{r})}\pi(\theta)$, which is strictly positive by Assumption \textbf{(A1)} and since $\tilde{r} \leq r'$. As the integral is also positive, the ratio of $\text{num}/\text{den}$ is well defined (i.e., we are not dividing by $0$). Applying the change of variable $h = \sqrt{\alpha_nn}(\theta-\theta^*)$, we have that $\theta=(\alpha_nn)^{-1/2}h + \theta^*$. Hence, $\theta\in B_{\theta^*}(\tilde{r}) \implies (\alpha_nn)^{-1/2}h + \theta^* \in B_{\theta^*}(\tilde{r}) \implies h\in B_0(\tilde{r}\sqrt{\alpha_nn}).$ From the above then, with the change of variables, we find
\begin{align*}
    \text{den} 
    \geq L(\theta^*, \tilde{r})\int_{B_{\theta^*}(\tilde{r})} e^{-\alpha_nnc_3\|\theta-\hat{\theta}\|_2^2} d\theta 
    &= L(\theta^*, \tilde{r})(\alpha_nn)^{-\frac{p}{2}}\int_{B_0(\tilde{r}\sqrt{\alpha_nn})} e^{-\alpha_nnc_3\left\|\theta^* + \frac{h}{\sqrt{\alpha_nn}} - \hat{\theta}\right\|_2^2} dh \\
    &= L(\theta^*, \tilde{r})(\alpha_nn)^{-\frac{p}{2}}\int_{B_0(\tilde{r}\sqrt{\alpha_nn})} e^{-c_3\left\|h + \sqrt{\alpha_nn}(\theta^*-\hat{\theta})\right\|_2^2} dh.
\end{align*}
Notice that since $(a + b)^p \leq 2^p\left(a^p + b^p\right)$ for $p \geq 0$, we have
$\|h + \sqrt{\alpha_nn}(\theta^*-\hat{\theta})\|_2^2 \leq 4\|h\|_2^2 + 4\|\sqrt{\alpha_nn}(\theta^*-\hat{\theta})\|_2^2.$
Next, notice that on the event $\mathcal{D}$, we have that $\|\sqrt{n}(\theta^*-\hat{\theta})\|_2^2 < M_0^2$ and for $\alpha_nn > N_4$, we have that $\alpha_n < \tau^2$. Hence, we can further bound the above
\begin{align}\label{eq:den_norm_bound}
    \left\|h + \sqrt{\alpha_nn}(\theta^*-\hat{\theta})\right\|_2^2 \leq 4\|h\|_2^2 + 4\tau^2M_0^2.
\end{align}
Next, using the bound in \eqref{eq:den_norm_bound}, we have that on $\mathcal{D}$ and with $\alpha_nn$ large enough,
\begin{align}
    \text{den} 
    &\nonumber \geq L(\theta^*, \tilde{r})(\alpha_nn)^{-\frac{p}{2}}\int_{B_0(\tilde{r}\sqrt{\alpha_nn})} e^{-c_3\left\|h + \sqrt{\alpha_nn}(\theta^*-\hat{\theta})\right\|_2^2} dh \\
    &\nonumber \geq L(\theta^*, \tilde{r})(\alpha_nn)^{-\frac{p}{2}}\int_{B_0(\tilde{r}\sqrt{\alpha_nn})} e^{-c_3(4\left\|h\right\|_2^2 + 4\tau^2M_0^2)} dh \\
    &= L(\theta^*, \tilde{r})e^{-4c_3\tau^2M_0^2}(\alpha_nn)^{-\frac{p}{2}}\int_{B_0(\tilde{r}\sqrt{\alpha_nn})}e^{-4c_3\left\|h\right\|_2^2} dh \geq C_0(\alpha_nn)^{-\frac{p}{2}}, \label{eq:D_bound}
\end{align}
where $C_0 = L(\theta^*, \tilde{r})e^{-4c_3\tau^2M_0^2}\int_{B_0(\tilde{r})} e^{-4c_3\left\|h\right\|_2^2} dh$ and the final inequality holds for $\alpha_n n >1$.

Using the bounds from \eqref{eq:N_bound} and \eqref{eq:D_bound} in \eqref{eq:I11_first} yields
\begin{align*}
   \mathbb{P}_{\pi_{n,\alpha_n}(\theta|X^n)}\Big(2r > \|\theta-\hat{\theta}\|_2 > \frac{t}{\sqrt{\alpha_nn}}\Big) \hspace{-.55pt}
    = \frac{\text{num}}{\text{den}}
    \leq \frac{ (\alpha_nn)^{-\frac{p}{2}}\tilde{C}e^{-c_4(t - \sqrt{p/(2c_4)})^2}}{ (\alpha_nn)^{-\frac{p}{2}}C_0} = Ce^{-c_4(t - \sqrt{\frac{p}{2c_4}})^2},
\end{align*}
where $C \equiv \tilde{C}/C_0$. Now, we bound $I_{11}$  of \eqref{eq:I11} using the above as follows:
\begin{align}
   \nonumber I_{11} &= \int_0^{2r\sqrt{\alpha_nn}} \hspace{-.6pt}t^{k_0-1}  \mathbb{P}_{\pi_{n,\alpha_n}(\theta|X^n)}\Big(2r > \|\theta-\hat{\theta}\|_2 > \frac{t}{\sqrt{\alpha_nn}}\Big)dt \\
    &\leq C\int_{0}^{2r\sqrt{\alpha_nn}} t^{k_0-1}e^{-c_4(t - \sqrt{p/(2c_4)})^2}dt 
    \label{eq:I11_bound_cauchy}\leq C\int_{0}^{\infty} t^{k_0-1}e^{-c_4(t - \sqrt{p/(2c_4)})^2}dt, 
\end{align}
which is a finite constant independent of $n$. \\

\noindent\textbf{Term $I_{12}$ of \eqref{eq:I11}:} First notice that
\begin{align}
    I_{12} \nonumber& = \mathbb{P}_{\pi_{n,\alpha_n}(\theta|X^n)}\big(\|\theta-\hat{\theta}\|_2 > 2r\big) \int_0^{2r\sqrt{\alpha_nn}}t^{k_0-1}dt \\
    &= \frac{(2r)^{k_0}(\alpha_nn)^{\frac{k_0}{2}}}{k_0} \cdot \frac{e^{\alpha_nn\ell_n(\theta^*)} \int_{\|\theta-\hat{\theta}\|_2 > 2r} e^{\alpha_nn(\ell_n(\theta)-\ell_n(\theta^*))}\pi(\theta)d\theta}{e^{\alpha_nn\ell_n(\hat{\theta})}\int e^{\alpha_nn(\ell_n(\theta)-\ell_n(\hat{\theta}))}\pi(\theta)d\theta}. 
\end{align}
Then, since $e^{\alpha_nn\ell_n(\theta^*)} \leq e^{\alpha_nn\ell_n(\hat{\theta})}$ as $\hat{\theta}$ is the MLE, we have
\begin{align}
    I_{12} & \leq  \frac{(2r)^{k_0}(\alpha_nn)^{\frac{k_0}{2}} \int_{\|\theta-\hat{\theta}\|_2 > 2r} e^{\alpha_nn(\ell_n(\theta)-\ell_n(\theta^*))}\pi(\theta)d\theta}{k_0 \int e^{\alpha_nn(\ell_n(\theta)-\ell_n(\hat{\theta}))}\pi(\theta)d\theta} \\
    &\leq \frac{\kappa (2r)^{k_0}(\alpha_nn)^{\frac{k_0+p}{2}}}{C_0k_0}\int_{\|\theta-\hat{\theta}\|_2 > 2r} \hspace{-6.18pt} e^{\alpha_nn(\ell_n(\theta)-\ell_n(\theta^*))} d\theta, \label{eq:I12bound1}
\end{align}
where the second inequality above uses  the bound in \eqref{eq:D_bound} and the assumption $\pi(\theta)\leq\kappa$.  Next, using the triangle inequality $\|\theta-\hat{\theta}\|_2 \leq \|\theta-\theta^*\|_2 + \|\hat{\theta}-\theta^*\|_2$ and that we are on the event $\mathcal{C}$, we have $\{\theta: \|\theta-\hat{\theta}\|_2 > 2r\} \subseteq \{\theta: \|\theta-\theta^*\|_2 > r\}$ and therefore, 
\begin{align}
\label{eq:int1}
  \int_{\|\theta-\hat{\theta}\|_2 > 2r} e^{\alpha_nn(\ell_n(\theta)-\ell_n(\theta^*))} d\theta  &\leq \int_{\|\theta-\theta^*\|_2 > r} e^{\alpha_nn(\ell_n(\theta)-\ell_n(\theta^*))} d\theta \leq \int_{\|\theta-\theta^*\|_2 > r}  e^{-L\alpha_nn\gamma(\|\theta-\theta^*\|_2)}d\theta.
\end{align}
The final inequality above follows conditional on the event $\mathcal{A}$ defined in \eqref{eq:A3_events_def}. Now, switching to spherical coordinates (with $\text{vol}(\mathbb{S}^{p-1})$ denoting the volume of the unit sphere -- see Corollary \ref{corr:spherical}),
\begin{align}
   \int_{\|\theta-\theta^*\|_2 > r} e^{-L\alpha_nn\gamma(\|\theta-\theta^*\|_2)}d\theta &= \text{vol}(\mathbb{S}^{p-1}) \int_{r}^{\infty} \rho^{p-1}e^{-L\alpha_nn\gamma(\rho)}d\rho  \nonumber\\
    \label{eq:int2}&\leq \text{vol}(\mathbb{S}^{p-1}) e^{Lc_2\alpha_nn} \int_{r}^{\infty} \rho^{p-1}e^{-Lc_0\alpha_nn\log(1+c_1\rho) }d\rho \\
    \label{eq:int3} &\leq \text{vol}(\mathbb{S}^{p-1})e^{Lc_2\alpha_nn}c_1^{-Lc_0\alpha_nn} \int_{r}^{\infty}\rho^{p-1-Lc_0\alpha_nn}d\rho \\
    \label{eq:int4} &= \text{vol}(\mathbb{S}^{p-1})e^{Lc_2\alpha_nn}(rc_1)^{-Lc_0\alpha_nn} \frac{r^{p}}{Lc_0\alpha_n n - p}.
\end{align}
In \eqref{eq:int2}, we have appealed to the Condition 1 assumption, $\gamma(v) \geq c_0\log(1+c_1v) - c_2$ for $v \geq r$ and in \eqref{eq:int3}, we use $(1+c_1\rho)^{-Lc_0\alpha_nn} \leq c_1^{-Lc_0\alpha_nn}\rho^{-Lc_0\alpha_nn}$ (since $c_1 > 0$). Finally, in \eqref{eq:int4} we use that for $\alpha_n n > (p+1)/(Lc_0)$, we have $p-Lc_0\alpha_nn < -1$  so that 
$$ \int_{r}^{\infty}\rho^{p-1-Lc_0\alpha_nn}d\rho = \frac{1}{p-Lc_0\alpha_n n}\rho^{p-Lc_0\alpha_nn} \Big|^{\infty}_r = \frac{1}{Lc_0\alpha_n n - p}r^{p-Lc_0\alpha_nn}.$$
Plugging the bounds in \eqref{eq:int1}-\eqref{eq:int4} into \eqref{eq:I12bound1},
\begin{align}
    I_{12} \label{eq:I12_final}&\leq \frac{\text{vol}(\mathbb{S}^{p-1})\kappa (2r)^{k_0}(\alpha_nn)^{\frac{k_0+p}{2}}}{C_0k_0} \times \frac{r^p}{Lc_0\alpha_nn-p} \times \Big(\frac{e^{c_2}}{(c_1r)^{c_0}}\Big)^{L\alpha_nn}.
\end{align}
The third term in \eqref{eq:I12_final} decays exponentially in $\alpha_nn$ since we have taken $r > \frac{e^{c_2/c_0}}{c_1}$. Hence, for some $\tau_1 > 0$, there exists $N_5 \equiv N_5(\tau_1)$ such that $I_{12} < \tau_1$ whenever $\alpha_nn > \max((p+1)/(Lc_0),N_5)$.

Now we look back to the first term on the right side of \eqref{eq:A3prob_ineq2}. Considering the bound in \eqref{eq:I11_bound_cauchy} and the argument just above in \eqref{eq:I12_final}, taking $\alpha_nn$ large enough,
\begin{align}
    \mathbb{P}_{f_{0,n}}(I_1 > M'' \cap \mathcal{E}) 
    \nonumber&= \mathbb{P}_{f_{0,n}}(I_{11} + I_{12} > M'' \cap \mathcal{E}) \\
    \label{eq:I1_prob_bound}&\leq \mathbb{P}_{f_{0,n}}\Big(C\int_{0}^{\infty} t^{k_0-1}e^{-c_4(-t - \sqrt{p/(2c_4)})^2}dt + \tau_1 > M'' \cap \mathcal{E}\Big),
\end{align}
and we may take $M'' = C\int_{0}^{\infty} t^{k_0-1}e^{c_4(-t - \sqrt{p/(2c_4)})^2}dt + \tau_1 + 1 < \infty$ so that the probability on the right hand side equals 0 as desired. \\

\noindent\textbf{Second Term of \eqref{eq:A3prob_ineq2}:} We show that  $I_2$ is bounded above by a constant for $\alpha_nn$ sufficiently large conditioned on $\mathcal{E}$. Hence, we may choose $M''$ to be larger than this constant such that $\mathbb{P}_{f_{0,n}}(I_2 > M''\cap\mathcal{E}) = 0$. First, by the change of variable $u = \frac{t}{\sqrt{\alpha_nn}}\implies dt = \sqrt{\alpha_nn}du$,
\begin{align}
    I_{2} &= \int_{2r\sqrt{\alpha_nn}}^\infty t^{k_0-1}\mathbb{P}_{\pi_{n,\alpha_n}(\theta|X^n)}\Big(\|\theta-\hat{\theta}\|_2 > \frac{t}{\sqrt{\alpha_nn}}\Big)dt \nonumber \\
    &= (\alpha_nn)^{\frac{k_0}{2}}\int_{2r}^\infty u^{k_0-1}\mathbb{P}_{\pi_{n,\alpha_n}(\theta|X^n)}(\|\theta-\hat{\theta}\|_2 > u)du. \label{eq:I2}
\end{align}
We express the probability as follows:
\begin{align*}
    \mathbb{P}_{\pi_{n,\alpha_n}(\theta|X^n)}(\|\theta-\hat{\theta}\|_2 > u) 
    &= \frac{e^{\alpha_nn\ell_n(\theta^*)}\int_{\|\theta-\hat{\theta}\|_2 > u}e^{\alpha_nn(\ell_n(\theta)-\ell_n(\theta^*))}\pi(\theta)d\theta}{e^{\alpha_nn\ell_n(\hat{\theta})}\int e^{\alpha_nn(\ell_n(\theta)-\ell_n(\hat{\theta}))}\pi(\theta)d\theta} \\
    &\leq (\alpha_nn)^{\frac{p}{2}}\kappa C_0^{-1}\int_{\substack{\|\theta-\hat{\theta}\|_2 > u}} e^{\alpha_nn(\ell_n(\theta)-\ell_n(\theta^*))}d\theta,
\end{align*}
where we have used that $\ell_n(\theta^*)\leq\ell_n(\hat{\theta})$, the bound in \eqref{eq:D_bound}, and the assumed bound on the prior. Our goal is to obtain a bound for the above that is valid for $u > 2r$. As in \eqref{eq:int1}--\eqref{eq:I12_final}, on the event $\mathcal{A}\cap\mathcal{C}$, we can bound the above as follows: for large enough $\alpha_n n$,
\begin{align}
    \mathbb{P}_{\pi_{n,\alpha_n}(\theta|X^n)}(\|\theta-\hat{\theta}\|_2 > u) 
    \nonumber&\leq (\alpha_nn)^{p/2}\kappa C_0^{-1}\int_{\substack{\|\theta-\hat{\theta}\|_2 > u}}\exp\left(\alpha_nn(\ell_n(\theta)-\ell_n(\theta^*))\right)d\theta \\
    \label{eq:I2_final_prob}&\leq \frac{\text{vol}(\mathbb{S}^{p-1})e^{Lc_2\alpha_nn}c_1^{-Lc_0\alpha_nn}(\alpha_nn)^{\frac{p}{2}}\kappa}{ C_0}\times\frac{(u/2)^{p-Lc_0\alpha_nn}}{Lc_0\alpha_nn-p}.
\end{align}
Plugging the final bound in \eqref{eq:I2_final_prob} into our expression for $I_2$ in \eqref{eq:I2} yields
\begin{align*}
    I_2 
    &= (\alpha_nn)^{\frac{k_0}{2}}\int_{2r}^\infty u^{k_0-1}\mathbb{P}_{\pi_{n,\alpha_n}(\theta|X^n)}(\|\theta-\hat{\theta}\|_2 > u)du \\
    &\leq \frac{\text{vol}(\mathbb{S}^{p-1})e^{Lc_2\alpha_nn}c_1^{-Lc_0\alpha_nn}\kappa (\alpha_nn)^{\frac{k_0+p}{2}}}{2^{p-Lc_0\alpha_nn}C_0(Lc_0\alpha_nn-p)}\int_{2r}^\infty u^{k_0-1+p-Lc_0\alpha_nn}du \\
    &= \frac{\text{vol}(\mathbb{S}^{p-1})r^{k_0+p}\kappa (\alpha_nn)^{\frac{k_0+p}{2}}}{2^p C_0(Lc_0\alpha_nn-p)(Lc_0\alpha_nn-(k_0+p))} \times \Big(\frac{e^{c_2}}{(c_1r)^{c_0}}\Big)^{L\alpha_nn}.
\end{align*}
We see that the second term in the above decays to 0 exponentially in $\alpha_nn$, since we have taken $r > \frac{e^{c_2/c_0}}{c_1}$. Hence, for some $\tau_2 > 0$, there exists $N_6 \equiv N_6(\tau_2)$ such that $I_{2} < \tau_2$ whenever $\alpha_nn > N_6$. Then taking $\alpha_nn > N_6$, we have that
\begin{align}\label{eq:I2_prob_bound}
    \mathbb{P}_{f_{0,n}}(I_2 > M'' \cap \mathcal{E}) \leq \mathbb{P}_{f_{0,n}}(\tau_2 > M'' \cap \mathcal{E}),
\end{align}
and we may take $M'' = \tau_2 + 1 < \infty$ such that the probability on the right side equals 0. \\

\noindent\textbf{Final bound:} Taking $\alpha_nn$ large enough and 
$$M'' = \max\Big\{C\int_{0}^{\infty} t^{k_0-1}e^{-c_4(-t - \sqrt{p/(2c_4)})^2}dt + \tau_1 + 1,~\tau_2 + 1\Big\} < \infty,$$
we have by the bounds in \eqref{eq:A3prob_ineq2}, \eqref{eq:I1_prob_bound}, and \eqref{eq:I2_prob_bound}, that
\begin{align*}
    \mathbb{P}_{f_{0,n}}(\mathbb{E}_{\pi_{n,\alpha_n}(\theta|X^n)}[\|\sqrt{\alpha_nn}(\theta-\theta^*)\|_2^{k_0}] > M) &\leq \mathbb{P}_{f_{0,n}}(I_1 > M''\cap\mathcal{E}) + \mathbb{P}_{f_{0,n}}(I_2 > M''\cap\mathcal{E}) + \epsilon \\
    &= \epsilon.
\end{align*}
Finally, we recover the original constant, $M$, as follows. Recall that $M'' = M'/(2k_0) \implies M'=2k_0M''$ and that $M = 2^{k_0}M' + (2\tau M_0)^{k_0} = 2^{k_0+1}k_0M'' + (2\tau M_0)^{k_0}$. Hence, $$M = 2^{k_0+1}k_0\max\Big\{C\int_{0}^{\infty} t^{k_0-1}e^{-c_4(t - \sqrt{p/(2c_4)})^2}dt + \tau_1 + 1,~\tau_2 + 1\Big\} + (2\tau M_0)^{k_0} < \infty.$$ \end{proof}

\subsection{Sufficient conditions for Assumption \textbf{(A2')}}\label{app:likelihood_suff}
Assumption \textbf{(A2')} can be thought of a higher order LAN condition. The standard LAN condition in \textbf{(A2)} is a quadratic approximation of the log-likelihood ratio centered at the pseudo-true parameter. Assumption \textbf{(A2')} is a cubic approximation of the log-likelihood ratio centered at the MLE. Proposition \ref{prop:A2_suff} allows us to check \textbf{(A2)} by putting regularity conditions on three derivatives of the log-likelihood. The following proposition allows us to check \textbf{(A2')} by imposing regularity conditions on four derivatives of the log-likelihood.

\begin{Proposition}\label{prop:likelihood_suff}
    Assume \textbf{(A0)} holds. Furthermore, assume that there exists some $\gamma> 0$ such that the following hold on $B_{\theta^*}(\gamma)$:
    \begin{enumerate}
        \item The log-likelihood function, $\log f_n(X^n|\theta)$, is four-times continuously differentiable.\
        \item Let $\frac{1}{n}\nabla^4\log f_n(X^n|\theta)\in\mathbb{R}^{p\times p\times p\times p}$ be a fourth-order tensor containing the partial derivatives of $\log f_n(X^n|\theta)$. For all $\theta$ in the closure of the open ball, there exists a function $\tilde{M}:\mathcal{X}^n\to\mathbb{R}$, where $\mathbb{E}_{f_{0,n}}|\frac{1}{n}\tilde{M}(X^n)| < \infty$, such that the entries of $\nabla^4\log f_n(X^n|\theta)$ satisfy
        $|[\nabla^4\log f_n(X^n|\theta)]_{i,j,k,l}| =|\frac{\partial^4}{\partial\theta_i\partial\theta_j\partial\theta_k\partial\theta_l}\log f_n(X^n|\theta)| < \tilde{M}(X^n).$
        \item The Hessian of $-\log f_n(X^n|\theta)$ is positive definite with probability tending to 1. That is, the smallest eigenvalue of the Hessian of $-\log f_n(X^n|\theta)$ is positive with probability tending to 1; namely, $\mathbb{P}_{f_{0,n}}(\lambda_{\min}(-\nabla^2\frac{1}{n}\log f_n(X^n|\theta)) > 0) \rightarrow 1.$
    \end{enumerate}
    Then the condition on $\tilde{R}_n(h)$ in \eqref{eq:tildeR_remainder_control} is satisfied with $\delta = \gamma/2$, $H_n = -\frac{1}{n}\nabla^2\log f_n(X^n|\hat{\theta})$ (which is symmetric and positive definite with probability tending to 1), $S_n = \nabla^3\frac{1}{n}\log f_n(X^n|\hat{\theta})$, and $M = \frac{p^2}{12\epsilon}\mathbb{E}_{f_{0,n}}[\frac{1}{n}\tilde{M}(X^n)] + 1$.
\end{Proposition}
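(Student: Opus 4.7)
The plan is to recognize $\tilde R_n(h)$ as precisely the fourth-order remainder in a Taylor expansion of the log-likelihood about the MLE, and then to control that remainder uniformly on $\mathcal{H}_n$ using condition 2 together with Lemma \ref{lem:l1_l2_tensor_bound}. This parallels the proof of Proposition \ref{prop:A2_suff} but goes one derivative further and is centered at $\hat\theta$ rather than $\theta^*$, which is what lets the linear term vanish.

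First, I will introduce the scalar function $g(t) = \log f_n(X^n|\hat\theta + t h/\sqrt{\alpha_n n})$ and apply Taylor's theorem with integral remainder through fourth order. Differentiability from condition 1 together with $\hat\theta$ lying in the interior of $B_{\theta^*}(\gamma)$ (by \textbf{(A0)}, with high probability) makes the first-order optimality of the MLE force $g'(0) = \nabla \log f_n(X^n|\hat\theta)^\top h/\sqrt{\alpha_n n} = 0$. A chain-rule calculation identifies $\alpha_n g''(0)/2$ with $-h^\top H_n h/2$ and $\alpha_n g'''(0)/6$ with $\langle S_n, h^{\otimes 3}\rangle/(6\sqrt{\alpha_n n})$ for the chosen $H_n$ and $S_n$, so that matching terms in the definition of $\tilde R_n(h)$ gives
\[
\tilde R_n(h) \;=\; \frac{\alpha_n}{6}\int_0^1 (1-t)^3 g^{(4)}(t)\,dt \;=\; \frac{1}{6\alpha_n n^2}\int_0^1 (1-t)^3 \langle \nabla^4 \log f_n(X^n|\hat\theta + th/\sqrt{\alpha_n n}),\, h^{\otimes 4}\rangle\,dt.
\]

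Second, I will verify this identity is valid uniformly on $\mathcal{H}_n$ with $\delta = \gamma/2$. On the event $\{\|\hat\theta - \theta^*\|_2 \leq \gamma/2\}$, which has $f_{0,n}$-probability at least $1 - \epsilon/2$ for $n$ large by \textbf{(A0)}, the triangle inequality places the whole segment $\{\hat\theta + th/\sqrt{\alpha_n n}\}_{t\in[0,1]}$ inside $B_{\theta^*}(\gamma)$ for every $h \in \mathcal{H}_n$, so the entrywise bound from condition 2 applies to the integrand. Combining this with $\|h^{\otimes 4}\|_1 \leq p^2 \|h\|_2^4$ from Lemma \ref{lem:l1_l2_tensor_bound} and $\int_0^1 (1-t)^3 dt = 1/4$ yields, uniformly in $h \in \mathcal{H}_n$, the deterministic bound $|\tilde R_n(h)| \leq p^2 \tilde M(X^n)\|h\|_2^4 / (24\,\alpha_n n^2)$.

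Third, I will close via Markov's inequality and a union bound. Since $\mathbb{E}_{f_{0,n}}[\tilde M(X^n)/n]$ is finite by condition 2, Markov shows $\tilde M(X^n)/n \leq 24 M/p^2$ with $f_{0,n}$-probability at least $1-\epsilon/2$ for the stated choice $M = p^2 \mathbb{E}_{f_{0,n}}[\tilde M(X^n)/n]/(12\epsilon) + 1$, which on that event converts the previous bound into $\tilde R_n(h) \leq M\|h\|_2^4/(\alpha_n n)$ uniformly in $h \in \mathcal{H}_n$. Intersecting with $\{\|\hat\theta - \theta^*\|_2 \leq \gamma/2\}$ via a union bound delivers \eqref{eq:tildeR_remainder_control}. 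Symmetry of $H_n$ follows immediately from the equality of mixed partials under $C^2$ smoothness (condition 1), and its positive definiteness with $f_{0,n}$-probability tending to one is precisely condition 3. The argument is essentially bookkeeping; the only delicate point is tracking the powers of $\alpha_n$ and $\sqrt{\alpha_n n}$ in the chain-rule derivatives so that, after multiplying by $\alpha_n$, the leftover fourth-order term carries the extra factor $(\alpha_n n)^{-1}$ needed to match the target $\|h\|_2^4/(\alpha_n n)$ rate.
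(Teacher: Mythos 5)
Your proof takes the same route as the paper's: a third-order Taylor expansion of $\log f_n$ about $\hat\theta$ with integral remainder so that, after the MLE first-order condition kills the linear term and the quadratic/cubic terms match $H_n$ and $S_n$, $\tilde R_n(h)$ is exactly the fourth-order remainder; this is then controlled via the tensor bound $\|h^{\otimes 4}\|_1 \le p^2\|h\|_2^4$ (Lemma~\ref{lem:l1_l2_tensor_bound}), a uniform entrywise bound from condition 2 on the segment (valid once $\hat\theta \in B_{\theta^*}(\gamma/2)$ via \textbf{(A0)}), and Markov's inequality to produce the stated $M$ — all matching the paper's Step 1/Step 2 structure including the choice of events. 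The only notational difference is that you work with $g(t)=\log f_n$ rather than the paper's $g(t)=\frac1n\log f_n$, but the bookkeeping of powers of $n$ and $\alpha_n$ comes out identically.
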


\begin{proof}
We prove Proposition \ref{prop:likelihood_suff} in two steps. First, we take a third-order Taylor expansion of $\frac{1}{n}\log f_n(X^n|\hat{\theta}+{h}/{\sqrt{\alpha_nn}})$ around $\frac{1}{n}\log f_n(X^n|\hat{\theta})$. Second, we study the convergence of the remainder of the Taylor expansion on the set $\mathcal{H}_n$, and show how the choice of $M = \frac{p^2}{12\epsilon}\mathbb{E}_{f_{0,n}}[\frac{1}{n}\tilde{M}(X^n)] + 1$ leads to the condition of \textbf{(A2')}, as specified in \eqref{eq:tildeR_remainder_control}. \\

\noindent\textbf{Step 1: Taylor expansion of the log-likelihood.} Let $g(t) \equiv  \frac{1}{n}\log f_n(X^n|u(t))$, where $u(t) \equiv \hat{\theta}+{th}/{\sqrt{\alpha_nn}}$. Similarly to \eqref{eq:g_expansion}, a third-order Taylor expansion of $g(1)$ around $g(0)$ shows that
\begin{align}
    g(1) \label{eq:g_expansion_tildeR}&= g(0) + g'(0) + \frac{1}{2}g''(0) + \frac{1}{6}g'''(0) + \frac{1}{6}\int_0^1 (1-t)^3g''''(t)dt.
\end{align}
Note that an expansion of $g(1)$ around $g(0)$ is equivalent to an expansion of $\frac{1}{n}\log f_n(X^n|\hat{\theta}+{h}/{\sqrt{\alpha_nn}})$ around $\frac{1}{n}\log f_n(X^n|\hat{\theta})$ since $g(1)$ and $g(0)$ are as in \eqref{eq:g1g0} with $\hat{\theta}$ replacing $\theta^*$.
Furthermore, $g'(0)$, $g''(0)$, and $g'''(0)$ are as in \eqref{eq:g_derivs} with $\hat{\theta}$ replacing $\theta^*$ and
\begin{equation}\label{eq:g_derivs_tildeR}
    \begin{split}
        g''''(t) = \biggl<\frac{1}{n}\nabla^4\log f_n\big(X^n\big|\hat{\theta} + \frac{th}{\sqrt{\alpha_nn}}\big), \Big(\frac{h}{\sqrt{\alpha_nn}}\Big)^{\otimes 4}\biggr>.
    \end{split}
\end{equation}
Substituting these values into \eqref{eq:g_expansion_tildeR} yields
\begin{equation}
\begin{split}
\label{eq:Taylor_approx_tildeR}
\frac{1}{n}\log f_n\Big(X^n\Big|\hat{\theta}+\frac{h}{\sqrt{\alpha_nn}}\Big) &= \frac{1}{n}\log f_n(X^n|\hat{\theta}) + \frac{h^\top}{\sqrt{\alpha_nn}} \left[\frac{1}{n} \nabla \log f_n(X^n|\hat{\theta})\right] \\
&\qquad + \frac{1}{2} \frac{h^\top}{\sqrt{\alpha_nn}} \left[\frac{1}{n}\nabla ^2\log f_n(X^n|\hat{\theta})\right] \frac{h}{\sqrt{\alpha_nn}} \\
&\qquad +\frac{1}{6}\biggl<\frac{1}{n}\nabla^3\log f_n(X^n|\hat{\theta}), \Big(\frac{h}{\sqrt{\alpha_nn}}\Big)^{\otimes 3}\biggr> + \textrm{rem}_n(h),
\end{split}
\end{equation}
where $\textrm{rem}_n(h)$ is the remainder of the Taylor approximation given by 
\begin{equation}
\begin{split}
\label{eq:remdef}
\textrm{rem}_n(h) = \frac{1}{6}\int_0^{1} (1- t)^3 g''''(t) dt = \frac{1}{6(\alpha_n n)^2}\int_0^{1} (1- t)^3 \Bigl<\frac{1}{n}\nabla^4\log f_n\big(X^n\big|\hat{\theta} + \frac{th}{\sqrt{\alpha_nn}}\big), h^{\otimes 4}\Bigr> dt.
\end{split}
\end{equation}
Then by the definition of $\tilde{R}_n(h)$ in Assumption \textbf{(A2')} and the Taylor approximation in \eqref{eq:Taylor_approx_tildeR},
\begin{align}
    \nonumber&\tilde{R}_n(h) =\alpha_n\Big[\log f_n\big(X^n\big|\hat{\theta}+\frac{h}{\sqrt{\alpha_nn}}\big)-\log f_n(X^n|\hat{\theta})\Big]
    +\frac{1}{2}h^\top H_nh - \frac{1}{6\sqrt{\alpha_nn}}\langle S_n, h^{\otimes 3}\rangle \\
    \nonumber&=\alpha_nn\Big(\frac{h^\top}{\sqrt{\alpha_nn}}\Big[\frac{1}{n}\nabla\log f_n(X^n|\hat{\theta})\Big] + \frac{1}{2}\frac{h^{\top}}{\sqrt{\alpha_nn}}\Big[\frac{1}{n}\nabla^2\log f_n(X^n|\hat{\theta})\Big]\frac{h}{\sqrt{\alpha_nn}} \\
    \nonumber&\hspace{50pt}+\frac{1}{6}\Bigl<\frac{1}{n}\nabla^3\log f_n(X^n|\hat{\theta}), \Big(\frac{h}{\sqrt{\alpha_nn}}\Big)^{\otimes 3}\Bigr> + \textrm{rem}_n(h)\Big) +\frac{1}{2}h^\top H_nh - \frac{1}{6\sqrt{\alpha_nn}}\langle S_n, h^{\otimes 3}\rangle \\
    \label{eq:tildeRn(h)_grad}&= \frac{1}{2}h^\top\Big[\frac{1}{n}\nabla^2\log f_n(X^n|\hat{\theta}) + H_n\Big]h + \frac{1}{6\sqrt{\alpha_nn}}\Big\langle\frac{1}{n}\nabla^3\log f_n(X^n|\hat{\theta}) - S_n, h^{\otimes 3}\Big\rangle + \alpha_nn\text{rem}_n(h) \\
    \label{eq:tildeRn(h)_equal}&=\alpha_nn\textrm{rem}_n(h).
\end{align}
The equality in \eqref{eq:tildeRn(h)_grad} follows from \textbf{(A0)}, which implies that $\nabla\log f_n(X^n|\hat{\theta}) = 0$, and by combining common terms. The equality in \eqref{eq:tildeRn(h)_equal} follows with $H_n = -\frac{1}{n}\nabla^2\log f_n(X^n|\hat{\theta})$ and $S_n = \frac{1}{n}\nabla^3\log f_n(X^n|\hat{\theta})$. 

In this step, it remains to show that this choice of $H_n$ is symmetric and positive definite with $f_{0,n}$-probability tending to 1. 
Recall, we have let $\gamma$ denote the radius of the open ball such that conditions 1-3 hold. Given this $\gamma$, define the following events:
\begin{equation}
\label{eq:E0E1_def_likprop}
    \mathcal{E}_1 =\Big\{\lambda_{\min}\Big(-\nabla^2\frac{1}{n}\log f_n(X^n|\hat{\theta})\Big) > 0 \Big\}~~~~~\text{and}~~~~~\mathcal{E}_2 = \{\hat{\theta}\in B_{\theta^*}(\gamma/2)\}.\
\end{equation}
First, we  show that $H_n$ is positive definite with $f_{0,n}$-probability tending to 1. To do so, note that $\mathcal{E}_1$ is the event that we would like to control, but it only makes sense if the log-likelihood is twice differentiable at $\hat{\theta}$. The event $\mathcal{E}_2$ corresponds to $\hat{\theta}$ lying in a neighborhood of $\theta^*$ where $\nabla^2\log f_n(X^n|\theta)$ is continuous. In fact, $\nabla^2\log f_n(X^n|\theta)$ is assumed in condition 1 to be continuous on a larger ball, $B_{\theta^*}(\gamma)$, but the more restricted event $\mathcal{E}_2$ will be useful in Step 2 of the proof. 

We will show that for any $\epsilon > 0$, there exists an integer $N_0$ such that for $\alpha_nn > N_0$,
\begin{equation}\label{eq:HnPSDprob}
    \mathbb{P}_{f_{0,n}}(\mathcal{E}_1 \cap \mathcal{E}_2) \geq 1-\epsilon.
\end{equation}
By Assumption \textbf{(A0)}, there exists an integer $N_{00} \equiv N_{00}(\epsilon,\gamma)$ such that for $\alpha_nn > N_{00}$,
\begin{equation}
\label{eq:E1c_prob_likprop}
    \mathbb{P}_{f_{0,n}}(\mathcal{E}_2^\mathsf{c}) = \mathbb{P}_{f_{0,n}}\left(\|\hat{\theta}-\theta^*\|_2 \geq \frac{\gamma}{2}\right) < \frac{\epsilon}{4} < \frac{\epsilon}{2}.
\end{equation}
Furthermore, by the condition 3 assumption, there exists some $N_{01}\equiv N_{01}(\epsilon,\gamma)$ such that for $\alpha_nn > N_{01}$, we have
$\mathbb{P}_{f_{0,n}}(\mathcal{E}_1^{\mathsf{c}}\cap\mathcal{E}_2) < \frac{\epsilon}{4}.$
Indeed, we assume that $\lambda_{\text{min}}(-\nabla^2\frac{1}{n}\log f_n(X^n|\theta)) > 0$ with $f_{0,n}$-probability tending to 1, for $\theta\in B_{\theta^*}(\gamma)$ (condition 3). In $\mathcal{E}_1$, we are evaluating $\lambda_{\text{min}}(-\nabla^2\frac{1}{n}\log f_n(X^n|\theta))$ at $\theta=\hat{\theta}$, so we split the probability $\mathbb{P}(\mathcal{E}_1^\mathsf{c})$ according to $\hat{\theta}$ either belonging to $B_{\theta^*}(\gamma)$ or not. On the event, $\mathcal{E}_1^\mathsf{c}\cap\{\hat{\theta}\in B_{\theta^*}(\gamma)\}$, we appeal to condition 3.
Hence, with \eqref{eq:E1c_prob_likprop}, whenever $\alpha_nn > N_0 \equiv \max(N_{00}, N_{01})$,
\begin{align}
\label{eq:E0c_likprop}
\mathbb{P}_{f_{0,n}}\left(\mathcal{E}_1^\mathsf{c}\right) 
        \leq \mathbb{P}_{f_{0,n}}(\mathcal{E}_1^{\mathsf{c}}\cap\mathcal{E}_2)  + \mathbb{P}_{f_{0,n}}(\mathcal{E}_2^\mathsf{c})
        < \frac{\epsilon}{4} + \frac{\epsilon}{4} = \frac{\epsilon}{2}.
\end{align}
By \eqref{eq:E1c_prob_likprop} and \eqref{eq:E0c_likprop} with a union bound, for $\alpha_nn > N_0$, we have shown \eqref{eq:HnPSDprob}.
Therefore, we conclude that $-\frac{1}{n}\nabla^2\log f_n(X^n|\hat{\theta})$ is positive definite tending to 1.  Now, we show that $H_n$ is symmetric with $f_{0,n}$-probability tending to 1. Indeed, this is the case on event $\mathcal{E}_2$ defined in \eqref{eq:E0E1_def_likprop}. On $\mathcal{E}_2$, we have $\hat{\theta}\in B_{\theta^*}(\gamma)$. Then, by continuity of $-\frac{1}{n}\nabla^2\log f_n(X^n|\theta)$ on $B_{\theta^*}(\gamma)$,
$$[H_n]_{ij} = -\frac{\partial^2}{\partial\theta_i\partial\theta_j}\frac{1}{n}\nabla^2\log f_n(X^n|\theta)\bigg|_{\theta=\hat\theta} = -\frac{\partial^2}{\partial\theta_j\partial\theta_i}\frac{1}{n}\nabla^2\log f_n(X^n|\theta)\bigg|_{\theta=\hat\theta} = [H_n]_{ji}$$
on $\mathcal{E}_2$, which follows from Clairaut's theorem. By this and \eqref{eq:E1c_prob_likprop}, we have shown that $H_n$ is symmetric with $f_{0,n}$-probability tending to 1.

\noindent\textbf{Step 2: Analysis of $\mathbf{\boldsymbol{\alpha}_nn \textrm{rem}_n(h)}$}. Recall that $\gamma$ is the radius of the open ball such that conditions 1-3 hold. In this step, we aim to show that for all $\epsilon > 0$ and $\alpha_nn$ sufficiently large,
\begin{equation}
\label{eq:prop6_step2}
    \mathbb{P}_{f_{0,n}}(\mathcal{E}') < \epsilon,
\end{equation}
where
$\mathcal{E'} = \{\sup_{h\in\mathcal{H}_n} [\alpha_nn \textrm{rem}_n(h) - \frac{M}{\alpha_nn}\|h\|_2^4] > 0\}$
with $\mathcal{H}_n = \{h\in\mathbb{R}^p|\frac{\|h\|_2}{\sqrt{\alpha_nn}} \leq \delta\}$ where $\delta = \gamma/2$ and $M = \frac{p^2}{12\epsilon}\mathbb{E}_{f_{0,n}}[\frac{1}{n}\tilde{M}(X^n)] + 1$. Recall the event $\mathcal{E}_2$ defined in \eqref{eq:E0E1_def_likprop} and define
$\mathcal{E}_3 = \{\hat{\theta} + \frac{h}{\sqrt{\alpha_nn}}\in B_{\theta^*}(\gamma)\}.$
Notice, $\mathcal{E}_3^\mathsf{c}\subset\mathcal{E}_2^\mathsf{c}$ since   on event $\mathcal{E}_3^\mathsf{c}$, for all $h\in\mathcal{H}_n$,  we also have
\begin{equation*}
    \gamma \leq \Big\|\hat{\theta} + \frac{h}{\sqrt{\alpha_nn}} - \theta^*\Big\|_2\leq \|\hat{\theta}-\theta^*\|_2+\frac{\gamma}{2} \iff \hat\theta\notin B_{\theta^*}(\gamma/2).
\end{equation*}
Using this fact and \eqref{eq:E1c_prob_likprop}, it follows that taking $\alpha_nn > N_{00}$ we obtain
$ \mathbb{P}_{f_{0,n}}(\mathcal{E}_3^\mathsf{c})\leq 
\mathbb{P}_{f_{0,n}}\left(\mathcal{E}_2^\mathsf{c}\right) < \frac{\epsilon}{4}.
$
Now we define $\mathcal{E} \equiv \mathcal{E}_2 \cap \mathcal{E}_3$ and notice that from a union bound with the bound established in \eqref{eq:E1c_prob_likprop}, we have
$\mathbb{P}_{f_{0,n}}(\mathcal{E}^{\mathsf{c}})  = \mathbb{P}_{f_{0,n}}((\mathcal{E}_2\cap\mathcal{E}_3)^\mathsf{c}) \leq \mathbb{P}_{f_{0,n}}(\mathcal{E}_2^\mathsf{c}) + \mathbb{P}_{f_{0,n}}(\mathcal{E}_3^\mathsf{c}) \leq  \frac{\epsilon}{4} +\frac{\epsilon}{4}= \frac{\epsilon}{2}.$
Taking $\alpha_nn > N_{00}$, we can bound the probability in \eqref{eq:prop6_step2} as follows:
\begin{equation}\label{eq:probE0E1E2_arg_likprop}
    \begin{split}
    \quad\mathbb{P}_{f_{0,n}}(\mathcal{E}')
    &\leq \mathbb{P}_{f_{0,n}}(\mathcal{E}'  \cap  \mathcal{E}) + \mathbb{P}_{f_{0,n}}(\mathcal{E}^\mathsf{c})
    \leq \mathbb{P}_{f_{0,n}}(\mathcal{E}'  \cap  \mathcal{E}) + \frac{\epsilon}{2}.
    \end{split}
\end{equation}
It therefore remains to show that $\mathbb{P}_{f_{0,n}}(\mathcal{E}' \cap \mathcal{E}) < \frac{\epsilon}{2}$ for our choice of $M$.

First, we argue that $\textrm{rem}_n(h)$ is well defined over $\mathcal{H}_n$ on the event $\mathcal{E}$. That is, we will show that for all $0\leq t\leq 1$ and all $h\in\mathbb{R}^p$ such that ${\|h\|_2}/{\sqrt{\alpha_nn}} \leq \delta$, the vector $\hat{\theta} + {th}/{\sqrt{\alpha_nn}}$ belongs to $B_{\theta^*}(\gamma)$, the neighborhood of $\theta^*$ where $\log f_n(X^n|\theta)$ is four-times continuously differentiable. First, notice that for all $h\in\mathcal{H}_n$, we have that $\frac{t\|h\|_2}{\sqrt{\alpha_nn}} \leq \delta = \frac{\gamma}{2}$. Therefore, 
\begin{align}\label{eq:in_the_ball2}
    \Big\|\hat{\theta} + \frac{t h}{\sqrt{\alpha_nn}} - \theta^*\Big\|_2
    \leq \|\hat{\theta}-\theta^*\|_2 + \frac{t\|h\|_2}{\sqrt{\alpha_nn}}
    \leq \|\hat{\theta}-\theta^*\|_2 + \frac{\gamma}{2}
    < \frac{\gamma}{2} + \frac{\gamma}{2} = \gamma,
\end{align}
where we have also used that we are on the event $\mathcal{E}_2 = \{\hat{\theta}\in B_{\theta^*}(\gamma/2)\}$.\ We have established that $\hat{\theta} + th/\sqrt{\alpha_nn} \in B_{\theta^*}(\gamma)$. Furthermore, we have that $\nabla^4\log f_n(X^n|\theta)$ is continuous on $B_{\theta^*}(\gamma)$ by condition 1; hence, $\textrm{rem}_n(h)$ defined in \eqref{eq:remdef} is well defined on $\mathcal{E}$. Furthermore,  on $\mathcal{E}$, we can upper bound $|\alpha_nn\textrm{rem}_n(h)|$ for $h\in\mathcal{H}_n$  as  in \eqref{eq:rem_ineq1} using Lemma \ref{lem:l1_l2_tensor_bound}:
\begin{align}
|\alpha_nn\textrm{rem}_n(h)| 
    \label{eq:rem_ineq1_likprop}&\leq  \frac{p^2  \|h\|_2^4}{24\alpha_nn} \sup_{t\in[0,1]} \max_{1 \leq i,j,k,l \leq p} \Big| \Big[\frac{1}{n}\nabla^4\log f_n\big(X^n\big|\hat{\theta} + \frac{th}{\sqrt{\alpha_nn}}\big)\Big]_{i,j,k,l}\Big| .
\end{align}
Next, recall the definition of $\mathcal{E}'$ from \eqref{eq:prop6_step2}; namely, $\mathcal{E'} = \{\sup_{h\in\mathcal{H}_n} [\alpha_nn \textrm{rem}_n(h) - \frac{M}{\alpha_nn}\|h\|_2^4] > 0\}$. Notice that, using the bound in \eqref{eq:rem_ineq1_likprop}, we have
\begin{align}
    \nonumber&\sup_{h\in\mathcal{H}_n} \Big[\alpha_nn\textrm{rem}_n(h) - \frac{M \|h\|_2^4}{\alpha_n n} \Big]  \\
    \nonumber&\leq \sup_{h\in\mathcal{H}_n} \Big[\frac{\|h\|_2^4}{\alpha_nn} \Big( \frac{p^2}{24}\sup_{t\in[0,1]}  \max_{1 \leq i,j,k,l \leq p} \Big| \Big[\frac{1}{n}\nabla^4\log f_n\big(X^n\big|\hat{\theta} + \frac{th}{\sqrt{\alpha_nn}}\big)\Big]_{i,j,k,l}\Big|  - M\Big) \Big]\\
    \label{eq:A2_R'n_bound_ineq2}&\leq  \delta^4\alpha_nn   \Big(\frac{p^2}{24}\sup_{\substack{ t\in[0,1] \\h\in\mathcal{H}_n}} \max_{1 \leq i,j,k,l \leq p} \Big| \Big[\frac{1}{n}\nabla^4\log f_n\big(X^n\big|\hat{\theta} + \frac{th}{\sqrt{\alpha_nn}}\big)\Big]_{i,j,k,l}\Big|  - M\Big),
\end{align}
where the inequality in \eqref{eq:A2_R'n_bound_ineq2} follows from applying the bound $\|h\|_2 \leq \delta \sqrt{\alpha_nn}$, which follows from the definition of $\mathcal{H}_n$. Now we use the bound in \eqref{eq:A2_R'n_bound_ineq2} to upper  bound $\mathbb{P}_{f_{0,n}}(\mathcal{E}'\cap\mathcal{E})$:
\begin{align}
    \nonumber&\mathbb{P}_{f_{0,n}}(\mathcal{E}'\cap\mathcal{E})=\mathbb{P}_{f_{0,n}}\Big(\Big\{\sup_{h\in\mathcal{H}_n} \Big[\alpha_nn\textrm{rem}_n(h) - \frac{M}{\alpha_n n}\|h\|_2^4 \Big] > 0\Big\}\cap\mathcal{E}\Big) \\
    \nonumber &\leq\mathbb{P}_{f_{0,n}}\Big(\Big\{ \delta^4\alpha_nn \Big(\frac{p^2}{24}\sup_{\substack{ t\in[0,1] \\h\in\mathcal{H}_n}} \max_{1 \leq i,j,k,l \leq p} \Big| \Big[\frac{1}{n}\nabla^4\log f_n\big(X^n\big|\hat{\theta} + \frac{th}{\sqrt{\alpha_nn}}\big)\Big]_{i,j,k,l}\Big|  - M\Big) > 0\Big\}\cap\mathcal{E}\Big) \\
    \label{eq:EEbound}&=\mathbb{P}_{f_{0,n}}\Big(\Big\{\frac{p^2}{24}\sup_{\substack{ t\in[0,1] \\h\in\mathcal{H}_n}} \max_{1 \leq i,j,k,l \leq p} \Big| \Big[\frac{1}{n}\nabla^4\log f_n\big(X^n\big|\hat{\theta} + \frac{th}{\sqrt{\alpha_nn}}\big)\Big]_{i,j,k,l}\Big| > M\Big\}\cap\mathcal{E}\Big).
\end{align}
Next, $\hat{\theta} + {th}/{\sqrt{\alpha_nn}}\in B_{\theta^*}(\gamma)$  for all $h\in\mathcal{H}_n$  by the argument in \eqref{eq:in_the_ball2}.
Therefore,
\begin{align*}
    &\sup_{\substack{t\in[0,1] \\ h\in\mathcal{H}_n}}  \max_{1\leq i,j,k,l \leq p} \Big| \Big[\frac{1}{n}\nabla^4\log f_n\big(X^n\big|\hat{\theta} + \frac{th} {\sqrt{\alpha_nn}}\big)\Big]_{i,j,k,l}\Big| \\
    & \qquad \leq  \sup_{\theta\in B_{\theta^*}(\gamma)} \max_{1 \leq i,j,k,l \leq p} \Big| \Big[\frac{1}{n}\nabla^4\log f_n\big(X^n\big|\theta\big)\Big]_{i,j,k,l}\Big|.
\end{align*}
We apply the above to the bound on $\mathbb{P}_{f_{0,n}}(\mathcal{E}'\cap\mathcal{E})$ from \eqref{eq:EEbound} and use Markov's Inequality:
\begin{align}
     \nonumber \mathbb{P}_{f_{0,n}}(\mathcal{E}'\cap\mathcal{E}) 
    \nonumber&\leq\mathbb{P}_{f_{0,n}}\Big(\frac{p^2}{24}\sup_{\theta\in B_{\theta^*}(\gamma)} \max_{1 \leq i,j,k,l \leq p} \Big| \Big[\frac{1}{n}\nabla^4\log f_n\big(X^n\big|\theta\big)\Big]_{i,j,k,l}\Big| > M\Big) \\
     \nonumber &\leq \frac{p^2}{24M}\mathbb{E}_{f_{0,n}}\Big[ \sup_{\theta\in B_{\theta^*}(\gamma)} \max_{1 \leq i,j,k,l \leq p} \Big| \Big[\frac{1}{n}\nabla^4\log f_n\big(X^n\big|\theta\big)\Big]_{i,j,k,l}\Big|\Big] \\
    \label{eq:sup_max_likprop_expectation_bound} &\leq \frac{p^2}{24M}\mathbb{E}_{f_{0,n}}\Big[\frac{1}{n}\tilde{M}(X^n)\Big] < \frac{\epsilon}{2}.
\end{align}
The first inequality in \eqref{eq:sup_max_likprop_expectation_bound} follows from condition 2 and the second  from choosing $M$ such that $M > \frac{p^2}{12\epsilon}\mathbb{E}_{f_{0,n}}[\frac{1}{n}\tilde{M}(X^n)]$. For example, choose $M = \frac{p^2}{12\epsilon}\mathbb{E}_{f_{0,n}}[\frac{1}{n}\tilde{M}(X^n)] + 1$, which is finite by condition 2. The bounds in \eqref{eq:probE0E1E2_arg_likprop} and \eqref{eq:sup_max_likprop_expectation_bound} give the desired result in \eqref{eq:prop6_step2}.

\textbf{Conclusion.} The result in \eqref{eq:prop6_step2} and the equality $\tilde{R}_n(h) = \alpha_nn\text{rem}_n(h)$ shown in Step 1 (see \eqref{eq:tildeRn(h)_equal}) establish that the desired representation for $\tilde{R}_n(h)$ is satisfied with $\delta=\gamma/2$, $H_n = -\frac{1}{n}\nabla^2\log f_n(X^n|\hat{\theta})$ (which is symmetric and positive definite with probability tending to 1, as shown in \eqref{eq:HnPSDprob}, $S_n = \nabla^3\frac{1}{n}\log f_n(X^n|\hat{\theta})$, and $M = \frac{p^2}{12\epsilon}\mathbb{E}_{f_{0,n}}[\frac{1}{n}\tilde{M}(X^n)] + 1$.
\end{proof}

\subsection{Sufficient conditions for Assumption \textbf{(ALap)} (Proof of Proposition \ref{prop:prior_suff})}\label{app:prior_suff}

\begin{proof}
Let $g(t) \equiv  g(u(t))$, where $u(t) \equiv \hat{\theta}+{th}/{\sqrt{\alpha_nn}}$. A first-order Taylor expansion of $g(1) = b(\hat{\theta} + {h}/{\sqrt{\alpha_nn}})$ around $g(0) = b(\hat{\theta})$ shows that
$g(1)= g(0) + g'(0) + \int_0^1 (1-t)g''(t)dt.
$
Using that
\begin{equation*}
    \begin{split}
        g'(0) &= \frac{h^\top}{\sqrt{\alpha_nn}} \nabla b(\hat{\theta}), \,\,\, \text{ and } \,\,\, g''(t) = \frac{h^\top}{\sqrt{\alpha_nn}} \Big[\nabla^2 b\Big(\hat{\theta} + \frac{th}{\sqrt{\alpha_nn}}\Big)\Big] \frac{h}{\sqrt{\alpha_nn}},
    \end{split}
\end{equation*}
we find
\begin{equation}
\begin{split}
\label{eq:Taylor_approx_b}
    b\Big(\hat{\theta} + \frac{h}{\sqrt{\alpha_nn}}\Big) &= b(\hat{\theta}) + \frac{h^\top\nabla b(\hat{\theta}) }{\sqrt{\alpha_nn}} + \textrm{rem}_n(h); \\
    \textrm{rem}_n(h) &:= \int_0^1 \frac{(1-t)}{\alpha_nn}  h^\top \Big[\nabla^2 b\Big(\hat{\theta} + \frac{th}{\sqrt{\alpha_nn}}\Big)\Big] h dt.
\end{split}
\end{equation}
By the definition of $R_b(h)$ in Assumption \textbf{(ALap)} and the Taylor approximation in \eqref{eq:Taylor_approx_b},
\begin{align}
        R_b(h)
        =\Big[b\Big(\hat{\theta} + \frac{h}{\sqrt{\alpha_n n}}\Big) - b(\hat{\theta})\Big] - \frac{v^\top h}{\sqrt{\alpha_n n}} &=\frac{h^\top  \nabla b(\hat{\theta})}{\sqrt{\alpha_nn}} + \textrm{rem}_n(h) - \frac{v^\top h}{\sqrt{\alpha_n n}} =\textrm{rem}_n(h), 
    \label{eq:rem_equiv}
\end{align}
where the final inequality follows by setting $v=\nabla b(\hat{\theta}) = \pi(\hat\theta)\nabla q(\hat\theta) + q(\hat\theta)\nabla\pi(\hat\theta)$.
Because $R_b(h) = \textrm{rem}_n(h)$ as shown above, to complete the proof, we prove that for all $\epsilon > 0$,
        \begin{equation}\label{eq:toshow_prop2}
            \mathbb{P}_{f_{0,n}}\Big(\sup_{h\in\mathcal{H}_{n,b}} \textrm{rem}_n(h) - \frac{M}{\alpha_n n}\|h\|_2^2 > 0\Big) < \epsilon,
        \end{equation}
 when $M = p\sup_{\theta\in B_{\theta^*}(\gamma)}  \max_{1 \leq i,j \leq p} | [\nabla^2b\left(\theta\right)]_{i,j}|$ for $\alpha_nn$ sufficiently large. In what follows we denote $ \mathcal{E}' = \{\sup_{h\in\mathcal{H}_{n,b}}\textrm{rem}_n(h) - \frac{M}{\alpha_n n}\|h\|_2^2 > 0\}$ so that \eqref{eq:toshow_prop2} reads $ \mathbb{P}_{f_{0,n}}(\mathcal{E}') < \epsilon$.

Define events 
    $\mathcal{E}_1\equiv\{\hat{\theta}\in B_{\theta^*}(\gamma/2)\}$ and $\mathcal{E}_2\equiv\{\hat{\theta} + {h}/{\sqrt{\alpha_nn}}\in B_{\theta^*}(\gamma)\}.$
By Assumption \textbf{(A0)}, there exists an $N_{0} \equiv N_{0}(\epsilon,\gamma)$ such that whenever $\alpha_nn > N_{0}$, we have
    $\mathbb{P}_{f_{0,n}}(\mathcal{E}_1^\mathsf{c}) = \mathbb{P}_{f_{0,n}}(\|\hat{\theta}-\theta^*\|_2 \geq \frac{\gamma}{2}) < \frac{\epsilon}{2}.$
Next, note that $\mathcal{E}_2^\mathsf{c} \subset \mathcal{E}_1^\mathsf{c}$, as
$\gamma \leq \|\hat{\theta} + {h}/{\sqrt{\alpha_nn}} - \theta^*\|_2 \leq \|\hat{\theta}-\theta^*\|_2 + \frac{\gamma}{2}$ is true if and only if $\hat{\theta}\not\in B_{\theta^*}(\gamma/2).$
Hence, taking $\alpha_nn > N_{0}$, we have that $\mathbb{P}_{f_{0,n}}(\mathcal{E}_2^\mathsf{c}) \leq \mathbb{P}_{f_{0,n}}(\mathcal{E}_1^\mathsf{c}) < \frac{\epsilon}{2}$ as well. Now we define $\mathcal{E} \equiv \mathcal{E}_1\cap\mathcal{E}_2$ and from a union bound we have $\mathbb{P}_{f_{0,n}}(\mathcal{E}^{\mathsf{c}}) \leq \epsilon$ when $\alpha_nn > N_{0}$.
Hence, taking $\alpha_nn > N_{0}$,  we can bound the probability in \eqref{eq:toshow_prop2}:
\begin{equation*}
    \begin{split}
\mathbb{P}_{f_{0,n}}\hspace{-2pt}\Big(\hspace{-1pt}\sup_{h\in\mathcal{H}_{n,b}} \hspace{-3pt} \textrm{rem}_n(h) - \frac{M\|h\|_2^2 }{\alpha_n n} > \hspace{-1pt} 0 \hspace{-1pt}\Big) \hspace{-2pt} = \hspace{-1pt}\mathbb{P}_{f_{0,n}}(\mathcal{E}')
    &\leq \mathbb{P}_{f_{0,n}}(\mathcal{E}' \cap \mathcal{E}) \hspace{-1pt} +\hspace{-1pt}  \mathbb{P}_{f_{0,n}}(\mathcal{E}^\mathsf{c})
    \leq \mathbb{P}_{f_{0,n}}(\mathcal{E}'  \cap \mathcal{E}) + \epsilon.
    \end{split}
\end{equation*}
To complete the proof, we will therefore show that $\mathbb{P}_{f_{0,n}}(\mathcal{E}'  \cap \mathcal{E})=0$ for our choice of $M$.

First, we notice that by an argument like that around \eqref{eq:in_the_ball2}, on the event $\mathcal{E}$, we have that $\hat{\theta} + {th}/{\sqrt{\alpha_nn}} \in B_{\theta^*}(\gamma)$; hence, $\textrm{rem}_n(h)$ is well defined over $\mathcal{H}_{n,b}$. Next, using Lemma \ref{lem:l1_l2_tensor_bound} and the intersection with event $\mathcal{E}$, we can bound $\textrm{rem}_n(h)$ on $\mathcal{H}_{n,b}$ in a similar way as is done in \eqref{eq:rem_ineq1}, leading to 
\begin{align*}
    \textrm{rem}_n(h)
  &\leq  \frac{p \|h\|_2^2}{2\alpha_nn} \sup_{t\in[0,1]}  \max_{1 \leq i,j \leq p} \Big| \Big[\nabla^2b\Big(\hat{\theta} + \frac{th}{\sqrt{\alpha_nn}}\Big)\Big]_{i,j}\Big| .
\end{align*}
Next, since $\hat{\theta} + {th}/{\sqrt{\alpha_nn}}\in B_{\theta^*}(\gamma)$, by an argument like that in \eqref{eq:rem_ineq3} (or right after \eqref{eq:EEbound}),
\begin{align}
    \sup_{h\in\mathcal{H}_{n,b}} \textrm{rem}_n(h)
    \label{eq:remb_ineq3}&\leq \sup_{h\in\mathcal{H}_{n,b}} \frac{p \|h\|_2^2 }{2\alpha_nn} \sup_{\theta\in B_{\theta^*}(\gamma)}  \max_{1 \leq i,j \leq p} \Big| [\nabla^2b(\theta)]_{i,j}\Big| .
\end{align}
We finally apply \eqref{eq:remb_ineq3} to bound $\mathbb{P}_{f_{0,n}}(\mathcal{E}'\cap\mathcal{E})$.
\begin{align}
    \mathbb{P}_{f_{0,n}}(\mathcal{E}'\cap\mathcal{E}) \nonumber&=\mathbb{P}_{f_{0,n}}\Big(\sup_{h\in\mathcal{H}_{n,b}}\textrm{rem}_n(h) - \frac{M \|h\|_2^2}{\alpha_n n} > 0 \, \cap \, \mathcal{E}\Big) \\
    \nonumber&\leq \mathbb{P}_{f_{0,n}}\Big(\sup_{h\in\mathcal{H}_{n,b}} \frac{\|h\|_2^2 }{\alpha_nn} \Big(\frac{p}{2} \sup_{\theta\in B_{\theta^*}(\gamma)}  \max_{1 \leq i,j \leq p} \left| [\nabla^2b(\theta)]_{i,j}\right| - M\Big) > 0 \, \cap \, \mathcal{E}\Big) \\
    \label{eq:PE'E=0_prop4}&\leq \mathbb{P}_{f_{0,n}}\Big(\frac{\gamma^2}{4}\Big(\frac{p}{2} \sup_{\theta\in B_{\theta^*}(\gamma)}  \max_{1 \leq i,j \leq p} \left|[\nabla^2b(\theta)]_{i,j}\right| - M\Big) > 0 \, \cap \, \mathcal{E}\Big) = 0.
\end{align}
The final inequality follows as $\mathcal{H}_{n,b}=\{h\in\mathbb{R}^p|\frac{\|h\|_2}{\sqrt{\alpha_nn}} \leq \delta\}$ with $\delta = \gamma/2$ and the final equality  follows from plugging in $M = p\sup_{\theta\in B_{\theta^*}(\gamma)}  \max_{1 \leq i,j \leq p} | [\nabla^2b(\theta)]_{i,j}|$. 
\end{proof}

\subsection{Verifying the conditions of our main results for $\tilde{f}_n(X^n|\theta)$}
In this section, we show that if the conditions of Theorems \ref{thm:moments_alt} and \ref{thm:BayesEstimator} are satisfied by $f_n(X^n|\theta)$,
then the tilted likelihood, $\tilde{f}_n(X^n|\theta) = f_n(X^n|\theta)^{\frac{\hat{\alpha}_n}{\alpha_n}}$ -- where $\alpha_n$ is such that $\frac{1}{n}\ll\alpha_n\ll1$ and $\frac{\hat{\alpha}_n}{\alpha_n}\to1$ in $f_{0,n}$-probability -- also satisfies the assumptions with the same values of $\hat{\theta}$, $\theta^*$, and $V_{\theta^*}$. This allows us to show that the results of Theorems \ref{thm:moments_alt} and \ref{thm:BayesEstimator} hold for the $\hat{\alpha}_n$-posterior by replacing $f_n(X^n|\theta)$ with $\tilde{f}_n(X^n|\theta)$. We note that assumptions \textbf{(A1)} and \textbf{(A1')} are assumptions on the prior only, so we do not have to check them.

The first result checks the Theorem \ref{thm:moments_alt} assumptions \textbf{(A0)} and  \textbf{(A2)} and shows that both hold for $\tilde{f}_n(X^n|\theta)$ with the same $\hat{\theta}$, $\theta^*$, and $V_{\theta^*}$ specified in these assumptions for $f_n(X^n|\theta)$.

\begin{Proposition}\label{prop:random_alpha_(A0)(A2)}
    Suppose $f_n(X^n|\theta)$ satisfies assumptions \textbf{(A0)} and \textbf{(A2)}. Furthermore, suppose that for a data-dependent sequence, $\hat{\alpha}_n$, there exists a sequence $\frac{1}{n}\ll\alpha_n\ll1$ such that ${\hat{\alpha}_n}/{\alpha_n}\to1$ in $f_{0,n}$-probability. Then the tilted likelihood $\tilde{f}_n(X^n|\theta) = f_n(X^n|\theta)^{\frac{\hat{\alpha}_n}{\alpha_n}}$ also satisfies \textbf{(A0)} and \textbf{(A2)} with the same $\hat{\theta}$, $\theta^*$, and $V_{\theta^*}$.
\end{Proposition}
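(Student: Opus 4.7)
The plan is to verify each assumption directly from the definitions, exploiting the fact that the transformation $f_n \mapsto \tilde{f}_n = f_n^{\hat\alpha_n/\alpha_n}$ amounts, with $f_{0,n}$-probability tending to one, to raising the likelihood to a positive scalar that is close to $1$.

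For \textbf{(A0)}, I would first note that since $\hat\alpha_n/\alpha_n \to 1$ in $f_{0,n}$-probability, the event $\mathcal{A}_n = \{\hat\alpha_n/\alpha_n > 0\}$ has probability tending to one. On $\mathcal{A}_n$, raising a nonnegative function to a positive power preserves its argmax, so $\tilde{f}_n(X^n|\theta) = f_n(X^n|\theta)^{\hat\alpha_n/\alpha_n}$ has the same unique maximizer $\hat\theta$ as $f_n(X^n|\theta)$. The convergence $\hat\theta \to \theta^*$ in $f_{0,n}$-probability and the asymptotic normality of $\sqrt n(\hat\theta-\theta^*)$ are statements about $\hat\theta$ alone and carry over unchanged. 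Thus \textbf{(A0)} holds for $\tilde f_n$ with the same $\hat\theta$, $\theta^*$, and asymptotic covariance as for $f_n$.

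For \textbf{(A2)}, the main step is an algebraic rewrite. Let $L_n(h) \equiv \alpha_n[\log f_n(X^n|\theta^*+h/\sqrt{\alpha_n n}) - \log f_n(X^n|\theta^*)]$ and $A(h) \equiv \sqrt{\alpha_n}\, h^\top V_{\theta^*}\Delta_{n,\theta^*} - \tfrac{1}{2} h^\top V_{\theta^*} h$, so that the residual in \textbf{(A2)} for $f_n$ is $R_n(h) = L_n(h) - A(h)$. Because $\log \tilde f_n(X^n|\theta) = (\hat\alpha_n/\alpha_n)\log f_n(X^n|\theta)$, the analogous residual for $\tilde f_n$ at the same centering is
\begin{equation*}
\tilde R_n(h) = \tfrac{\hat\alpha_n}{\alpha_n} L_n(h) - A(h) = \tfrac{\hat\alpha_n}{\alpha_n} R_n(h) + \Big(\tfrac{\hat\alpha_n}{\alpha_n}-1\Big) A(h).
\end{equation*}
On $\bar B_0(r)$, assumption \textbf{(A0)} gives $\Delta_{n,\theta^*} = O_{f_{0,n}}(1)$, so $\sqrt{\alpha_n}\sup_{h\in \bar B_0(r)}|h^\top V_{\theta^*}\Delta_{n,\theta^*}| = o_{f_{0,n}}(1)$ and $\sup_{h\in\bar B_0(r)} |A(h)| = O_{f_{0,n}}(1)$. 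The hypothesis gives $\hat\alpha_n/\alpha_n = 1 + o_{f_{0,n}}(1)$, and \textbf{(A2)} applied to $f_n$ gives $\sup_{h\in\bar B_0(r)} |R_n(h)| = o_{f_{0,n}}(1)$. Combining these in the decomposition above yields $\sup_{h\in\bar B_0(r)}|\tilde R_n(h)| \le (1+o_{f_{0,n}}(1))\sup_h |R_n(h)| + o_{f_{0,n}}(1) \cdot O_{f_{0,n}}(1) = o_{f_{0,n}}(1)$, which is \textbf{(A2)} for $\tilde f_n$ with the same $V_{\theta^*}$ and $\Delta_{n,\theta^*}$. No step here appears to be a genuine obstacle; the only care required is to keep the $\sqrt{\alpha_n}$ factor in $A(h)$ in mind so that the $(\hat\alpha_n/\alpha_n - 1)A(h)$ contribution is controlled on the fixed-radius ball rather than requiring uniformity over growing balls.
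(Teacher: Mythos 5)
Your proof is correct and follows essentially the same route as the paper's. For \textbf{(A0)}, the paper simply observes that $\log\tilde f_n = (\hat\alpha_n/\alpha_n)\log f_n$ with a positive, $\theta$-free multiplier, so the argmax is unchanged; you add a high-probability event $\{\hat\alpha_n/\alpha_n>0\}$, which is a harmless extra precaution (in fact $\hat\alpha_n$ and $\alpha_n$ are assumed positive, so this holds surely). For \textbf{(A2)}, your decomposition $\tilde R_n(h)=\frac{\hat\alpha_n}{\alpha_n}R_n(h)+(\frac{\hat\alpha_n}{\alpha_n}-1)A(h)$ is algebraically identical to the paper's three-term identity, and you control the two pieces the same way: $\sup_{h\in\bar B_0(r)}|R_n(h)|=o_{f_{0,n}}(1)$ from \textbf{(A2)} for $f_n$ and $\sup_{h\in\bar B_0(r)}|A(h)|=O_{f_{0,n}}(1)$ from $\Delta_{n,\theta^*}=O_{f_{0,n}}(1)$ (via \textbf{(A0)}) and boundedness of the quadratic form on a fixed ball, multiplied by the $o_{f_{0,n}}(1)$ factor $|\hat\alpha_n/\alpha_n-1|$. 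Your remark that the fixed-radius ball is what makes this work (no uniformity over growing balls needed here) matches how the paper invokes the result in the proof of Theorem \ref{thm:moments_alt}.
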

\begin{proof}
    We first check that $\tilde{f}_n(X^n|\theta)$ satisfies Assumption \textbf{(A0)}. In particular, we will show that the MLE with respect to $\tilde{f}_n(X^n|\theta)$ (which we denote by $\tilde{\theta}$) is equal to the MLE with respect to $f_n(X^n|\theta)$ (denoted $\hat{\theta}$). Indeed, since $\hat{\alpha}_n/\alpha_n$ does not depend on $\theta$, we have that
    \begin{align*}
        \tilde{\theta} = \arg\max_\theta\log\tilde{f}_n(X^n|\theta)
        = \arg\max_\theta\frac{\hat{\alpha}_n}{\alpha_n} \log f_n(X^n|\theta)
        = \arg\max_\theta\log f_n(X^n|\theta) = \hat{\theta}.
    \end{align*}

    We next check that $\tilde{f}_n(X^n|\theta)$ satisfies Assumption \textbf{(A2)}. In particular, for $V_{\theta^*}$ originally specified in \textbf{(A2)} for $f_n(X^n|\theta)$, we define as $\tilde{R}_n(h)$ the value in \eqref{eq:Rn(h)_def} with $f_n(X^n|\theta)$ replaced with $\tilde{f}_n(X^n|\theta)$. By assumption, $\sup_{h\in\bar{B}_0(r)}| R_n(h)|\to0$ for any $r > 0$ in $f_{0,n}$-probability, and now  we show that $\sup_{h\in\bar{B}_0(r)}| \tilde{R}_n(h)|\to0$ for any $r > 0$, in $f_{0,n}$-probability. Notice that
    \begin{align*}
        |\tilde R_n(h)| &= \Big|\alpha_n\Big[\log \tilde{f}_n\big(X^n\big|\theta^*+\frac{h}{\sqrt{\alpha_nn}}\big)-\log \tilde{f}_n(X^n|\theta^*)\Big] -\sqrt{\alpha_n}h^\top V_{\theta^*}\Delta_{n,\theta^*}+\frac{1}{2}h^\top V_{\theta^*}h\Big| \\
        &= \Big|\frac{\hat{\alpha}_n}{\alpha_n} \Big(\alpha_n\Big[\hspace{-1pt} \log f_n\big(X^n\big|\theta^*+\frac{h}{\sqrt{\alpha_nn}}\big)-\log f_n(X^n|\theta^*)\Big] \Big) -\sqrt{\alpha_n}h^\top V_{\theta^*}\Delta_{n,\theta^*}+\frac{1}{2}h^\top V_{\theta^*}h\Big| \\
        &= \Big|\frac{\hat{\alpha}_n}{\alpha_n} R_n(h)  - \Big(1 - \frac{\hat{\alpha}_n}{\alpha_n}\Big) \sqrt{\alpha_n}h^\top V_{\theta^*}\Delta_{n,\theta^*}+ \Big(1 - \frac{\hat{\alpha}_n}{\alpha_n}\Big)\frac{1}{2}h^\top V_{\theta^*}h\Big| \\
        &\leq \frac{\hat{\alpha}_n}{\alpha_n}|R_n(h)|  +\left|1-\frac{\hat{\alpha}_n}{\alpha_n}\right|\left|\sqrt{\alpha_n}h^\top V_{\theta^*}\Delta_{n,\theta^*}\right|+\left|1-\frac{\hat{\alpha}_n}{\alpha_n}\right|\left|\frac{1}{2}h^\top V_{\theta^*}h\right|.
    \end{align*}
    Now, since $\frac{\hat{\alpha}_n}{\alpha_n}=1+o_{f_{0,n}}(1)$ and $\sup_{h\in\bar{B}_0(r)}|R_n(h)|=o_{f_{0,n}}(1)$, using the above we have
    \begin{align*}
        \sup_{h\in\bar{B}_0(r)}|\tilde R_n(h)|
        &\leq  o_{f_{0,n}}(1)+o_{f_{0,n}}(1)\Big( \sup_{h\in\bar{B}_0(r)} \Big\{\big|\sqrt{\alpha_n}h^\top V_{\theta^*}\Delta_{n,\theta^*}\big|+\big|\frac{1}{2}h^\top V_{\theta^*}h\big| \Big\}\Big)=o_{f_{0,n}}(1).
    \end{align*}
     In the last equality we used that the term in parentheses is $O_{f_{0,n}}(1)$. Indeed,
     by Lemma \ref{lem:l1_l2_tensor_bound},
    \begin{align*}
        \big|\frac{1}{2}h^\top V_{\theta^*}h\big| \leq \frac{1}{2}\sum_{1\leq i,j\leq p}[V_{\theta^*}]_{ij}|h_ih_j| \leq \max_{1\leq i,j\leq p}[V_{\theta^*}]_{ij} \frac{1}{2}\sum_{1\leq i,j\leq p}|h_ih_j| \leq \frac{p}{2}\|h\|_2^2\max_{1\leq i,j\leq p}[V_{\theta^*}]_{ij}.
    \end{align*}
    Using the above and the Cauchy-Schwarz inequality, we find
    \begin{align}
     \nonumber   \sup_{h\in\bar{B}_0(r)} \hspace{-3pt} \Big\{\big|\sqrt{\alpha_n}h^\top V_{\theta^*}\Delta_{n,\theta^*}\big| \hspace{-1pt} + \hspace{-1pt} \big|\frac{h^\top V_{\theta^*}h}{2}\big| \Big\}
        &\leq \sup_{h\in\bar{B}_0(r)} \hspace{-3pt} \Big\{\sqrt{\alpha_n}\|h\|_2\|V_{\theta^*}\Delta_{n,\theta^*}\|_2 + \frac{p\|h\|_2^2}{2} \hspace{-1.05pt} \max_{1\leq i,j\leq p}[V_{\theta^*}]_{ij} \Big\}\\
        \label{eq:A2_sup_bound_random2}&\leq \sqrt{\alpha_n}r\|V_{\theta^*}\Delta_{n,\theta^*}\|_2 + \frac{p r^2}{2}\max_{1\leq i,j\leq p}[V_{\theta^*}]_{ij}.
    \end{align}
     Since $\alpha_n\to0$ and $\Delta_{n,\theta^*}=O_{f_{0,n}}(1)$, we have that \eqref{eq:A2_sup_bound_random2} is $O_{f_{0,n}}(1)$.
\end{proof}

The second result  checks Assumption \textbf{(A3)} when $f_n(X^n|\theta)$ satisfies the conditions of Proposition \ref{prop:A3_suff}, which gives conditions on the likelihood to check \textbf{(A3)}. In the proposition below, we show that the conditions of Proposition \ref{prop:A3_suff} are satisfied by $\tilde{f}_n(X^n|\theta)$ if they are satisfied by $f_n(X^n|\theta)$. This, in turn, ensures that Assumption \textbf{(A3)} holds for $\tilde{f}_n(X^n|\theta)$.

\begin{Proposition}\label{prop:random_alpha(A3)}
    Suppose $f_n(X^n|\theta)$ and $\pi(\theta)$ satisfy the conditions of Proposition \ref{prop:A3_suff}. Furthermore, suppose for a data-dependent sequence, $\hat{\alpha}_n$, there exists a sequence $\frac{1}{n}\ll\alpha_n\ll1$ such that $\frac{\hat{\alpha}_n}{\alpha_n}\to1$ in $f_{0,n}$-probability. Then Assumption \textbf{(A3)} holds for the $\hat{\alpha}_n$-posterior.
\end{Proposition}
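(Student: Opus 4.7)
The plan is to exploit the identity $\tilde f_n(X^n|\theta)^{\alpha_n} = f_n(X^n|\theta)^{\hat{\alpha}_n}$, which implies that the $\hat{\alpha}_n$-posterior $\pi_{n,\hat\alpha_n}(\theta|X^n)$ coincides with the $\alpha_n$-posterior built from the tilted likelihood $\tilde f_n(X^n|\theta) = f_n(X^n|\theta)^{\hat{\alpha}_n/\alpha_n}$ and the original prior $\pi$. Thus, verifying Assumption \textbf{(A3)} for $\pi_{n,\hat{\alpha}_n}$ (stated at the reference sequence $\alpha_n$) reduces to verifying the hypotheses of Proposition \ref{prop:A3_suff} for the pair $(\tilde f_n, \pi)$.

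Of those hypotheses, Assumption \textbf{(A0)} for $\tilde f_n$ (with the same MLE $\hat\theta$ and pseudo-true value $\theta^*$) is already supplied by Proposition \ref{prop:random_alpha_(A0)(A2)}, Assumption \textbf{(A1)} is a condition on the prior and therefore unchanged, and the uniform prior bound $\pi(\theta)\leq\kappa$ is also unchanged. The substantive task is thus to verify Conditions \ref{condition1} and \ref{condition2} for $\tilde f_n$, for which I would introduce the event $\mathcal{A}_n = \{1/2 \leq \hat{\alpha}_n/\alpha_n \leq 2\}$. Because $\hat{\alpha}_n/\alpha_n \to 1$ in $f_{0,n}$-probability, $\mathbb{P}_{f_{0,n}}(\mathcal{A}_n) \to 1$, and I would intersect $\mathcal{A}_n$ with the (high-probability) events on which Conditions \ref{condition1} and \ref{condition2} hold for $f_n$.

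On this intersection, the identity $\log\tilde f_n(X^n|\theta)-\log\tilde f_n(X^n|\theta_0) = (\hat{\alpha}_n/\alpha_n)\{\log f_n(X^n|\theta)-\log f_n(X^n|\theta_0)\}$ combined with the bracket $1/2\leq\hat{\alpha}_n/\alpha_n\leq 2$ and the sign of the right-hand side of each bound yields Condition \ref{condition1} for $\tilde f_n$ with $L$ replaced by $L/2$ (same $\gamma$, $c_0, c_1, c_2, r$), and Condition \ref{condition2} with $c_3, c_4$ replaced by $2c_3, c_4/2$ (same $r$). The auxiliary requirement $r>e^{c_2/c_0}/c_1$ from Condition \ref{condition1} is preserved because only $L$ is rescaled. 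Applying Proposition \ref{prop:A3_suff} to $(\tilde f_n, \pi)$ then produces \textbf{(A3)} for the $\alpha_n$-posterior associated with $\tilde f_n$, which by the identity above is $\pi_{n,\hat{\alpha}_n}$.

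I do not anticipate a real obstacle: the proof is essentially a bookkeeping exercise that converts the multiplicative random perturbation $\hat\alpha_n/\alpha_n$ into constant factors of at most $2$ on an event of asymptotic probability one. The one point requiring a small amount of care is checking that the monotonicity is in the correct direction for each bound — specifically, that multiplying the upper bound in Condition \ref{condition1} and both bounds in Condition \ref{condition2} by $\hat\alpha_n/\alpha_n\in[1/2,2]$ preserves the inequality with the rescaled constants, which follows because each right-hand side is non-positive (note $\gamma\geq 0$ by definition).
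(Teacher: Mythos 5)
Your proposal is correct and follows essentially the same route as the paper: exploit the identity $\tilde f_n(X^n|\theta)^{\alpha_n}=f_n(X^n|\theta)^{\hat\alpha_n}$, cite Proposition \ref{prop:random_alpha_(A0)(A2)} for \textbf{(A0)}, and then re-verify Conditions \ref{condition1}--\ref{condition2} for $\tilde f_n$ by intersecting with a high-probability concentration event for $\hat\alpha_n/\alpha_n$ and absorbing the random factor into the constants, using the sign of the log-likelihood differences. The only cosmetic difference is that the paper works with the symmetric bracket $[1-t,1+t]$ for $t\in(0,1)$ (yielding $\tilde L=(1-t)L$, $\tilde c_3=(1+t)c_3$, $\tilde c_4=(1-t)c_4$) while you fix the bracket to $[1/2,2]$; both choices are valid.
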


\begin{proof}
  Let $\tilde{f}_n(X^n|\theta) = f_n(X^n|\theta)^{\frac{\hat{\alpha}_n}{\alpha_n}}$. We will show that $\tilde{f}_n(X^n|\theta)$ satisfies the conditions of Proposition \ref{prop:A3_suff}, from which it follows that \textbf{(A3)} holds for the $\hat{\alpha}_n$-posterior. Notice that we have already proved that \textbf{(A0)} holds for $\tilde{f}_n(X^n|\theta)$ and we have assumed that the prior satisfies \textbf{(A1)} as well as the bound $\pi(\theta) \leq \kappa$ for some $\kappa >0$. It remains to show 
  that Conditions \ref{condition1} and \ref{condition2} hold for $\tilde{f}_n(X^n|\theta)$.
  We prove this in what follows.

    \noindent\textbf{Checking Condition \ref{condition1}.} 
    To check Condition \ref{condition1}, we will show that for any $\epsilon > 0$ there exists a constant $\tilde{L} > 0$ and a function $\tilde{\gamma}: [0,\infty)\to [0,\infty)$ such that $\mathbb{P}_{f_{0,n}}(\mathcal{A}) \leq \epsilon$, where
    \begin{equation*}
        \mathcal{A} = \Big\{ \sup_{\theta\in B_{\theta^*}(r)^\mathsf{c}} \log \tilde{f}_n(X^n|\theta)-\log \tilde{f}_n(X^n|\theta^*) > -\tilde{L}n\tilde{\gamma}(\|\theta-\theta^*\|_2) \Big\},
    \end{equation*}
    for $n$ large enough and for any $r>0$ satisfying the bound in the condition statement. 
To do so, define the event $\mathcal{E} = \{\hat{\alpha}_n/\alpha_n > 1-t\}$ for some $t\in(0,1)$. By assumption,  there exists $N_0\equiv N_0(t,\epsilon)$ such that $\mathbb{P}_{f_{0,n}}(\mathcal{E}^\mathsf{c}) < \epsilon/2$ whenever $n > N_0$. Then, taking $n > N_0$, we have
    \begin{align}
        \nonumber&\mathbb{P}_{f_{0,n}}(\mathcal{A} ) \leq \mathbb{P}_{f_{0,n}}(\mathcal{A} \cap\mathcal{E}) + \frac{\epsilon}{2} \\
        \nonumber&\leq\mathbb{P}_{f_{0,n}}\Big(\Big\{\sup_{\theta\in B_{\theta^*}(r)^\mathsf{c}}\log f_n(X^n|\theta)-\log f_n(X^n|\theta^*) > -\frac{\tilde{L}n\tilde{\gamma}(\|\theta-\theta^*\|_2)}{1-t}\Big\}\cap\mathcal{E}\Big) + \frac{\epsilon}{2} \\
       \nonumber &\leq\mathbb{P}_{f_{0,n}}\Big(\sup_{\theta\in B_{\theta^*}(r)^\mathsf{c}}\log f_n(X^n|\theta)-\log f_n(X^n|\theta^*) > -\frac{\tilde{L}n\tilde{\gamma}(\|\theta-\theta^*\|_2)}{1-t}\Big) + \frac{\epsilon}{2}.
    \end{align}
    As we assumed Condition \ref{condition1} holds for $f_n(X^n|\theta)$, letting $\tilde{L} := (1-t)L$ (which is positive as $t < 1$) and $\tilde{\gamma}(\cdot) = \gamma(\cdot)$, there exists a $N_1 \equiv N_1(t,\epsilon)$ such that the above probability is upper bounded by $\frac{1}{2}\epsilon$ when $n > N_1$. Hence, for $n > \max(N_0, N_1)$, we have the desired result.
    Notice, we showed Condition \ref{condition1} holds for $\tilde{f}_n(X^n|\theta)$ with the same $\gamma(\cdot)$ as it does for $f_n(X^n|\theta)$.

    \noindent\textbf{Checking Condition \ref{condition2}.} First, note that the value of $r$ is the one such that Condition \ref{condition1} holds (for both $f_n(X^n|\theta)$ and $\tilde{f}_n(X^n|\theta)$, as shown above). Next, let $g_n(\theta, X^n) \equiv \log \tilde{f}_n(X^n|\theta)-\log \tilde{f}_n(X^n|\hat{\theta})$. Then for some constants $\tilde{c}_3, \tilde{c}_4 >0,$
    \begin{align*}
        &\Big\{ -n\tilde{c}_3\|\theta-\hat{\theta}\|_2^2\le g_n(\theta, X^n)  \leq -n\tilde{c}_4\|\theta-\hat{\theta}\|_2^2 \quad \text {for all } \theta\in B_{\hat{\theta}}(2r)\Big\}^\mathsf{c} \\
        &\quad = \Big\{g_n(\theta, X^n) < -n\tilde{c}_3\|\theta-\hat{\theta}\|_2^2  \,\,  \cup \,\,  g_n(\theta, X^n) > -n\tilde{c}_4\|\theta-\hat{\theta}\|_2^2 \quad \text{for some } \theta\in B_{\hat{\theta}}(2r)\Big\}.
    \end{align*}
    Hence, checking Condition \ref{condition2} is equivalent to showing that for all $\epsilon > 0$, there exists $N$ and constants $\tilde{c}_3 > 0$ and $\tilde{c}_4 > 0$ such that whenever $n > N$,
    \begin{equation*}
        \mathbb{P}_{f_{0,n}}\Big(g_n(\theta, X^n) < -n\tilde{c}_3\|\theta-\hat{\theta}\|_2^2  \,\,  \cup \,\,  g_n(\theta, X^n) > -n\tilde{c}_4\|\theta-\hat{\theta}\|_2^2 \quad \text{for some } \theta\in B_{\hat{\theta}}(2r)\Big) < \epsilon.
    \end{equation*}
   By a union bound, we can upper bound the probability above as follows:
    \begin{align*}
        &\mathbb{P}_{f_{0,n}}\Big(g_n(\theta, X^n) < -n\tilde{c}_3\|\theta-\hat{\theta}\|_2^2  \,\,  \cup \,\,  g_n(\theta, X^n) > -n\tilde{c}_4\|\theta-\hat{\theta}\|_2^2 \quad \text{for some } \theta\in B_{\hat{\theta}}(2r)\Big) \\
        &\quad \leq \mathbb{P}_{f_{0,n}}\Big(g_n(\theta, X^n) < -n\tilde{c}_3\|\theta-\hat{\theta}\|_2^2  \quad \text{for some } \theta\in B_{\hat{\theta}}(2r)\Big) \\
        &\qquad + \mathbb{P}_{f_{0,n}}\Big(g_n(\theta, X^n) > -n\tilde{c}_4\|\theta-\hat{\theta}\|_2^2 \quad \text{for some } \theta\in B_{\hat{\theta}}(2r)\Big)  \\
        & \quad = \mathbb{P}_{f_{0,n}}\Big(\frac{\hat{\alpha}_n}{\alpha_n}\big(\log f_n(X^n|\theta)-\log f_n(X^n|\hat{\theta})\big) < -n\tilde{c}_3\|\theta-\hat{\theta}\|_2^2 \quad \text{for some } \theta\in B_{\hat{\theta}}(2r)\Big) \\
        &\qquad+ \mathbb{P}_{f_{0,n}}\Big(\frac{\hat{\alpha}_n}{\alpha_n}\big(\log f_n(X^n|\theta)-\log f_n(X^n|\hat{\theta})\big) > -n\tilde{c}_4\|\theta-\hat{\theta}\|_2^2 \quad \text{for some } \theta\in B_{\hat{\theta}}(2r)\Big) \\
        &\equiv P_1 + P_2.
    \end{align*}
    Now, define $\mathcal{E} = \{1-t \leq \hat{\alpha}_n/\alpha_n \leq 1+t\}$ for some $t\in(0,1)$. By assumption, there exists $N_0 \equiv N_0(t,\epsilon)$ such that $\mathbb{P}_{f_{0,n}}(\mathcal{E}^\mathsf{c}) < \frac{1}{4}\epsilon$ when $n > N_0$. We use this to bound $P_1$ and $P_2$.

    First, consider term $P_1$. Notice that $\log f_n(X^n|\theta)-\log f_n(X^n|\hat{\theta}) \leq 0$ for all $\theta\in B_{\hat{\theta}}(2r)$ by the definition of $\hat\theta$. Furthermore, Condition \ref{condition2} holding for $f_n(X^n|\theta)$ implies there exists an $N_1(\epsilon, c_3, c_4, t)$ such that the event in Condition \ref{condition2} holds with  probability at least $1-\epsilon/4$ whenever $n > N_1$. Hence, taking $n > \max\{N_0,N_1\}$ and $\tilde{c}_3 = (1+t)c_3$, where $c_3 > 0$ is the constant such that Condition \ref{condition2} holds for $f_n(X^n|\theta)$, we have
    \begin{align*}
        &P_1 \hspace{-1pt} \leq \hspace{-1pt} \mathbb{P}_{f_{0,n}}\hspace{-2pt} \Big(\hspace{-2pt}\Big\{\frac{\hat{\alpha}_n}{\alpha_n}\big(\log f_n(X^n|\theta)-\log f_n(X^n|\hat{\theta})\big)\hspace{-2pt} <\hspace{-2pt}  -n\tilde{c}_3\|\theta-\hat{\theta}\|_2^2 \text{ for some } \theta\in B_{\hat{\theta}}(2r) \hspace{-2pt}\Big\}\cap\mathcal{E} \Big) \hspace{-2pt}+ \hspace{-2pt} \frac{\epsilon}{4} \\
        &\leq \mathbb{P}_{f_{0,n}}\Big(\log f_n(X^n|\theta)-\log f_n(X^n|\hat{\theta}) < -\frac{n\tilde{c}_3\|\theta-\hat{\theta}\|_2^2}{1+t} \text{ for some } \theta\in B_{\hat{\theta}}(2r)\Big) + \frac{\epsilon}{4} \\
       &= 1 - \mathbb{P}_{f_{0,n}}\Big(\log f_n(X^n|\theta)-\log f_n(X^n|\hat{\theta}) \geq -nc_3\|\theta-\hat{\theta}\|_2^2 \text{ for all } \theta\in B_{\hat{\theta}}(2r)\Big) + \frac{\epsilon}{4} \\
       & \leq \frac{\epsilon}{4} + \frac{\epsilon}{4} = \frac{\epsilon}{2}.
    \end{align*}
    The proof that $P_2 < \epsilon/2$ is analagous and this gives the desired result.
\end{proof}

The final result in this section checks assumptions \textbf{(A2')} and \textbf{(A3')} for $\tilde{f}_n(X^n|\theta)$, which are required to apply Lemma \ref{lem:lap_int_lem} and Theorem \ref{thm:BayesEstimator} to $\tilde{f}_n(X^n|\theta)$.

\begin{Proposition}\label{prop:random_alpha_(A2')(A3')}
    Suppose $f_n(X^n|\theta)$ satisfies Assumptions \textbf{(A0)}, \textbf{(A2')}, and \textbf{(A3')}  and that  for a data-dependent sequence, $\hat{\alpha}_n$, there exists a sequence $\frac{1}{n}\ll\alpha_n\ll1$ such that ${\hat{\alpha}_n}/{\alpha_n}\to1$ in $f_{0,n}$-probability. Then $\tilde{f}_n(X^n|\theta) = f_n(X^n|\theta)^{\frac{\hat{\alpha}_n}{\alpha_n}}$ also satisfies \textbf{(A2')} and \textbf{(A3')}.
\end{Proposition}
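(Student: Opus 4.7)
The plan is to propagate each assumption through the identity $\log \tilde f_n(X^n|\theta) = r_n \log f_n(X^n|\theta)$, where $r_n := \hat\alpha_n/\alpha_n$, and use that $r_n \overset{p}{\to} 1$ so that for any $t\in(0,1)$ the event $\mathcal{E}_t := \{1-t \le r_n \le 1+t\}$ has $f_{0,n}$-probability at least $1-\epsilon/2$ for $n$ large enough. Throughout I also use Proposition~\ref{prop:random_alpha_(A0)(A2)}, which guarantees that $\tilde f_n$ and $f_n$ share the same MLE $\hat\theta$.

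For \textbf{(A3')}, define $D_n(\theta) := \frac{1}{n}[\log f_n(X^n|\theta) - \log f_n(X^n|\hat\theta)] \le 0$. Then the analogous quantity for $\tilde f_n$ is exactly $r_n D_n(\theta)$. Fix $\delta'>0$ and $\epsilon>0$; by \textbf{(A3')} for $f_n$, there exist $c>0$ and $N_0$ such that for $n>N_0$ we have $\sup_{\theta \in B_{\hat\theta}(\delta')^\mathsf{c}} D_n(\theta) \le -c$ on an event of probability at least $1-\epsilon/2$. Intersect with $\mathcal{E}_t$. Because $D_n(\theta)\le -c<0$ and $r_n \ge 1-t>0$, multiplying the inequality $D_n(\theta)\le -c$ by $r_n$ preserves direction and yields $r_n D_n(\theta) \le -(1-t)c$ uniformly on $B_{\hat\theta}(\delta')^\mathsf{c}$. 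Set $\tilde c := (1-t)c$; a union bound completes the verification of \textbf{(A3')} for $\tilde f_n$.

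For \textbf{(A2')}, the natural choice is $\tilde H_n := r_n H_n$ and $\tilde S_n := r_n S_n$ where $H_n,S_n$ are the objects from \textbf{(A2')} applied to $f_n$. A direct substitution using $\log \tilde f_n = r_n \log f_n$ gives the identity
\begin{equation*}
\tilde{\tilde R}_n(h) \;=\; r_n\, \tilde R_n(h),
\end{equation*}
so the symmetry and positive definiteness of $\tilde H_n$ (with high probability) are immediate from the corresponding property of $H_n$ together with $r_n>0$ on $\mathcal{E}_t$. It remains to show that on an event of probability at least $1-\epsilon$, $\sup_{h\in\mathcal{H}_n} r_n \tilde R_n(h) - \tfrac{\tilde M}{\alpha_n n}\|h\|_2^4 \le 0$ for some finite $\tilde M$. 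Fix such an $h$. If $\tilde R_n(h)\le 0$ then $r_n \tilde R_n(h)\le 0$ and the bound is trivial; if $\tilde R_n(h)>0$, then on the \textbf{(A2')} event for $f_n$ we have $\tilde R_n(h) \le \tfrac{M}{\alpha_n n}\|h\|_2^4$, so on $\mathcal{E}_t$ this gives $r_n \tilde R_n(h)\le (1+t)\tfrac{M}{\alpha_n n}\|h\|_2^4$. Hence the choice $\tilde M := (1+t)M$ works on the intersection of $\mathcal{E}_t$ with the good event of \textbf{(A2')} for $f_n$, whose probability is at least $1-\epsilon$ by a union bound.

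I do not expect any major obstacle: the argument is essentially the same homogeneity-of-scale trick used in Proposition~\ref{prop:random_alpha_(A0)(A2)}, with the only subtlety being the correct direction of the inequality when multiplying $D_n(\theta)\le -c$ by $r_n$ in the \textbf{(A3')} step and the split into the signs of $\tilde R_n(h)$ in the \textbf{(A2')} step.
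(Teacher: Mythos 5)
Your proof is correct and follows the same overall strategy as the paper: condition on the event $\{1-t\le \hat\alpha_n/\alpha_n\le 1+t\}$, propagate each assumption through the identity $\log\tilde f_n = r_n\log f_n$, and rescale the constants. Your \textbf{(A3')} step is essentially identical to the paper's (same event, same $\tilde c=(1-t)c$). For \textbf{(A2')} you deviate slightly, and in a way that is worth noting: you take $\tilde H_n=r_nH_n$ and $\tilde S_n=r_nS_n$, which makes the remainder identity $\tilde{\tilde R}_n(h)=r_n\tilde R_n(h)$ exact, and then you split on the sign of $\tilde R_n(h)$ before multiplying by $r_n$. The paper instead takes the deterministic rescalings $\tilde H_n=(1+t)H_n$, $\tilde S_n=(1+t)S_n$, $\tilde M=(1+t)M$ and bounds the $r_n$-weighted log-likelihood term by its $(1+t)$-weighted version using that $\log f_n(X^n|\hat\theta+h/\sqrt{\alpha_n n})-\log f_n(X^n|\hat\theta)\le 0$. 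As written, that displayed inequality in the paper goes the wrong way (multiplying a non-positive quantity by $r_n<1+t$ gives something \emph{larger} than multiplying by $1+t$, not smaller; the fix is to condition on $r_n>1-t$ and use $(1-t)$-rescalings). Your case split sidesteps this entirely: when $\tilde R_n(h)>0$ the bound $r_n\tilde R_n(h)\le(1+t)\tilde R_n(h)$ is in the correct direction, and when $\tilde R_n(h)\le 0$ the claim is trivial. So your route is marginally cleaner and more robust than the paper's; the only cost is that your $\tilde H_n$ is random through $r_n$, which is permitted since \textbf{(A2')} already allows a random $H_n$ (cf.\ Proposition \ref{prop:likelihood_suff}).
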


\begin{proof}
    We check that $\tilde{f}_n(X^n|\theta)$ satisfies Assumption \textbf{(A2')}. It suffices to show that there exists a matrix $\tilde{H}_n\in\mathbb{R}^{p\times p}$ (that is symmetric and positive-definite with high probability), a tensor $\tilde{S}_n\in\mathbb{R}^{p\times p\times p}$, and a constant $\tilde{M} < \infty$ such that for all $\epsilon > 0$ and some $\delta > 0$,
    \begin{align}\label{eq:g_random_cond}
        \mathbb{P}_{f_{0,n}}\Big(\sup_{h\in\mathcal{H}_n}\tilde{R}_n(h) - \frac{\tilde{M}}{\alpha_nn}\|h\|_2^4 > 0\Big) < \epsilon,
    \end{align}
    where $\mathcal{H}_n = \{h\in\mathbb{R}^p|\frac{\|h\|_2}{\sqrt{\alpha_nn}} < \delta\}$ and
    \begin{align*}
        \tilde{R}_n(h)
        &\equiv\alpha_n\Big[\log \tilde{f}_n\big(X^n\big|\hat{\theta}+\frac{h}{\sqrt{\alpha_nn}}\big)-\log \tilde{f}_n(X^n|\hat{\theta})\Big] +\frac{1}{2}h^\top \tilde{H}_nh - \frac{1}{6\sqrt{\alpha_n n}}\langle \tilde{S}_n, h^{\otimes 3}\rangle \\
        &=\alpha_n\frac{\hat{\alpha}_n}{\alpha_n}\Big[\log f_n\big(X^n\big|\hat{\theta}+\frac{h}{\sqrt{\alpha_nn}}\big)-\log f_n(X^n|\hat{\theta})\big] +\frac{1}{2}h^\top \tilde{H}_nh - \frac{1}{6\sqrt{\alpha_n n}}\langle \tilde{S}_n, h^{\otimes 3}\rangle.
    \end{align*}
    Define the event $\mathcal{A} = \{\hat{\alpha}_n/\alpha_n < 1 + t\}$ for some $t > 0$.
   By assumption, there exists $N_0 \equiv N_0(t,\epsilon)$ such that $\mathbb{P}_{f_{0,n}}(\mathcal{A}^\mathsf{c}) < \epsilon/2$ whenever $n > N_0$. Then taking $n > N_0$, we can bound the probability in \eqref{eq:g_random_cond} as follows:
    \begin{align*}
        \mathbb{P}_{f_{0,n}}\Big(\sup_{h\in\mathcal{H}_n}\tilde{R}_n(h) - \frac{\tilde{M}}{\alpha_nn}\|h\|_2^4 > 0\Big) \leq \mathbb{P}_{f_{0,n}}\Big(\Big\{\sup_{h\in\mathcal{H}_n}\tilde{R}_n(h) - \frac{\tilde{M}}{\alpha_nn}\|h\|_2^4 > 0\Big\} \cap \mathcal{A}\Big) + \frac{\epsilon}{2}.
    \end{align*}
    It remains to bound the first probability on the right side of the above. By  definition of $\hat\theta$, we know that $ \log f_n(X^n|\hat{\theta}+\frac{h}{\sqrt{\alpha_nn}})-\log f_n(X^n|\hat{\theta}) \leq 0$ for any $h\in\mathcal{H}_n$.
    Then, on event $\mathcal{A}$,
    \begin{equation*}
        \tilde{R}_n(h) \leq (1+t)\alpha_n\Big[\log f_n\big(X^n\big|\hat{\theta}+\frac{h}{\sqrt{\alpha_nn}}\big)-\log f_n(X^n|\hat{\theta})\Big] +\frac{1}{2}h^\top \tilde{H}_nh - \frac{1}{6\sqrt{\alpha_n n}}\langle \tilde{S}_n, h^{\otimes 3}\rangle,
    \end{equation*}
    and therefore on $\mathcal{A}$
    \begin{align*}
        &\frac{1}{1+t} \Big(\tilde{R}_n(h) - \frac{\tilde{M}\|h\|_2^4}{\alpha_nn}\Big) \\
        &\leq  \alpha_n\Big[\log f_n\big(X^n\big|\hat{\theta}+\frac{h}{\sqrt{\alpha_nn}}\big)-\log f_n(X^n|\hat{\theta})\Big] +\frac{h^\top\tilde{H}_n h}{2(1+t)} - \frac{\langle \frac{1}{1+t}\tilde{S}_n, h^{\otimes 3}\rangle }{6\sqrt{\alpha_n n}}- \frac{\tilde{M}\|h\|_2^4}{\alpha_nn(1+t)}.
    \end{align*}
    Then letting $\tilde{H}_n := (1+t)H_n$ (which is positive definite with high probability by choosing $t < 1$), $\tilde{S}_n := (1+t)S_n$, and $\tilde{M} = (1+t)M < \infty$, we find
    \begin{align*}
        &\mathbb{P}_{f_{0,n}}\Big(\Big\{\sup_{h\in\mathcal{H}_n}\tilde{R}_n(h) - \frac{\tilde{M}}{\alpha_nn}\|h\|_2^4 > 0 \Big\}\cap \mathcal{A}\Big) \\
        &\leq \mathbb{P}_{f_{0,n}}\hspace{-1.5pt}\Big( \hspace{-1.5pt} \sup_{h\in\mathcal{H}_n} \hspace{-1.5pt} \alpha_n \hspace{-1.5pt} \Big[ \hspace{-1.5pt} \log f_n\big(X^n\big|\hat{\theta} \hspace{-1pt} + \hspace{-1pt} \frac{h}{\sqrt{\alpha_nn}}\big) \hspace{-1pt} - \hspace{-1pt} \log f_n(X^n|\hat{\theta})\Big] \hspace{-1pt} + \hspace{-1pt} \frac{h^\top H_n h}{2} \hspace{-1pt}  - \hspace{-1pt} \frac{\langle S_n, h^{\otimes 3}\rangle}{6\sqrt{\alpha_n n}} \hspace{-1pt} - \hspace{-1pt} \frac{M\|h\|_2^4}{\alpha_nn} \hspace{-1pt} > \hspace{-1pt} 0\Big) \\
        &\leq \epsilon/2,
    \end{align*}
    where the last line follows for $n$ large enough since we have assumed Assumption \textbf{(A2')} holds for $f_n(X^n|\hat{\theta})$. Hence, \textbf{(A2')} holds for $\tilde{f}_n(X^n|\theta)$ with these choices of $\tilde{H}_n$, $\tilde{S}_n$, and $\tilde{M}$. \\

    \noindent\textbf{Assumption (A3'):} We will show that the condition in \eqref{eq:A3'} holds with $\tilde{f}_n(X^n|\theta)$ replacing $f_n(X^n|\theta)$. More specifically, we show that for all $\epsilon > 0$ and $\delta' > 0$, there exists a constant $\tilde{c}$ such that  for $n$ sufficiently large
    \begin{align*}
        &\mathbb{P}_{f_{0,n}}\Big(\sup_{\theta\in B_{\hat{\theta}}(\delta')^\mathsf{c}}\log \tilde{f}_n(X^n|\theta) - \log \tilde{f}_n(X^n|\hat{\theta}) > -\tilde{c}\Big).
    \end{align*}
 To do so, define the event $\mathcal{E} = \{\hat{\alpha}_n/\alpha_n > 1-t\}$ for $t\in(0,1)$ and note that there exists $N_0\equiv N_0(\epsilon,t)$ such that $\mathbb{P}_{f_{0,n}}(\mathcal{E}^\mathsf{c}) < \epsilon/2$ whenever $n > N_0$. Taking $n > N_0$ then, we have
    \begin{align*}
        &\mathbb{P}_{f_{0,n}}\Big(\sup_{\theta\in B_{\hat{\theta}}(\delta')^\mathsf{c}}\log \tilde{f}_n(X^n|\theta) - \log \tilde{f}_n(X^n|\hat{\theta}) > -\tilde{c}\Big) \\
        &\quad \leq\mathbb{P}_{f_{0,n}}\Big(\Big\{\sup_{\theta\in B_{\hat{\theta}}(\delta')^\mathsf{c}}\log \tilde{f}_n(X^n|\theta) - \log \tilde{f}_n(X^n|\hat{\theta}) > -\tilde{c}\Big\} \cap \mathcal{E}\Big) + \frac{\epsilon}{2} \\
        &\quad = \mathbb{P}_{f_{0,n}}\Big(\Big\{\frac{\hat{\alpha}_n}{\alpha_n}\Big[\sup_{\theta\in B_{\hat{\theta}}(\delta')^\mathsf{c}}\log f_n(X^n|\theta) - \log f_n(X^n|\hat{\theta})\Big] > -\tilde{c}\Big\}\cap\mathcal{E}\Big) + \frac{\epsilon}{2} \\
        &\quad\leq \mathbb{P}_{f_{0,n}}\Big(\Big\{\sup_{\theta\in B_{\hat{\theta}}(\delta')^\mathsf{c}}\log f_n(X^n|\theta) - \log f_n(X^n|\hat{\theta}) > -\frac{\tilde{c}}{1-t}\Big\}\cap\mathcal{E}\Big) + \frac{\epsilon}{2} \\
        &\quad\leq \mathbb{P}_{f_{0,n}}\Big(\sup_{\theta\in B_{\hat{\theta}}(\delta')^\mathsf{c}}\log f_n(X^n|\theta) - \log f_n(X^n|\hat{\theta}) > -\frac{\tilde{c}}{1-t}\Big) + \frac{\epsilon}{2}.
    \end{align*}
    Since we have assumed \textbf{(A3')} holds for $f_n(X^n|\theta)$, if we let $\tilde{c} = (1-t)c$ (which is positive as $t < 1$), then there exists an $N_1 \equiv N_1(\epsilon,t,c)$ such that the above probability is upper bounded by $\epsilon/2$ whenever $n > N_1$. We obtain \textbf{(A3')} with $\tilde{f}_n(X^n|\theta)$ by taking $n > \max(N_0,N_1)$.
\end{proof}

\section{Complementary examples}
\subsection{Proof of Proposition \ref{prop:counter_example_exp_family}}\label{app:counterexample_expfam_proof}

\begin{proof}
It is easy to see that the characteristic function of the prior distribution in \eqref{eq:exponential_family_prior} is
\begin{align}\label{eq:conjugate_chf}
        \varphi_{\pi}(t) 
          &= \int_{H} \exp(it^\top\eta)\exp\left(\eta^\top\xi - \nu A(\eta) - \psi(\xi, \nu)\right)d\eta 
        = \exp\left(\psi(it + \xi, \nu) - \psi(\xi, \nu)\right).\
\end{align}
From \eqref{eq:alpha_post_exponential_short} we see that the posterior belongs to the same family as the prior, \eqref{eq:exponential_family_prior}, and the characteristic function of the $\alpha_n$-posterior is $\varphi_{\pi_{n,\alpha_n}}(t) = \exp(\psi(it + \xi_n, \nu_n) - \psi(\xi_n, \nu_n)).$
Now, suppose that $\alpha_nn \rightarrow \alpha_0$ for some $\alpha_0 \geq 0$ and $\frac{1}{n}T(X^n)$ converges in $f_{0,n}$-probability to some limit $g(\eta^*)$, where $\eta^*$ denotes the true value of the natural parameter. Then, in $f_{0,n}$-probability, $\xi_n = \alpha_nn(\frac{1}{n}T(X^n)) + \xi \rightarrow \alpha_0g(\eta^*) + \xi \equiv \xi',$ and $\nu_n = \alpha_nn + \nu \rightarrow \alpha_0 + \nu \equiv \nu'.$
 Since $\varphi_{\pi_{n,\alpha_n}}(t)$ is continuous in $\xi_n$ and $\nu_n$, the limit in $f_{0,n}$-probability of the characteristic function of $\pi_{n,\alpha_n}$ is $\varphi_{\pi_{n,\alpha_n}}(t) \rightarrow \exp(\psi(it + \xi', \nu') - \psi(\xi', \nu')),$ which has the same form as the characteristic function of the prior, \eqref{eq:conjugate_chf}. By L\'evy's continuity theorem, the $\alpha_n$-posterior converges weakly to a distribution belonging to the family in  \eqref{eq:exponential_family_prior}.
\end{proof}

\subsection{Explicit counterexamples for the BvM result when $\alpha_n \asymp 1/n$}
\label{app:counterexamples}

We work out examples of likelihood models and priors where Proposition \ref{prop:counter_example_exp_family} holds. We compute explicitly the limiting characteristic functions for these examples which demonstrate that the BvM cannot hold.

\noindent\textbf{Exponential-Gamma:} Let $X^n = (X_1,\ldots,X_n) \overset{\text{i.i.d.}}{\sim} \text{Exp}(\lambda)$ and $\lambda \sim \Gamma(a,b)$. Denote the true parameter value by $\lambda^*$. We can write
$f(X^n|\lambda) = \exp(-\lambda\sum_{i=1}^nX_i - n[-\log(\lambda)] - 0).$
Hence, in the form of \eqref{eq:exponential_family_likelihood}, we have taken $\eta = -\lambda$, $A(\eta) = -\log(-\eta)$, $T(X^n) = \sum_{i=1}^nX_i$, and $B(X^n)=0$. We can write
$
    \pi(\lambda) = \exp(-\lambda b - (1-a)(-\log(\lambda)) - \log(\frac{b^{a}}{\Gamma(a)})).$
Hence, writing $\pi(\eta)$ in the form of \eqref{eq:exponential_family_prior} yields $\xi = b$, $\nu = 1-a$, and $\psi(\xi, \nu) = \log(\frac{\xi^{1-\nu}}{\Gamma(1-\nu)})$. The characteristic function of $ \pi(\lambda)$ is given by
$
    \varphi_{\pi}(t) = \exp(\log(\frac{(\xi+it)^{1-\nu}}{\Gamma(1-\nu)}) - \log(\frac{\xi^{1-\nu}}{\Gamma(1-\nu)})).$
The density of the $\alpha_n$-posterior according to \eqref{eq:alpha_post_exponential_short} is
$\pi_{n,\alpha_n}(\lambda|X^n) = \exp(-\lambda\xi_n- \nu_n(-\log(\lambda)) - \psi(\xi_n, \nu_n)).$
where we take $\xi_n \equiv \alpha_n\sum_{i=1}^nX_i + b$ and $ \nu_n \equiv \alpha_nn + 1-a.$
We can write the characteristic function of $\pi_{n,\alpha_n}(\lambda|X^n)$ as
\begin{equation*}
    \begin{split}
        \varphi_{\pi_{n,\alpha_n}} (t) 
       &= \exp\Big(\log  \Big(\frac{(\xi_n+it)^{1-\nu}}{\Gamma(1-\nu_n)}\Big)  -  \log \Big(\frac{\xi_n^{1-\nu_n}}{\Gamma(1-\nu_n)}\Big)\Big) \\
        &\rightarrow  \exp \Big(\log  \Big(\frac{(\xi'+it)^{1-\nu'}}{\Gamma(1-\nu')}\Big)  -   \log  \Big(\frac{\xi'^{1-\nu}}{\Gamma(1-\nu')}\Big)\Big),
    \end{split}
\end{equation*}
where the limit is in $f_{0,n}$-probability and we take $g(\eta^*) = -\frac{1}{\eta^*} = \frac{1}{\lambda^*}$, $\xi'= \alpha_0g(\eta^*) + \xi = \frac{\alpha_0}{\lambda^*} + b$, and $\nu' = \alpha_0 + \nu = \alpha_0 + 1 - a.$   

\noindent\textbf{Pareto-Gamma:} Let $X^n = (X_1,\ldots,X_n) \overset{\text{i.i.d.}}{\sim} \text{Pareto}(x_m, k)$, where $x_m$ is known, and $k \sim \Gamma(a,b)$. Denote the true parameter value by $k^*$.  We can write 
$f(X^n|k) = \exp(-(k+1)\sum_{i=1}^n\log(\frac{X_i}{x_m}) - n\log(\frac{x_m}{k})-0).$
Hence, in the form of \eqref{eq:exponential_family_likelihood} yields $\eta = -(k+1)$, $A(\eta) = \log(\frac{x_m}{-\eta - 1})$, $T(X^n) = \sum_{i=1}^n\log(\frac{X_i}{x_m})$, and $B(X^n)=0$. We can write
$\pi(k) = \exp(-(k+1) b - (1-a)\log(\frac{x_m}{k}) -(-\log(\frac{x_m^{a-1}b^a}{\Gamma(a)})- b)).$
Hence, writing $\pi(\eta)$ in the form of \eqref{eq:exponential_family_prior} yields $\xi = b$, $\nu = 1-a$, and $\psi(\xi, \nu) = -\log(\frac{x_m^{-\nu}\xi^{1-\nu}}{\Gamma(1-\nu)})-\xi$. The characteristic function of $ \pi(k)$ is given by
$\varphi_{\pi}(t) = \exp(-\log(\frac{x_m^{-\nu}(\xi+it)^{1-\nu}}{\Gamma(1-\nu)}) + \log(\frac{x_m^{-\nu}\xi^{1-\nu}}{\Gamma(1-\nu)})-it).$
The density of the $\alpha_n$-posterior according to \eqref{eq:alpha_post_exponential_short} is
$\pi_{n,\alpha_n}(k|X^n) = \exp(-(k+1)\xi_n - \nu_n\log(\frac{x_m}{k}) -\psi(\xi_n, \nu_n )),$
where we take $\xi_n \equiv \alpha_n\sum_{i=1}^n\log(\frac{X_i}{x_m})+b$ and $\nu_n \equiv \alpha_nn + (1-a)$.
We can write the characteristic function of $\pi_{n,\alpha_n}(k|X^n)$ as
\begin{equation*}
    \begin{split}
        \varphi_{\pi_{n,\alpha_n}}(t) 
        &= \exp\Big(-\log\Big(\frac{x_m^{-\nu}(\xi_n+it)^{1-\nu_n}}{\Gamma(1-\nu_n)}\Big) + \log\Big(\frac{x_m^{-\nu}\xi_n^{1-\nu_n}}{\Gamma(1-\nu_n)}\Big)-it\Big) \\
        &\rightarrow \exp\Big(-\log\Big(\frac{x_m^{-\nu'}(\xi'+it)^{1-\nu'}}{\Gamma(1-\nu')}\Big) + \log\Big(\frac{x_m^{-\nu'}\xi'^{1-\nu'}}{\Gamma(1-\nu')}\Big)-it\Big),
    \end{split}
\end{equation*}
where the limit is in $f_{0,n}$-probability with
$g(\eta^*) = -\frac{1}{\eta^* + 1} = \frac{1}{k^*}$,
        $\xi' = \alpha_0g(\eta^*) + \xi = \frac{\alpha_0}{k^*} + b,$ and
        $\nu' = \alpha_0 + \nu = \alpha_0 + 1 - a.$ 

\noindent \textbf{Bernoulli-Beta:} Let $X^n = (X_1,\ldots,X_n) \overset{\text{i.i.d.}}{\sim} \text{Bernoulli}(p)$ and $p \sim \text{beta}(a,b)$. Denote the true parameter value by $p^*$.  We can write $ f(X^n|p) = \exp(\log(\frac{p}{1-p})\sum_{i=1}^n X_i - n\log(\frac{1}{1-p}) - 0).$
Hence, in the form of \eqref{eq:exponential_family_likelihood}, we take $\eta = \log(\frac{p}{1-p})$, $A(\eta) = \log(1 + e^\eta)$, $T(X^n) = \sum_{i=1}^nX_i$, and $B(X^n)=0$. We can write 
$\pi(p) = \exp((a-1)\log(\frac{p}{1-p}) - (a + b + 2)\log(\frac{1}{1-p}) - \log(\frac{\Gamma(a)\Gamma(b)}{\Gamma(a+b)})).$
Hence, in the form of \eqref{eq:exponential_family_prior} we take $\xi = a-1$, $\nu = a + b + 2$, and $\psi(\xi, \nu) = \log(\frac{\Gamma(1+\xi)\Gamma(\nu-\xi-3)}{\Gamma(\nu-2)})$. The characteristic function of $\pi(p)$ is given by
\begin{equation*}
    \varphi_\pi(t) 
    = \exp\Big(\log\Big(\frac{\Gamma(1+\xi+it)\Gamma(\nu-\xi-it-3)}{\Gamma(\nu-2)}\Big) - \log\Big(\frac{\Gamma(1+\xi)\Gamma(\nu-\xi-3)}{\Gamma(\nu-2)}\Big)\Big).\
\end{equation*}
The density of the $\alpha_n$-posterior according to \eqref{eq:alpha_post_exponential_short} is
$\pi_{n,\alpha_n}(p|X^n) 
        = \exp(\xi_n\log(\frac{p}{1-p} - \nu_n\log(\frac{1}{1-p}) - \psi(\xi_n, \nu_n))$
where we have used $\xi_n = \alpha_n\sum_{i=1}^n X_i + a - 1$ and $\nu_n = \alpha_nn + a + b + 2$. We can write the characteristic function of $  \pi_{n,\alpha_n}(p|X^n)$ as
\begin{equation*}
    \begin{split}
        \varphi_{\pi_{n,\alpha_n}}(t) 
        &= \exp\Big(\log\Big(\frac{\Gamma(1+\xi_n+it)\Gamma(\nu_n-\xi_n-it-3)}{\Gamma(\nu_n-2)}\Big) - \log\Big(\frac{\Gamma(1+\xi_n)\Gamma(\nu_n-\xi_n-3)}{\Gamma(\nu_n-2)}\Big)\Big) \\
        &\rightarrow \exp\Big(\log\Big(\frac{\Gamma(1+\xi'+it)\Gamma(\nu'-\xi'-it-3)}{\Gamma(\nu'-2)}\Big) - \log\Big(\frac{\Gamma(1+\xi')\Gamma(\nu'-\xi'-3)}{\Gamma(\nu'-2)}\Big)\Big),
    \end{split}
\end{equation*}
where the limit is in $f_{0,n}$-probability and $g(\eta^*) = \frac{e^{\eta^*}}{1+e^{\eta^*}} = p^*$, $\xi' = \alpha_0g(\eta^*) + \xi = \alpha_0p^* + a-1,$and $\nu' = \alpha_0 + \nu = \alpha_0 + a + b + 2.$   

\noindent\textbf{Gaussian-Gaussian:} Let $X^n = (X_1,\ldots,X_n) \overset{\text{i.i.d.}}{\sim} \mathcal{N}(\mu, \sigma^2)$, where $\sigma^2$ is known, and $\mu \sim \mathcal{N}(\mu_0,\sigma_0^2)$. Denote the true parameter value by $\mu^*$.  We can write
$f(X^n|\mu) = \exp(\mu\frac{\sum_{i=1}^nX_i}{\sigma^2} - n(\frac{\mu^2}{2\sigma^2}) - (\frac{n}{2}\log(2\pi\sigma^2)+\frac{1}{2\sigma^2}\sum_{i=1}^nX_i^2)).$
Hence, writing $f_n(X^n|\eta)$ in the form of \eqref{eq:exponential_family_likelihood} yields $\eta = \mu$, $A(\eta) = \frac{\eta^2}{2\sigma^2}$, $T(X^n) = \frac{1}{\sigma^2} \sum_{i=1}^nX_i$, and $B(X^n) = \frac{n}{2}\log(2\pi\sigma^2)+\frac{1}{2\sigma^2}\sum_{i=1}^nX_i^2$. We can write $\pi(\mu)$ as
$\pi(\mu) = \exp(\mu\frac{\mu_0}{\sigma_0^2}-\frac{\sigma^2}{\sigma_0^2}(\frac{\mu^2}{2\sigma^2})-(\frac{1}{2}\log(2\pi\sigma_0^2)+\frac{1}{2\sigma_0^2}\mu_0^2)).$
Hence, writing $\pi(\eta)$ in the form of \eqref{eq:exponential_family_prior} yields $\xi = \mu_0/\sigma_0^2$, $\nu = \sigma^2/\sigma_0^2$, and $\psi(\xi, \nu) = \frac{1}{2}\log(\frac{2\pi\sigma^2}{\nu}) + \frac{\xi^2\sigma^2}{2\nu}$. The characteristic function of $\pi(\mu)$ is given by
$\varphi_\pi(t) = \exp(\frac{(\xi + it)^2\sigma^2}{2\nu} - \frac{\xi^2\sigma^2}{2\nu}).$
The density of the $\alpha_n$-posterior according to \eqref{eq:alpha_post_exponential_short} is
$f(X^n|\mu) = \exp(\mu\xi_n - \nu_n (\frac{\mu^2}{2\sigma^2})- \psi(\xi_n, \nu_n)).$
where we have used $\xi_n = \frac{\alpha_n\sum_{i=1}^nX_i}{\sigma^2} + \frac{\mu_0}{\sigma_0^2}$ and $
        \nu_n = \alpha_nn + \frac{\sigma^2}{\sigma_0^2}.$ We write the characteristic function of $f(X^n|\mu)$ as
\begin{equation*}
    \begin{split}
        \varphi_{\pi_{n,\alpha_n}}(t) 
        &= \exp\Big(\frac{(\xi_n + it)^2\sigma^2}{2\nu_n} - \frac{\xi_n^2\sigma^2}{2\nu_n}\Big) \rightarrow \exp\Big(\frac{(\xi' + it)^2\sigma^2}{2\nu'} - \frac{\xi'^2\sigma^2}{2\nu'}\Big),
    \end{split}
\end{equation*}
where the  limit is in $f_{0,n}$-probability and
$g(\eta^*) = \frac{\eta^*}{\sigma^2} = \frac{\mu^*}{\sigma^2}$, $\xi' = \alpha_0g(\eta^*) + \xi = \frac{\alpha_0\mu^*}{\sigma^2} + \frac{\mu_0}{\sigma_0^2}$, and $\nu' = \alpha_0 + \nu = \alpha_0 + {\sigma^2}/{\sigma_0^2}.$
Written in the form of \eqref{eq:exponential_family_prior}, the alleged limiting distribution of the $\alpha_n$-posterior according to Theorem \ref{cor:BvM} is
$
    \phi(\mu|\hat{\mu}, \frac{\sigma^2}{\alpha_nn}) 
    = \exp(\mu\frac{\alpha_nn\hat{\mu}}{\sigma^2}-\alpha_nn(\frac{\mu^2}{2\sigma^2})-(\frac{1}{2}\log(\frac{2\pi\sigma^2}{\alpha_nn})+\frac{\alpha_nn\hat{\mu}^2}{2\sigma^2})).$
Defining $\xi_n' = \frac{\alpha_nn\hat{\mu}}{\sigma^2}$ and  $\nu_n' = \alpha_nn,$
we can write the characteristic function of $\phi(\mu|\hat{\mu}, \frac{\sigma^2}{\alpha_nn})$ as
\begin{equation*}
    \begin{split}
        \varphi_{\pi_{n,\alpha_n}}(t) 
        &= \exp\Big(\frac{(\xi_n' + it)^2\sigma^2}{2\nu_n'} - \frac{\xi_n'^2\sigma^2}{2\nu_n'}\Big) \rightarrow \exp\Big(\frac{(\xi'' + it)^2\sigma^2}{2\nu''} - \frac{\xi''^2\sigma^2}{2\nu''}\Big),
    \end{split}
\end{equation*}
where the limit is in in $f_{0,n}$-probability and
$\xi'' = \frac{\alpha_0\mu^*}{\sigma^2} \neq \xi'$ and $\nu'' = \alpha_0 \neq \nu'.$ 

In this example, the characteristic function is that of a Gaussian distribution, which is exactly what we expect from  Proposition \ref{prop:counter_example_exp_family} in this case. However, the BvM stated in \ref{cor:BvM} does not hold since the Gaussian distribution that appears in that statement is not the same.

\subsection{Models where (A3) holds}\label{app:A3_holds}
In this section, we verify Assumption \textbf{(A3)} for models including our linear regression example and exponential families. We show that even with improper priors, which lead to heavier tails, the posterior will still have arbitrarily many moments for $\alpha_nn$ sufficiently large. \\

\noindent\textbf{Linear regression:} We consider a linear regression model similar to the one in the simulations of Section \ref{sec:model_spec}. For $x_i\in\mathbb{R}^p$ and $y_i\in\mathbb{R}$, we specify the linear model $y_i=x_i^\top\beta^* + \epsilon_i$, where $\epsilon_i \overset{\text{\text{i.i.d.}}}{\sim} \mathcal{N}(0,1)$ and $x_{i}\overset{\text{\text{\text{i.i.d.}}}}{\sim}\mathcal{N}(0,I_p)$. Denote the MLE/ordinary least squares estimator by $\hat{\beta}$ and  specify the prior $\pi(\beta)\propto 1$. Letting $Y = [y_1,\ldots,y_n]^\top$ and $X = [x_1^\top \cdots x_n^\top]^\top$ and using the identity $\|Y - X\beta\|_2^2 = \|Y - X\hat{\beta}\|_2^2 + \|X(\hat{\beta}-\beta)\|_2^2$, it is easy to see that 
\begin{equation}\label{eq:beta_posterior_A3}
    \pi_{n,\alpha_n}(\beta|X, Y) = \phi\Big(\beta \, \Big| \, \hat{\beta}, \frac{1}{\alpha_nn}\Big(\frac{1}{n}X^\top X\Big)^{-1}\Big).
\end{equation}
To verify Assumption \textbf{(A3)}, we show that $ \mathbb{E}_{\pi_{n,\alpha_n}(\beta|X,Y)}[\|\sqrt{\alpha_nn}(\beta-\beta^*)\|_2^{k_0}] = O_{f_{0,n}}(1)$ holds for all $k_0 \geq 0$.
First, define $v_n \equiv \sqrt{\alpha_nn}(\beta-\hat{\beta})$ and $S_n \equiv \frac{1}{n}X^\top X$. From \eqref{eq:beta_posterior_A3}, we see that 
\begin{align}
   v_n \, | \, X,Y&\sim\mathcal{N}\left(\mathbf{0}, S_n^{-1}\right) 
    \label{eq:Sn1/2vn}
    \implies\|S_n^{1/2}v_n\|_2^2 \, | \, X,Y\sim\chi^2_p.
\end{align}
Next, by the bound $(a+b)^{k_0}\leq 2^{k_0}(a^{k_0} + b^{k_0})$ for $k_0\geq0$, we have that
\begin{align*}
    2^{-k_0}\mathbb{E}_{\pi_{n,\alpha_n}(\beta|X,Y)}[\|\sqrt{\alpha_nn}(\beta-\beta^*)\|_2^{k_0}]
   &\leq \mathbb{E}_{\pi_{n,\alpha_n}(\beta|X,Y)}[\|\sqrt{\alpha_nn}(\beta-\hat{\beta})\|_2^{k_0} + \|\sqrt{\alpha_nn}(\hat{\beta}-\beta^*)\|_2^{k_0}] \\
    &= \mathbb{E}[\|v_n\|_2^{k_0} \, | \, X,Y] + \|\sqrt{\alpha_nn}(\hat{\beta}-\beta^*)\|_2^{k_0}\\
  &= \mathbb{E}\big[\big(v_n^\top S_n^{1/2}S_n^{-1}S_n^{1/2}v_n\big)^{k_0/2} \, | \, X,Y\big]  + o_{f_{0,n}}(1)\\
  &\leq \lambda_{\text{min}}(S_n)^{-k_0/2} \mathbb{E}_{u\sim\chi^2_p}\big[u^{k_0/2}\big]+ o_{f_{0,n}}(1) =O_{f_{0,n}}(1),
\end{align*}
where the second equality follows from Assumption \textbf{(A0)} and  $\alpha_n\to0$ and the last one from  $S_n = \frac{1}{n}X^\top X$ converging in probability to a positive definite matrix.  Hence,  Assumption \textbf{(A3)} is satisfied. \\

\noindent\textbf{Linear regression (with random $\boldsymbol{\alpha}$):} Cleary \eqref{eq:beta_posterior_A3} also implies that the $\hat{\alpha}_n$-posterior can be written as
\begin{equation}\label{eq:beta_posterior_A3_random}
    \pi_{n,\hat\alpha_n}(\beta|X, Y) = \phi\Big(\beta \, \Big| \, \hat{\beta}, \frac{1}{\hat\alpha_nn}\Big(\frac{1}{n}X^\top X\Big)^{-1}\Big),
\end{equation}
Assuming  $\alpha_n$ is a sequence such that $\frac{\hat{\alpha}_n}{\alpha_n}\to1$ in $f_{0,n}$-probability, by \eqref{eq:Sn1/2vn} we have that
$\mathbb{E}_{\pi_{n,\hat{\alpha}_n}(\beta|X,Y)}[\|\sqrt{\frac{\hat{\alpha}_n}{\alpha_n}}S_n^{1/2}v_n\|_2^2]  \sim \chi^2_p,$
where $v_n = \sqrt{\alpha_nn}(\beta-\hat{\beta})$. Therefore, following similar calculations as in  the determistic $\alpha_n$ case, we have that 
\begin{align*}
    2^{-k_0}\mathbb{E}_{\pi_{n,\hat{\alpha}_n}(\beta|X,Y)}[\|\sqrt{\alpha_nn}(\beta-\beta^*)\|_2^{k_0}]
&\leq\mathbb{E}_{\pi_{n,\hat{\alpha}_n}(\beta|X,Y)}\Big[\big\|\sqrt{\alpha_nn}(\beta-\hat{\beta})\big\|_2^{k_0}\Big] + o_{f_{0,n}}(1) \\
    &=\Big(\frac{\alpha_n}{\hat\alpha_n}\Big)^{k_0}\mathbb{E}_{\pi_{n,\hat{\alpha}_n}(\beta|X,Y)}\Big[\Big(\big\|\sqrt{\frac{\hat{\alpha}_n}{\alpha_n}}v_n\big\|_2^2\Big)^{k_0/2}\Big] + o_{f_{0,n}}(1)\\
 \nonumber   &\leq (1 + o_{f_{0,n}}(1))\lambda_{\text{min}}(S_n)^{-k_0/2} \mathbb{E}_{u\sim\chi^2_p}[u^{k_0/2}]+ o_{f_{0,n}}(1)\\
 &=O_{f_{0,n}}(1),
\end{align*}
Hence, \textbf{(A3)} holds for the $\hat{\alpha}_n$-posterior. \\

\noindent\textbf{Exponential families:} Let $X^n$ have density $f_n(X^n|\eta) = \exp\left(\eta T(X^n) - n(A(\eta) - B(X^n)\right)$, where $\eta\in H\subset \mathbb{R}$ denotes the natural parameter and $A$ is twice differentiable and strictly convex in the interior of $H$. Denote the MLE by $\hat{\eta}$ and the true natural parameter value by $\eta^*$. Let $\pi(\eta)\propto 1$. Then the $\alpha_n$-posterior is proportional to $\exp(\alpha_n\eta T(X^n) - \alpha_nnA(\eta))$. We verify \textbf{(A3)} for the $\alpha_n$-posterior by verifying Conditions \ref{condition1}--\ref{condition2} of Proposition \ref{prop:A3_suff}. By Proposition \ref{prop:random_alpha(A3)}, doing so also amounts to checking that \textbf{(A3)} holds for the $\hat{\alpha}_n$-posterior.

\noindent\textit{Condition \ref{condition1}.} We will show that the constants $c_0 = c_1 = 1$ and $c_2 = 0$ -- with $L$ to be specified later -- lead to \eqref{eq:cond2_part1} in Condition \ref{condition1}.

Consider first the case $\eta > \eta^* + r$, where $r > \frac{e^{c_2/c_0}}{c_1}=1$. Since $\eta > \eta^*$, we have that $\eta-\eta^* = |\eta-\eta^*|$ and 
\begin{align}
    \log f_n(X^n|\eta) - \log f_n(X^n|\eta^*) 
    \nonumber&= |\eta-\eta^*|T(X^n) - n(A(\eta) - A(\eta^*)) \\
    \label{eq:expfam_case1_bound1}&= n|\eta-\eta^*|\Big[\frac{1}{n}T(X^n) - A'(\eta^*) - \Big(\frac{A(\eta)-A(\eta^*)}{|\eta-\eta^*|} - A'(\eta^*)\Big)\Big].
\end{align}
Assuming that $\frac{1}{n}T(X^n)\to A'(\eta^*)$ in $f_{0,n}$-probability (this is true in the i.i.d.\ setting), with probability tending to one, the above is further bounded as follows for some $t > 0$ as
\begin{align*}
    \log f_n(X^n|\eta) - \log f_n(X^n|\eta^*) 
    &\leq -n|\eta-\eta^*|\Big[-t + \frac{A(\eta)-A(\eta^*)}{|\eta-\eta^*|} - A'(\eta^*)\Big] \\
    &= -n|\eta-\eta^*|\left[-t + A'(\tilde{\eta}) - A'(\eta^*)\right]\\
     &\leq -n|\eta-\eta^*|\Big[-t + \inf_{\{\eta|\eta > \eta^* + r\}}\big(A'(\eta) - A'(\eta^*)\big)\Big]\\
     &= -n|\eta-\eta^*|\Big[-t + A'(\eta^* + r) - A'(\eta^*)\Big],
\end{align*}
 where $\tilde{\eta}\in (\eta^*, \eta)$, the last inequality used $\eta > \eta^* + r$ and the last equality follows the strict convexity of $A$.
Choosing $t$ small enough the term in brackets is positive with $f_{0,n}$-probability tending to 1. 

In the case $\eta \leq \eta^* - r$, we have $\eta-\eta^* = -|\eta-\eta^*|$ and it is easy to check that one obtains similar bounds,  which lead to  $\log f_n(X^n|\eta) - \log f_n(X^n|\eta^*) \leq -n|\eta-\eta^*|[-t - (A'(\eta^* - r) - A'(\eta^*))].$ Combining the cases, \eqref{eq:cond2_part1} of Condition \ref{condition1} is satisfied by setting 
\begin{equation*}
    L = -t+\max\{A'(\eta^* + r) - A'(\eta^*), A'(\eta^*)-A'(\eta^* - r) \}
\end{equation*}
and $\gamma(|\eta-\eta^*|) = |\eta-\eta^*|$.  In particular, $\gamma(v)\geq c_0\log(1+c_1v)-c_2$ for $v\geq r > 1$ with the constants $c_0=c_1=1$ and $c_2=0$.

\noindent\textit{Condition \ref{condition2}.} By the mean value theorem,
$$
    \log f_n(X^n|\eta)-\log f_n(X^n|\hat{\eta}) = -\frac{n}{2}A''(\hat{\eta} + \xi_n(\eta-\hat{\eta}))(\eta-\hat{\eta})^2,$$
where $\xi_n\in(0,1)$. Note that $A''(\hat{\eta} + \xi_n(\eta-\hat{\eta}))$ is strictly positive for all $\eta$ by the strict convexity of $A$. For $\eta\in\hat{B}_{\hat{\eta}}(2r)$ (with $r > 1$ from checking condition \ref{condition1}), we have that $B_{\hat{\eta}}(2r)\subseteq B_{\eta^*}(4r)$ with $f_{0,n}$-probability tending to 1, and hence, 
\begin{equation*}
    \sup_{\eta\in B_{\hat{\eta}}(2r)}A''(\eta) \leq \sup_{\eta\in B_{\eta^*}(4r)}A''(\eta) \quad \text{ and } \quad \inf_{\eta\in B_{\hat{\eta}}(2r)}A''(\eta) \geq \inf_{\eta\in B_{\eta^*}(4r)}A''(\eta).
\end{equation*}
Combining the mean value theorem statement  with the above bounds, we have that with probability tending to $1$,
\begin{align}
    -\frac{n}{2}\inf_{\eta\in B_{\eta^*}(4r)}A''(\eta)(\eta-\hat{\eta})^2 \nonumber&< \log f_n(X^n|\eta)-\log f_n(X^n|\hat{\eta}) < -\frac{n}{2}\sup_{\eta\in B_{\eta^*}(4r)}A''(\eta)(\eta-\hat{\eta})^2,
\end{align}
uniformly on $B_{\hat{\eta}}(2r)$. So we can take $c_3 = \frac{1}{2}\inf_{\eta\in B_{\eta^*}(4r)}A''(\eta)$ and $c_4 = \frac{1}{2}\sup_{\eta\in B_{\eta^*}(4r)}A''(\eta)$ to conclude the proof. 

\section{Complementary discussion of related work}
\label{App:related_work}
There is vast body of work in Bayesian statistics that is related to our contribution. Perhaps most obviously related, is the thread of literature developing methodology for  data-driven posterior tempering. The SafeBayesian algorithm of \cite{grunwald_safe_2012, grunwald_inconsistency_2017}  is one of the earliest papers in this topic. It chooses the $\alpha$ that minimizes the cumulative posterior expected log loss. In \cite{holmes_assigning_2017}, the authors proposed to choose $\alpha$  such that the expected gain in information between the prior and $\alpha$-posterior matches the expected gain in information between the prior and standard posterior. In the work of \cite{syring_calibrating_2019, wu_comparison_2023}, the tempering parameter is calibrated to give credible intervals obtaining nominal frequentist coverage probability. In a more restrictive theoretical setting, \cite{bu_towards_2024} proposed closed-form expressions for the optimal inverse temperature parameter in the Gibbs posterior that minimizes the population risk. Recent work proposes choosing $\alpha$ to optimize posterior predictions, but also shows this leads to an ill-defined optimization problem where the tempering parameter does not have a meaningful effect in large samples \cite{mclatchie_predictive_2025}. However, similar approaches for generalized posterior inference outperform standard posteriors \cite{lee_liu_nicholls2025}.

A second important line of related work is the  literature studying the asymptotic properties of  Bayesian posteriors. In particular, our work builds on existing BvM theorems for misspecified models in low-dimensional, parametric models where the asymptotic behavior of the resulting posterior is determined by a ``pseudo-true" likelihood model \cite{kleijn_bernstein-von-mises_2012,avella_medina_robustness_2022}. There is also an extensive literature on the asymptotic properties of posterior distributions in  parametric, high-dimensional settings where the dimension of the parameter increases with the sample size. Asymptotic normality results for Bayesian linear regression with a diverging number of parameters $p$ were established in \cite{ghosal_asymptotic_1999, bontemps_bernsteinvon_2011} and central limit theorems about linear statistics from such posteriors were recently established in \cite{lee_deb_mukherjee_2025} in the regime where $p \ll n^{2/3}$. Furthermore, sufficient conditions for and asymptotic regimes for which valid BvMs/Laplace approximations exist when $p\to \infty$ are given in \cite{katsevich_laplace_2024, katsevich_improved_2025, katsevich_laplace_2025}. Finally, we note that BvMs in semiparametric settings were established for standard posteriors in \cite{castillo_bernstein-von_2015} and for $\alpha_n$-posteriors in \cite{lhuillier_semiparametric_2023}.

There is also a broader class of BvM results for generalized posterior distributions \cite{chernozhukov_mcmc_2003, ghosh_robust_2016, miller_asymptotic_2021, lee_liu_nicholls2025,marusic:avellamedina:rush2025}. These posteriors replace the likelihood in the standard construction of a posterior by some Gibbs distribution defined by a loss function that is exponentiated. We note that  Gibbs posteriors arise as optimal solutions to several statistical problems such as aggregation of estimators \cite{vovk_aggregating_1990, dalalyan_aggregation_2008, rigollet_sparse_2012}, stochastic model selection \cite{mcallester_pac-bayesian_2003}, and minimum complexity density estimation/information complexity minimization \cite{barron_minimum_1991, zhang_epsilon-entropy_2006, zhang_information_2006}.  

The popularity of $\alpha$-posteriors stem largely from the robustness properties that they have been shown to enjoy in both theory and practice. These were initially explored in the context of misspecified linear regression models where $\alpha$-posteriors were shown to outperform standard posteriors \cite{grunwald_inconsistency_2017}. Similar conclusions were obtained in \cite{holmes_assigning_2017} in the context of exponential families and correcting for overdispersion. In an asymptotic analysis, \cite{avella_medina_robustness_2022} showed that $\alpha$-posteriors and their variational approximations are robust to general misspecification of the likelihood, assuming the misspecification occurs with vanishing probability.
The coarsened posterior of \cite{miller_robust_2019} conditions on the data being perturbed -- rather than being observed exactly, as it is in the calculation of the standard posterior. This design of the posterior builds in explicitly some amount of robustness to model misspecification. Putting a prior on the magnitude of the perturbation reveals  that, asymptotically, the coarsened posterior is approximately a power posterior, where the power depends on the sample size. In a slightly different vein, \cite{lin_tarp_evans2025} indicates a way to augment randomized control trial data with biased (i.e., misspecified) observational data using a power likelihood approach while maintaining nominal credible interval coverage. 

There have also been some proposals to robustify standard posterior constructions to the presence of outliers. Among these, the Bayesian data reweighting approach proposed by \cite{wang_robust_2017} is perhaps the most similar to $\alpha$-posteriors. The authors proposed to assign a tempering parameter $\alpha_i$ to each observation (as opposed to a single $\alpha$ applied to the entire likelihood) and jointly model the $\alpha_i$'s and observations by putting a prior on the $\alpha_i$'s. The idea of this construction is that individual weights can downweight the impact of individual outliers on the overall contribution to the likelihood \cite{wang_robust_2017}. Other robustifications to outliers  are  derived from constructing generalized posteriors with robust losses \cite{hooker:vidyashankar2014, ghosh_robust_2016,altamirano_robust_2024,matsubara_generalized_2024,marusic:avellamedina:rush2025}  and test statistics \cite{baraud_robust_2024}. There has also been some work arguing that $\alpha$-posteriors can obtain more robustness with respect to the choice of the prior. This was advocated in the context of model selection with Bayes factors in the work of \cite{ohagan_fractional_1995}. Similarly, power priors construct informative priors from historical data by computing an $\alpha$-posterior from the historical data to use as a prior for the current study \cite{ibrahim_power_2000, ibrahim_power_2015, carvalho_normalized_2021}. 

Finally, our work is also related to some literature on Variational Inference (VI). VI is an optimization-based approach to approximate an intractable posterior distribution \cite{jordan_introduction_1999, blei_variational_2017, wainwright_graphical_2008}. The core idea of VI is to approximate the posterior, $\pi_n(\theta|X^n)$,  with a more tractable distribution. To do so, one specifies a family of computationally-tractable distributions, $\mathcal{Q}$, and chooses the distribution $q^*\in\mathcal{Q}$ that is closest to the posterior in terms of KL-divergence.  A variant of this idea, directly connected to  $\alpha$-posteriors, is the following  regularized version of Evidence Lower Bound (ELBO) maximization: $q^* = \arg\max_{q\in\mathcal{Q}}\int \log f_n(X^n|\theta)\pi(\theta)q(\theta)d\theta - \frac{1}{\alpha}d_{\text{KL}}(q\|\pi),$
where $d_{\text{KL}}(q\|\pi) = \int_{\mathbb{R}^p}q(\theta)\log(\frac{q(\theta)}{\pi(\theta)})d\theta$. Indeed, the $\alpha$-posterior maximizes the above problem when $\mathcal{Q}$ is unrestricted \cite{avella_medina_robustness_2022}. Regularized ELBOs and related quantities have been referred to as $\alpha$-VI in the literature \cite{huang_improving_2018, yang_alpha-variational_2020}, and this objective has been used in training variational autoencoders \cite{higgins_beta-vae_2016, burgess_understanding_2018}. A similar objective that instead upweights the influence of the likelihood was shown to efficiently tune hyperparameters in overparameterized probabilistic models \cite{harvey_petrov_hughes2025}.
The $\alpha$-VI objective can be further generalized by replacing the log-likelihood with another loss function; known as generalized VI \cite{knoblauch_optimization-centric_2022} when $\mathcal{Q}$ is restricted. This objective is maximized by the generalized posterior \cite{bissiri_general_2016} when $\mathcal{Q}$ is unrestricted.

\section{Additional experimental details}
\subsection{Bayesian cross-validation formulas}
 
We work out the explicit formulas we use to compute the two cross-validation losses that we used in our experiments. These are more convenient than the integrals \eqref{eq:elpd} and \eqref{eq:elpd_vi}.

\subsubsection{$\overline{\textrm{lppd}}_{\text{LOO}}(\alpha)$ calculation}\label{sec:elpd_calc}
Recall the definition of leave-one-out $\overline{\text{lppd}}_{\text{LOO}}(\alpha)$,
\begin{equation*}
    \overline{\textrm{lppd}}_{\textrm{LOO}}(\alpha) = \frac{1}{n}\sum_{i=1}^n \log \int p(x_i,y_i|\beta)\pi_{n,\alpha}(\beta|X_{-i}Y_{-i})d\beta,
\end{equation*}
where $X_{-i}$, $Y_{-i}$ denote the design matrix and dependent variable, respectively, with the i$^{th}$ entry removed. In our numerical experiments the prior is $N(0,I_p)$ and the $\alpha_n$-posterior in the above is 
$
    \pi_{n,\alpha}(\beta|{X_{-i}},Y_{-i}) = \phi(\beta  | \mu_{n,-i}(\alpha),~\Sigma_{n,-i}(\alpha)),
$
where
\begin{equation}\label{eq:ridge_obj_i}
    \begin{split}
        \mu_{n,-i}(\alpha) \equiv \Big(X_{-i}^\top  X_{-i} + \frac{\sigma^2}{\alpha}I_p\Big)^{-1}X_{-i}^\top  Y_{-i}
        \quad \mbox{and} \quad
        \Sigma_{n,-i}(\alpha) \equiv \frac{\sigma^2}{\alpha}\Big(X_{-i}^\top  X_{-i} + \frac{\sigma^2}{\alpha}I_p\Big)^{-1}.
    \end{split}
\end{equation}
To obtain the posterior in \eqref{eq:ridge_post}, we set $\sigma^2 = 1$ to obtain the ridge regression estimator, $\mu_{n,-i}(\alpha) \equiv \hat{\beta}^{\text{Ridge}}_{-i}$, as the $\alpha_n$-posterior mean. In the data application example, we plug $\hat{\sigma}^2 = \frac{1}{n-p}\sum_{i=1}^n(Y_i - \hat{Y}_i)^2$ into the above.
Substituting the expression of the $\alpha_n$-posterior in $\overline{\text{lppd}}_{\text{LOO}}(\alpha)$ gives
\begin{equation*}\label{eq:step_0_i}
    \overline{\textrm{lppd}}_{\textrm{LOO}}(\alpha) = \frac{1}{n}\sum_{i=1}^n \log  \frac{\int\exp\left(-\frac{1}{2}\left(\frac{1}{\sigma^2}(y_i-x_i^\top  \beta)^2+(\beta-\mu_{n,-i})^\top  \Sigma_{n,-i}^{-1}(\beta-\mu_{n,-i})\right)\right)d\beta}{(2\pi)^{\frac{p}{2}+\frac{1}{2}}\sigma|\Sigma_{n,-i}|^{\frac{1}{2}}},
\end{equation*}
where we dropped the dependence on $\alpha$ from $\mu_{n,-i}$ and $\Sigma_{n,-i}$ in order to make the notation a little less cumbersome. In the remaining calculations, we will not include the denominator of the above as it is an additive constant in the above and does not affect the optimization of $\overline{\textrm{lppd}}_{\textrm{LOO}}(\alpha)$. 
Completing the square we see that 
\begin{align}
   \nonumber &\frac{1}{\sigma^2}(y_i-x_i^\top  \beta)^2+(\beta-\mu_{n,-i})^\top  \Sigma_{n,-i}^{-1}(\beta-\mu_{n,-i}) \\
   \nonumber &= (\beta-\tilde{\mu}_{n,-i})\tilde{\Sigma}_{n,-i}^{-1}(\beta-\tilde{\mu}_{n,-i}) +\Big(\frac{1}{\sigma^2}y_i^2+\mu_{n,-i}^\top\Sigma_{n,-i}^{-1}\mu_{n,-i} -\tilde{\mu}_{n,-i}^\top \tilde{\Sigma}_{n,-i}^{-1}\tilde{\mu}_{n,-i}\Big) \\
   &= (\beta-\tilde{\mu}_{n,-i})\tilde{\Sigma}_{n,-i}^{-1}(\beta-\tilde{\mu}_{n,-i})+C_{n,-i},
    \label{eq:loo_square}
\end{align}
where 
\begin{equation}
\label{eq:ridge_obj_i2}
    \tilde{\mu}_{n,-i} =\tilde{\Sigma}_{n,-i}\Big(\frac{1}{\sigma^2}x_iy_i+\Sigma_{n,-i}^{-1}\mu_{n,-i}\Big)\quad \mbox{and} \quad \tilde{\Sigma}_{n,-i}=\Big(\frac{1}{\sigma^2}x_ix_i^\top+\Sigma_{n,-i}^{-1}\Big)^{-1},
\end{equation}
and in $C_{n,-i}$ we have collected the terms that do not depend on $\beta$.
Therefore,

\begin{align}
 \nonumber &\int\exp\Big(-\frac{1}{2}\Big(\frac{1}{\sigma^2}(y_i-x_i^\top  \beta)^2+(\beta-\mu_{n,-i})^\top  \Sigma_{n,-i}^{-1}(\beta-\mu_{n,-i})\Big)\Big)d\beta\\
 &=e^{-C_{n,-i}/2}\int\exp\Big(\frac{-1}{2}(\beta-\tilde\mu_{n,-i})^\top  \tilde\Sigma_{n,-i}^{-1}(\beta-\tilde\mu_{n,-i})\Big)d\beta  =e^{-C_{n,-i}/2}(2\pi)^{\frac{p}{2}}|\tilde\Sigma_{n,-i}|^{\frac{1}{2}}.
 \label{eq:loo_int}
\end{align}
It follows from \eqref{eq:ridge_obj_i}, \eqref{eq:loo_square}, \eqref{eq:ridge_obj_i2} and \eqref{eq:loo_int} that
$
       \mbox{argmin}_\alpha \{\overline{\textrm{lppd}}_{\textrm{LOO}}(\alpha)\} = \mbox{argmin}_\alpha\{T_1(\alpha) + T_2(\alpha) + T_3(\alpha)\},
$
where
\begin{align*}
    T_1(\alpha) 
    = -\frac{1}{2n}\sum_{i=1}^n\log\Big(\frac{|\tilde\Sigma_{n,-i}(\alpha)^{-1}|}{|\Sigma_{n,-i}(\alpha)^{-1}|}\Big) &=-\frac{1}{2n}\sum_{i=1}^n\log\Big(\frac{|\frac{1}{\sigma^2}x_ix_i^\top+\frac{\alpha }{\sigma^2}(X_{-i}^\top  X_{-i}+\frac{\sigma^2}{\alpha } I_p)|}{|\frac{\alpha }{\sigma^2}(X_{-i}^\top  X_{-i}+\frac{\sigma^2}{\alpha } I_p)|}\Big) \\
    &= -\frac{1}{2n}\sum_{i=1}^n \log\Big|\frac{1}{\alpha }\Big(X_{-i}^\top  X_{-i} + \frac{\sigma^2}{\alpha }I_p\Big)^{-1}x_ix_i^\top   + I_p\Big|,
\end{align*}
where the last expression follows from tedious but straightforward manipulations. 
\begin{align*}
        T_2(\alpha) 
        &=-\frac{1}{2n}\sum_{i=1}^n\mu_{n,-i}^\top\Sigma_{n,-i}^{-1}\mu_{n,-i} =-\frac{1}{2n}\sum_{i=1}^n\frac{\alpha}{\sigma^2}Y_{-i}^\top X_{-i}  \Big(X_{-i}^\top  X_{-i} + \frac{\sigma^2}{\alpha }I_p\Big)^{-1}X_{-i}^\top  Y_{-i},
\end{align*}
and
\begin{equation*}
    \begin{split}
        T_3(\alpha) 
        &= \frac{1}{2n}\sum_{i=1}^n\tilde{\mu}_{n,-i}^\top \tilde{\Sigma}_{n,-i}^{-1}\tilde{\mu}_{n,-i}\\
        &=\frac{1}{2n}\sum_{i=1}^n \Big[\frac{x_iy_i}{\sigma^2} + \frac{\alpha }{\sigma^2}X_{-i}^\top  Y_{-i} \Big]^\top   \Big(\frac{x_ix_i^\top }{\sigma^2}  + \frac{\alpha }{\sigma^2}\Big(X_{-i}^\top  X_{-i} + \frac{\sigma^2}{\alpha }I_p\Big)\Big)^{-1} \Big[\frac{x_iy_i}{\sigma^2} + \frac{\alpha }{\sigma^2}X_{-i}^\top  Y_{-i} \Big].
    \end{split}
\end{equation*}
In our numerical experiments, we optimize $\overline{\text{lppd}}_{\text{LOO}}$ with respect to $\lambda$ and ``convert" the selected $\lambda$ value to its selected $\alpha$ value using the relation $\alpha = \frac{1}{(n-1)\lambda}$ (see Section \ref{app:grid_searches}). As a result, we rescale the leave-one-out matrices by $\frac{1}{n-1}$ (e.g., we use $\frac{1}{n-1}X_{-i}^\top X_{-i}$ in lieu of $X_{-i}^\top X_{-i}$). In the above formulas, we have factored and canceled out the $\frac{1}{n-1}$. The same applies to $\overline{\text{lppd}}_{\text{LOO-VI}}$, which is calculated in the next subsection.

\subsubsection{$\overline{\textrm{lppd}}_{\textrm{VI-LOO}}$ calculation}
\label{sec:elpdvi_calc}

We also consider the $\overline{\textrm{lppd}}_{\textrm{VI-LOO}}$, where density of the Gaussian mean field variational approximation of \eqref{eq:ridge_post} is substituted in the calculation of \eqref{eq:elpd}. This density has parameters
\begin{equation*}
    \mu_{n,-i}(\alpha) \equiv \Big(X_{-i}^\top  X_{-i} + \frac{\sigma^2}{\alpha}I_p\Big)^{-1}X_{-i}^\top  Y_{-i}
    \quad \mbox{and} \quad 
    \Sigma_{n,-i}(\alpha) \equiv \frac{\sigma^2}{\alpha}\Big[\text{diag}\Big(X_{-i}^\top  X_{-i} + \frac{\sigma^2}{\alpha}I_p\Big)\Big]^{-1}.
\end{equation*}
We obtain
$
    \mbox{argmin}_\alpha \{\overline{\textrm{lppd}}_{\textrm{LOO-VI}}(\alpha)\}
        =\mbox{argmin}_\alpha \{\tilde{T}_1(\alpha) + \tilde{T}_2(\alpha) + \tilde{T}_3(\alpha)\},
$ using a similar approach to the calculation of $\overline{\textrm{lppd}}$ (Section \ref{sec:elpd_calc}),
where
\begin{align*}
        &\tilde{T}_1(\alpha) 
        = \frac{-1}{2n}\sum_{i=1}^n\log\Big(\frac{|\tilde\Sigma_{n,-i}(\alpha)^{-1}|}{|\Sigma_{n,-i}(\alpha)^{-1}|}\Big) = -\frac{1}{2n}\sum_{i=1}^n \log\Big|\frac{1}{\alpha }\Big[\text{diag}\Big(X_{-i}^\top  X_{-i} + \frac{\sigma^2}{\alpha }I_p\Big)\Big]^{-1}x_ix_i^\top  + I_p\Big|, \\
        &\tilde{T}_2(\alpha)
        =\frac{-1}{2n}\sum_{i=1}^n\mu_{n,-i}^\top\Sigma_{n,-i}^{-1}\mu_{n,-i} \\
        &= \frac{-1}{2n}\sum_{i=1}^n\frac{\alpha }{\sigma^2}Y_{-i}^\top X_{-i}  \Big(X_{-i}^\top  X_{-i} + \frac{\sigma^2}{\alpha }I_p\Big)^{-1}  \hspace{-.1pt}\text{diag}\Big(X_{-i}^\top  X_{-i} + \frac{\sigma^2}{\alpha }I_p\Big) \Big(X_{-i}^\top  X_{-i} + \frac{\sigma^2}{\alpha }I_p\Big)^{-1}
        X_{-i}^\top  Y_{-i} ,
\end{align*}
and
\begin{align*}
        &T_3 
        =\frac{1}{2n}\sum_{i=1}^n\tilde{\mu}_{n,-i}^\top \tilde{\Sigma}_{n,-i}^{-1}\tilde{\mu}_{n,-i} \\
        &= \frac{1}{2n}\sum_{i=1}^n 
        \Big[\frac{x_iy_i}{\sigma^2} + \frac{\alpha }{\sigma^2}\text{diag}\Big(X_{-i}^\top X_{-i} + \frac{\sigma^2}{\alpha}I_p\Big)\Big(X_{-i}^\top X_{-i} + \frac{\sigma^2}{\alpha}I_p\Big)^{-1}X_{-i}^\top  Y_{-i} \Big]^\top \\
        &\Big(\frac{x_ix_i^\top}{\sigma^2} + \frac{\alpha }{\sigma^2}\text{diag}\Big(X_{-i}^\top  X_{-i} + \frac{\sigma^2}{\alpha }I_p\Big)\Big)^{-1} \Big[\frac{x_iy_i}{\sigma^2} + \frac{\alpha }{\sigma^2}\text{diag}\Big(X_{-i}^\top X_{-i} + \frac{\sigma^2}{\alpha}I_p\Big)\Big(X_{-i}^\top X_{-i} + \frac{\sigma^2}{\alpha}I_p\Big)^{-1}X_{-i}^\top  Y_{-i} \Big].
\end{align*}

\subsection{Grids used in grid searches}\label{app:grid_searches}
We list the grids used in the grid searches in our simulation example (Section \ref{sec:sim-results}, Table \ref{tab:sim_grid}) and our data application example (Section \ref{sec:data_example}, Table \ref{tab:data_grid}). Each table contains the following columns:
\begin{itemize}
    \item Method: selection method/loss function. See Section \ref{sec:sel_methods} for details on each method. 
    \item Parameter: the parameter with respect to which the loss function is minimized. All methods except for Safebayes -- which optimizes for the tempering parameter, $\alpha$ -- optimize for $\lambda$ (the regularization parameter in \eqref{eq:ridge_obj}).
    \item Spacing: whether the grid is linearly spaced or logarithmically spaced.
    \item Interval: lower and upper bounds of the grid.
    \item Density: number of gridpoints.
    \item Mapping to $\alpha$: How the selected parameter is mapped back to $\hat{\alpha}_n$ in the $\alpha_n$-posterior.
\end{itemize}
After $\hat{\alpha}_n$ is computed from Tables \ref{tab:sim_grid} and \ref{tab:data_grid}, we recode $\hat{\alpha}_n$ that are greater than $10^6$ to be ``effectively" infinity, and label such values as $\infty$ in Figures \ref{fig:lim_dist_constant}--\ref{fig:lim_dist_mix} and Figure \ref{fig:lim-dist-data}.

\begin{table}
\begin{center}
    \begin{tabular}{cccccc}
    \toprule
    Method & Parameter & Spacing & Interval & Density & Mapping to $\alpha$ \\
    \midrule
    BCV & $\lambda$ & Logarithmic & [$10^{-12}$, 10] & 200 & $((n-1)\lambda)^{-1}$\\
    BCV + VI & $\lambda$ & Logarithmic & [$10^{-12}$, 10] & 200 & $((n-1)\lambda)^{-1}$ \\
    LOOCV & $\lambda$ & Linear & [$10^{-12}$, 30] & 200 & $\lambda^{-1}$ \\
    Train-test & $\lambda$ & Linear & [$10^{-12}$, 5] & 200& $(n\lambda)^{-1}$ \\
    Safebayes & $\alpha$ & Linear & [0, 1] & 30 & $\alpha$\\
    \bottomrule
    \end{tabular}
\end{center}
\caption{Grids used in the simulation example described in Section \ref{sec:motivating_examples}. }
    \label{tab:sim_grid}
\end{table}

\begin{table}
\begin{center}
    \begin{tabular}{cccccc}
    \toprule
    Method & Parameter& Spacing & Interval & Density & Mapping to $\alpha$ \\
    \midrule
    BCV & $\lambda$ & Logarithmic & [$10^{-12}$, 10] & 200 & $((n-1)\lambda)^{-1}$ \\
    BCV + VI & $\lambda$ & Logarithmic & [$10^{-12}$, 10] & 200 & $((n-1)\lambda)^{-1}$ \\
    LOOCV & $\lambda$ & Linear & [$10^{-12}$, 0.5] & 200 & $\lambda^{-1}$ \\
    Train-test & $\lambda$ & Linear & [$10^{-12}$, 0.05] & 200 & $(n\lambda)^{-1}$ \\
    Safebayes & $\alpha$ & Linear & [0, 1] & 30 & $\alpha$ \\
    \bottomrule
    \end{tabular}
\end{center}
\caption{Grids used in the data application example described in Section \ref{sec:data_example}. }
    \label{tab:data_grid}
\end{table}

\subsection{Relationship between $\hat{\alpha}_n$ and $n$}\label{sec:alpha_lambda}
In this section, we explain how we estimate the rates of convergence of $\hat{\alpha}_n$ to $0$ that appear in Tables \ref{tab:conv_rates} and \ref{tab:conv_rates-data}. To obtain a curve of best fit of $\hat{\alpha}_n$ vs. $n$, we assume $\alpha = Cn^\gamma$, and our goal is to estimate the parameters $C$ and $\gamma$. We regress $\log(\alpha)$ on $\log(n)$, since
$
    \log(\alpha) = \log(C) + \gamma\log n \equiv \beta_0 + \beta_1\log n,$
the estimated curve of best fit is $\hat{\alpha}_n = e^{\hat{\beta}_0}n^{\hat{\beta}_1}$ and $\hat\gamma=\hat\beta_1$. We report the estimates and $95\%$ confidence intervals for $\beta_1$ for our simulation example in Table \ref{tab:conv_rates} and for our data application example in Table \ref{tab:conv_rates-data}. We report the corresponding curves of best fit in Figures \ref{fig:conv-rate} (simulation example) and \ref{fig:conv-rate-data-non-corner} (data application example).

\begin{figure}
    \centering
    \includegraphics{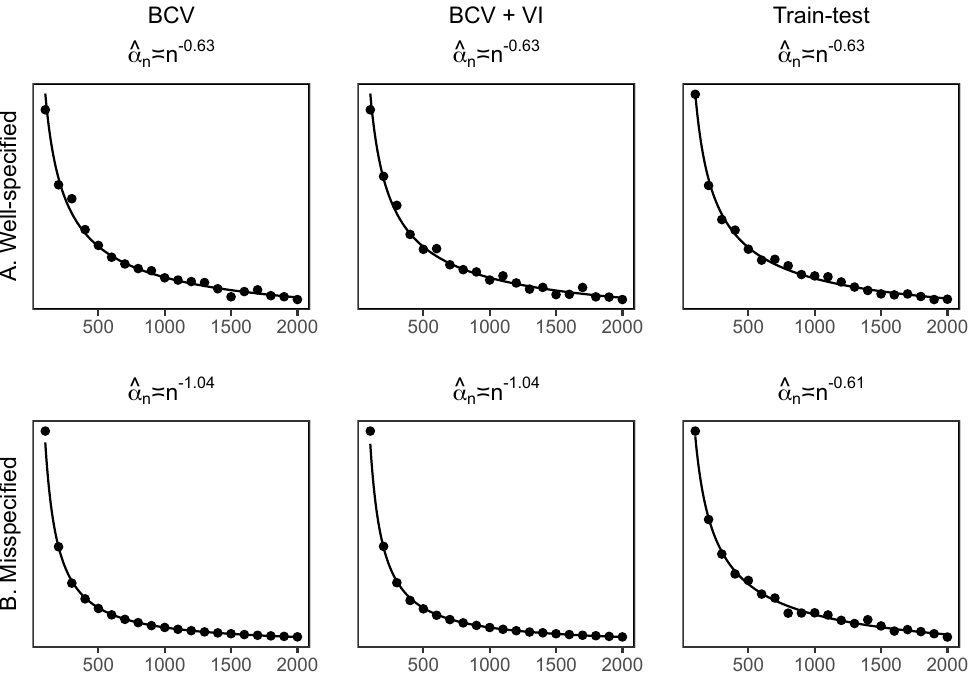}
    \caption{Panel A and B: dependence of $\hat{\alpha}_n$ (y-axis) on $n$ (x-axis) in the well-specified model setting (A) and misspecified model setting (B). Points correspond to $\hat{\alpha}_n$ values averaged over 1,000 replications with overlaid curves of best fit. Plot titles denote the estimated stochastic order of $\hat{\alpha}_n$. Corner solutions at $\hat{\alpha}_n=\infty$ are discarded in the plotting and estimation in the following settings: BCV/well-specified, BCV+VI/well-specified, train-test/well-specified and misspecified.
    }
    \label{fig:conv-rate}
\end{figure}

\begin{figure}
    \centering
    \includegraphics{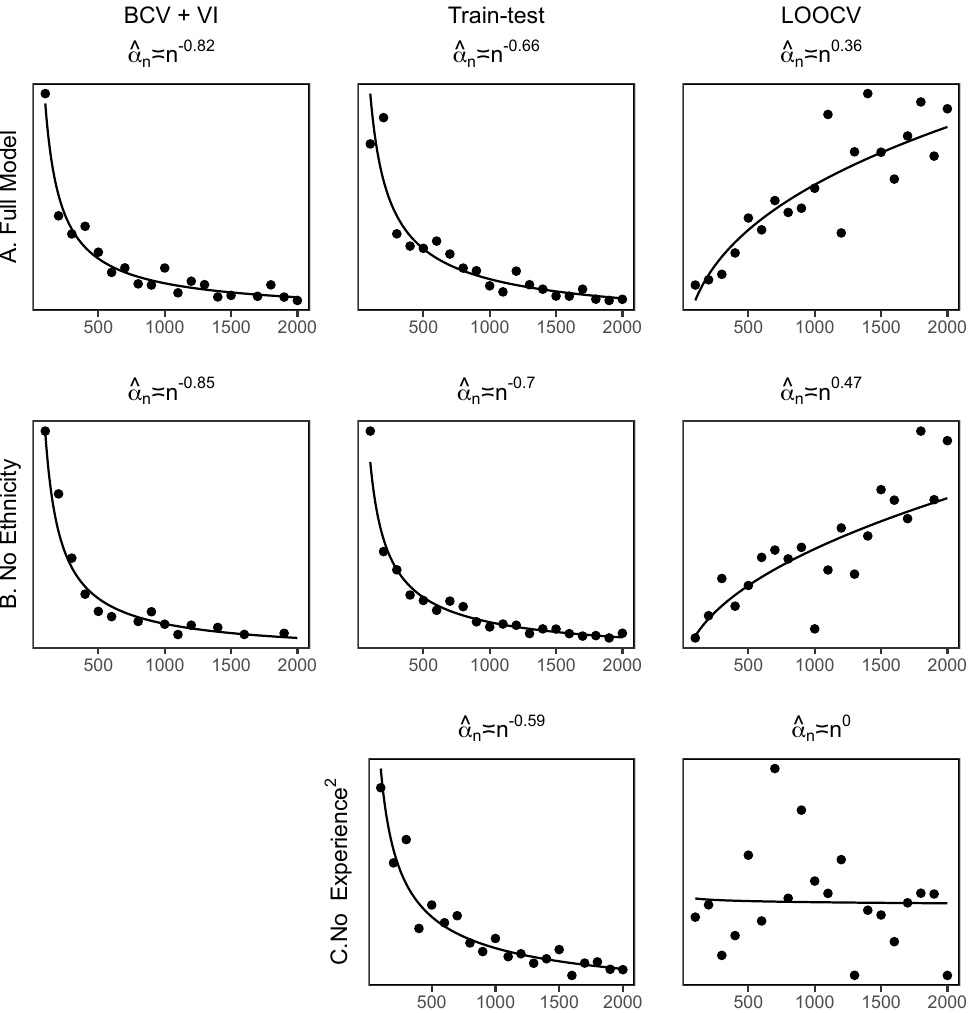}
    \caption{Panel A, B, and C: dependence of $\hat{\alpha}_n$ (y-axis) on $n$ (x-axis) under the full model setting (A), model excluding ethnicity (B), and model excluding squared experience. Points correspond to $\hat{\alpha}_n$ values (with corner solutions discarded) averaged over $100$ replications with overlaid curves of best fit. Plot titles denote the estimated stochastic order of $\hat{\alpha}_n$. The plot for BCV + VI under the model excluding squared experience is not plotted because ``non-corner solutions" only appear for $n = 100$. We also not that the ``non-corner" solutions according to LOOCV appear to be growing -- rather than shrinking -- with $n$. We include this result for completeness.}
    \label{fig:conv-rate-data-non-corner}
\end{figure}

\subsection{Supplementary Figures}
\subsubsection{Figures \ref{fig:lim_dist_quick}--\ref{fig:lim_dist_mix} plotted on a linear scale}\label{app:lim_dist_linear}
In figures \ref{fig:lim_dist_quick} and \ref{fig:lim_dist_mix}, we plot the $\hat{\alpha}_n$ values selected with BCV, BCV + VI, and Train-test on a log scale. We do so because the order of magnitude of $\hat{\alpha}_n$ selected according to these methods changes as $n$ grows. Plotting $\hat{\alpha}_n$ on a common log scale allows us to compare the distributions of $\hat{\alpha}_n$ as $n$ grows while also showcasing the convergence of $\hat{\alpha}_n$ to 0, which we further investigate in Table \ref{tab:conv_rates} and Figure \ref{fig:conv-rate}. In this appendix, we also show the histograms of $\hat{\alpha}_n$ from Figures \ref{fig:lim_dist_quick} and \ref{fig:lim_dist_mix} with $\hat{\alpha}_n$ plotted on a linear scale (Figures \ref{fig:lim_dist_quick_linear} and \ref{fig:lim_dist_mix_linear}). We use a different x-axis for each method and sample size to show the scale/order of magnitude of $\hat{\alpha}_n$ as $n$ grows.

\begin{SCfigure}[0.95][h]
    \centering
    \includegraphics{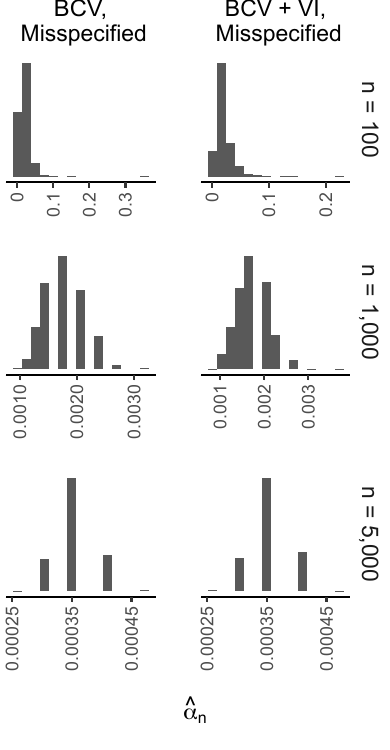}
    \caption{We replicate Figure \ref{fig:lim_dist_quick} with the $\hat{\alpha}_n$ values plotted on a linear scale. Different x-axis scales are used for $n=100$, $n=1,000$, and $n=5,000$ to showcase the different scales of the selected $\hat{\alpha}_n$ values as $n$ grows.} 
    \label{fig:lim_dist_quick_linear}
\end{SCfigure}

\begin{figure}
    \centering
    \includegraphics{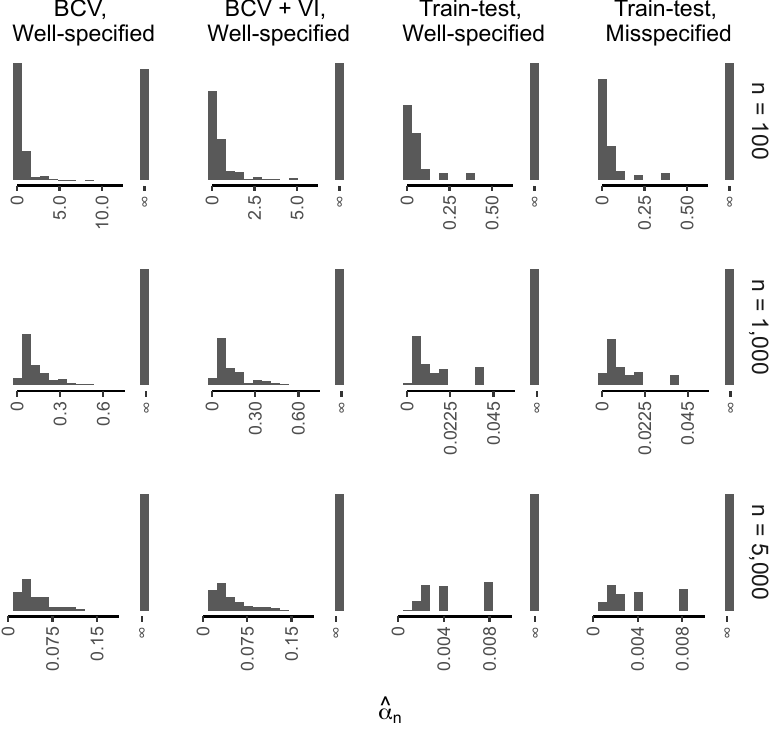}
    \caption{Figure \ref{fig:lim_dist_mix} with the $\hat{\alpha}_n$ values plotted on a linear scale. Different x-axis scales are used for $n=100$, $n=1,000$, and $n=5,000$ to showcase the different scales of the selected $\hat{\alpha}_n$ values as $n$ grows.
    }
    \label{fig:lim_dist_mix_linear}
\end{figure}

\subsubsection{Distribution of selected $\hat{\alpha}_n$ values in CPS1988 example}\label{app:lim_dist_data}
In Section \ref{sec:data_example}, we repeat the experiment in Figures \ref{fig:lim_dist_constant}--\ref{fig:lim_dist_mix} on subsamples from the CPS1988 dataset. We show the distributions of the selected $\hat{\alpha}_n$ in Figure \ref{fig:lim-dist-data}.

\subsubsection{Stability of posterior mean estimates under subsampling}\label{app:data_experiment}
\begin{figure}
    \centering
    \includegraphics[width=0.9\linewidth]{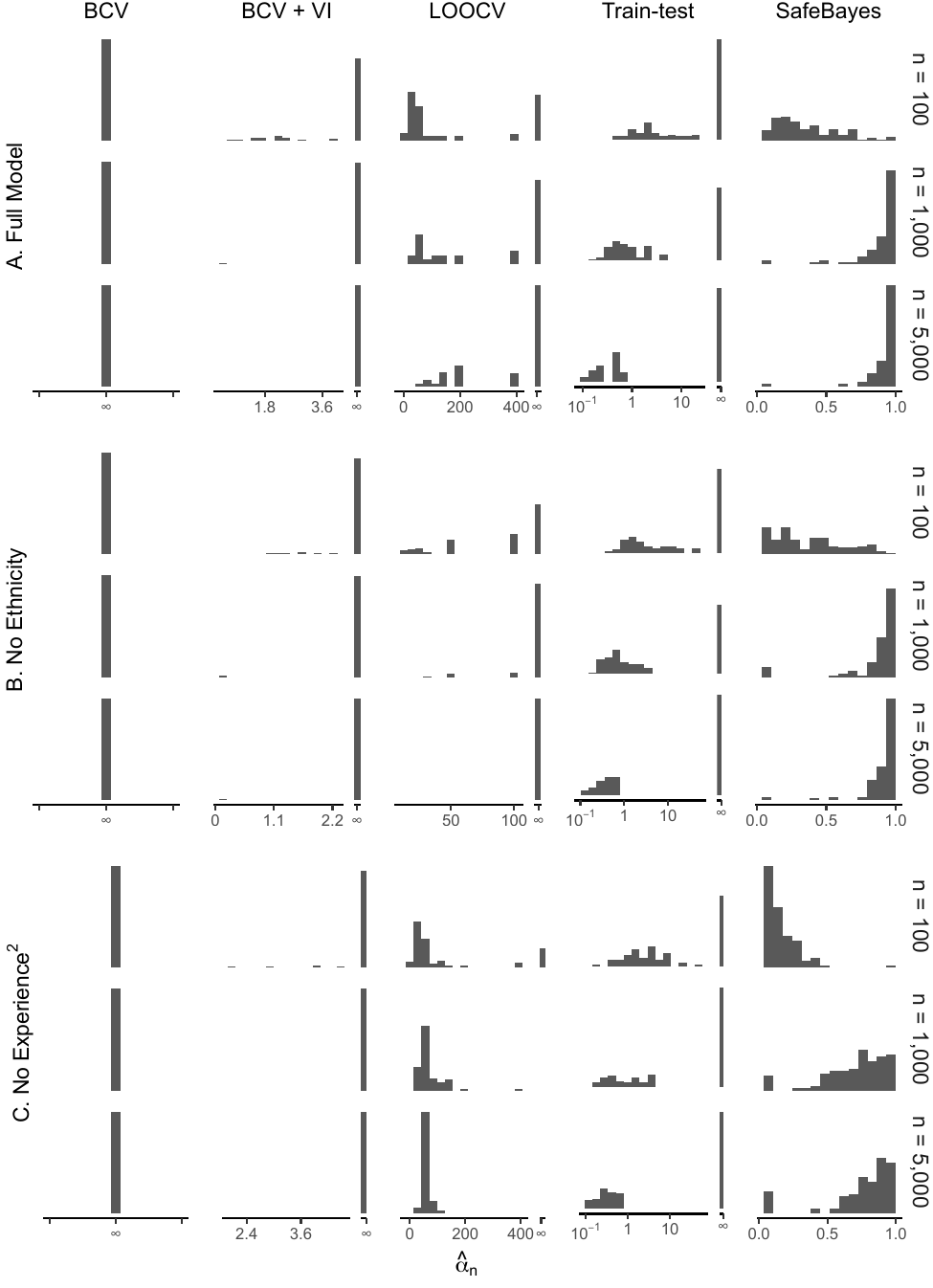}
    \caption{Distribution of $\hat{\alpha}_n$ over 100 subsamples from the CPS1988 dataset under the full model (Panel A), model excluding ethnicity (Panel B), and model excluding squared experience (Panel C). Within each panel, columns correspond to a selection method of $\alpha$ 
    (Bayesian cross-validation, BCV + VI, LOOCV, train-test split, SafeBayes), and rows correspond to sample size per replication ($n=100$, $n=1,000$, $n=5,000$).}
    \label{fig:lim-dist-data}
\end{figure}

In Section \ref{sec:data_example}, we consider the CPS1988 dataset and study the stability to subsampling of the selected $\hat{\alpha}_n$ and the resulting $\hat{\alpha}_n$-posterior mean estimates under the models considered in \eqref{eq:full_model} -- \eqref{eq:noexp2}. We report the results for the $\hat{\alpha}_n$-posterior mean estimates in this section in Figure \ref{fig:data_fig_full} (full model), Figure \ref{fig:data_fig_noeth} (model excluding Ethnicity), and Figure \ref{fig:data_fig_noexp2} (model excluding Experience$^2$).\

\begin{figure}
    \centering
    \begin{subfigure}{0.75\textwidth}
        \includegraphics[width=1\linewidth]{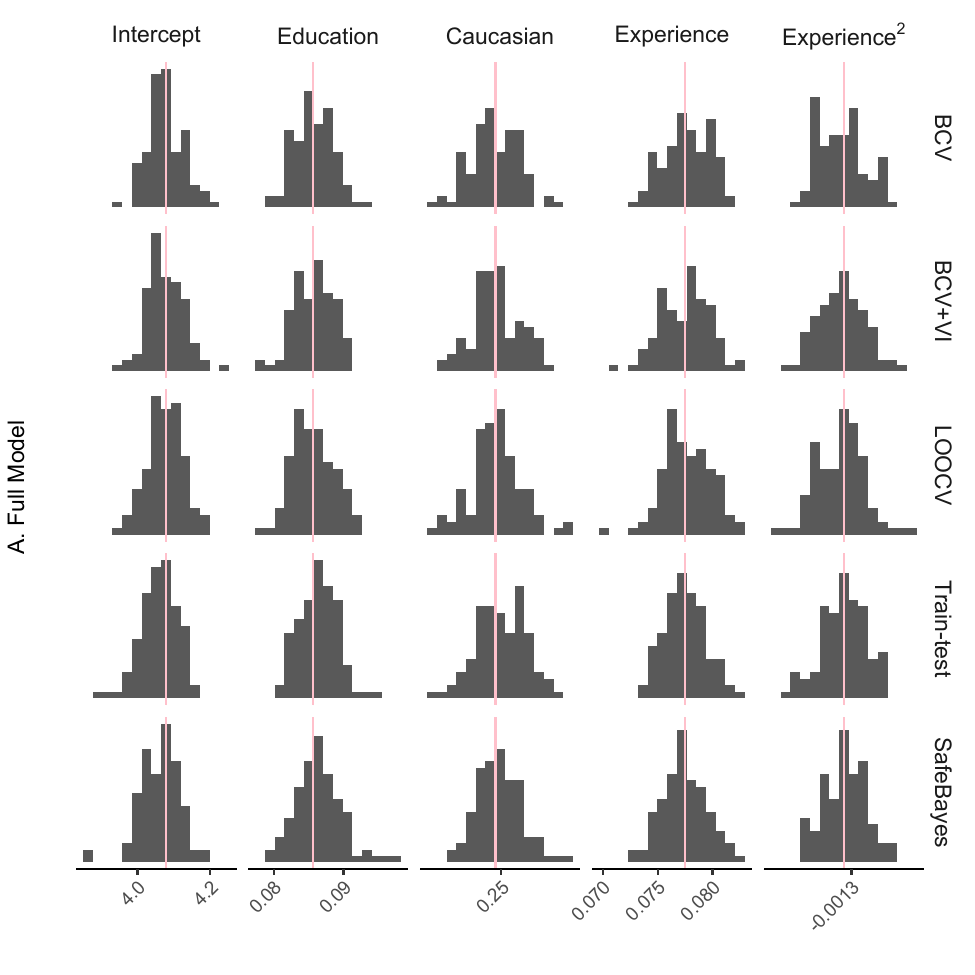}
        \caption{Full model}
    \label{fig:data_fig_full}
    \end{subfigure}
    \label{fig:data_fig}
\end{figure}
\begin{figure}\ContinuedFloat
    \centering
    \begin{subfigure}{0.75\textwidth}
        \includegraphics[width=1\linewidth]{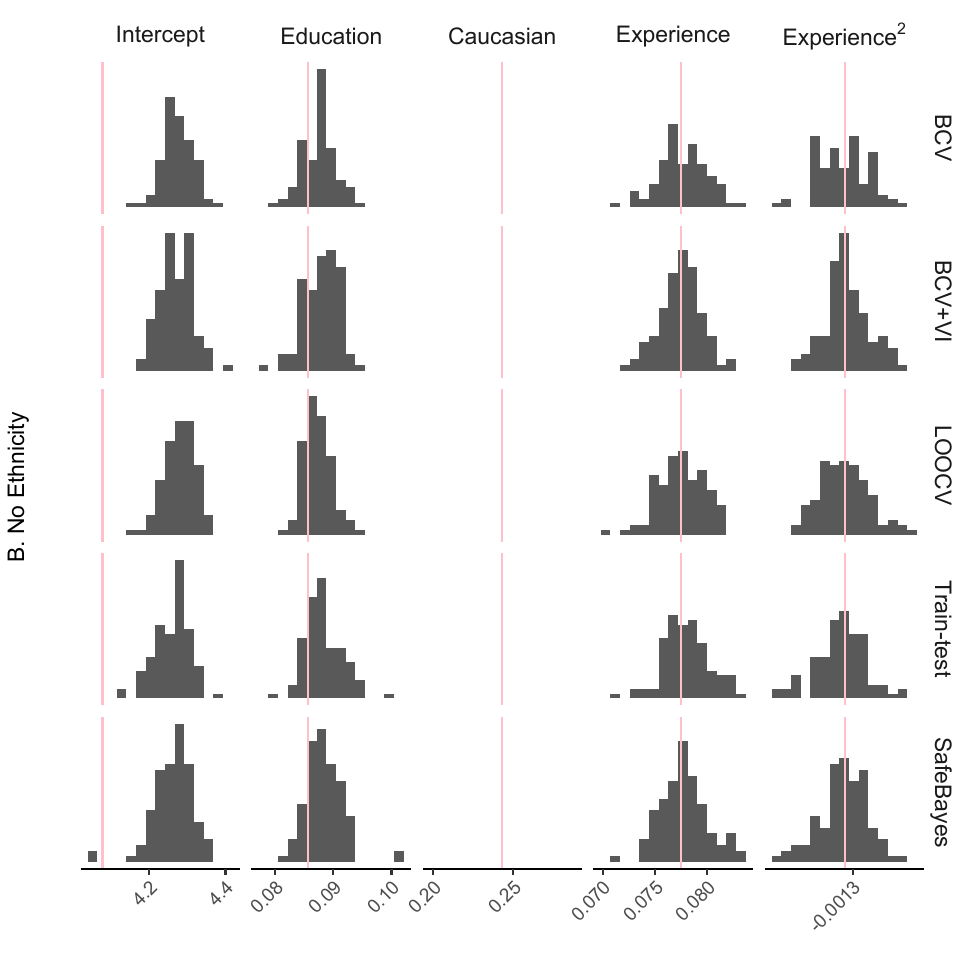}
        \caption{Model excluding ethnicity }
    \label{fig:data_fig_noeth}
    \end{subfigure}
\end{figure}
\begin{figure}\ContinuedFloat
    \centering
    \begin{subfigure}{0.75\textwidth}
        \includegraphics[width=1\linewidth]{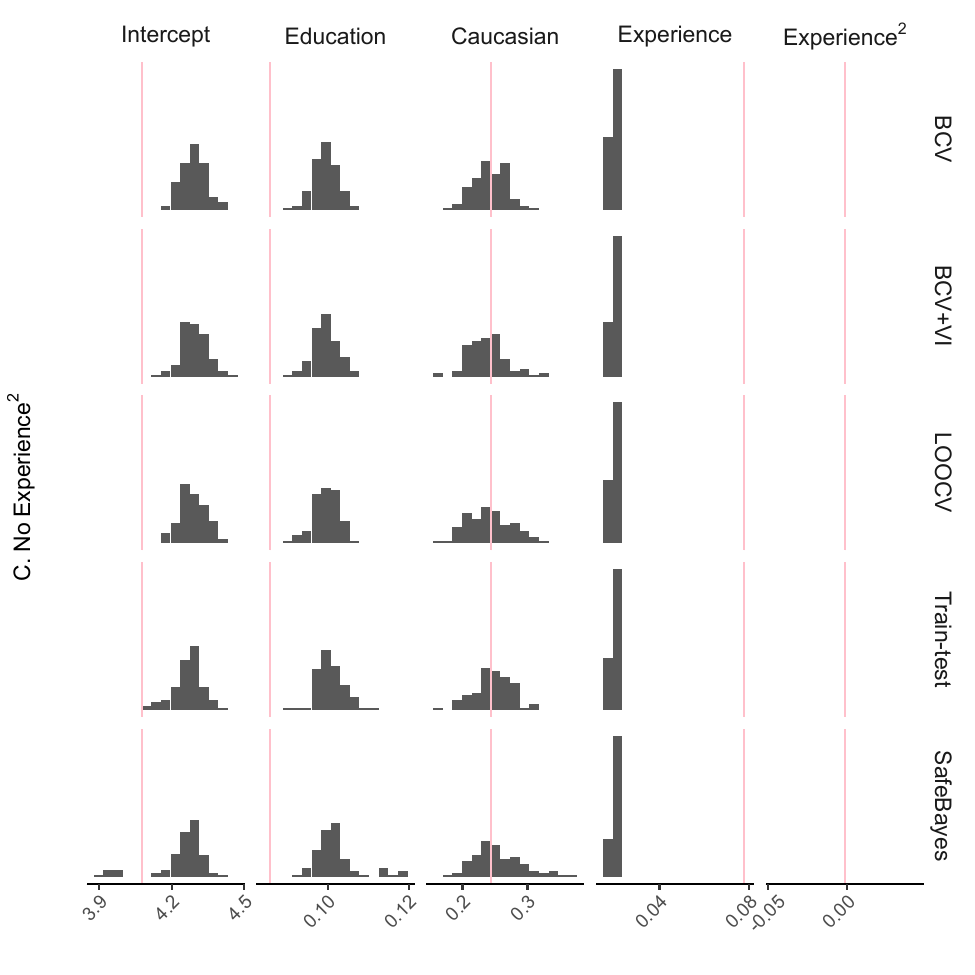}
        \caption{Model excluding squared experience    }
    \label{fig:data_fig_noexp2}
    \end{subfigure}
    \caption{Plots of the distributon of resulting posterior means of each coefficient over 100 subsamples of size $n=5,000$. Each row corresponds to a selection method. The pink vertical lines correspond to the least squares estimates of the coefficients of the full model estimated on the full dataset. }
\end{figure}

\end{document}